\newtheorem{thm}{Theorem}[section]
\newtheorem{prop}[thm]{Proposition}
\newtheorem{lemma}[thm]{Lemma}
\newtheorem{lem}[thm]{Lemma}
\newtheorem{cor}[thm]{Corollary}
\newtheorem{claim}[thm]{Claim}
\newtheorem{conj}[thm]{Conjecture}
\newtheorem{observation}[thm]{Observation}
\theoremstyle{definition}
\renewcommand{\epsilon}{\varepsilon}
\numberwithin{thm}{section}
\numberwithin{equation}{section}
\theoremstyle{remark}
\newtheorem{remark}[thm]{Remark}
\newcommand{\Prob}{\mathbb{P}}
\newcommand{\E}{\mathbb{E}}
\newcommand{\COMMENT}[1]{}
\begin{document}

\title{A blow-up lemma for approximate decompositions}

\author{Jaehoon Kim}

\address{School of Mathematics, University of Birmingham, 
Edgbaston, Birmingham, B15 2TT, United Kingdom}
\email{j.kim.3@bham.ac.uk, d.kuhn@bham.ac.uk, d.osthus@bham.ac.uk}

\author{Daniela K\"uhn}

\author{Deryk Osthus}

\author{Mykhaylo Tyomkyn}
\address{
School of Mathematical Sciences, Tel Aviv University, Tel Aviv 69978, Israel}  
\email{tyomkynm@post.tau.ac.il}

\thanks{The research leading to these results was partially supported by the EPSRC, grant no. EP/M009408/1 (D.~K\"uhn and D.~Osthus), 
and by the Royal Society and the Wolfson Foundation (D.~K\"uhn).
The research was  also partially supported by the European Research Council
under the European Union's Seventh Framework Programme (FP/2007--2013) / ERC Grant 306349 (J.~Kim, D.~Osthus and M.~Tyomkyn). }

\date{\today}
\begin{abstract}
We develop a new method for constructing approximate decompositions of dense graphs into sparse graphs and apply it to longstanding decomposition problems. For instance, our results imply the following. Let $G$ be a quasi-random $n$-vertex graph and suppose $H_1,\dots,H_s$ are bounded degree $n$-vertex graphs with $\sum_{i=1}^{s} e(H_i) \leq (1-o(1)) e(G)$. Then $H_1,\dots,H_s$ can be packed edge-disjointly into $G$. The case when $G$ is the complete graph $K_n$ implies an approximate version of the tree packing conjecture of Gy\'arf\'as and Lehel for bounded degree trees, and of the Oberwolfach problem.

We provide a more general version of the above approximate decomposition result which can be applied to super-regular graphs and thus can be combined with Szemer\'edi's regularity lemma. In particular our result can be viewed as an extension of the classical blow-up lemma of Koml\'os, S\'ark\H{o}zy and Szemer\'edi to the setting of approximate decompositions.
\end{abstract}

\maketitle

\section{Introduction}\label{sec:intro}
\subsection{Packings and decompositions}
Questions on packings and decompositions have a long history, going back to the 19th century.
For instance, the existence of Steiner triple systems (proved by Kirkman in 1847) corresponds to a decomposition of the edge set of the complete graph $K_n$
on $n$ vertices into triangles (if $n\equiv 1 \text{ or } 3 \mod 6$).
A fundamental theorem of Wilson~\cite{Wilson} generalizes this to decompositions of $K_n$ into arbitrary graphs $H$ of fixed size:
for any graph $H$, if $n$ is sufficiently large and satisfies trivially necessary divisibility conditions, then $K_n$ has a decomposition into edge-disjoint copies of $H$.
Similarly, Walecki's theorem goes back to 1892 and gives a decomposition of $K_n$ into Hamilton cycles (if $n$ is odd).
Recently, there has been some exciting progress in this area,
often involving the use of probabilistic techniques. 

Here, a \emph{decomposition} of a graph $G$ into graphs $H_1,\dots,H_s$ is a set of pairwise edge-disjoint copies of $H_1,\dots,H_s$
in $G$ which together cover all edges of $G$.
Conversely, we say that graphs $H_1,\dots, H_s$ \emph{pack into $G$} if there are copies of $H_1,\dots, H_s$ in $G$ so that these copies
are pairwise edge-disjoint. We informally refer to an `approximate decomposition' or a `near-optimal packing' 
if there is a packing which leaves only a small proportion 
of the edges of $G$ uncovered.

There are several beautiful conjectures which have driven a large amount of research in the area.
A prime example is the tree packing conjecture of Gy\'arf\'as and Lehel, which 
would guarantee a decomposition of a complete graph into a suitable given collection of trees.
\begin{conj}[Gy\'arf\'as and Lehel~\cite{GLtrees}] \label{gyarfaslehel}
Given trees $T_1,\dots, T_n$, where for each $i\in [n]$, the tree $T_i$ has $i$ vertices,
the complete graph $K_n$ has a decomposition into copies of $T_1,\ldots, T_n$. 
\end{conj}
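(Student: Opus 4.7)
The plan is to combine the blow-up lemma for approximate decompositions developed in this paper with an absorbing argument, aiming for an exact decomposition. Since $\sum_{i=1}^{n}(i-1) = \binom{n}{2} = e(K_n)$, the conjecture has no slack: every edge of $K_n$ must be covered and every tree $T_i$ embedded in full, so a purely approximate result cannot suffice on its own.

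Before packing any trees, I would set aside a small random subgraph $R \subseteq K_n$ with $e(R) = o(n^2)$ edges, chosen so that $K_n - R$ still satisfies the quasi-randomness hypothesis needed by the main theorem. This $R$ is intended as an \emph{absorbing reservoir}: at the end of the argument, it should be possible to finish the decomposition using $R$ together with a small family of very flexible trees. I would then split the remaining trees by size. The large trees $T_i$ with $i \geq \varepsilon n$ carry $(1 - O(\varepsilon))\binom{n}{2}$ edges in total; for any fixed $\Delta$ the subfamily with $\Delta(T_i) \leq \Delta$ can be approximately packed into $K_n - R$ directly by the main theorem. To cover large trees of unbounded maximum degree, I would decompose each such $T_i$ into a bounded-degree backbone and a small number of high-degree star-like pieces, feeding the backbones into the blow-up machinery and embedding the stars by hand using the large common neighbourhoods that quasi-randomness guarantees in the remaining graph.

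The small trees $T_i$ with $i < \varepsilon n$ contribute only $O(\varepsilon n^2)$ edges. Because each has few vertices it admits many embeddings, and I would use this flexibility to embed them into the leftover edges together with $R$, steering the process so that the final uncovered set lies inside $R$ and matches a structure the absorber was pre-designed to handle. The reservoir must therefore be built at the start with a very specific decomposition property in mind, and the order in which the $T_i$ are packed must be coordinated with the choice of absorber.

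The main obstacle — and essentially the reason the conjecture remains open — is this final absorbing step. Approximate decomposition tolerates a vanishing but nonzero number of missed edges, whereas absorption requires that for every possible leftover $L$ arising from the approximate step, the graph $L \cup R$ admits an \emph{exact} decomposition into the prescribed trees $T_1,\dots,T_{\lfloor \varepsilon n \rfloor}$. Designing a single reservoir whose decomposition types simultaneously match both all the small $T_i$ and all conceivable leftovers appears to be genuinely hard, and is very likely where any attack using only the tools of this paper would stall; full resolution probably requires additional ideas of a more algebraic or constructive nature to cope with the rigidity imposed by fixing the exact shapes of all $T_i$ in advance.
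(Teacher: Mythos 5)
The statement you are asked to prove is a \emph{conjecture}; the paper does not prove it and explicitly presents it as open (it was due to Gy\'arf\'as and Lehel and, at the time of writing, remained unresolved in general). What the paper actually establishes in this direction is an \emph{approximate} version for bounded-degree trees, obtained as a special case of Theorem~\ref{main 1b} with $G=K_n$: one needs $\Delta(T_i)\le\Delta$ and one sacrifices an $\alpha$-fraction of the edges of $K_n$. The exact version for bounded-degree trees is deferred to the companion paper \cite{JKKO}, which combines Theorem~\ref{main lemma} (crucially properties (T1) and (T4), which allow prescribed target sets and forbidden collisions) with an absorption argument. Your proposal is a high-level sketch of exactly that kind of two-phase strategy, and you have correctly identified that the paper's machinery alone yields only approximate packings and that the difficulty is in designing a robust absorber; to your credit, you say so explicitly rather than handwave a false completion.

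That said, as a \emph{proof} your submission has two concrete gaps beyond the one you flag. First, the conjecture involves trees of \emph{unbounded} maximum degree (the star $K_{1,n-1}$ is among the $T_i$), and your proposed reduction to a ``bounded-degree backbone plus high-degree stars embedded by hand'' is not substantiated: splitting a spanning tree this way does not obviously yield pieces that can be packed edge-disjointly while covering the whole edge set, and the blow-up machinery of this paper genuinely requires bounded $\Delta$. Even the subsequent resolution in \cite{JKKO} is restricted to bounded-degree trees; the unbounded-degree case of Conjecture~\ref{gyarfaslehel} was still open. Second, the absorbing reservoir $R$ is specified only by its intended function, not by any construction, and you give no argument that a single $R$ can absorb \emph{every} possible leftover $L$ arising from the approximate phase together with the prescribed small trees $T_1,\dots,T_{\lfloor\varepsilon n\rfloor}$ — this is precisely where the hard combinatorics lives. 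So the proposal is a reasonable research plan but is not, and does not claim to be, a proof; and it does not match ``the paper's own proof'' because the paper contains none.
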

A related conjecture, made by Ringel, concerns decompositions of complete graphs into copies of a single tree:
for every tree $T$ on $n+1$ vertices, $K_{2n+1}$ has  a decomposition into $2n+1$ copies of $T$.
There are a large number of partial results on Conjecture~\ref{gyarfaslehel}, some focusing on special classes of trees and some on embedding a (small) proportion of the trees (see e.g.~\cite{BP, B, D2, GLtrees, HB, R,Zak}).

Possibly the  most striking results towards Conjecture~\ref{gyarfaslehel} have been obtained for
the case of bounded degree trees. In particular, a recent result by
B\"ottcher, Hladk\`y, Piguet and Taraz~\cite{BHPT} allows for approximate decompositions of $K_n$ into bounded degree trees that are
permitted to be almost spanning. More precisely, their main result states that for all $\epsilon>0$ and $\Delta\in\mathbb{N}$
there exists $n_0\in\mathbb{N}$ such that whenever $n\ge n_0$ and $T_1,\dots,T_s$ is any family of trees with $|T_i| \le n$, $\Delta(T_i)\le \Delta$
and $\sum_{i=1}^s e(T_i) \le \binom{n}{2}$, then $T_1,\dots,T_s$ pack into $K_{(1+\epsilon)n}$.
Note that this implies an approximate version of Conjecture~\ref{gyarfaslehel} for bounded degree trees (it is approximate both in the sense that none of the trees is spanning and that they do not form a decomposition).
The result in~\cite{BHPT} was strengthened by  Messuti, R\"odl and Schacht~\cite{MRS}
to approximate decompositions of complete graphs into almost spanning graphs of bounded degree which are `separable'
(roughly speaking, a graph is separable if it can be split into bounded size components by removing a small proportion of its vertices).
Independently to us, Ferber, Lee and Mousset~\cite{FLM} generalized this further by proving that one can obtain an approximate decomposition 
of $K_n$ into separable graphs of bounded degree which are allowed to be spanning.
In particular, this means that one can always obtain an approximate decomposition of $K_n$ into spanning trees of bounded degree.

Our first result is in fact more general and guarantees an approximate decomposition of a dense quasi-random graph $G$ 
into arbitrary bounded degree graphs.
More precisely, we say that a graph $G$ on $n$ vertices is {\em $(\epsilon,p)$-quasi-random} if all vertices
$v$ of $G$ have degree $d_G(v)=(1\pm \epsilon)pn$ and all pairs of distinct vertices  $u,v$ have $(1\pm \epsilon)p^2n$ common neighbours. (In fact, the latter condition is only required for almost all pairs, see Section~\ref{sec:concl}.)
\begin{thm}\label{main 1b} 
For all $\Delta\in \mathbb{N}$ and $0<p_0,\alpha \leq 1$ there exist $\epsilon >0$ and $n_0 \in \mathbb{N}$ such that
the following holds for all $n \ge n_0$ and $p\ge p_0$.
Suppose $H_1,\dots, H_s$ are $n$-vertex graphs with $\Delta(H_\ell)\leq\Delta$ for all $\ell \in [s]$ and
$\sum_{\ell=1}^{s} e(H_\ell) \leq (1-\alpha) \binom{n}{2}p$. Suppose that $G$ is an $(\epsilon,p)$-quasi-random graph on $n$ vertices.
Then $H_1,\dots, H_s$ pack into $G$. 

Moreover, if in addition $\sum_{\ell=1}^{s} e(H_\ell)\ge (1-2\alpha)\binom{n}{2}p$, then writing $\phi(H_\ell)$ for the copy of $H_\ell$ in this packing of $H_1,\dots,H_s$ in $G$ and writing $J:=G -(\phi(H_1)\cup\dots\cup \phi(H_s))$, we have $\Delta(J)\leq 4\alpha p n$. 
\end{thm}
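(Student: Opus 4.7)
The plan is to derive Theorem \ref{main 1b} from the more general blow-up lemma for approximate decompositions (the main technical result developed later in the paper), which is designed to pack bounded-degree graphs equipped with $k$-partite structures into $k$-partite super-regular hosts. The reduction from the quasi-random setting to the super-regular setting will be by a random partition of the common vertex set. Fix a sufficiently large constant $k = k(\Delta,\alpha,p_0)$ and randomly partition $V(G)$ into equal-sized parts $V_1,\ldots,V_k$. Standard Chernoff-type concentration combined with the $(\epsilon,p)$-quasi-randomness of $G$ gives that with high probability every bipartite graph $G[V_i,V_j]$ is $(\epsilon',p)$-super-regular, with $\epsilon' \to 0$ as $\epsilon \to 0$. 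For each $\ell\in[s]$, I would then randomly and uniformly map the vertices of $H_\ell$ to the clusters $V_1,\ldots,V_k$; since $\Delta(H_\ell)\leq\Delta$, another concentration argument ensures that every cluster receives $(1\pm\epsilon')|V(H_\ell)|/k$ vertices of $H_\ell$ and that the resulting bipartite pieces $H_\ell[V_i,V_j]$ have bounded maximum degree. Once $G$ and the $H_\ell$ carry compatible $k$-partite structures, a direct application of the blow-up lemma for approximate decompositions will produce the desired edge-disjoint packing $\phi(H_1),\ldots,\phi(H_s)$ of the $H_\ell$ into $G$.

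For the \emph{moreover} assertion, I would exploit the fact that the blow-up lemma for decompositions can be arranged to produce a packing which is also degree-balanced at the vertex level: each $v \in V(G)$ receives roughly the fair share $|V(H_\ell)|/n$ of the vertices of each $H_\ell$, so that $\sum_\ell d_{\phi(H_\ell)}(v) \approx \sum_\ell 2e(H_\ell)/n \geq (1 - 2\alpha - o(1))pn$. Combined with the upper bound $d_G(v) \leq (1+\epsilon)pn$ from quasi-randomness, this yields $d_J(v) \leq (2\alpha + \epsilon + o(1))pn < 4\alpha pn$ uniformly over $v$, provided $\epsilon$ is chosen small enough compared with $\alpha$.

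The main obstacle is, of course, the technical tool invoked in the last step of the reduction — the blow-up lemma for approximate decompositions itself. Its proof will need to embed the $H_\ell$ edge-disjointly while controlling the degradation of super-regularity of the residual host $G\setminus\bigcup_{i<\ell}\phi(H_i)$ after every embedding, since naive sequential embedding typically destroys the codegree conditions required for the next step. One expects this will require a combination of nibble-style partial embeddings, randomised reservations of buffer vertices and edges for later absorption, and strong concentration inequalities (of Freedman or McDiarmid type) to track vertex and pair densities throughout the iterative process.
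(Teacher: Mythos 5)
Your reduction to the partite setting breaks down at the step where you ``randomly and uniformly map the vertices of $H_\ell$ to the clusters $V_1,\dots,V_k$''. First, a uniform random assignment places a constant proportion of the edges of each $H_\ell$ \emph{inside} the clusters, and such edges cannot be handled by any partite packing statement: the host structure you hand to the blow-up lemma for approximate decompositions consists only of the super-regular pairs $G[V_i,V_j]$ (the edges of $G$ inside clusters are discarded; in the paper's proof one explicitly passes to $G'$ with $G'[V_i]=\emptyset$), and Theorems~\ref{main 2}/\ref{thm:main3} require each $H_\ell[X_i]$ to be empty. Second, since the $H_\ell$ are spanning ($|H_\ell|=n=|G|$), the embedding must map each class of $H_\ell$ \emph{bijectively} onto the corresponding cluster, so the class sizes must equal $|V_1|,\dots,|V_k|$ exactly; a random assignment only gives $(1\pm\epsilon')n/k$, and the $\Theta(\sqrt n)$ discrepancies cannot be absorbed. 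The paper avoids both problems by applying the Hajnal--Szemer\'edi theorem to obtain an exact equitable proper $r$-colouring of each $H_\ell$ (this is where the hypothesis $r\ge 3\Delta+2$ and the arithmetic adjustment of Proposition~\ref{arithm} enter, to cope with $r\nmid n$), and then randomly permuting the colour classes of each $H_\ell$ so that the total number of edges between any two classes is balanced across the family (Claims~\ref{permute} and~\ref{permute2}). Your random-partition treatment of $G$ itself, by contrast, is essentially the paper's argument (Chernoff plus the codegree-to-regularity statement).

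A secondary gap concerns the ``moreover'' part: the bound $\Delta(J)\le 4\alpha pn$ is not something the blow-up lemma for approximate decompositions ``can be arranged'' to give as stated -- Theorem~\ref{main 2} controls only the total number of leftover edges, not leftover degrees. In the paper the degree control comes from the regularization step of Section~\ref{sec:stacking}: batches of the $H_\ell$ are first packed into near-equiregular graphs, so that each embedded batch removes an almost regular bipartite graph from every pair $G[V_j,V_{j'}]$, and the leftover degree at each vertex is then bounded pair by pair (the moreover parts of Lemma~\ref{packing to regular graph} and Theorem~\ref{main 1}), with the final $n/r$ term accounting for the discarded edges inside clusters. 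If you treat the decomposition lemma as a black box, you would still need this vertex-level balancing argument to conclude the maximum-degree bound on $J$.
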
 
This immediately implies the corresponding result (asymptotically almost surely) for the binomial random graph $G_{n,p}$ where $p$ is constant
and for the complete graph $K_n$.
Note that the case of $G=K_n$ in Theorem~\ref{main 1b} extends the previous results in~\cite{BHPT,FLM,MRS}
mentioned above.

The case $G=K_n$ also yields an approximate version of the longstanding `Oberwolfach problem' (proposed by Ringel in 1967).
Given an arbitrary union $F$ of vertex-disjoint cycles altogether spanning $n$ vertices where $n$ is odd, the Oberwolfach problem asks 
for a decomposition of $K_n$ into copies of $F$.
Bryant and Scharaschkin~\cite{BS} have recently provided an affirmative answer for infinitely many $n$.

The famous Bollob\'as-Eldridge-Catlin conjecture states
that if $ G_1 $ and $ G_2 $ are graphs on $n$ vertices
and $ (\Delta(G_1) + 1) (\Delta(G_2) + 1) < n + 1 $, then $ G_1 $ and $ G_2 $ can be packed into $K_n$.
Bollob\'as, Kostochka and Nakprasit~\cite{BKN}  investigated versions of this conjecture for packing many graphs of bounded degeneracy.
The case $G=K_n$ of Theorem~\ref{main 1b} can be viewed as an asymptotically optimal version of the  
conjecture for packing many graphs of bounded degree.

It would be very interesting to know whether an analogue of Theorem~\ref{main 1b} holds for sparse (quasi-)random graphs, 
i.e.~when the density tends to zero. Very recently, Ferber and Samotij~\cite{FS} were able to obtain very strong results
for the case of spanning trees. In particular, rather than requiring $\Delta$ to be bounded, $\Delta$
is allowed to be a polynomial in $pn$.

\subsection{The blow-up lemma}
Combined with Szemer\'edi's regularity lemma~\cite{S regularity}, the blow-up lemma of Koml\'os, S\'ark\"ozy and Szemer\'edi~\cite{KSSblowup} has had a 
major impact on extremal graph theory.
Roughly speaking, Szemer\'edi's regularity lemma guarantees a partition of any dense graph into a bounded number of
random-like bipartite subgraphs, while the blow-up lemma 
allows to embed bounded degree graphs $H$ into such random-like host graphs $G$.
(Here $H$ is allowed to have the same number of vertices as~$G$, i.e.~the blow-up lemma can be used to find spanning subgraphs.)
These two tools can be combined to find spanning structures in dense graphs, and within the last 20 years, they
have lead to a series of very strong results. Early striking results concern 
spanning trees~\cite{KSSztrees} and powers of Hamilton cycles~\cite{KSSz98}, more recent results include
$H$-factors~\cite{KSSz01,KOmatch} and embeddings of bounded degree graphs of small bandwidth~\cite{bandwidth}.
For further results, see e.g.~the survey~\cite{BCCsurvey}.

Here we develop a `blow-up lemma for approximate decompositions'. This version of the blow-up lemma will not only guarantee a single copy of $H$ inside the host
graph~$G$, but will guarantee a collection of pairwise edge-disjoint copies of $H$ in $G$ such that together all these
copies of $H$ cover almost all edges of~$G$.
In fact, our result is even more general -- we show that, essentially, 
any not too large collection of graphs $H_1,\dots,H_s$ of uniformly bounded maximum degree pack into $G$.

As with the classical blow-up lemma, for this it is natural to consider a partite setting and to assume that
both the graphs $H_\ell$ and the host graph $G$ `share' a common equipartition. 
We need the following standard definitions.
A bipartite graph $G$ with vertex classes $A$ and $B$ is called \emph{$(\epsilon,d)$-regular} if 
for all $A'\subseteq A, B'\subseteq B$ with $|A'| > \epsilon|A|$ and $|B'|> \epsilon|B|$ we have
$$
\frac{e(G[A',B'])}{|A'||B'|} = d\pm \epsilon. 
$$
We say that $G$ is \emph{$(\epsilon,d)$-super-regular} if it is $(\epsilon,d)$-regular and $d_G(v) = (d\pm \epsilon)|A|$ for all $v\in B$ and $d_G(v)= (d\pm \epsilon)|B|$ for all $v\in A$.
It is well known and easy to see that super-regularity is a weaker requirement than (a bipartite version of) quasi-randomness. We can now state our second result. 
\begin{thm}\label{main 2}
For all $0<d_0, \alpha \leq 1$ and $\Delta,r\in \mathbb{N}$ there exist $\epsilon>0$ and $n_0\in \mathbb{N}$ such that the following holds
for all $n\geq n_0$ and $d\ge d_0$.
Suppose $H_1, \dots, H_s$ are  $r$-partite graphs such that each $H_\ell$ has 
vertex classes $X_1,\dots,X_r$ of size $n$ 
and $\Delta(H_\ell)\leq \Delta$. Suppose that $G$ is an $r$-partite graph with vertex classes $V_1,\dots, V_r$ of size $n$,
where $G[V_i,V_j]$ is  $(\epsilon,d)$-super-regular  for all $1 \le i \neq j \le r$.
If $\sum_{\ell=1}^{s} e(H_\ell) \leq (1-\alpha)e(G)$, then $H_1,\dots,H_s$ pack into $G$.
\end{thm}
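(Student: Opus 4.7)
The plan is a sequential probabilistic packing argument: process $H_1,\dots,H_s$ one after another, and at each step choose a random partite embedding of $H_\ell$ into the still-unused portion of $G$. To keep control of degrees once the packing has consumed most of $G$, first reserve a super-regular spanning subgraph $G_{\mathrm{res}}\subseteq G$ of density about $\alpha d/4$ for an endgame phase, performing the bulk of the packing inside $G_{\mathrm{main}}:=G-E(G_{\mathrm{res}})$, which is still super-regular of density at least $(1-\alpha/4)d$. Writing $G_\ell$ for $G_{\mathrm{main}}$ with all previously embedded edges removed, the task reduces to producing partite embeddings $\phi_\ell\colon H_\ell\hookrightarrow G_\ell$, with $\phi_\ell(X_i)=V_i$, in each stage.

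The crux is to strengthen the classical blow-up lemma of Koml\'os, S\'ark\"ozy and Szemer\'edi to produce random embeddings whose \emph{edge-marginals} are close to uniform: each edge $uv\in E(G_\ell[V_i,V_j])$ should appear in $\phi_\ell(H_\ell)$ with probability close to $e(H_\ell[X_i,X_j])/e(G_\ell[V_i,V_j])$, with near-independence across pairs of edges. Combined with a bounded-differences concentration inequality applied to the sequence $\phi_1,\dots,\phi_s$, this ensures that each edge of $G_{\mathrm{main}}$ is used close to its expected number of times, so that $G_{\ell+1}$ remains $(\epsilon_\ell,d_\ell)$-super-regular with only a slow drift of parameters and the iteration can proceed. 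Any local degree defects created along the way are repaired between stages by swapping edges with $G_{\mathrm{res}}$; once all but an $o(1)$ fraction of the edges of $G$ have been used, a final application of the blow-up lemma inside $G_{\mathrm{res}}$ together with the residue of $G_{\mathrm{main}}$ packs the remaining $H_\ell$'s and completes the proof.

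The principal obstacle is producing the nearly-marginal-uniform random embedding at each stage. The standard greedy embedding used in the Koml\'os-S\'ark\"ozy-Szemer\'edi argument yields no control over edge-marginals, while the uniform distribution on partite embeddings of $H_\ell$ into $G_\ell$ is intractable. A natural attempt is to draw uniformly random bijections $\phi_\ell|_{X_i}\colon X_i\to V_i$ independently for each $i$ (which gives near-uniform edge-marginals into the blown-up complete $r$-partite graph $K_r(n)$ on $V_1,\dots,V_r$), and then to correct the small fraction of edges of $\phi_\ell(H_\ell)$ that land outside $G_\ell$ by local vertex-swaps exploiting the super-regularity of $G_\ell$. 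Controlling the distortion that these correction steps introduce in the edge-marginals, and hence in the concentration of edge-usage accumulating over the iteration, will be the main technical burden, and will presumably require an auxiliary random-greedy correction algorithm analysed via martingale techniques.
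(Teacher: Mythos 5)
Your plan hinges on a random embedding of $H_\ell$ into $G_\ell$ whose edge-marginals are nearly uniform, and the route you sketch for it does not work. If you embed by choosing independent uniform bijections $X_i\to V_i$, then for $d$ bounded away from $1$ a \emph{constant proportion} of the edges of $\phi_\ell(H_\ell)$ (roughly a $(1-d)$-fraction per pair of classes, i.e.\ linearly many edges touching a constant fraction of all vertices) land outside $G_\ell$. Repairing this is therefore not a local perturbation: you would have to re-embed a positive fraction of $V(H_\ell)$ consistently with its bounded-degree structure, which is essentially the original embedding problem again, and no martingale analysis of a ``random-greedy correction'' is likely to control how these cascading swaps distort the marginals. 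This is exactly the difficulty the paper's core result (the uniform blow-up lemma, Lemma~\ref{modified blow-up}) is designed to overcome, and it does so by a completely different mechanism: following R\"odl--Ruci\'nski, $H$ is split (via Hajnal--Szemer\'edi applied to $H^2$) into matchings between refined classes, which are embedded round by round as uniformly random perfect matchings of super-regular candidacy graphs, with the Four Graphs Lemma preserving super-regularity of the candidacy graphs; the marginal control (B1) then comes out of a delicate conditioning argument, not out of correcting a uniform bijection. Note also that your target is stronger than what is achievable: per-edge uniform marginals for \emph{every} edge, with near-independence across pairs of edges, are impossible in general (e.g.\ an edge of $G$ lying in no triangle when $H$ is a triangle factor); the paper only proves degree-into-large-set control (B1) and per-edge control for \emph{most} edges (B5), and that is what the codegree bookkeeping is built on.

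There is a second structural problem: your purely sequential scheme re-verifies super-regularity of $G_\ell$ after each of the $s=\Theta(n)$ removals. But the marginal accuracy you can extract from any blow-up-type lemma at regularity parameter $\epsilon_\ell$ is only some $f(\epsilon_\ell)\gg\epsilon_\ell$ (a small fixed power), so each re-verification forces the regularity parameter to degrade by a fixed power; iterating $\epsilon\mapsto\epsilon^{c}$ linearly many times destroys the parameter after a bounded number of steps, so ``only a slow drift of parameters'' is not available. The paper sidesteps this by a nibble: in each of a \emph{constant} number of rounds it embeds $\gamma n$ graphs independently into the same $G^t$ (so concentration is over independent embeddings and only constantly many parameter degradations occur), and the resulting within-round edge overlaps are repaired using a sparse patching graph $P$ reserved at the outset, with compatibility with $P$ enforced during the embedding via candidacy bigraphs (Lemma~\ref{patching}). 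Your reserved $G_{\mathrm{res}}$ plays only an endgame role and the proposed ``edge swaps with $G_{\mathrm{res}}$'' for degree defects is not developed; neither replaces the overlap-patching and constant-round structure that make the iteration close.
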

The case when $s=1$ corresponds to the classical blow-up lemma by Koml\'os, S\'ark\"ozy and Szemer\'edi~\cite{KSSblowup}.
An application of Szemer\'edi's regularity lemma to an arbitrary dense graph $G$ is naturally associated with a `reduced graph' $R$,
whose vertices correspond to the clusters of the regularity partition and whose edges correspond to those pairs of clusters which
in $G$ induce an $\epsilon$-regular graph
of significant density. This reduced graph may not be complete and the number of clusters may be relatively large.
The following result is designed with such a situation in mind.
(A corresponding extension of the classical blow-up lemma to this setting was proved by Csaba~\cite{Csaba} in the context of the
Bollob\'as-Eldridge-Catlin conjecture, see Lemma~\ref{thm:oldblowup}.)

\begin{thm}\label{thm:main3}
For all  $0<\alpha, \eta, d_0 \le 1$ and $\Delta, \Delta_R \in \mathbb{N}$ there exists $\epsilon>0$ so that for all $r\in \mathbb{N}$
there exists $n_0=n_0(\epsilon,r)\in \mathbb{N}$
such that the following holds for all $n \ge n_0$ and $d\ge d_0$.
Let $s\in \mathbb{N}$ be such that $s \leq \eta^{-1} n$. Suppose that $R$ is a graph on $[r]$ with $\Delta(R)\leq \Delta_R$.
Suppose that $H_1,\dots, H_s$ are $r$-partite graphs such that each $H_\ell$ has 
vertex classes $X_1,\dots,X_r$ of size $n$ and satisfies $\Delta(H_\ell)\leq \Delta$. Further, suppose that 
$\sum_{\ell=1}^s e(H_\ell[X_i,X_{j}])\leq (1-\alpha)dn^2$ for all $ij\in E(R)$ and $H_\ell[X_i,X_{j}]$ is empty if $ij \notin E(R)$. 
Suppose finally that $G$ is an $r$-partite graph with vertex classes $V_1,\dots, V_r$ of size $n$,
where $G[V_i,V_j]$ is  $(\epsilon,d)$-super-regular for all $ij \in E(R)$.
Then $H_1,\dots, H_{s}$ pack into $G$.
\end{thm}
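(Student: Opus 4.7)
The plan is to lift Theorem~\ref{main 2} to the setting of a bounded-degree (but non-complete) reduced graph $R$ by localising the packing problem along $E(R)$. The key structural observation is that since $H_\ell[X_i,X_j]=\emptyset$ whenever $ij\notin E(R)$, each vertex class $X_i$ interacts (via every $H_\ell$) with at most $\Delta_R$ other classes, namely those indexed by $N_R(i)$. Hence the problem has bounded ``local complexity'' even though the total number of classes $r$ may be unbounded, which matches the parameter dependence in the statement (where $\epsilon$ depends only on global parameters and $\Delta_R$, while $n_0$ depends on $r$).

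A natural first step is to apply Vizing's theorem to edge-colour $R$ with $k\le\Delta_R+1$ colours, producing matchings $M_1,\dots,M_k\subseteq E(R)$. This induces a decomposition $H_\ell=H_\ell^{(1)}\cup\dots\cup H_\ell^{(k)}$, where $H_\ell^{(t)}$ collects the edges of $H_\ell$ on pairs $(X_i,X_j)$ with $ij\in M_t$. For each fixed colour $t$ the pairs $\{(V_i,V_j):ij\in M_t\}$ are vertex-disjoint, so the restriction of $G$ to them is a disjoint union of super-regular bipartite graphs; Theorem~\ref{main 2} with $r=2$ would then pack $\{H_\ell^{(t)}[X_i,X_j]\}_{\ell}$ into each $G[V_i,V_j]$ with $ij\in M_t$. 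Csaba's extension of the classical blow-up lemma (Lemma~\ref{thm:oldblowup}) would play the analogous single-embedding role inside any iterative variant of this argument.

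The main step, which I also expect to be the chief obstacle, is stitching the per-colour packings into a \emph{single} globally consistent embedding $\phi_\ell:V(H_\ell)\to V(G)$ whose images are edge-disjoint across all of $G$. A naive colour-by-colour application of Theorem~\ref{main 2} produces $k$ mutually incompatible candidate bijections of each $X_i$ into $V_i$, whereas a naive random pre-assignment of bijections typically misses a positive fraction of the edges of each $H_\ell$. I would resolve this by running a nibble/random-greedy process akin to the one underlying Theorem~\ref{main 2} directly in this sparse-$R$ setting: pre-commit to random bijections $\phi_\ell^i:X_i\to V_i$, then iteratively refine them while applying Theorem~\ref{main 2} on one matched pair at a time, using Azuma/Chernoff-type concentration to ensure that the unused part of each $G[V_i,V_j]$ (for $ij\in E(R)$) remains approximately super-regular throughout. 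Because each $V_i$ participates in at most $\Delta_R$ active pairs and each vertex of $H_\ell$ has at most $\Delta$ neighbours, all concentration estimates depend on $\Delta_R$ and $\Delta$ rather than on $r$, which is precisely what produces the claimed independence of $\epsilon$ from $r$.
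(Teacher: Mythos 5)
There is a genuine gap at the step you yourself flag as the chief obstacle: stitching the per-colour packings into a single embedding of each $H_\ell$. After edge-colouring $R$ and applying Theorem~\ref{main 2} (with $r=2$) independently on each pair of a colour class, you obtain, for each $\ell$ and each class $X_i$, up to $\Delta_R$ unrelated bijections $X_i\to V_i$, and there is no mechanism by which these can be ``refined'' into one common bijection while edges still land on edges -- that consistency requirement is essentially the whole difficulty of embedding a spanning $r$-partite graph, and it cannot be recovered after the bipartite embeddings have been chosen. Your proposed fix does not repair this: (a) Theorem~\ref{main 2} is an existential black box that produces its own embeddings and offers no handle for respecting pre-committed bijections $\phi_\ell^i$; (b) random pre-committed bijections place each $H_\ell$-edge on a $G$-edge only with probability about $d$, so a constant fraction of the edges of every $H_\ell$ is lost, which is fatal since a packing needs genuine copies of the $H_\ell$; and (c) keeping the unused part of each $G[V_i,V_j]$ super-regular through linearly many embeddings is not a matter of Azuma/Chernoff applied to black-box applications of Theorem~\ref{main 2} or Lemma~\ref{thm:oldblowup} -- it requires knowing that each embedded copy is distributed almost uniformly over the host (property (B1) of Lemma~\ref{modified blow-up}), which is exactly what the randomised ``uniform blow-up lemma'' and the subsequent patching step (Lemma~\ref{patching}) are built to provide; the paper explicitly notes that iterating a deterministic blow-up lemma linearly many times is infeasible.

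The paper's actual deduction of Theorem~\ref{thm:main3} is different and avoids any decomposition of $R$ into matchings: the $H_\ell$ are grouped into batches of bounded size $M$, each batch is packed by Lemma~\ref{lem:section3newlemma} into a single $(R,\vec{k},C)$-near-equiregular graph on the common partition $(R,X_1,\dots,X_r)$ (this step needs only the classical blow-up lemma, since the auxiliary host is essentially complete multipartite on refined classes), and then Theorem~\ref{main lemma} -- whose proof is the nibble carried out with Lemma~\ref{modified blow-up} and the patching of overlaps within each round -- packs these near-equiregular graphs into $G$; see the proof of Theorem~\ref{thm:main0}, of which Theorem~\ref{thm:main3} is an immediate special case. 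Consistency of the vertex maps across all pairs meeting a given class is handled inside Lemma~\ref{modified blow-up} (via the Hajnal--Szemer\'edi refinement and candidacy graphs), not by gluing independent bipartite packings. Your observation that the local complexity is bounded by $\Delta_R$ (so $\epsilon$ need not depend on $r$) is correct and is reflected in the paper's hierarchy, but by itself it does not substitute for this machinery.
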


In Section~\ref{sec:concl}, we will formulate even more general versions: we for instance allow arbitrary densities for the pairs $G[V_i,V_j]$ and do not require that the vertex classes have exactly equal size. Such a setting allows us to derive an approximate version of the bipartite analogue of the tree packing conjecture, formulated by 
Hobbs, Bourgeois and Kasiraj \cite{HB} in 1986 (see Conjecture~\ref{bipartite tree packing conjecture} and Corollary~\ref{bipartite version}). 

Furthermore, in the main result of Section~\ref{sec:main} (Theorem~\ref{main lemma}), we can also require that the embeddings of the 
$H_\ell$ satisfy additional restrictions: for instance, for the vertices of $H_\ell$ we can specify certain `target sets' in $G$ into which these vertices will be embedded.
In a subsequent paper, Joos, Kim, K\"uhn and Osthus \cite{JKKO} use this to prove several `exact' packing results, including both the tree packing conjecture (Conjecture~\ref{gyarfaslehel}) as well as Ringel's conjecture for all bounded degree trees. 

\subsection{Related results, open questions and further applications}
Kirkman's result on Steiner triple systems was recently generalized by 
Keevash~\cite{Kdesign}, who showed that every
sufficiently large quasi-random graph $G$ of sufficient density has a decomposition into $K_r$ for fixed $r$, provided the
obvious necessary divisibility conditions hold
(here the quasi-randomness assumption is stronger than the one in Theorem~\ref{main 1b}).
Similarly, it is natural to consider decompositions of graphs of large minimum degree into fixed subgraphs  (see e.g.~\cite{BKLO}).

K\"uhn and Osthus~\cite{Kelly,KellyII} extended Walecki's theorem on Hamilton decompositions of complete graphs to the setting of a
`robustly expanding' regular host graph $G$.
(The robust expansion condition is considerably weaker than that of quasi-randomness or $\epsilon$-regularity
and also applies to all graphs of degree $cn$ for $c>1/2$.)
In~\cite{CKLOT}, this result is used as a tool to prove several decomposition and packing conjectures involving Hamilton cycles
and perfect matchings. 
Also, in~\cite{GKO}, it is used to derive optimal decomposition results for dense quasi-random graphs into other structures, including linear forests.
A different generalization of Walecki's theorem is given by the Alspach conjecture, which states that for odd $n$, the complete graph $K_n$ should have a decomposition into cycles $C^1,\dots,C^s$,
provided that $\sum_{\ell=1}^s |C^\ell| = \binom{n}{2}$. This was recently confirmed by Bryant, Horsley and Pettersson~\cite{BHP}.

Of particular interest are decompositions into $H$-factors
(also referred to as `resolvable designs').
Here an \emph{$H$-factor} in a graph $G$ is a set of disjoint copies of $H$ which together cover all vertices of $G$.
A classical result of Ray-Chaudhuri and Wilson~\cite{RCW} states that if $H = K_k$, if $n$ 
is a sufficiently large multiple of $k$ and if $n-1$
is a multiple of $k-1$, then $K_n$ admits a decomposition into $H$-factors. 
Dukes and Ling~\cite{DukesLing} resolved the more general problem of decomposing a complete graph $K_n$ into $H$-factors for arbitrary but 
fixed graphs $H$ (subject to divisibility conditions -- note that, unlike in Wilson's theorem~\cite{Wilson}, 
for some graphs $H$, such as $H=K_{1,2t+1}$, those are never fulfilled). Related results (which avoid this divisibility issue) were obtained by Alon and Yuster~\cite{AY}.
It would be very interesting if one could extend these results on resolvable designs to non-complete host graphs using the results of this paper.
Moreover, the new developments on hypergraph designs in~\cite{GKLO,GKLO:Fs,Kdesign} raise the question whether one can also obtain such resolvable designs in the hypergraph setting.
A related challenge would be to extend some or all of the results of this paper to hypergraphs.

The Hamilton-Waterloo problem (which in general is wide open and generalizes the Oberwolfach problem)
asks for a decomposition of $K_n$ into $C_\ell$-factors and $C_{\ell'}$-factors,
where $\ell$ and $\ell'$ as well as the number of  cycle factors of a given type are given (and $n$ is odd).
Several special cases have received considerable attention, 
such as triangle factors versus Hamilton cycles (see e.g.~\cite{DL}).
Theorem~\ref{main 1b} immediately implies an approximate solution to this problem.
Moreover, we can combine Theorem~\ref{main 1b} with the results in~\cite{Kelly,KellyII} to obtain a general decomposition result 
into factors,
which solves the Hamilton-Waterloo problem if a significant proportion of the factors are Hamilton cycles.

\begin{cor}\label{Hamilton-Waterloo for many cycles}
For all $\Delta\in \mathbb{N}$ and $0<p_0,\beta \leq 1$ there exist $\epsilon >0$ and $n_0 \in \mathbb{N}$ such that
the following holds for all $n \ge n_0$ and $p\ge p_0$.
Suppose $r_\ell \le \Delta$ for all $\ell \in [s]$ and $\sum_{\ell=1}^{s} r_\ell=pn$.
Suppose $H_1,\dots, H_s$ are $n$-vertex graphs so that $H_\ell$ is $r_\ell$-regular for all $\ell \in [s]$ and that
$H_1,\dots,H_{\beta n}$ are Hamilton cycles.
Suppose further that $G$ is an $(\epsilon,p)$-quasi-random graph on $n$ vertices
which is $pn$-regular.
Then $G$ has a decomposition into $H_1,\dots, H_s$.
\end{cor}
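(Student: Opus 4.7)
The strategy is to combine Theorem~\ref{main 1b} with the Hamilton decomposition result for regular robust expanders due to K\"uhn and Osthus~\cite{Kelly,KellyII}. The plan is to approximately pack the non-cycle factors $H_{\beta n+1},\dots,H_s$ into a carefully chosen subgraph of $G$ and then to Hamilton-decompose the (exactly $2\beta n$-regular) remainder into the required $\beta n$ Hamilton cycles.

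For the setup, note that since $r_\ell=2$ for $\ell\le\beta n$ and $\sum_\ell r_\ell=pn$, one has $\beta\le p/2$. Set $q:=\beta/p\in(0,1/2]$ and fix any $\alpha_1$ in the (nonempty) interval $[\beta/(2(p-\beta)),\beta/(p-\beta)]$, say $\alpha_1:=3\beta/(4(p-\beta))$. Split $E(G)$ randomly into $G^0$ and $G^1$ by placing each edge of $G$ into $G^0$ independently with probability~$q$; standard concentration implies that w.h.p.\ $G^0$ and $G^1$ are quasi-random of densities $\beta$ and $p-\beta$ respectively (with mildly worse parameters), and are almost-regular. Since $\sum_{\ell>\beta n}e(H_\ell)=(p-2\beta)n^2/2$ sits in the window $[(1-2\alpha_1),(1-\alpha_1)]\cdot e(G^1)$, applying Theorem~\ref{main 1b} to $G^1$ with parameter $\alpha_1$ produces an edge-disjoint packing of $H_{\beta n+1},\dots,H_s$ in $G^1$; writing $\phi(H_\ell)$ for the packed copies and $J^1:=G^1\setminus\bigcup_{\ell>\beta n}\phi(H_\ell)$, the ``moreover'' clause gives $\Delta(J^1)\le 4\alpha_1(p-\beta)n\le 3\beta n$.

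Now set $J:=G\setminus\bigcup_{\ell>\beta n}\phi(H_\ell)=G^0\cup J^1$. Because each packed $H_\ell$ is $r_\ell$-regular with $\sum_{\ell>\beta n}r_\ell=pn-2\beta n$, degree accounting shows $d_J(v)=pn-(pn-2\beta n)=2\beta n$ for every $v$, so $J$ is \emph{exactly} $2\beta n$-regular. Moreover, $J\supseteq G^0$, and $G^0$ is a quasi-random graph of density $\beta$; this implies $G^0$ is a robust $(\nu,\tau)$-expander for some $\nu,\tau>0$ depending only on $\beta$, and since robust expansion is monotone under edge addition, $J$ itself is a $2\beta n$-regular robust expander. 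Applying the Hamilton decomposition theorem of~\cite{Kelly,KellyII} to $J$ yields a decomposition $J=C_1\cup\cdots\cup C_{\beta n}$ into Hamilton cycles; setting $\phi(H_\ell):=C_\ell$ for $\ell\in[\beta n]$ together with the packing above produces the desired decomposition of $G$ into $H_1,\dots,H_s$.

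The main obstacle is the final step: verifying that the hypotheses of~\cite{Kelly,KellyII} hold for $J$, which has density $2\beta$ and may be substantially sparser than $G$. This works because the reservoir $G^0$ alone is already a regular robust expander at the constant density $\beta$, and robust expansion cannot be destroyed by adding further edges, so $J\supseteq G^0$ inherits the property. Matching the quantitative dependencies — choosing the hierarchy $\epsilon\ll\epsilon'\ll\nu,\tau\ll\alpha_1,q\ll\beta,p_0,1/\Delta$ so that the random splitting, Theorem~\ref{main 1b}, and~\cite{Kelly,KellyII} all apply with compatible parameters — is routine.
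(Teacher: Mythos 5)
Your proposal is correct and follows the same two-step strategy as the paper (pack $H_{\beta n+1},\dots,H_s$ via Theorem~\ref{main 1b} into a large quasi-random piece of $G$, then Hamilton-decompose the exactly $2\beta n$-regular remainder via K\"uhn--Osthus), but the way you legitimise the last step is genuinely different. The paper reserves a quasi-random graph $J$ of density $2\beta-\alpha$, packs into $G'=G-J$ of density $p-2\beta+\alpha$, and then verifies that the final leftover $G^*$ is itself $(\alpha^{1/2},2\beta)$-quasi-random (a small codegree-perturbation argument using that the unused part $G''$ of $G'$ has maximum degree $O(\alpha n)$), so that the quasi-random special case recorded as Theorem~\ref{quasicor} applies. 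You instead reserve only a density-$\beta$ quasi-random graph $G^0$, make no structural claim about the packing leftover $J^1$ at all, and invoke monotonicity of robust expansion under edge addition together with the general robust-expander Hamilton decomposition theorem of \cite{Kelly,KellyII} (of which Theorem~\ref{quasicor} is the special case). This buys you a cleaner final step: you never need the ``moreover'' clause of Theorem~\ref{main 1b}, nor any quasi-randomness of the leftover; you only need $J$ to be exactly regular (degree accounting, as in the paper) and to contain a constant-density quasi-random reservoir. The price is that you lean on the stronger robust-expander form of the K\"uhn--Osthus result rather than the quasi-random corollary the paper chose to state.

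Two small blemishes, neither fatal. First, your $\alpha_1=3\beta/(4(p-\beta))$ depends on $p$, whereas in Theorem~\ref{main 1b} the constant $\alpha$ must be fixed before $\epsilon$ and $n_0$; as written this is not a direct application. But since your argument never actually uses the bound $\Delta(J^1)\le 3\beta n$, you can simply take a fixed $\alpha:=\beta/2$ (say), for which $\sum_{\ell>\beta n}e(H_\ell)=(p-2\beta)n^2/2\le(1-\alpha)e(G^1)$ holds uniformly in $p\ge p_0$, and drop the two-sided window entirely. Second, when quoting the robust-expander theorem, note that its quantification forces $\tau$ to be chosen small in terms of the degree ratio $2\beta$; this is fine because a quasi-random graph of density $\beta$ is a robust $(\nu,\tau)$-expander for every sufficiently small $\tau$ with a suitable $\nu=\nu(\tau,\beta)$, but your phrasing ``for some $\nu,\tau$ depending only on $\beta$'' should be read with that quantifier order in mind.
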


\subsection{Organization of the paper}
In the next section, we give a sketch of the main ideas of the proof.
In Section \ref{sec:tools} we then collect the tools that we need later.
In Section \ref{sec:matching} we establish some properties of a typical matching in a super-regular bipartite graph. 
These will be used in Section \ref{sec:RR} in order to prove a `uniform blow-up lemma' (Lemma~\ref{modified blow-up}).
This lemma forms the core of the paper and embeds a `near-regular' graph $H$ into $G$ in a sufficiently `random-like' fashion.
In Section \ref{sec:main} we use Lemma~\ref{modified blow-up} in order to construct the desired packing
when each $H_\ell$ is near-regular (Theorem~\ref{main lemma}). 
In Section \ref{sec:stacking} we will reduce the problem of finding a packing of bounded degree graphs $H_1,\dots,H_s$
to the case when each $H_\ell$ is near-regular.
In Section \ref{sec:concl} we combine Theorem~\ref{main lemma} with the results from Section~\ref{sec:stacking}. 
In particular, we will deduce Theorems~\ref{main 1b}--\ref{thm:main3} as well as Corollary~\ref{Hamilton-Waterloo for many cycles}.
  
\section{Sketch of the method} \label{sec:sketch}

In this section, we highlight some of the main ideas of the argument towards Theorem~\ref{main 2}.
Suppose we are given a balanced $r$-partite graph $G$ with vertex classes $V_1,\dots, V_r$ of size $n$
such that the bipartite graph $G[V_i,V_j]$ is $(\epsilon,d)$-super-regular for all $1 \le i \neq j \le r$.
For simplicity, suppose $H$ is a balanced $r$-partite graph with vertex classes $X_1,\dots, X_r$ of size $n$, such that the bipartite graph $H[X_i,X_j]$
is $k$-regular for all $1 \le i \neq j \le r$ (where $k$ is a large constant).
Our aim is to find an approximate $H$-decomposition of $G$,
i.e.~a set of $(1-o(1))dn/k$ edge-disjoint copies of $H$ in $G$ so that $X_i$ is mapped to $V_i$ for all $i \in [r]$.

The classical blow-up lemma guarantees one such copy of $H$ in $G$. In fact, one can repeatedly apply the blow-up lemma to obtain 
$\epsilon^2 n$ (say) edge-disjoint copies of $H$ in $G$, but after this, we can no longer guarantee that the remaining subgraph of $G$
is $\epsilon'$-regular for small enough $\epsilon'$.

We overcome this as follows:
we will prove a `uniform blow-up lemma' which returns a random copy $\phi(H)$ of $H$ in $G$ so that
$\phi(H)$ behaves like a uniformly distributed random subgraph of $G$ (see Lemma~\ref{modified blow-up}). 
Then the hope would be that with high probability, the graph $G-\phi(H)$ is still $\epsilon$-regular.
One desirable  property towards this goal would be that for almost all edges $e$ of $G$, the probability that $e$ lies in $\phi(H)$
is close to $p:=e(H)/e(G) \sim k/(dn)$.
(Clearly, one cannot achieve this for \emph{every} edge of $G$, as for example $G$ may have an  edge which does not lie in 
a triangle.)

Towards this, we will use the characterization of $(\epsilon, d)$-super-regular graphs in terms of co-degrees:
for all $j \neq i$ and all vertices $u \in V_i$, we have that $|N_G(u) \cap V_j|=(d\pm \epsilon) n$ .
Also, for almost all pairs $u,v\in V_i$, the common neighbourhood $N_G(u,v)\cap V_j$  has size  $(d^2 \pm 3 \epsilon) n$.
Assume for simplicity that we have equality everywhere, i.e.~for all pairs $u,v \in V_i$, we have 
$|N_G(u,v) \cap V_j|=d^2 n$ for all $j \neq i$.
Assume also that for every edge of $G$ the probability that $e$ lies in $\phi(H)$ is exactly  $p=k/(dn)$, independently of all other edges.
Note that for any copy $\phi(H)$ in $G$, the bipartite pairs of $G-E(\phi(H))$ have density $d_1:=d-k/n$.
Our uniform blow-up lemma would then find a copy $\phi(H)$ of $H$ in $G$ where we expect that
\begin{equation} \label{sketch}
|N_{G-E(\phi(H))}(u,v)| = d^2 n(1-p)^2 =d_1^2 n.
\end{equation}
This is of course exactly what one would expect if $H$ were a completely random subgraph of $G$
(of the same density). It turns out that property (B1) of our uniform blow-up lemma can be used to
prove a surprisingly accurate approximation to the idealized formula in~(\ref{sketch}) (see Claim~\ref{succession}).

Removing $E(\phi(H))$ from $G$ now leaves a graph whose regularity parameters are much better than if we had removed this copy greedily.
So ideally one would keep applying this uniform blow-up lemma, maintaining (after the removal of $i$ copies)
an $(\epsilon_i,d_i)$-super-regular graph $G_i$ with $d_i=d-ik/n$ and $\epsilon_i \sim \epsilon \ll d_i$.
However, due to the fact that we need to allow $i$ to be linear in $n$,
this seems to be extremely challenging, if not infeasible.

So instead, we pursue a `nibble-based' approach: we remove our copies of $H$ in a large (but constant) number $T$ of `rounds'.
For each $t \in [T]$, at the beginning of Round $t$, let $G^t$ be the graph of currently available edges 
(i.e.~those which do not lie in a copy of $H$
selected in a previous round) and suppose that $G^t$ is $(\epsilon_t,d_t)$-super-regular for $\epsilon_t \ll d_t$.
Let $\gamma \sim d/(kT)$ (then $\gamma$ will be a small constant).
Within each round, we remove $\gamma n$ copies $\phi_1(H),\dots,\phi_{\gamma n}(H)$ 
of $H$ from $G^t$, each chosen randomly in $G^t$ (and independently of the other copies) 
according to the uniform blow-up lemma.
So the edge sets of $\phi_i(H)$ will usually have a small but significant overlap for different $i$.
On the other hand, if after Round $t$ we let $G^{t+1}$ be the graph obtained by removing the edges of all the $\phi_i(H)$, then one can show that
$G^{t+1}$ is still $(\epsilon_{t+1},d_{t+1})$-super-regular for $\epsilon_{t+1} \ll d_{t+1}$.
This means we can continue with a new embedding round for $G^{t+1}$.
It turns out that we can in fact carry on with this approach until we have the required number of copies $\phi_i(H)$.
Note that these copies of $H$ will be edge-disjoint if they are constructed in different rounds.

However, we still need to resolve overlaps between the edge sets of
the $\phi_i(H)$ which are constructed within the same round.
In other words, we need to modify  $\phi_i(H)$ into an embedding $\phi'_i(H)$ such that all the $\phi'_i(H)$ are pairwise edge-disjoint.
For this, call any edge of $\phi_i(H)$ which also belongs to some other $\phi_j(H)$ a conflict edge.
Let $W_i \subseteq V(G)$ be the vertices which have distance at most one to an endvertex of a conflict edge
in $\phi_i(H)$. For technical reasons, we also enlarge $W_i$ by adding a small random set of vertices.
Then we still have $|W_i|/n\ll 1$. Remove any edges from $\phi_i(H)$ which are incident to $W_i$.
We now patch up the resulting partial embeddings by using edges from a
sparse `patching graph' $P\subseteq G$ which we set aside at the beginning of the proof.
For  this patching process to work, we need each $\phi_i(H)$ to be `compatible' with $P$.
For instance, this means that if $w \in W_i$, if $v_1,v_2,v_3 \notin W_i$ 
and if $v_\ell w \in E(\phi_i(H))$ for each $\ell \in [3]$, then 
in the graph $P$,  $v_1,v_2,v_3$ need to have many common neighbours in $W_i$
(these are then potential candidates for the new image of $w$ in $\phi'_i(H)$).
This compatibility will already be ensured during the construction of $\phi_i(H)$ 
-- in the proof of the uniform  blow-up lemma, 
we will disregard any embeddings $\phi_i(H)$ which are not compatible with $P$.
Lemma~\ref{patching} formalizes the above description
(we will deduce it from the uniform blow-up lemma).

The core of the current paper is the uniform  blow-up lemma (Lemma~\ref{modified blow-up}) described above.
To prove this lemma, we develop an approach by R\"odl and Ruci\'nski~\cite{RR} (who designed it to give an alternative 
proof of the classical blow-up lemma).
We will find a copy of $H$ as the union of a bounded number of matchings.
For this, we first apply the Hajnal-Szemer\'{e}di theorem to the square $H^2$ of $H$ in order to
obtain a refined partition of $V(H)$ into classes $Y_i$, 
where the bipartite subgraph $H[Y_i,Y_j]$ is a (possibly empty) matching for each pair $Y_i,Y_j$ of classes.
We also find a corresponding partition of each $V_i$ into subclasses $U_j$.
We will embed each class $Y_j$ into $U_j$ in a single round.
For this, at each round, we have a `candidacy graph' $A_i^j$ with vertex classes $Y_j$ and $U_j$, where a vertex $x \in Y_j$
is adjacent to $v \in U_j$ if after Round $i$, $v$ is still a good candidate for $\phi(x)$.
Initially, we may take $A_0^j$ to be the complete bipartite graph, and as the embedding progresses, the candidacy graphs grow sparser,
as additional constraints come from the partial embedding.
For instance, if we embed a neighbour $y$ of $x$ in Round $i$, then the number of
candidates for $\phi(x)$ is expected to shrink by a factor of~$d$. In particular, we always  have $A_{i+1}^j \subseteq A_i^j$.
As indicated in the previous paragraph, we also make sure that only embeddings which are compatible with the patching
graph $P$ are permitted by the candidacy graph $A_{i+1}^j$.  
Crucially, we will be able to show that each $A_i^j$ is super-regular -- in particular, this means that  $A_i^j$ contains a perfect matching.
When embedding $Y_i$ in Round $i$, we will choose such a perfect matching $\sigma$ in  $A_{i-1}^i$ 
uniformly at random and embed $x \in Y_i$ to $v=\sigma(x)$.
The key difficulty in proving Lemma~\ref{modified blow-up} is in proving property (B1), 
which allows us to derive an approximation to~(\ref{sketch}).  
 
The above nibble process together with the patching is carried out in Section~\ref{sec:main}, 
to prove our main decomposition theorem (Theorem~\ref{main lemma}).
Theorem~\ref{main lemma} however assumes that $H$ has the property that each graph $H[X_i,X_j]$ is (extremely close to being) regular.
Of course, in general we do not want to assume that $H$ has this approximate regularity property.
So in Section~\ref{sec:stacking} we show that even if $H$ does not satisfy this approximate regularity property, then we can
pack together a large but bounded number of copies of $H$ (in a suitably 
random fashion) into a new graph $H'$ to which we can apply Theorem~\ref{main lemma}. 
In Section~\ref{sec:concl} we use this to deduce (from Theorem~\ref{main lemma}) several further results about packing arbitrary graphs 
$H$ of bounded degree.

\section{Notation and tools}\label{sec:tools}

\subsection{Notation}\label{subsec:notation}
For $s\in \mathbb{N}$ let $[s]:=\{1,\dots,s\}$. For a graph $G$ and an edge set $E$, let $G-E$ denote the graph $G'$ with $V(G')=V(G), E(G')= E(G)\setminus E$. Given another graph $H$ we let $G-H:=G-E(H)$.
As usual, $|G|$, $e(G)$ and $\Delta(G)$ will denote the number of vertices, edges and the maximum degree in $G$, respectively. Given a set $W\subseteq V = V(G)$, we let $N_G(W)=\bigcap_{v\in W}N_G(v)$, where $N_G(v)$ is the neighbourhood of $v$ in $G$. We write $G[W]$ for the induced subgraph of $G$ on $W$; 
when $G$ is $r$-partite with partition classes $V_1,\dots,V_r$ and $W_i\subseteq V_i$ for all $i\in [r]$ we also write $G[W_1,\dots,W_r]:=G[W_1\cup \dots \cup W_r]$.
We say an $rn$-vertex graph $G$ is \emph{$r$-equipartite} if $G$ admits an $r$-partition into independent sets $V_1,\dots, V_r$ with $|V_i|=n$.

Let $R$ be a graph on $[r]$. We say that a graph $G$ admits a vertex partition $(R,V_1,\dots, V_r)$ if $V_1,\dots, V_r$ form a partition of $V(G)$ into independent sets and $G[V_i,V_j]$ is empty if $ij \notin E(R)$. If $\mathcal{V}:=\{V_1,\dots,V_r\}$ then we also say that $G$ admits the vertex partition $(R,\mathcal{V})$. Given a symmetric $r\times r$ matrix $\vec{k}$ with entries $k_{i,j}\in \mathbb{N}$ and $C\in \mathbb{N}$, we say that $G$ is \emph{$(R,\vec{k},C)$-near-equiregular} with respect to $(V_1,\dots,V_r)$ if 
\begin{itemize}
\item $G$ admits the vertex partition $(R,V_1,\dots,V_r)$,
\item $\left||V_i|-|V_j|\right|\leq C$ for all $i\neq j \in [r]$,
\item for each $ij\in E(R)$ all vertices in $G[V_i,V_j]$ have degree $k_{i,j}$ except for at most $Ck_{i,j}$ vertices having degree $k_{i,j}+1$. 
\end{itemize}
If $R$ is the complete graph on $[r]$, $k_{i,j}=k$ for all $i\neq j\in [r]$ and $G$ is $(R,\vec{k},C)$-near-equiregular then we also say that $G$ is \emph{$(r,k,C)$-near-equiregular}.

We write $R_K$ for the $K$-fold blow-up of $R$. So if $R$ is a graph on $[r]$, then $R_K$ is the graph on $[Kr]$ obtained by replacing each vertex $i$ of $R$ by the set of vertices $\{(i-1)K+1,\dots, iK\}$ and replacing each edge of $R$ by a copy of $K_{K,K}$. We record the following observation for future reference, where $(R_K)^2$ denotes the square of $R_K$.
\begin{observation}\label{J scatter}
Suppose $r,K\in \mathbb{N}$ and $R$ is a graph on $[r]$ with $\delta(R)\geq 1$. Let $J_i$ be the set of vertices of $R_K$ which corresponds to the vertex $i$ of $R$. Let $I$ be an independent set of $(R_K)^2$. Then $|J_i\cap I| \leq 1$.
\end{observation}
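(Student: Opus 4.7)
The plan is to exploit the fact that $\delta(R) \geq 1$ forces every vertex of $R$ to have a neighbour, so that any two vertices inside a single blow-up class $J_i$ are at distance exactly $2$ in $R_K$, and hence adjacent in $(R_K)^2$.

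More concretely, fix $i \in [r]$ and pick any two distinct vertices $u, v \in J_i$. Since $\delta(R) \geq 1$, the vertex $i$ has some neighbour $j \in [r]$ in $R$. By the definition of the blow-up $R_K$, the edge $ij \in E(R)$ is replaced by a complete bipartite graph $K_{K,K}$ between $J_i$ and $J_j$. In particular, every vertex in $J_j$ is adjacent in $R_K$ to both $u$ and $v$. Since $K \geq 1$, we may pick any $w \in J_j$; then $uw, vw \in E(R_K)$, so $u$ and $v$ are joined by a path of length $2$ in $R_K$. Consequently $uv \in E((R_K)^2)$.

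Therefore no independent set $I$ of $(R_K)^2$ can contain both $u$ and $v$, which gives $|J_i \cap I| \leq 1$, as claimed. No step is really an obstacle here; the only thing to check is that the hypothesis $\delta(R) \geq 1$ is actually used, which it is, in selecting the neighbour $j$ of $i$.
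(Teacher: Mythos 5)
Your proof is correct, and it is exactly the (elementary) argument the paper has in mind — the paper simply records this observation without proof, since any two vertices of $J_i$ have a common $R_K$-neighbour in $J_j$ for a neighbour $j$ of $i$ and are thus adjacent in $(R_K)^2$. Nothing further is needed.
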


For $a,b,c\in \mathbb{R}$ we write $a = b\pm c$ if $b-c \leq a \leq b+c$. Equations containing $\pm$ are always to be interpreted from left to right, e.g. $b_1\pm c_1=b_2\pm c_2$ means that $b_1-c_1\ge b_2-c_2$ and $b_1+c_1\le b_2+c_2$. In order to simplify the presentation, we omit floors and ceilings and treat large numbers as integers whenever this does not affect the argument. The constants in the hierarchies used to state our results have to be chosen from right to left. More precisely, if we claim that a result holds whenever $0 < 1/n \ll a \ll b \ll c \leq 1$ (where $n$ is typically the order of a graph), then this means that there are non-decreasing
functions $f^* : (0, 1] \rightarrow (0, 1]$, $g^* : (0, 1] \rightarrow (0, 1]$ and $h^* : (0, 1] \rightarrow (0, 1]$ such that the result holds for all $0 < a, b, c \leq1 $ and all $n \in \mathbb{N}$ with $b \leq f^*(c)$, $a \leq g^*(b)$ and $1/n \leq h^*(a)$. We will not calculate these functions explicitly. Hierarchies with more constants are
defined in a similar way.

The following functions $h',h,g',g,q_*,f,q$ will be used in the course of the proof. The values of $q_*, f, q$ depend on a further constant $w\in \mathbb{N}$, and we shall only use these after the value of $w$ has been fixed. Let  
\begin{align}\label{eq:functions}
&h'(a):= a^{1/10}, & &h(a):= a^{1/20}, & &g'(a) := a^{1/120}, & & g(a):= a^{1/300},\nonumber\\ 
&q_*(a):= a^{(1/300)^{w+1}}, & &f(a):= a^{(1/300)^{w+2}}, & &q(a):= a^{(1/300)^{w+3}}.
\end{align}
Here and elsewhere we denote by $g^b(a)$ the $b$-fold iteration of $g(a)$ (and similarly for the other functions). 
Note that for $a<1$ we have $h'(a)<h(a) < g'(a) < g(a) < q_*(a) < f(a) < q(a)$. 

\subsection{Probabilistic estimates}

We shall need the concentration inequalities of Azuma and Chernoff-Hoeffding. $X_0,\dots, X_N$ is a {\em martingale} if for all $n\in [N]$, $\mathbb{E}[X_{n}\mid X_0,\dots,X_{n-1}] = X_{n-1}$. We say it is {\em $c$-Lipschitz} if $|X_{n}-X_{n-1}| \leq c$ holds for all $n\in[N]$.

Our applications of Azuma's inequality will mostly involve \emph{exposure martingales} (also known as Doob martingales). These are martingales of the form $X_i:=\E[X\mid Y_1,\dots, Y_i]$, where $X$ and $Y_1,\dots,Y_i$ are some previously defined random variables. 

\begin{thm}[Azuma's inequality]\label{Azuma} 
Suppose that $\lambda, c >0$ and that $X_0,\dots, X_N$ is a $c$-Lipschitz martingale. 
Then 
\begin{align}
\Prob[\left|X_N-X_0\right|\geq \lambda]\leq 2e^{\frac{-\lambda^2}{2Nc^2}}.
\end{align}
\end{thm}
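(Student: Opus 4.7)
The plan is to use the standard exponential-moment (Chernoff) method applied to the martingale differences. Set $D_n := X_n - X_{n-1}$ for $n \in [N]$, so that $X_N - X_0 = \sum_{n=1}^N D_n$. By the martingale property, $\E[D_n \mid X_0,\dots,X_{n-1}] = 0$, and by the $c$-Lipschitz property, $|D_n| \leq c$ almost surely. For any $t > 0$, Markov's inequality gives
\begin{equation*}
\Prob[X_N - X_0 \geq \lambda] \leq e^{-t\lambda}\, \E\bigl[e^{t(X_N-X_0)}\bigr] = e^{-t\lambda}\, \E\Bigl[\prod_{n=1}^N e^{tD_n}\Bigr].
\end{equation*}

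The key step is to bound the moment generating function of each increment conditionally. I would prove the auxiliary fact (Hoeffding's lemma) that if $D$ is a random variable with $\E[D \mid \F] = 0$ and $|D| \leq c$ almost surely, then $\E[e^{tD} \mid \F] \leq e^{t^2c^2/2}$. The proof is by convexity: writing $D = \tfrac{c-D}{2c}(-c) + \tfrac{c+D}{2c}(c)$, the convexity of $x \mapsto e^{tx}$ yields $e^{tD} \leq \tfrac{c-D}{2c}e^{-tc} + \tfrac{c+D}{2c}e^{tc}$; taking conditional expectations kills the $D$-terms and leaves $\cosh(tc)$, which is bounded by $e^{t^2c^2/2}$ via the Taylor expansion comparison $\cosh(x) \leq e^{x^2/2}$. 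I would apply this to $D_n$ conditioned on $X_0,\dots,X_{n-1}$.

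Now I would iterate using the tower property: conditioning on $X_0,\dots,X_{N-1}$ and pulling out the factors $e^{tD_1},\dots,e^{tD_{N-1}}$, which are measurable with respect to this $\sigma$-algebra, gives
\begin{equation*}
\E\Bigl[\prod_{n=1}^N e^{tD_n}\Bigr] = \E\Bigl[\prod_{n=1}^{N-1} e^{tD_n}\cdot \E[e^{tD_N}\mid X_0,\dots,X_{N-1}]\Bigr] \leq e^{t^2c^2/2}\, \E\Bigl[\prod_{n=1}^{N-1} e^{tD_n}\Bigr],
\end{equation*}
and inducting down to $n=1$ yields $\E[e^{t(X_N-X_0)}] \leq e^{Nt^2c^2/2}$. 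Substituting back gives $\Prob[X_N - X_0 \geq \lambda] \leq \exp(Nt^2c^2/2 - t\lambda)$, and optimizing by choosing $t = \lambda/(Nc^2)$ produces the bound $e^{-\lambda^2/(2Nc^2)}$.

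Finally, to obtain the two-sided estimate with the factor of $2$, I would observe that $-X_0,\dots,-X_N$ is also a $c$-Lipschitz martingale, so the same argument applied to it bounds $\Prob[X_N - X_0 \leq -\lambda]$ by the same quantity, and a union bound gives the stated inequality. The main (and only real) obstacle is establishing Hoeffding's lemma cleanly; everything else is a routine application of Markov and the tower property. Since this is a completely standard inequality, no cleverness beyond the classical argument is required, and the proof is essentially textbook.
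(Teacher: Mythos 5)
Your proof is correct: it is the classical exponential-moment argument (Hoeffding's lemma for the bounded, conditionally centred increments, the tower property to iterate, optimisation of $t$, and negation plus a union bound for the two-sided estimate), and all steps are sound given the paper's definition of a $c$-Lipschitz martingale with respect to the natural filtration. The paper itself states Azuma's inequality as a standard tool without proof, so there is no in-paper argument to compare against; your write-up supplies exactly the textbook proof the paper is implicitly invoking.
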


For $n\in \mathbb{N}$ and $0\leq p\leq 1$ we write $Bin(n,p)$ to denote the binomial distribution with parameters $n$ and $p$. For $m,n,N\in \mathbb{N}$ with $m,n<N$ the \emph{hypergeometric distribution} with parameters $N$, $n$ and $m$ is the distribution of the random variable $X$ defined as follows. Let $S$ be a random subset of $\{1,2, \dots, N\}$ of size $n$ and let $X:=|S\cap \{1,2,\dots, m\}|$. We will use the following bound, which is a simple form of Chernoff-Hoeffding's inequality.

\begin{lemma}[see {\cite[Remark 2.5 and Theorem 2.10]{JLR}}] \label{Chernoff Bounds}
Let $X\sim Bin(n,p)$ or let $X$ have a hypergeometric distribution with parameters $N,n,m$.
Then
$\mathbb{P}[|X - \mathbb{E}(X)| \geq t] \leq 2e^{-2t^2/n}$.
\end{lemma}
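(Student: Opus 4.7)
The plan is to apply the classical Chernoff method based on exponential moments, treating the binomial case first and then reducing the hypergeometric case to it.

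For $X\sim Bin(n,p)$, I would write $X=\sum_{i=1}^n X_i$ as a sum of independent Bernoulli$(p)$ variables. For any $\lambda>0$, Markov's inequality applied to $e^{\lambda(X-\mathbb{E}X)}$ gives
\[
\mathbb{P}[X-\mathbb{E}X\geq t]\leq e^{-\lambda t}\prod_{i=1}^n \mathbb{E}\bigl[e^{\lambda(X_i-p)}\bigr].
\]
Each factor is bounded via Hoeffding's lemma: if $Y$ is mean-zero with $a\leq Y\leq b$, then $\mathbb{E}[e^{\lambda Y}]\leq e^{\lambda^2(b-a)^2/8}$. Since here $b-a=1$, the product is at most $e^{n\lambda^2/8}$, and optimizing at $\lambda=4t/n$ yields $\mathbb{P}[X-\mathbb{E}X\geq t]\leq e^{-2t^2/n}$. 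Treating the lower tail symmetrically (apply the same argument to $-X$) and taking a union bound produces the stated factor of $2$.

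For the hypergeometric case, the natural reduction is Hoeffding's classical comparison inequality: if $X$ is a sum of $n$ indicators sampled without replacement from a population with $m$ successes out of $N$ and $X'\sim Bin(n,m/N)$, then
\[
\mathbb{E}[\varphi(X)]\leq \mathbb{E}[\varphi(X')]
\]
for every convex $\varphi$. Specialising to $\varphi(x)=e^{\lambda x}$, the moment generating function of $X$ is dominated by that of the comparison binomial, so the entire exponential-moment argument above carries over verbatim with $p=m/N$, delivering the same bound.

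The principal subtlety is securing the sharp constant $2$ in the exponent, which relies on Hoeffding's lemma with the optimal factor $1/8$ rather than a cruder estimate. Its proof proceeds by analysing $\psi(\lambda):=\log\mathbb{E}[e^{\lambda Y}]$: one checks that $\psi(0)=\psi'(0)=0$ and $\psi''(\lambda)\leq (b-a)^2/4$ (the variance bound for a random variable supported in $[a,b]$), and then integrates twice. Note in contrast that Azuma's inequality (Theorem~\ref{Azuma}) applied to the natural exposure martingale of $X$ would only deliver the weaker bound $2e^{-t^2/(2n)}$, which is precisely why a direct Chernoff-style argument, together with the reduction from hypergeometric to binomial, is required.
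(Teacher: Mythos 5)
Your proof is correct, and since the paper simply quotes this bound from \cite{JLR} without proof, there is nothing to compare beyond noting that your argument (Hoeffding's lemma for the binomial tails, plus the convex-ordering/MGF-domination reduction of sampling without replacement to the binomial) is exactly the standard derivation underlying the cited Remark~2.5 and Theorem~2.10 there. Your closing remark that Azuma applied to the exposure martingale would only give $2e^{-t^2/(2n)}$ correctly identifies why the direct Chernoff--Hoeffding route is needed for the stated constant.
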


We shall need the following two inequalities for bounding tails of random variables in terms of the binomial distribution.

\begin{prop}[Jain, see {\cite[Lemma 8]{super-chernoff}}] \label{generalised-chernoff}
Let $B \sim Bin(n,p)$, and let $X_1, \ldots, X_n$ be Bernoulli random variables such that, for any $s \in [n]$ and any $x_1, \ldots, x_{s-1}\in \{0,1\}$ we have
\begin{align*}
\Prob\left[X_s = 1 \mid X_1 = x_1, \ldots, X_{s-1} = x_{s-1}\right] \leq p.
\end{align*}
Then $\Prob[\sum_{i=1}^{n} X_i \geq a] \leq \Prob[B \geq a]$ for any~$a\ge 0$.

\noindent
Likewise, if for any $s \in [n]$ and any $x_1, \ldots, x_{s-1}\in \{0,1\}$ we have
\begin{align*}
\Prob\left[X_s = 1 \mid X_1 = x_1, \ldots, X_{s-1} = x_{s-1}\right] \geq p,
\end{align*}
then $\Prob[\sum_{i=1}^{n} X_i \leq a] \leq \Prob[B \leq a]$ for any~$a\ge 0$.
\end{prop}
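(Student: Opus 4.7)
The plan is to prove the upper-tail inequality by induction on $n$; the lower-tail inequality is symmetric. The statement I would induct on is: for every $n\geq 1$ and every sequence $(X_1,\ldots,X_n)$ of $\{0,1\}$-valued random variables satisfying the hypothesis of the proposition, one has $\Prob[\sum_{i=1}^n X_i \geq a]\leq \Prob[B\geq a]$ for every $a\geq 0$, where $B\sim Bin(n,p)$. The base case $n=1$ is immediate: only $a=1$ is nontrivial, and there $\Prob[X_1\geq 1]=\Prob[X_1=1]\leq p=\Prob[B\geq 1]$.

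For the inductive step, I would first observe that the conditional law of $(X_2,\ldots,X_n)$ given $X_1=x_1$ still satisfies the hypothesis with the same $p$: for every $s\geq 2$ and every $x_2,\ldots,x_{s-1}\in\{0,1\}$ the conditional probability of $X_s=1$ remains bounded by $p$. Writing $p_1:=\Prob[X_1=1]\leq p$ and $B'\sim Bin(n-1,p)$, splitting on the value of $X_1$ and applying the induction hypothesis to each of the two conditional distributions of $(X_2,\ldots,X_n)$ yields
\begin{equation*}
\Prob\!\left[\sum_{i=1}^n X_i \geq a\right] \leq (1-p_1)\,\Prob[B'\geq a] + p_1\,\Prob[B'\geq a-1].
\end{equation*}
I would then decompose $\Prob[B\geq a]=(1-p)\Prob[B'\geq a]+p\,\Prob[B'\geq a-1]$ by conditioning on the last Bernoulli coordinate, reducing the inductive step to the one-line identity
\begin{equation*}
\Prob[B\geq a] - \Bigl((1-p_1)\Prob[B'\geq a]+p_1\Prob[B'\geq a-1]\Bigr) = (p-p_1)\bigl(\Prob[B'\geq a-1]-\Prob[B'\geq a]\bigr) \geq 0,
\end{equation*}
which holds because both factors are nonnegative.

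The lower-tail inequality is handled by exactly the same induction with $\{\geq a\}$ replaced by $\{\leq a\}$; the residual algebraic quantity becomes $(p-p_1)\bigl(\Prob[B'\leq a-1]-\Prob[B'\leq a]\bigr)$, and now both factors are nonpositive, so the product is nonnegative again. The only point that really needs care — the main ``obstacle'', such as it is — is the verification that conditioning on $X_1$ preserves the hypothesis for $(X_2,\ldots,X_n)$ with the same parameter $p$; once that is noted, the induction is essentially a one-line monotonicity argument. An equivalent but more cumbersome approach would be to construct an explicit coupling of $(X_1,\ldots,X_n)$ with an i.i.d.~Bernoulli$(p)$ sequence $(B_1,\ldots,B_n)$ such that $X_i\leq B_i$ almost surely (built by revealing both sequences one coordinate at a time); the subtlety there is verifying that the constructed $B_i$ really are jointly independent, which on expansion costs essentially the same computation as the induction above.
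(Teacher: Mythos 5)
Your proof is correct. Note, however, that the paper does not prove this proposition at all: it is quoted from the literature (Jain's inequality, cited as Lemma 8 of the Raman reference), so there is no in-paper argument to compare against. Your induction on $n$ is a clean, self-contained derivation: the key observation that conditioning on $X_1$ preserves the hypothesis for $(X_2,\ldots,X_n)$ with the same $p$ is exactly right, the identity
\begin{equation*}
\Prob[B\geq a]-\bigl((1-p_1)\Prob[B'\geq a]+p_1\Prob[B'\geq a-1]\bigr)=(p-p_1)\bigl(\Prob[B'\geq a-1]-\Prob[B'\geq a]\bigr)
\end{equation*}
checks out, and the sign analysis for the lower tail is also correct. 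The only pedantic point worth recording is that conditional probabilities are only defined on histories of positive probability; histories of probability zero contribute zero weight in your decomposition, so this does not affect the argument. Your alternative sketch via a monotone coupling with an i.i.d.\ Bernoulli$(p)$ sequence is the proof most often given in the literature and is indeed equivalent in content to the induction you carried out.
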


\subsection{Graph Theory tools}
In the preparation stages of our proof we shall apply the Hajnal-Szemer\'{e}di theorem. Given a set  $X$ of size $n$, an \emph{equitable $r$-partition} of $X$ is a partition of $X$ into sets of size $\lfloor n/r \rfloor$ and $\lceil n/r \rceil$ (note that the number of sets of each size is uniquely determined). An \emph{equitable $k$-colouring} of a graph $G$ is an equitable partition of $V(G)$ into $k$ independent sets. 
\begin{thm}[Hajnal-Szemer\'{e}di~\cite{HSz}]\label{thm:HS} 
Every graph $G$ with $\Delta(G)\leq k$ admits an equitable $(k+1)$-colouring. 
\end{thm}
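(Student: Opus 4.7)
The plan is to produce the equitable colouring by starting from an arbitrary proper $(k+1)$-colouring of $G$ (which exists by a greedy argument, since $\Delta(G) \leq k$) and then iteratively rebalancing the colour classes. Write $n = (k+1)q + r$ with $0 \leq r \leq k$, so that any equitable $(k+1)$-colouring consists of exactly $r$ classes of size $q+1$ and $k+1-r$ classes of size $q$. If a given proper colouring $(V_1, \dots, V_{k+1})$ does not have this profile, then there exist indices $i, j$ with $|V_i| \geq q+2$ and $|V_j| \leq q$; I would call these classes \emph{heavy} and \emph{light} respectively, and the goal of one rebalancing step is to transfer one unit of size from a heavy class to a light one while preserving properness.

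The tool for this is an \emph{augmenting sequence}: distinct indices $i_0, i_1, \dots, i_t$ together with vertices $v_s \in V_{i_s}$ for $0 \leq s \leq t-1$, with $V_{i_0}$ heavy, $V_{i_t}$ light, and such that for each $0 \leq s < t$ the vertex $v_s$ has no neighbour in $V_{i_{s+1}} \setminus \{v_{s+1}\}$ (taking $\{v_t\} = \emptyset$). Simultaneously reassigning every $v_s$ to class $V_{i_{s+1}}$ preserves properness, decreases $|V_{i_0}|$ by $1$, increases $|V_{i_t}|$ by $1$, and leaves every other class size unchanged. The potential $\Phi := \sum_{p=1}^{k+1} (|V_p| - q)^2$ therefore strictly decreases under each augmentation, and since $\Phi$ is a nonnegative integer-valued function that is minimised precisely at equitable colourings, finitely many augmentations suffice to reach one.

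The hard part will be proving that whenever the current colouring fails to be equitable, an augmenting sequence exists. I would argue by contradiction: fix a heavy class $V_{i_0}$ and let $\mathcal{A}$ be the set of indices $p$ reachable from $i_0$ by some partial augmenting sequence, and suppose that no index in $\mathcal{A}$ labels a light class. For every $p \in \mathcal{A}$, every vertex $u$ of $V_p$ that can appear as the terminal vertex of such a partial sequence must have at least one neighbour in every class $V_q$ with $q \notin \mathcal{A}$, for otherwise that sequence could be extended to include $V_q$. A careful double-count of the edges between $\bigcup_{p \in \mathcal{A}} V_p$ and $\bigcup_{q \notin \mathcal{A}} V_q$, using the size bounds that follow from heaviness, lightness, and the definition of $\mathcal{A}$, combined with the maximum degree bound $\Delta(G) \leq k$, yields the desired contradiction. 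This is essentially the original argument of Hajnal and Szemer\'edi; alternatively one may follow the shorter proof of Kierstead and Kostochka, which formalises the same rebalancing idea through a discharging scheme.
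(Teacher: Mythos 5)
The first thing to note is that the paper contains no proof of this statement: Theorem~\ref{thm:HS} is quoted directly from Hajnal and Szemer\'edi~\cite{HSz} and used as a black box, so you are not reconstructing an argument from the paper but attempting a from-scratch proof of a famously difficult classical theorem. Before the main issue, two smaller slips: your claim that a non-equitable colouring must contain a class of size at least $q+2$ together with one of size at most $q$ is false (for $n=(k+1)q$ the size vector $(q+1,q-1,q,\dots,q)$ is not equitable but has no class of size $q+2$), and your potential $\Phi$ strictly decreases only when the donating class exceeds the receiving class by at least two; both are repairable by always moving from a largest class to a smallest one when their sizes differ by at least two, but they signal that the bookkeeping has not been checked.

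The genuine gap is the assertion that an augmenting sequence always exists whenever the colouring is inequitable, and that this follows from ``a careful double-count''. That existence statement is precisely the hard core of the Hajnal--Szemer\'edi theorem, and the counting you sketch does not deliver it: from non-extendability one only learns that certain ``mover'' vertices have one (or two) neighbours in each class outside the reachable family $\mathcal{A}$, and the resulting edge count between $\bigcup_{p\in\mathcal{A}}V_p$ and its complement is perfectly consistent with $\Delta(G)\leq k$ for every size of $\mathcal{A}$, so no contradiction is forthcoming at this level of granularity. Moreover, your appeal to the known proofs does not close the gap, because neither proof establishes your claim or proceeds this way: Hajnal and Szemer\'edi's original argument is a long intricate induction, and Kierstead--Kostochka, after reducing to a nearly equitable colouring, must handle the case where the oversized class is not accessible by operations your scheme cannot perform --- a $\Phi$-neutral exchange in which a ``solo'' vertex outside the accessible family swaps into an accessible class while its unique neighbour cascades onward (this move does not decrease your potential; it only relocates the deficiency), followed by an inductive recolouring of the entire subgraph spanned by the inaccessible classes with fewer colours, with a delicate counting argument tied to that solo-vertex structure showing such a configuration must exist. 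So the step you label ``the hard part'' is not merely left unproved; the monotone augment-and-double-count route you propose is structurally inadequate to it, and filling it would amount to reproving the theorem by the genuinely harder arguments of~\cite{HSz} or of Kierstead and Kostochka.
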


Let $R$ be a graph on $[r]$ and suppose that $G$ is a graph with vertex partition $(R,V_1,\dots,V_r)$. Let $\vec{d}$ be a symmetric $r \times r$ matrix with entries $d_{i,j}$. We say that $G$ is \emph{$(\epsilon,\vec{d})$-super-regular with respect to $(R,V_1,\dots,V_r)$} if $G[V_i,V_j]$ is $(\epsilon,d_{i,j})$-super-regular whenever $ij\in E(R)$. We say that an $r$-partite graph $G$ with partition classes $V_1,\dots,V_r$ is \emph{$(\epsilon,d)$-super-regular with respect to $V_1,\dots,V_r$} if each $G[V_i,V_j]$ is  $(\epsilon,d)$-super-regular for all distinct $i,j\in [r]$.

The next statements are standard facts about graph regularity.
\begin{prop}\label{typical degree}
Suppose $G[A,B]$ is $(\epsilon,d)$-regular and $B'\subseteq B$ with $|B'|\geq \epsilon |B|$. Then all but at most $2\epsilon |A|$ vertices in $A$ have degree $(d\pm 2\epsilon)|B'|$ in $B'$.
\end{prop}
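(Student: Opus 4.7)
\medskip

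\noindent\textbf{Proof proposal.} This is a standard consequence of the definition of regularity, proved by a two-sided counting argument applied to the sets of vertices whose degree into $B'$ deviates from the expected value. The plan is to assume for contradiction that too many vertices have abnormally high (or low) degree into $B'$, and then to read off the edge density of the pair $(A^\pm,B')$ in two different ways.

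More precisely, I would first define
\begin{equation*}
A^+ := \{a\in A : |N_G(a)\cap B'| > (d+2\epsilon)|B'|\}, \qquad A^- := \{a\in A : |N_G(a)\cap B'| < (d-2\epsilon)|B'|\},
\end{equation*}
and show $|A^+|\le \epsilon|A|$ and $|A^-|\le \epsilon|A|$ separately. Suppose, towards a contradiction, that $|A^+| > \epsilon|A|$. Since $|B'|\ge \epsilon|B|$ as well, the pair $(A^+,B')$ satisfies both size thresholds in the definition of $(\epsilon,d)$-regularity of $G[A,B]$, so
\begin{equation*}
e_G(A^+,B') \le (d+\epsilon)|A^+||B'|.
\end{equation*}
On the other hand, counting the edges from $A^+$ into $B'$ vertex-by-vertex via the defining inequality of $A^+$ yields
\begin{equation*}
e_G(A^+,B') > (d+2\epsilon)|A^+||B'|,
\end{equation*}
a contradiction. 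The bound $|A^-|\le \epsilon|A|$ follows from the symmetric argument using the lower density bound $e_G(A^-,B') \ge (d-\epsilon)|A^-||B'|$. Taking a union bound then gives that at most $2\epsilon|A|$ vertices in $A$ have degree outside $(d\pm 2\epsilon)|B'|$ in $B'$, as required.

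The argument is essentially self-contained and mechanical; there is no real obstacle beyond correctly setting up the contradiction. The only mild subtlety is ensuring that when we apply regularity to the pair $(A^\pm,B')$ we have verified both size conditions $|A^\pm|>\epsilon|A|$ and $|B'|\ge \epsilon|B|$ — the former is exactly the contradictory hypothesis, the latter is part of the proposition's assumption.
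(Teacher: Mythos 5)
Your proof is correct and is exactly the standard counting argument that the paper implicitly relies on: Proposition~\ref{typical degree} is stated there as a standard fact without proof, and the intended justification is precisely your $A^+$/$A^-$ double count against the regularity density bound. The only (negligible) quibble is the boundary case $|B'|=\epsilon|B|$, since the paper's definition of regularity uses the strict inequality $|B'|>\epsilon|B|$, but this mismatch is already present in the statement itself and does not affect the argument in any application.
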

\begin{prop}\label{restriction}
Suppose $G[A,B]$ is $(\epsilon,d)$-regular, and $A'\subseteq A, B'\subseteq B$ with $|A'|/|A|,|B'|/|B| \geq \delta$. Then $G[A',B']$ is $({\epsilon}/{\delta},d)$-regular.
\end{prop}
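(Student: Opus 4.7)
The plan is to unwind the definition of $(\epsilon/\delta, d)$-regularity and then reduce immediately to the hypothesis. To show $G[A',B']$ is $(\epsilon/\delta, d)$-regular, I would take arbitrary subsets $A''\subseteq A'$ and $B''\subseteq B'$ with $|A''| > (\epsilon/\delta)|A'|$ and $|B''| > (\epsilon/\delta)|B'|$, and check that
\[
\frac{e(G[A'',B''])}{|A''|\,|B''|} = d \pm \epsilon/\delta.
\]

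The key observation is that the lower-bound assumptions $|A'| \geq \delta|A|$ and $|B'|\geq \delta|B|$ combine with the size assumptions on $A'',B''$ to give
\[
|A''| > (\epsilon/\delta)|A'| \geq (\epsilon/\delta)\cdot \delta|A| = \epsilon|A|,
\]
and analogously $|B''| > \epsilon|B|$. Thus $A''$ and $B''$ are themselves legitimate test sets for the $(\epsilon, d)$-regularity of the ambient pair $G[A,B]$, which immediately yields density $d\pm \epsilon$. Since $\delta\leq 1$ we have $\epsilon \leq \epsilon/\delta$, so $d\pm \epsilon \subseteq d\pm \epsilon/\delta$, completing the verification.

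There is really no obstacle here; the statement is an entirely routine monotonicity fact about the regularity definition, and the only thing one has to be careful about is not confusing the thresholds $\epsilon|A|$ and $(\epsilon/\delta)|A'|$. No additional machinery (Chernoff, Azuma, Hajnal--Szemer\'edi, etc.) is needed.
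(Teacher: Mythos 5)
Your proof is correct, and it is exactly the standard argument: the paper states Proposition~\ref{restriction} as a well-known fact without proof, and the intended justification is precisely your observation that $|A''|>(\epsilon/\delta)|A'|\geq\epsilon|A|$ (similarly for $B''$), so the test sets for $G[A',B']$ are already valid test sets for $G[A,B]$, giving density $d\pm\epsilon\subseteq d\pm\epsilon/\delta$. Nothing is missing.
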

\begin{prop}\label{regularity after edge deletion}
Let $k\ge 4$ and let $0<1/n\ll \epsilon \ll 1/k,d,1/(C+1)$. Suppose that $G[A,B]$ is $(\epsilon,d)$-super-regular with $|A|=|B|\pm C$ and $|A|,|B|\geq n$. If $F$ is a spanning subgraph of $G$ such that for each $v\in V(G)$, $d_G(v)-d_F(v)<k\epsilon n$, 
then $F$ is $(3\sqrt{k\epsilon}/2,d)$-super-regular. In particular, $F$ is $(k\sqrt{\epsilon},d)$-super-regular.
\end{prop}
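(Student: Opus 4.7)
The plan is to verify each of the two clauses of super-regularity for $F$ separately, exploiting the fact that the per-vertex loss $k\epsilon n$ is tiny compared to $\sqrt{k\epsilon}\cdot n$.

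First I would handle the degree condition. For any $v\in B$, super-regularity of $G$ gives $d_G(v)=(d\pm\epsilon)|A|$, and the hypothesis gives $d_F(v)\ge d_G(v)-k\epsilon n \ge (d-\epsilon)|A|-k\epsilon n$. Since $|A|\ge n$, this is at least $(d-(k+1)\epsilon)|A|$. Because $\epsilon\ll 1/k$, one has $(k+1)\epsilon \le \tfrac{3}{2}\sqrt{k\epsilon}$, so $d_F(v)\ge (d-\tfrac{3}{2}\sqrt{k\epsilon})|A|$; the upper bound $d_F(v)\le d_G(v)\le (d+\tfrac{3}{2}\sqrt{k\epsilon})|A|$ is immediate. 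The same argument works for $v\in A$, so the degree condition holds with parameter $\tfrac{3}{2}\sqrt{k\epsilon}$.

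Next I would check the regularity condition. Fix $A'\subseteq A$, $B'\subseteq B$ with $|A'|>\tfrac{3}{2}\sqrt{k\epsilon}\,|A|$ and $|B'|>\tfrac{3}{2}\sqrt{k\epsilon}\,|B|$. Since $\tfrac{3}{2}\sqrt{k\epsilon}>\epsilon$, the pair $(A',B')$ is large enough to apply $(\epsilon,d)$-regularity of $G$, giving $e(G[A',B'])=(d\pm\epsilon)|A'||B'|$. Every edge of $G[A',B']\setminus F[A',B']$ is incident to some $v\in A'$ and contributes to $d_G(v)-d_F(v)<k\epsilon n$, so
\[
0\le e(G[A',B'])-e(F[A',B']) \le |A'|\cdot k\epsilon n.
\]
Dividing by $|A'||B'|$ and using $|B'|\ge \tfrac{3}{2}\sqrt{k\epsilon}\,|B|\ge \sqrt{k\epsilon}\,n$ (valid since $|B|\ge n-C\ge \tfrac{2}{3}n$ as $\epsilon$ is tiny relative to $1/(C+1)$) gives a defect of at most $k\epsilon n/|B'|\le \sqrt{k\epsilon}$. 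Combined with the $\pm\epsilon$ from $G$, we obtain $e(F[A',B'])/(|A'||B'|)=d\pm(\epsilon+\sqrt{k\epsilon})=d\pm\tfrac{3}{2}\sqrt{k\epsilon}$, as required.

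This establishes $(\tfrac{3}{2}\sqrt{k\epsilon},d)$-super-regularity of $F$. The ``in particular'' clause follows because $\tfrac{3}{2}\sqrt{k\epsilon}\le k\sqrt{\epsilon}$ is equivalent to $k\ge 9/4$, which is implied by $k\ge 4$. There is no real obstacle here beyond bookkeeping; the only point requiring mild care is ensuring that $|B'|$ (respectively $|A'|$) is comparable to $n$ so that the crude bound $|A'|\cdot k\epsilon n$ on edge losses dilutes to an $O(\sqrt{k\epsilon})$ density error, and this is precisely why one gets a $\sqrt{k\epsilon}$ rather than an $\epsilon$ defect.
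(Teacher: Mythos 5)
Your proof is correct, and it is exactly the standard argument one would expect: the paper itself states this proposition without proof (as one of the "standard facts about graph regularity"), so there is nothing to diverge from. The only cosmetic remark is that the bound $|B'|\ge\sqrt{k\epsilon}\,n$ follows directly from the hypothesis $|B|\ge n$, so the detour through $|B|\ge n-C\ge\tfrac23 n$ is unnecessary (though harmless).
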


The following lemma states that a super-regular graph can be, at the cost of increasing $\epsilon$, edge-decomposed into two sparser graphs, each of which is also super-regular.
\begin{lemma}\label{preparing patching graph}
Suppose $0<\beta\leq d\leq 1$ and $0<1/n \ll \epsilon \ll \beta, d, d-\beta$, and that $G[A,B]$ is an $(\epsilon,d)$-super-regular graph with $|A|,|B|\geq n$. 
Then there exists a spanning subgraph $P$ of $G$ such that $P$ is $(2\epsilon,\beta)$-super-regular and $G-P$ is $(2\epsilon,d-\beta)$-super-regular.
\end{lemma}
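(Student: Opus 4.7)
The plan is to obtain $P$ as a random spanning subgraph of $G$ where each edge of $G$ is included independently with probability $p := \beta/d$. Then $G - P$ is the complementary random subgraph where each edge of $G$ is kept independently with probability $1 - p = (d - \beta)/d$. The target densities $\beta$ and $d - \beta$ are exactly the expected edge densities of $P$ and $G - P$, so I just need to show that the regularity and super-regularity conditions are preserved with positive probability.

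For the vertex-degree condition of super-regularity, fix $v \in A$. Since $G$ is $(\epsilon,d)$-super-regular, $d_G(v) = (d \pm \epsilon)|B|$, so the Bernoulli sum $d_P(v)$ has mean $(\beta \pm \epsilon\beta/d)|B|$. Chernoff (Lemma~\ref{Chernoff Bounds}) gives
$$\Prob[|d_P(v) - \Exp[d_P(v)]| > \epsilon |B|/2] \leq 2\exp(-\epsilon^2 |B|/2),$$
which is $o(1/n)$ by the hierarchy. Union-bounding over all $v \in A \cup B$ shows that with probability $1 - o(1)$, every vertex satisfies $d_P(v) = (\beta \pm 2\epsilon)|B|$ or $(\beta \pm 2\epsilon)|A|$ as appropriate, and simultaneously $d_{G-P}(v) = (d - \beta \pm 2\epsilon)|B|$ or $(d - \beta \pm 2\epsilon)|A|$.

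For the regularity condition on $P$, fix $A' \subseteq A$ and $B' \subseteq B$ with $|A'| \geq 2\epsilon|A|$ and $|B'| \geq 2\epsilon|B|$. Since $G$ is $(\epsilon,d)$-regular we have $e(G[A',B']) = (d \pm \epsilon)|A'||B'|$, so $e(P[A',B'])$ is binomially distributed with mean $(\beta \pm \epsilon\beta/d)|A'||B'|$. Another application of Chernoff gives
$$\Prob\bigl[|e(P[A',B']) - \beta |A'||B'|| > \tfrac{3}{2}\epsilon |A'||B'|\bigr] \leq 2\exp(-c\epsilon^2 |A'||B'|)$$
for an absolute constant $c > 0$. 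Using $|A'||B'| \geq 4\epsilon^2 n^2$, this probability is at most $2\exp(-\Omega(\epsilon^4 n^2))$, which easily beats the $2^{|A|+|B|}$ term from the union bound over all eligible pairs $(A',B')$. The same argument applied to $G - P$ (with retention probability $1-p$) shows that $G-P$ is $(2\epsilon, d-\beta)$-regular. Combining these estimates with the degree bound from the previous paragraph gives the desired conclusion with positive probability.

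The main obstacle is ensuring that the tail bounds are strong enough to survive the union bound over exponentially many pairs $(A',B')$; this is exactly why we need $\epsilon \ll \beta, d-\beta$, which makes the Chernoff rate $\Omega(\epsilon^2)$ dominate the combinatorial factor $2^{O(n)}$. Beyond this, the argument is a routine application of the first moment method and concentration, with no delicate combinatorial input.
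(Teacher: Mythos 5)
Your proposal is correct and is exactly the paper's approach: the paper also takes $P$ to be a random subgraph obtained by keeping each edge of $G$ independently with probability $\beta/d$, and leaves the Chernoff-plus-union-bound verification as a straightforward exercise. (The only cosmetic point: in the union bound it is cleaner to use $|A'||B'|\geq 4\epsilon^2|A||B|$ rather than $4\epsilon^2 n^2$, since the lemma only bounds $|A|,|B|$ from below.)
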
 
\begin{proof}
\COMMENT{\begin{proof}
For each edge $e\in E(G)$, we select $e$ with probability ${\beta}/{d}$, all choices being independent. Then we let $P$ be the spanning subgraph formed by the selected edges. 

For any vertex $v\notin V_i$, let $B^i_{v}$ be the event that $|N_{P}(v) \cap V_i| \neq (\beta\pm 2\epsilon)n$. Then a vertex $w \in N_{G}(v)\cap V_i$ is in $N_{P}(v)\cap V_i$ with probability $\beta/d$.
Hence, 
$$\mathbb{E}[|N_{P}(v)\cap V_i|] = \left(\frac{\beta}{d}\right)|N_{G}(v)\cap V_i| = (\beta\pm\epsilon) n.$$
So by Chernoff, we can show that 
$$\mathbb{P}[B^i_{v}] < 2^{-\epsilon n/10} .$$

Also, for two sets $S_1\subset V_i, S_2\subset V_2$ with $|S_1|,|S_2|\geq \epsilon n$, let $B(S_1,S_2)$ be the event that $\frac{|E_P(S_1,S_2)|}{|S_1||S_2|} \neq \beta\pm 2\epsilon$.
Then an edge $uv \in E(S_1,S_2)$ is in $E_P(S_1,S_2)$ with probability $\beta/d$. Hence
$$\mathbb{E}[E_P(S_1,S_2)|] = (\beta\pm \epsilon)|S_1||S_2|.$$ 
So by Chernoff, we can show that
$$\mathbb{P}[B(S_1,S_2)] < 2^{-\epsilon^3 n^2/100} .$$

Therefore 
$$\mathbb{P}[ P \text{ is not }(2\epsilon,\beta)\text{-super-regular} ] \leq \sum_{1\leq i\leq r}\sum_{v\in V(G)\setminus V_i} \mathbb{P}[B^i_{v}] + \sum_{1\leq i\neq j\leq r} \sum_{S_1\in \binom{V_i}{\geq \epsilon n}, S_2 \in \binom{V_j}{\geq \epsilon n}}\mathbb{P}[B(S_1,S_2)] = c^n$$ for a constant $0<c<1$.

Thus with probability $1-c^n$ for a constant $0<c<1$, $P$ is $(2\epsilon,\beta)$-super-regular and $G-P$ is $(2\epsilon,d-\beta)$-super-regular.
Thus there exists a subgraph $P$ satisfying the required conditions.
\end{proof}}
For each edge $e\in E(G)$, we select $e$ with probability ${\beta}/{d}$, all choices being independent. Let $P$ be the spanning subgraph of $G$ formed by the selected edges. It is a straightforward exercise to check that with nonzero probability the above conditions are indeed satisfied.
\end{proof}

The following two statements establish a link between codegree and graph regularity. 
The first one is due to Duke, Lefmann and R\"odl~\cite{DLR} (a similar result is proved in~\cite{Aetal}), the converse provided by 
Proposition~\ref{regularity implies codegree} follows immediately from the definitions.
\begin{thm} \cite{DLR} \label{codegree implies regularity}
Suppose $0<\epsilon <2^{-200}$. Suppose $G=G[A,B]$ is a bipartite graph with $|A|>2/\epsilon$, and let $d:=\frac{e(G)}{|A||B|}$. Let $D$ be the collection of all pairs $\{x,x'\}$ of vertices of $A$ for which 

\begin{itemize}
\item[(i)]$d(x), d(x') > (d-\epsilon)|B|$, and 

\item[(ii)]$|N_G(x)\cap N_G(x')| < (d+\epsilon)^2|B|$.
\end{itemize}
Then if $|D|> \frac{1}{2}(1-5\epsilon)|A|^2$, the graph $G$ is $(\epsilon^{1/6},d)$-regular.
\end{thm}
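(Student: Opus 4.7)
The plan is a second-moment (defect Cauchy--Schwarz) argument. First I would extract two quantitative consequences of the hypothesis. Let $A^{*} := \{x \in A : d_G(x) > (d-\epsilon)|B|\}$; since every pair in $D$ lies in $\binom{A^{*}}{2}$, the bound $|D| > (1-5\epsilon)|A|^2/2$ forces $|A^{*}| \geq (1-5\epsilon)|A|$. In particular $e(A',B) \geq (d - O(\epsilon^{5/6}))|A'||B|$ whenever $|A'| \geq \epsilon^{1/6}|A|$. Moreover, the number of ``bad'' pairs $\{x,x'\}\subseteq A$ with $|N(x)\cap N(x')| > (d+\epsilon)^2|B|$ is at most $\binom{|A|}{2} - |D| \leq \tfrac{5}{2}\epsilon|A|^2$.

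Now fix $A' \subseteq A$ with $|A'| \geq \epsilon^{1/6}|A|$ and set $f \colon B \to \mathbb{N}$, $f(b) := |N(b) \cap A'|$. The identity
\[ \sum_{b \in B} f(b)^2 \;=\; e(A',B) \;+\; 2\sum_{\{x,x'\} \subseteq A'} |N(x) \cap N(x')| \]
together with the codegree bound (good pairs contribute at most $(d+\epsilon)^2|B|$, bad pairs trivially at most $|B|$) gives
\[ \sum_{b \in B} f(b)^2 \;\leq\; |A'||B| \,+\, (d+\epsilon)^2 |A'|^2 |B| \,+\, 5\epsilon|A|^2|B|. \]
Cauchy--Schwarz yields $\sum_b f(b)^2 \geq e(A',B)^2/|B|$, so subtracting and using both the lower bound $e(A',B) \geq d|A'||B| - O(\epsilon^{5/6})|A'||B|$ and $|A|^2 \leq \epsilon^{-1/3}|A'|^2$ produces the defect bound
\[ \sum_{b \in B} \bigl(f(b) - \tfrac{e(A',B)}{|B|}\bigr)^2 \;\leq\; O(\epsilon^{2/3})\,|A'|^2\,|B|. \]

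Chebyshev then shows that all but $O(\epsilon^{1/3})|B|$ vertices $b \in B$ satisfy $f(b) = d|A'| \pm \epsilon^{1/6}|A'|$. For any $B' \subseteq B$ with $|B'| \geq \epsilon^{1/6}|B|$, this exceptional set occupies only an $O(\epsilon^{1/6})$ fraction of $B'$; summing $f(b)$ over $B'$ (good contributions close to $d|A'|$, bad contributions trivially at most $|A'|$) gives $e(A',B') = d|A'||B'| \pm O(\epsilon^{1/6})|A'||B'|$. The hypothesis $\epsilon < 2^{-200}$ provides more than enough slack to absorb the implicit constants, yielding $(\epsilon^{1/6},d)$-regularity.

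The main obstacle is the mismatch between the ``global'' variance bound, which only controls exceptional $b$'s across all of $B$, and the ``local'' concentration required inside an arbitrary subset $B'$. The polynomial gap between the hypothesis parameter $\epsilon$ and the conclusion $\epsilon^{1/6}$ is precisely what rescues the argument: the Chebyshev exceptional set $O(\epsilon^{1/3})|B|$ is a small fraction of any $|B'| \geq \epsilon^{1/6}|B|$, and symmetrically the restriction $|A'| \geq \epsilon^{1/6}|A|$ ensures the ``bad pair'' slack $5\epsilon|A|^2|B|$ is dominated by the variance budget $O(\epsilon^{2/3})|A'|^2|B|$. Without both losses, the defect argument would be too weak to close.
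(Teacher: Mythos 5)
The paper itself does not prove this statement -- it is quoted directly from Duke, Lefmann and R\"odl \cite{DLR} -- so your argument can only be judged on its own terms. Your defect Cauchy--Schwarz strategy is the standard route, and the first half is sound: $|A\setminus A^{*}|\le 5\epsilon|A|$ gives $e(A',B)\ge (d-6\epsilon^{5/6})|A'||B|$ for $|A'|\ge\epsilon^{1/6}|A|$, and the codegree identity together with at most $\tfrac52\epsilon|A|^2$ bad pairs yields your variance bound $\sum_{b}(f(b)-\mu)^2=O(\epsilon^{2/3})|A'|^2|B|$, where $\mu:=e(A',B)/|B|$ (the dominant term being $5\epsilon|A|^2\le 5\epsilon^{2/3}|A'|^2$). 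One omission: to centre $f$ at $d|A'|$ you also need an \emph{upper} bound on $\mu$; this does follow from the same two displays (Cauchy--Schwarz plus the codegree bound give $e(A',B)\le (d+3\epsilon^{1/3})|A'||B|$), but you never state it, and without it the assertion ``$f(b)=d|A'|\pm\epsilon^{1/6}|A'|$ for most $b$'' is not justified.

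The genuine gap is in the final step. With Chebyshev threshold $\epsilon^{1/6}|A'|$ the exceptional set has size $O(\epsilon^{1/3})|B|$, i.e.\ an $O(\epsilon^{1/6})$ fraction of $B'$, and bounding those vertices trivially by $|A'|$ gives $e(A',B')=d|A'||B'|\pm C\epsilon^{1/6}|A'||B'|$ with an absolute constant $C>1$ (the ``good'' vertices alone already contribute deviation $\epsilon^{1/6}|A'|$ each, and the bad set adds several more multiples of $\epsilon^{1/6}$). Your closing claim that $\epsilon<2^{-200}$ absorbs the implicit constants is false precisely for these terms: a constant times $\epsilon^{1/6}$ is never at most $\epsilon^{1/6}$, however small $\epsilon$ is; smallness of $\epsilon$ only eats constants attached to strictly larger exponents. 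Nor does tuning the threshold help: forcing the exceptional contribution below $\epsilon^{1/6}$ pushes the threshold itself above $\epsilon^{1/6}|A'|$, so the ``Chebyshev plus trivial bound'' scheme cannot deliver the stated constant-free conclusion. The repair is short and should be made explicit: drop Chebyshev and apply Cauchy--Schwarz directly to the deviation sum, $\bigl|\sum_{b\in B'}(f(b)-\mu)\bigr|\le\sqrt{|B'|\sum_{b\in B}(f(b)-\mu)^2}=O(\epsilon^{1/4})|A'||B'|$ using $|B'|\ge\epsilon^{1/6}|B|$. Since $1/4>1/6$, this constant (and the $O(\epsilon^{1/3})$ error in the mean) genuinely is absorbed by $\epsilon<2^{-200}$, giving density $d\pm\epsilon^{1/6}$ for all admissible $A',B'$ as required.
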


\begin{prop}\label{regularity implies codegree}
\COMMENT{$(d\pm \epsilon)^2 = d^2 \pm 3\epsilon$.
Given $x\in A$ we have $|B'|=|N_G(x)|=(d\pm \epsilon)|B|$ by super-regularity. Let $A'\subseteq A$ be the set of all vertices with more than $(d+\epsilon)^2|B|$ or less than $(d-\epsilon)^2|B|$ joint neighbours with $x$. Then $e(A',B')/|A'||B'|\neq d\pm \epsilon$, thus we must have $|A'|<2\epsilon |A|$. So we have at most $2\epsilon |A|$ 'bad' pairs for any given vertex $x\in A$, yielding at most $\epsilon |A|^2$ such pairs in total}
Suppose $0<1/n\ll \epsilon \ll d$ and that $G[A,B]$ is $(\epsilon,d)$-super-regular with $|A|,|B|\geq n$. Then all but at most $\epsilon|A|^2$ vertex pairs $\{x,x'\}\subseteq A$ satisfy $|N_G(x)\cap N_G(x')| = (d^2\pm 3\epsilon)|B|$.
\end{prop}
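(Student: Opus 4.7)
The plan is to fix one vertex $x\in A$ at a time, apply the typical-degree lemma (Proposition~\ref{typical degree}) with the neighbourhood $N_G(x)$ playing the role of the distinguished subset, and then count bad pairs by summing over $x$. Concretely, fix $x\in A$ and set $B':=N_G(x)$. By super-regularity of $G[A,B]$ we have $|B'|=(d\pm\epsilon)|B|$, and since $\epsilon\ll d$ this gives $|B'|\geq \epsilon|B|$, so the hypothesis of Proposition~\ref{typical degree} is satisfied.

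Applying Proposition~\ref{typical degree} yields a set $A_x\subseteq A$ of size at most $2\epsilon|A|$ such that every $x'\in A\setminus A_x$ satisfies $d_G(x',B') = (d\pm 2\epsilon)|B'|$. For such $x'$, note that $d_G(x',B') = |N_G(x')\cap N_G(x)|$, and combined with $|B'|=(d\pm\epsilon)|B|$ this gives
\[
|N_G(x)\cap N_G(x')| \;=\; (d\pm 2\epsilon)(d\pm\epsilon)|B| \;=\; \bigl(d^2 \pm (3d\epsilon+2\epsilon^2)\bigr)|B|.
\]
Since $d\le 1$ and $\epsilon\ll d\le 1$, we have $3d\epsilon+2\epsilon^2\le 3\epsilon$, so the codegree equals $(d^2\pm 3\epsilon)|B|$, as required. (In the degenerate regime when $d$ is very close to $1$, the bound $3d\epsilon+2\epsilon^2\le 3\epsilon$ still holds with room to spare once $\epsilon$ is small compared to $1-d$; and if $d=1$ then the codegree trivially lies in $[(1-2\epsilon)|B|,|B|]\subseteq(1\pm3\epsilon)|B|$, so the conclusion is immediate.)

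Finally, we convert this per-vertex bound to a pair count. Declare an unordered pair $\{x,x'\}$ \emph{bad} if it violates the desired codegree estimate. By the previous step, if $\{x,x'\}$ is bad then $x'\in A_x$, so the number of ordered bad pairs $(x,x')$ is at most $\sum_{x\in A}|A_x|\le 2\epsilon|A|^2$; each unordered bad pair contributes at least once (in fact twice, by symmetry of the codegree condition), so the number of unordered bad pairs is at most $\epsilon|A|^2$, completing the proof.

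There is no substantive obstacle here — the statement is a direct consequence of Proposition~\ref{typical degree} applied neighbourhood-by-neighbourhood; the only mildly subtle point is the arithmetic verification that $(d\pm 2\epsilon)(d\pm\epsilon)=d^2\pm 3\epsilon$ holds uniformly for $d\in(0,1]$ under $\epsilon\ll d$, and the careful double-counting of ordered versus unordered pairs to land on the claimed bound of $\epsilon|A|^2$ rather than $2\epsilon|A|^2$.
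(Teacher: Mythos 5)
Your overall strategy is the same as the paper's: fix $x\in A$, work inside $B'=N_G(x)$, bound the number of exceptional co-neighbours per $x$ using regularity, then sum over $x$ and halve. The only structural difference is that you route the per-vertex step through Proposition~\ref{typical degree}, whereas the paper argues directly from $\epsilon$-regularity that at most $2\epsilon|A|$ vertices $x'$ have codegree outside $(d\pm\epsilon)^2|B|$; that tighter window satisfies $(d\pm\epsilon)^2|B|\subseteq (d^2\pm 3\epsilon)|B|$ for \emph{all} $d\le 1$, since $2d\epsilon+\epsilon^2\le 3\epsilon$.

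Your detour through Proposition~\ref{typical degree} produces the wider window $(d\pm2\epsilon)(d\pm\epsilon)|B|=(d^2\pm(3d\epsilon+2\epsilon^2))|B|$, and here your arithmetic claim is not quite right: $3d\epsilon+2\epsilon^2\le 3\epsilon$ is equivalent to $d\le 1-\tfrac{2\epsilon}{3}$, so it genuinely fails when $d\in(1-\tfrac{2\epsilon}{3},1]$. The hierarchy $\epsilon\ll d$ does \emph{not} give $\epsilon\ll 1-d$, so your parenthetical patch addresses the wrong regime (when $\epsilon$ is small compared to $1-d$ there is no problem in the first place), and covering only $d=1$ exactly leaves $d\in(1-\tfrac{2\epsilon}{3},1)$ unhandled. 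The hole is small and easy to close: the lower bound $(d-2\epsilon)(d-\epsilon)\ge d^2-3\epsilon$ holds for all $d\le 1$, and for the upper bound in the corner regime one can use the trivial estimate $|N_G(x)\cap N_G(x')|\le (d+\epsilon)|B|$ together with $d+\epsilon\le d^2+3\epsilon$, which holds whenever $d(1-d)\le 2\epsilon$ and in particular whenever $1-d<\tfrac{2\epsilon}{3}$; alternatively, just reproduce the paper's direct argument with the window $(d\pm\epsilon)^2|B|$ and avoid the issue altogether. Your final double count (each bad unordered pair $\{x,x'\}$ is counted in both $A_x$ and $A_{x'}$, giving $\le \epsilon|A|^2$ rather than $2\epsilon|A|^2$) is correct and matches the paper.
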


The following observation states that if we delete some edges incident to a small number of vertices from a super-regular graph $G$, then the resulting graph still contains a 
dense super-regular graph $G'$.

\begin{lemma}\label{lem: deletion of some edges still contain super-regular}
Suppose $0<1/n \ll \epsilon \ll d_0, d$ and $G[A,B]$ is $(\epsilon,d)$-super-regular with $|A|=|B|=n$. Let $A'\subseteq A$ with $|A'| \leq \epsilon n$.
Suppose that for each $v\in A'$, we have a set $B'(v)\subseteq N_{G}(v)$ with $|B'(v)| \geq d_0 n.$ Then there exists an $(\epsilon^{1/3},d_0 )$-super-regular spanning subgraph $G'$ of $G$ such that for all $v\in A'$, we have $N_{G'}(v) \subseteq B'(v)$.
\end{lemma}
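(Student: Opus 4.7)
The plan is to construct $G'$ by a random sparsification of $G$. I may assume $d_0 \leq d$: otherwise $|B'(v)| \geq d_0 n > (d+\epsilon) n \geq |N_G(v)|$ would force $A' = \emptyset$, and the statement reduces to the standard fact that a random subgraph of $G$ obtained by keeping each edge with probability $d_0/d$ is super-regular (as in Lemma~\ref{preparing patching graph}). Define a random spanning subgraph $G'$ as follows. For each $v \in A'$, delete every edge from $v$ to $N_G(v) \setminus B'(v)$, and for every $w \in B'(v)$ retain $vw$ independently with probability $d_0 n / |B'(v)| \leq 1$. For each $v \in A \setminus A'$ and each $w \in N_G(v)$, retain $vw$ independently with probability $d_0/d$. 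All decisions are independent. By construction $N_{G'}(v) \subseteq B'(v)$ for every $v \in A'$, so it suffices to show that with positive probability $G'$ is $(\epsilon^{1/3}, d_0)$-super-regular.

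For the degree conditions: if $v \in A'$, then $d_{G'}(v)$ is binomial with parameters $(|B'(v)|, d_0 n/|B'(v)|)$, hence by Lemma~\ref{Chernoff Bounds} it lies in $d_0 n \pm \epsilon^{1/3} n/2$ except with probability $e^{-\Omega(\epsilon^{2/3} n)}$. If $v \in A \setminus A'$, then $d_{G'}(v) \sim Bin(d_G(v), d_0/d)$ has mean $d_0 n \pm O(\epsilon n)$ and concentrates similarly. For $w \in B$ I write $d_{G'}(w) = X_1 + X_2$, where $X_1 \leq |A'| \leq \epsilon n$ is the trivial bound on neighbours in $A'$ and $X_2$ is a sum of $|N_G(w) \cap (A \setminus A')|$ independent Bernoullis with mean $d_0 n \pm O(\epsilon n)$. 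Chernoff and a union bound over the $2n$ vertices take care of all degree conditions.

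For the regularity condition, fix $A'' \subseteq A$, $B'' \subseteq B$ with $|A''|, |B''| \geq \epsilon^{1/3} n$ and split $A'' = A''_1 \sqcup A''_2$, where $A''_1 = A'' \cap A'$. Then $|A''_1| \leq \epsilon n$, so the edges incident to $A''_1$ contribute at most $|A''_1||B''| \leq \epsilon n |B''|$ to $e_{G'}(A'', B'')$, which is an additive $\epsilon^{2/3}$ in the density (using $|A''| \geq \epsilon^{1/3} n$). Since $|A''_2| \geq |A''| - \epsilon n \geq \epsilon |A|$, the $(\epsilon, d)$-regularity of $G$ gives $e_G(A''_2, B'') = (d \pm \epsilon)|A''_2||B''|$. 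Thus $e_{G'}(A''_2, B'')$ is a sum of that many independent Bernoullis with success probability $d_0/d$, of mean $d_0 |A''_2||B''|(1 \pm \epsilon/d)$, and Chernoff yields $e_{G'}(A''_2, B'')/(|A''_2||B''|) = d_0 \pm \epsilon^{1/2}$ with probability at least $1 - \exp(-\Omega(\epsilon \cdot \epsilon^{2/3} n^2)) = 1 - \exp(-\Omega(\epsilon^{5/3} n^2))$. Union bounding over the at most $4^n$ pairs $(A'', B'')$, all events hold simultaneously with positive probability, producing the desired $G'$.

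The only delicate point is to align the slack in the density calculation with the target $\epsilon^{1/3}$: because $|A'|$ can be as large as $\epsilon n$ while $A''$ can be as small as $\epsilon^{1/3} n$, the ratio $|A''_1|/|A''|$ may reach $\epsilon^{2/3}$, which is comfortably below $\epsilon^{1/3}$ but essentially pins down the exponent in the conclusion. Everything else is a routine concentration argument of the same flavour as the proof of Lemma~\ref{preparing patching graph}.
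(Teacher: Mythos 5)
Your argument is essentially correct and follows the same route as the paper: the paper also passes to the restricted graph $H$ (with $N_H(v)=B'(v)$ for $v\in A'$ and $N_H(v)=N_G(v)$ otherwise) and takes a random spanning subgraph of it, the only difference being that the paper fixes $d_{G'}(v)=d_0n$ exactly for every $v\in A$ while you use independent edge retention with probabilities $d_0n/|B'(v)|$ and $d_0/d$; the Chernoff-plus-union-bound verification is the same in both cases, and your degree and regularity estimates go through with room to spare.

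The one imprecise point is your reduction to $d_0\le d$. From $d_0>d$ you cannot conclude $d_0n>(d+\epsilon)n\ge |N_G(v)|$, so in the regime $d<d_0\le d+\epsilon$ the set $A'$ need not be empty, and there your retention probability $d_0/d$ exceeds $1$, so the construction as written is ill-defined. This is easily patched (and is exactly what the paper's terse ``we may assume $d_0\le d-\epsilon$'' is covering): if $d_0>d-\epsilon$, then since $B'(v)\subseteq N_G(v)$ forces $d_0\le d+\epsilon$ whenever $A'\neq\emptyset$, the restricted graph $H$ itself already works, because removing the at most $\epsilon n$ edges at each $v\in A'$ (and at most $|A'|\le\epsilon n$ edges at each vertex of $B$) leaves a graph that is $(O(\sqrt{\epsilon}),d)$-super-regular by Proposition~\ref{regularity after edge deletion}, hence $(\epsilon^{1/3},d_0)$-super-regular as $|d-d_0|\le\epsilon$. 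With that one-line case added, your proof is complete.
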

\proof
Note that the conditions imply that $d_0 \le d+\epsilon$.
Without loss of generality, we may further assume that $d_0 \le d-\epsilon$.
Now consider the subgraph $H$ of $G$ such that $N_{H}(v)= B'(v)$ for all $v\in A'$ and $N_{H}(v)=N_{G}(v)$ for all $v\in A\setminus A'$.
Then a random subgraph $G'$ of $H$ such that $d_{G'}(v) = d_0 n$ for all $v\in A$ is $(\epsilon^{1/3},d_0)$-super-regular with high probability.%
\COMMENT{Consider $U,W$ with $|U|,|W| \ge \epsilon^{1/3} n$.
Then 
$$
\mathbb{E}  [ e_{G'}(U,W)] \ge \frac{d_0}{d+\epsilon}e_G(U,W) -|A'| n \ge \frac{d-\epsilon}{d+\epsilon} d_0 |U| |W| -\epsilon n^2
$$}
\endproof


\section{Random Perfect Matchings} \label{sec:matching}

It is well-known that every $(\epsilon,d)$-super-regular balanced bipartite graph $G$ contains a perfect matching. In this section we show that the number of the perfect matchings containing a given edge $e$ is roughly the same for all $e\in E(G)$, i.e. each edge has the same likelihood of appearing in a random perfect matching.

Given a bipartite graph $G[U, W]$, we write $M(G)$ for the set of all perfect matchings of $G$, $M_e(G)$ for the set of all perfect matchings containing a given edge $e\in E(G)$, and set $M'_e(G):=M(G)\setminus M_e(G)$. 
For $\sigma \in M(G)$, we often abuse the notion and think of $\sigma$ as a bijection from $U$ to $W$ (where $\sigma(u)=w$ if and only
if $uw$ is an edge in the perfect matching $\sigma$). 

We will use the following result of R\"{o}dl and Ruci\'{n}ski~\cite{RR}, which implies that the edges of a random perfect matching are uniformly distributed with respect to large sets of vertices. 

\begin{thm} \cite{RR} \label{M0}
Suppose $0<1/n \ll {c} \ll \epsilon \ll d\leq 1$ and $h'(a)=a^{1/10}$ is as defined in $(\ref{eq:functions})$.
Let $G[U,W]$ be an $(\epsilon,d)$-super-regular bipartite graph with $|U|=|W|=n$ and let $S\subseteq U, T\subseteq W$ with
$|S|=sn, |T|=tn$ and $s,t\geq h'(\epsilon)$.%
  \COMMENT{the original version states $|\sigma(S)\cap T | = (st\pm h'(\epsilon))n$ without the assumption
that $s,t\geq h'(\epsilon)$ and with $h'(\epsilon)=a^{1/3}$ or so. Adding the lower bound on $s,t$ yields
the multiplicative version $|\sigma(S)\cap T | = (1\pm h'(\epsilon))stn$}
Then at least $(1-(1-{c})^n)|M(G)|$ perfect matchings $\sigma$ of $G$ satisfy
$|\sigma(S)\cap T | = (1\pm h'(\epsilon))stn$.
\end{thm}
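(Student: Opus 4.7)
My plan is to execute a switching-based double-count that compares the sizes of the level sets
\[
\mathcal{M}_k := \{\sigma \in M(G) : |\sigma(S) \cap T| = k\}
\]
for adjacent values of $k$, and to show that, up to a lower-order multiplicative error, these match the consecutive probability masses of the hypergeometric distribution $\mathrm{Hyp}(n, tn, sn)$, which is the law of $|\sigma(S)\cap T|$ when $\sigma : U \to W$ is a uniformly random bijection without any graph constraint. That distribution is exponentially concentrated around its mean $stn$, which is exactly the target window in the statement. The only properties of $G$ I need are (i) the super-regular degree condition $d_G(u) = (d \pm \epsilon)n$ for all vertices $u$, and (ii) the codegree estimate $|N_G(w_1) \cap N_G(w_2)| = (d^2 \pm O(\epsilon))n$ for all but an $O(\epsilon)$-fraction of pairs $\{w_1, w_2\} \subseteq W$, which is Proposition~\ref{regularity implies codegree}.

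The main technical step is the switch count. For $\sigma \in M(G)$, call an ordered pair of distinct vertices $(u_1, u_2) \in U^2$ a \emph{switch} if $u_1\sigma(u_2), u_2\sigma(u_1) \in E(G)$; swapping the two edges $u_i\sigma(u_i)$ yields a new perfect matching $\sigma'$. Classify switches by the induced change $\Delta := |\sigma'(S) \cap T| - |\sigma(S) \cap T| \in \{-2,-1,0,1,2\}$, which depends only on the $S$-profile of $(u_1,u_2)$ and the $T$-profile of $(\sigma(u_1),\sigma(u_2))$. I focus on switches with $\Delta = -1$: for $\sigma \in \mathcal{M}_k$, these are pairs with $u_1\in S,\ \sigma(u_1) \in T,\ u_2 \notin S,\ \sigma(u_2) \notin T$ (plus a symmetric variant handled identically). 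There are $k$ choices for $u_1$ and $((1-s-t)n+k)$ for $u_2$, and using (ii) to control the joint codegree of $(\sigma(u_1),\sigma(u_2))$, the fraction of such pairs for which both switch edges exist is $(d^2 \pm O(\epsilon))$; the $O(\epsilon n^2)$ codegree-bad pairs contribute a relative error of only $O(\epsilon/st) = O(\epsilon^{9/10})$ because $s,t \ge \epsilon^{1/10}$. So the total number of $\Delta = -1$ switches of $\sigma$ equals $(1\pm O(\epsilon^{9/10}))\,d^2 k ((1-s-t)n + k)$, and the analogous count of reverse $\Delta = +1$ switches of $\sigma' \in \mathcal{M}_{k-1}$ is $(1 \pm O(\epsilon^{9/10}))\,d^2(sn-k+1)(tn-k+1)$.

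Double counting pairs $(\sigma, \sigma') \in \mathcal{M}_k \times \mathcal{M}_{k-1}$ linked by a single switch yields the hypergeometric-type ratio
\[
\frac{|\mathcal{M}_k|}{|\mathcal{M}_{k-1}|} = \bigl(1 \pm O(\epsilon^{9/10})\bigr) \cdot \frac{(sn-k+1)(tn-k+1)}{k\bigl((1-s-t)n + k\bigr)},
\]
which is precisely the ratio of consecutive masses of $\mathrm{Hyp}(n, tn, sn)$, so $(|\mathcal{M}_k|)_k$ is unimodal with peak near $k = stn$. Iterating this ratio outward over $|k - stn| \le h'(\epsilon) stn = \epsilon^{1/10} stn$ and using $s,t \ge \epsilon^{1/10}$ gives hypergeometric tail decay at least $\exp(-\Omega(\epsilon^{3/5} n))$; the accumulated multiplicative noise is at most $(1 + O(\epsilon^{9/10}))^{\epsilon^{1/10} stn} \le \exp(O(\epsilon^{6/5} n))$, which is dominated by the decay since $\epsilon^{6/5} \ll \epsilon^{3/5}$. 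Hence $\sum_{|k - stn| \ge \epsilon^{1/10} stn}|\mathcal{M}_k| \le \exp(-\Omega(\epsilon^{3/5} n))\,|M(G)| \le (1-c)^n |M(G)|$ for any $c \ll \epsilon$, as in the hypothesis.

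The main obstacle is the joint-edge factor in the switch count: each switch requires \emph{both} $u_1\sigma(u_2)$ and $u_2\sigma(u_1)$ to be present, so the relevant probability is the joint codegree of $\sigma(u_1)$ and $\sigma(u_2)$, not a product of marginal degrees; this forces the use of Proposition~\ref{regularity implies codegree} rather than mere super-regularity, and the $O(\epsilon n^2)$ codegree-bad pairs must be shown to contribute only a lower-order error. The hypothesis $s,t \ge h'(\epsilon) = \epsilon^{1/10} \gg \epsilon$ is exactly the lever that makes this possible, since the dominant switch-count term scales with $st$. A subsidiary bookkeeping point is that $\Delta \in \{-2,0,+2\}$ switches must be filtered out of both sides of the double count so as not to contaminate the $k \to k-1$ transition; this is immediate from the $S \times T$-profile case split above.
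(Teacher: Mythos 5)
Your central estimate is not justified, and this is a genuine gap rather than a bookkeeping issue. You claim that for \emph{every} $\sigma\in\mathcal{M}_k$ the number of $\Delta=-1$ switches is $(1\pm O(\epsilon^{9/10}))\,d^2k((1-s-t)n+k)$, citing super-regularity and Proposition~\ref{regularity implies codegree}. But the two required edges $u_1\sigma(u_2)$ and $u_2\sigma(u_1)$ are incident to \emph{different} vertices of $U$, so the relevant quantity is not the codegree $|N_G(\sigma(u_1))\cap N_G(\sigma(u_2))|$ at all: for fixed $u_1$ the number of admissible $u_2$ is $|\{u_2:\ u_2\in N_G(\sigma(u_1)),\ \sigma(u_2)\in N_G(u_1)\}|=|\sigma(N_G(\sigma(u_1)))\cap N_G(u_1)|$ (restricted to the profile sets), i.e.\ a correlation between the fixed matching $\sigma$ and the graph. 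Regularity gives no pointwise control of this for an arbitrary $\sigma\in M(G)$: the sets $N_G(\sigma(u_1))$ and $\sigma^{-1}(N_G(u_1))$ each have size $(d\pm\epsilon)n$, and a bad matching can make them nearly disjoint (when $d\le 1/2$) or nested, so the per-matching switch count can deviate from $d^2$-proportionality in either direction. Asserting that such deviant matchings form only an exponentially small fraction of $M(G)$ is essentially the theorem you are proving — indeed, the paper's proof of Theorem~\ref{MM} introduces exactly this set ($A'$, the matchings with $\sigma(N_G(v'))\cap N_G(u')=(d^2\pm 2h'(\epsilon))n$ for all $u',v'$) and can only show it is almost everything by invoking Theorem~\ref{M0} itself. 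Without a switch-degree bound valid for all matchings on both sides of the double count (or an a priori bound on the exceptional set), the ratio $|\mathcal{M}_k|/|\mathcal{M}_{k-1}|$ does not follow, so the argument is circular at its key step.

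For comparison, the paper does not prove this statement itself (it is quoted from R\"odl--Ruci\'nski), and the known proof is not a switching argument but a direct counting one in the style the paper uses for (M3) of Lemma~\ref{lem:matchings}: fix the intersection pattern (which $k$ vertices of $S$ are matched into $T$, etc.), bound the number of perfect matchings realizing it by binomial coefficients times the upper bound $(d+3\epsilon)^m m!$ for the induced (still regular) sub-pairs as in Theorem~\ref{thm for M3}, and compare with the lower bound $(d-2\epsilon)^n n!$ on $|M(G)|$; an entropy estimate then shows the total weight of all $k$ with $|k-stn|\ge h'(\epsilon)stn$ is exponentially small. No per-matching structural claim is needed there, which is precisely what your switching route cannot supply. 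A minor additional slip: for your $2$-switch, $\Delta=(\mathbf{1}_{u_1\in S}-\mathbf{1}_{u_2\in S})(\mathbf{1}_{\sigma(u_2)\in T}-\mathbf{1}_{\sigma(u_1)\in T})\in\{-1,0,1\}$, so the $\Delta=\pm 2$ cases you propose to filter out never occur; this is harmless, unlike the gap above.
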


We also use the following result of Alon, R\"odl and Ruci\'nski~\cite{ARR} on the number of perfect matchings in (super-)regular graphs (which was also a tool in the proof of Theorem~\ref{M0}).

\begin{thm}\cite{ARR}\label{thm for M3}
Let $0<1/n\ll\epsilon \ll d\leq 1$, and let $G[U,W]$ be an $(\epsilon,d)$-super-regular bipartite graph with $|U|=|W|=n$. Then 
$$ (d-2\epsilon)^n n! \leq|M(G)|\leq (d+2\epsilon)^n n!.$$
If $G[U,W]$ is $(\epsilon,d)$-regular bipartite graph, then 
$$|M(G)| \leq (d+3\epsilon)^n n!.$$
\end{thm}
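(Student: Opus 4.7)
The plan is to prove both bounds via classical permanent estimates applied to the $n \times n$ bipartite adjacency matrix $A = A(G)$, for which $|M(G)| = \mathrm{perm}(A)$. For the upper bound, Br\'egman's inequality on permanents of $0/1$ matrices gives
\[
\mathrm{perm}(A) \;\le\; \prod_{u \in U} \bigl(d_G(u)!\bigr)^{1/d_G(u)}.
\]
In the $(\epsilon,d)$-regular case, Proposition~\ref{typical degree} applied with $B' = W$ shows that all but at most $2\epsilon n$ vertices $u \in U$ satisfy $d_G(u) = (d \pm 2\epsilon)n$, while the remaining vertices satisfy the trivial bound $d_G(u) \le n$. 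Using the Stirling estimate $(k!)^{1/k} = (k/e)(1+o(1))$, the typical factors contribute at most $((d+2\epsilon)n/e)^{(1-2\epsilon)n}$ and the exceptional ones at most $(n/e)^{2\epsilon n}$. Combining these with $n! \ge (n/e)^n$ and the hypothesis $\epsilon \ll d$, the residual factor $(d+2\epsilon)^{-2\epsilon n}(1+o(1))^n$ is absorbed by enlarging the base from $d+2\epsilon$ to $d+3\epsilon$, yielding $\mathrm{perm}(A) \le (d+3\epsilon)^n n!$. In the super-regular case every $u \in U$ already satisfies $d_G(u) = (d\pm\epsilon)n$, so no exceptional vertices occur and the analogous calculation delivers the sharper bound $(d+2\epsilon)^n n!$.

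For the lower bound (needed only in the super-regular case), I would rescale $A$ towards a doubly stochastic matrix and appeal to the Egorychev--Falikman theorem (the resolution of Van der Waerden's conjecture on permanents). Dividing row $u$ by $d_G(u)$ produces a matrix $A'$ with row sums equal to $1$ and column sums equal to $1 \pm O(\epsilon/d)$; Sinkhorn balancing (alternately rescaling rows and columns) then converges to a doubly stochastic matrix $A^*$ with cumulative row and column scaling factors lying in $1 \pm O(\epsilon/d)$. This gives
\[
\mathrm{perm}(A) \;=\; \Bigl(\prod_{u \in U} d_G(u)\Bigr)\mathrm{perm}(A') \;\ge\; ((d-\epsilon)n)^n \cdot \mathrm{perm}(A^*) \cdot \bigl(1 - O(\epsilon/d)\bigr)^{2n} \;\ge\; (d-\epsilon)^n n!\cdot\bigl(1 - O(\epsilon/d)\bigr)^{2n},
\]
using $\prod_{u\in U} d_G(u) \ge ((d-\epsilon)n)^n$ and the Egorychev--Falikman bound $\mathrm{perm}(A^*) \ge n!/n^n$. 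Since $\epsilon \ll d$, the right-hand side is at least $(d-2\epsilon)^n n!$, as required.

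The main obstacle will be the error analysis of the Sinkhorn step: one must verify that the $O(\epsilon/d)$ perturbation of row and column sums propagates to at most a $(1 \pm O(\epsilon/d))^n$ multiplicative error in the permanent, with a constant in $O(\cdot)$ small enough not to eat the slack between $(d-\epsilon)^n$ and $(d-2\epsilon)^n$. A hands-on alternative that avoids invoking Egorychev--Falikman is a direct inductive count: order $U=\{u_1,\dots,u_n\}$ arbitrarily, and for each partial matching $\sigma$ of $\{u_1,\dots,u_{i-1}\}$ with $i \le (1-\epsilon^{1/2})n$, apply Proposition~\ref{typical degree} to the restriction $G[\{u_i,\dots,u_n\},\, W\setminus\sigma(\{u_1,\dots,u_{i-1}\})]$ (which remains super-regular) to show that at least $(d-\epsilon^{1/2})(n-i+1)$ choices of $\sigma(u_i)$ extend to a perfect matching. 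A defect-Hall argument handles the final $\epsilon^{1/2}n$ vertices, and multiplying the branching lower bounds yields the same $(d-2\epsilon)^n n!$ estimate.
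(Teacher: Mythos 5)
The paper does not prove Theorem~\ref{thm for M3} at all: it is imported as a black box from~\cite{ARR}, so there is no in-paper argument to compare against. Your high-level route is in fact the classical one behind that reference: Br\'egman's inequality for the upper bound and Egorychev--Falikman applied to a doubly stochastic matrix supported on $E(G)$ for the lower bound. Your upper-bound argument is essentially complete: in the super-regular case all degrees are $(d\pm\epsilon)n$ and Stirling absorbs the $(1+o(1))^n$ into the move from $d+\epsilon$ to $d+2\epsilon$; in the merely regular case the $2\epsilon n$ exceptional vertices cost a factor $(d+2\epsilon)^{-2\epsilon n}$, and it is worth noting that absorbing this into $d+3\epsilon$ is not automatic --- it works precisely because $2\epsilon\ln\frac{1}{d+2\epsilon}\le \frac{\epsilon}{d+2\epsilon}\le\ln\frac{d+3\epsilon}{d+2\epsilon}+o(1)$, an inequality you should spell out.

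The lower bound is where the real gaps are. First, the crux of your Sinkhorn route --- that the doubly stochastic scaling $A^*=D_1A'D_2$ has cumulative scaling factors $1\pm O(\epsilon/d)$ --- is asserted, not proved, and you rightly flag it; moreover the constant in the $O(\cdot)$ must be small enough that $(1-O(\epsilon/d))^{2n}$ fits inside the gap between $(d-\epsilon)^n$ and $(d-2\epsilon)^n$, so even a correct qualitative scaling bound could miss the stated constant. You can avoid Sinkhorn entirely: by max-flow/min-cut there is a fractional perfect matching with every edge weight at most $t:=\frac{1}{(d-2\epsilon)n}$ (for cuts $(S,W\setminus T)$ with both sides of size at least $\epsilon n$ use $\epsilon$-regularity, otherwise use the minimum-degree condition), and then Egorychev--Falikman gives $|M(G)|\ge \mathrm{perm}(X)t^{-n}\ge n!/(nt)^n=(d-2\epsilon)^n n!$ directly. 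Second, your ``hands-on alternative'' does not work as described: after an $\alpha$-fraction of $U$ has been matched, the leftover pair is still $\epsilon'$-regular (Proposition~\ref{restriction}) but \emph{not} super-regular --- in the worst case a remaining vertex has relative degree only about $\frac{d-\alpha}{1-\alpha}\ll d$ --- so the claimed per-step branching $(d-\epsilon^{1/2})(n-i+1)$ fails for partial matchings you must account for; and even granting it, finishing the last $\epsilon^{1/2}n$ vertices by a defect-Hall ``extendability'' argument sacrifices a factor of order $(\epsilon^{1/2}n)!$, which is super-exponential and cannot be recovered inside $(d-2\epsilon)^n$. Any greedy worst-case product argument loses the $n!$ term for exactly this reason, which is why one has to pass through permanent inequalities (or an averaging/entropy argument) as in your primary route.
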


We now use Theorem \ref{M0} to prove a `localised' version of it. 

\begin{thm}\label{MM}
Suppose $0<1/n \ll \epsilon \ll d\leq 1$ and $h(a)=a^{1/20}$ is as defined in $(\ref{eq:functions})$.
Let $G[U,W]$ be an $(\epsilon,d)$-super-regular bipartite graph with $|U|=|W|=n$.
Then 
$$\frac{|M_e(G)|}{|M(G)|} = (1\pm h(\epsilon)) \frac{1}{dn}. $$
In other words, if we choose a perfect matching $\sigma$ of $G$ uniformly at random, then for any edge $uv\in E(G)$ with $u\in U$ and $v\in W$,
$$\mathbb{P}[ \sigma(u)=v ] = (1\pm h(\epsilon)) \frac{1}{dn}.$$ 
\end{thm}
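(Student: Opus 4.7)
The plan is a switching argument combined with Theorem~\ref{M0}. For an edge $e=uv$ I introduce the auxiliary quantity
\[
|G_e|:=|\{\sigma\in M(G):\sigma^{-1}(v)\sigma(u)\in E(G)\}|,
\]
which counts perfect matchings admitting a ``$4$-cycle swap'' at $u,v$. Note that $M_e(G)\subseteq G_e$, since for $\sigma\in M_e(G)$ the pair $(\sigma^{-1}(v),\sigma(u))$ is exactly $(u,v)$ and $uv\in E(G)$. The strategy is to derive two independent estimates for $|G_e|$, one in terms of $|M_e(G)|$ and one in terms of $|M(G)|$, and combine them.

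For the first estimate I set up a bijection between pairs $(\sigma^*,u_0v_0)$ with $\sigma^*\in M_e(G)$ and $u_0v_0\in \sigma^*\setminus\{uv\}$ satisfying $u_0\in N(v)\setminus\{u\}$ and $v_0\in N(u)\setminus\{v\}$, and matchings $\sigma\in G_e\setminus M_e(G)$; the bijection is the swap $\{uv,u_0v_0\}\leftrightarrow\{uv_0,u_0v\}$. The subgraph $G_{u,v}:=G-\{u,v\}$ is $(\epsilon',d)$-super-regular on $n-1$ vertices per side for $\epsilon'=\epsilon(1+o(1))$; applying Theorem~\ref{M0} to $G_{u,v}$ with $S:=N(v)\setminus\{u\}$ and $T:=N(u)\setminus\{v\}$ (each of size $\approx dn$) gives that, for all but a $(1-c)^n$-fraction of $\sigma^*\in M_e(G)$, the count $|\sigma^*\cap(S\times T)|$ equals $(1\pm h'(\epsilon))d^2n$. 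Summing via the bijection yields
\[
|G_e|=(1\pm h'(\epsilon))|M_e(G)|(d^2n+1).
\]

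For the second estimate I aim to show $|G_e|=(1\pm h(\epsilon))\,d\,|M(G)|$, which combined with the first estimate gives $|M_e(G)|/|M(G)|=(1\pm h(\epsilon))/(dn)$ as desired. The idea is to decompose
\[
|G_e|=|M_e(G)|+\sum_{u_0\in N(v)\setminus\{u\},\,v_0\in N(u)\setminus\{v\},\,u_0v_0\in E(G)}|\{\sigma\in M(G):\sigma(u)=v_0,\,\sigma(u_0)=v\}|,
\]
count the number of valid $4$-cycles as $(1\pm O(\epsilon))d^3n^2$ (by super-regularity of $G[N(v),N(u)]$ together with the codegree estimate of Proposition~\ref{regularity implies codegree}), and control the inner quantity $|\{\sigma\in M(G):\sigma(u)=v_0,\sigma(u_0)=v\}|$ on average over valid $(u_0,v_0)$ using Theorem~\ref{M0} to show that this average equals $|M(G)|/(dn)^2$ up to a factor $(1\pm h(\epsilon))$, so that the whole sum is $\approx d\,|M(G)|$.

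The main obstacle is precisely this second estimate: $|G_e|$ depends on the \emph{specific} vertex pair $(\sigma^{-1}(v),\sigma(u))$, while Theorem~\ref{M0} only controls set-level counts for sets of size at least $h'(\epsilon)n$. Converting set-level concentration into the required pointwise estimate generically costs a square-root factor, which is exactly the slack $h=\sqrt{h'}$ built into the statement. Should the direct decomposition prove too delicate, a viable alternative is strong induction on $n$: apply the theorem statement itself to the smaller super-regular graphs $G-\{u,v_0\}$ (size $n-1$) and $G-\{u,u_0,v,v_0\}$ (size $n-2$), which controls the inner quantities, and since the regularity parameter degrades by only $O(1/n)$ per level the induction closes with the $h(\epsilon)$-slack.
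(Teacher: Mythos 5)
Your first estimate is sound: the 4-cycle swap is indeed a bijection between $G_e\setminus M_e(G)$ and pairs $(\sigma^*,u_0v_0)$, matchings in $M_e(G)$ correspond to perfect matchings of $G-\{u,v\}$, and Theorem~\ref{M0} applied there gives $|G_e|=(1\pm O(h'(\epsilon)))d^2n\,|M_e(G)|$. The gap is the second estimate $|G_e|\approx d\,|M(G)|$, and it is not a technical delicacy to be absorbed by the $\sqrt{\cdot}$ slack — it carries the entire content of the theorem. In your decomposition the inner quantities $|\{\sigma:\sigma(u)=v_0,\ \sigma(u_0)=v\}|$ are pointwise pair-containment counts; Theorem~\ref{M0} only controls $|\sigma(S)\cap T|$ for sets of size at least $h'(\epsilon)n$ and says nothing about the single vertices $u,v$, so there is no mechanism behind the claim that the average over valid 4-cycles is $(1\pm h(\epsilon))|M(G)|/(dn)^2$. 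Concretely, grouping by $v_0$ the sum you must estimate is $\sum_{v_0}|N(v)\cap N(v_0)|\cdot|M_{uv_0}(G)|$ (up to the inductive factor), and while $\sum_{v_0}|M_{uv_0}(G)|=|M(G)|-|M_e(G)|$ exactly, the $\Theta(\epsilon n)$ vertices $v_0$ with atypical codegree with $v$ could a priori carry almost all of this mass; ruling that out needs a pointwise upper bound on $|M_{uv_0}(G)|/|M(G)|$, i.e.\ the theorem itself at size $n$, and the crude bound from Theorem~\ref{thm for M3} is off by a factor $(1+O(\sqrt{\epsilon}))^{n}$, which is useless.

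The induction fallback does not repair this, for two reasons. First, the problematic quantities $|M_{uv_0}(G)|$ live in $G$ itself, not in a smaller graph, so the induction hypothesis never reaches them. Second, the error budget does not close: invoking the statement for $G-\{u,v_0\}$ contributes a factor $(1\pm h(\epsilon_{n-1}))$, but each level also incurs \emph{fresh} errors of order $h'(\epsilon)$ (from your first estimate and from codegree fluctuations) that are independent of $n$; unwinding the strong induction down to the base case $n_0(\epsilon,d)$ accumulates an error of order $(n-n_0)\,h'(\epsilon)$, vastly exceeding the fixed slack $h(\epsilon)=\sqrt{h'(\epsilon)}$ — the remark that the regularity parameter degrades by only $O(1/n)$ per level addresses the wrong error source. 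The paper avoids both problems by double counting a switching graph built from \emph{triangle} switches rather than 4-cycle swaps: with $(u_1,u_2,u_3)$-switches, every Theorem~\ref{M0}-typical matching in $M'_e(G)$ has predictable degree $(d^2\pm 2h'(\epsilon))n$ (the typicality used is exactly $|\sigma(N_G(v'))\cap N_G(u'))|\approx d^2n$ for all pairs $u',v'$), typical matchings in $M_e(G)$ have degree $\approx d^3n^2$, and the exponentially few atypical matchings are absorbed by trivial degree bounds, so the ratio falls out with no pointwise input and no induction. If you wish to keep your 2-switch set-up, you would need an independent argument that a random $\sigma\in M'_e(G)$ satisfies $\sigma^{-1}(v)\sigma(u)\in E(G)$ with probability $d\pm o(1)$, which is again a pointwise statement about $(\sigma(u),\sigma^{-1}(v))$ that Theorem~\ref{M0} does not supply.
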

\begin{proof}
For $\sigma\in M(G)$ and distinct vertices $u_1,u_2,u_3\in U$ such that
$\sigma(u_1)u_2, \sigma(u_2)u_3, \sigma(u_3)u_1 \in E(G)$, we define the {\em $(u_1,u_2,u_3)$-switch} $S_{u_1,u_2,u_3}(\sigma)$
of the matching $\sigma$ to be a new matching $\sigma'$ where
$$\sigma'(u) :=  \left\{ \begin{array}{ll}
\sigma(u) & \text{ if }u\notin \{u_1,u_2,u_3\},\\
\sigma(u_3) & \text{ if } u=u_1, \\
\sigma(u_1) & \text{ if } u=u_2,\\
\sigma(u_2) & \text{ if } u=u_3.\\
\end{array} \right.$$
If $\sigma'\in M(G)$ is the $(u_1,u_2,u_3)$-switch of $\sigma\in M(G)$ for some $u_1,u_2,u_3\in U$, we also say that
$\sigma'$ is a \emph{switch of $\sigma$}.

Our aim is to estimate $|M_e(G)|/ |M'_e(G)|$ for a given edge $e=uv$. To do so, we consider the auxiliary bipartite graph
$H$ with bipartition $A_1:=M_e(G)$, $A_2:=M'_e(G)$ such that $\sigma \sigma' \in E(H)$ if and
only if $\sigma\in A_1, \sigma'\in A_2$ and $\sigma'$ is a switch of $\sigma$. So $V(H)=M(G)$.
Let
\begin{align*}
A' := \{\sigma\in M(G) : \sigma(N_G(v'))\cap N_G(u') = (d^2 \pm 2h'(\epsilon))n \text{ for all } u'\in U, v'\in W\}.
\end{align*}
Choose a new constant $c$ such that $1/n \ll c \ll \epsilon$. Since $G$ is $(\epsilon,d)$-super-regular, $|N_G(u')| = (d\pm \epsilon)n$ for all $u'\in V(G)$. This together with Theorem \ref{M0} implies
that
\begin{align}\label{U' size}
|A'| \geq (1-n^2 (1-{c})^n)|V(H)|.
\end{align}
Note that for all $\sigma_1 \in A_1$ 
\begin{align}\label{degree def}
d_H(\sigma_1) = |\{ S_{u,u_1,u_2}(\sigma_1) : v u_1,\sigma_1(u_1)u_2, \sigma_1(u_2)u \in E(G), u_1,u_2 \in U\setminus \{u\} \text{ with } u_1\neq u_2\}|,
 \end{align}
and for all $\sigma_2 \in A_2$ 
\begin{align}\label{degree def2} 
d_H(\sigma_2) = |\{ S_{u,u_1,\sigma^{-1}_2(v)}(\sigma_2) :   u_1\sigma_2(u),\sigma_2(u_1)\sigma^{-1}_2(v)  \in E(G), u_1 \in U\setminus \{u,\sigma_2^{-1}(v)\}\}|.
 \end{align}
Thus for any $\sigma_1 \in A_1, \sigma_2 \in A_2$, we have
\begin{align}\label{degree U}
d_H(\sigma_1) \leq  n^2 \ \ \ \ \text{and}\ \ \ \ d_H(\sigma_2) \leq  n. 
 \end{align}
Let us now estimate the degree of vertices of $A'$ in the graph $H$.  
For $\sigma_1 \in A_1$, 
$$d_H(\sigma_1) \stackrel{(\ref{degree def})}{=} |\{(u_1,u_2)\in (U\setminus\{u\})\times (U\setminus \{u\}): vu_1, \sigma_1(u_1)u_2, \sigma_1(u_2)u \in E(G), u_1\neq u_2\}|.$$
Since $u_1 \in N_{G}(v)$ we have $ |N_{G}(v)| = (d\pm \epsilon) n$ choices for $u_1$.
Once $u_1$ is fixed, we have to choose $u_2$ such that $u_2 \in N_{G}(\sigma_1(u_1))$ and $\sigma_1(u_2) \in N_{G}(u)$.
So if $\sigma_1 \in A'$, there are $(d^2 \pm 2h'(\epsilon)) n$ choices for $u_2$ once $u_1$ is fixed.
Hence we obtain that
\begin{align}\label{deg 1}
\text{if }\sigma_1 \in A_1\cap A', \text{ then } d_H(\sigma_1) = (d\pm \epsilon) n \cdot (d^2 \pm 2h'(\epsilon) ) n =  (d^3 \pm 3h'(\epsilon))n^2.
\end{align}
For $\sigma_2 \in A_2$, 
$$d_H(\sigma_2) \stackrel{(\ref{degree def2})}{=} |\{u_1\in U\setminus \{u,\sigma_2^{-1}(v)\}: u_1\sigma_2(u), \sigma_2(u_1) \sigma_2^{-1}(v) \in E(G) \}|.$$
Thus we count the number of choices of $u_1$ such that $u_1 \in N_G(\sigma_2(u))$ and $\sigma_2(u_1) \in N_G(\sigma^{-1}_2(v))$.
Similarly as before, if $\sigma_2 \in A'$, there are $(d^2 \pm 2h'(\epsilon)) n$ choices for~$u_1$. 
Hence we obtain that 
\begin{align}\label{deg 2}
\text{if }\sigma_2 \in A_2\cap A',\text{ then } d_H(\sigma_2) = (d^2 \pm 2h'(\epsilon))n.
\end{align}
Thus
\begin{eqnarray*}
|E(H)| &=& \sum_{\sigma_1\in A_1\cap A'} d_H(\sigma_1) + \sum_{\sigma_1\in A_1\setminus A'} d_H(\sigma_1)  
\stackrel{(\ref{degree U}),(\ref{deg 1})}{=} (d^3 \pm 3h'(\epsilon))n^2 |A_1\cap A'| \pm n^2 |A_1\setminus A'|\nonumber \\
& = & (d^3 \pm 3h'(\epsilon))n^2 |A_1| \pm 2n^2 |A_1\setminus A'| 
\stackrel{(\ref{U' size}) }{=} (d^3 \pm 3h'(\epsilon))n^2 |A_1| \pm 2n^4 (1-{c})^n |V(H)|,
\end{eqnarray*}
and
\begin{eqnarray*}
|E(H)|&=& \sum_{\sigma_2\in A_2\cap A'} d_H(\sigma_2) + \sum_{\sigma_2\in A_2\setminus A'} d_H(\sigma_2) 
\stackrel{(\ref{degree U}),(\ref{deg 2})}{=} (d^2 \pm 2h'(\epsilon))n |A_2\cap A'| \pm n |A_2\setminus A'| \nonumber\\
& \stackrel{(\ref{U' size}) }{=} & (d^2 \pm 2h'(\epsilon))n |A_2| \pm 2 n^3 (1-{c})^n |V(H)|.
\end{eqnarray*}
This in turn implies that
\begin{align*}
(d^3 \pm 3h'(\epsilon))n^2 |A_1| = (d^2 \pm 2h'(\epsilon))n |A_2| \pm 3n^4 (1-{c})^n (|A_1|+|A_2|).
\end{align*}
Since $1/n \ll {c}$, we conclude that 
$|A_2| = (d\pm 7h'(\epsilon)/d^2) n |A_1|$.
Since $h(\epsilon) = \sqrt{h'(\epsilon)}$ by~(\ref{eq:functions}), this in turn implies that
$$\frac{|M_e(G)|}{|M(G)|}=\frac{|A_1|}{|A_1|+|A_2|} = 
\frac{|A_1|}{|A_1|+ (d\pm 7h'(\epsilon)/d^2) n |A_1|}   = (1\pm h(\epsilon)) \frac{1}{dn}.$$
\end{proof}

From Theorems~\ref{M0}--\ref{MM} we now deduce further properties of a random perfect matching which we will 
make frequent use of in Section~\ref{sec:RR}.

\begin{lem}\label{lem:matchings}
Suppose $0< 1/n\ll \epsilon \ll d'<d\leq 1$ and $0<c\ll d'\leq d/9$.\COMMENT{What we use in the proof is that $8d'/d\leq 1-\sqrt{\epsilon}$.} Let $h(a)=a^{1/20}$ be as defined in $(\ref{eq:functions})$
and let $G[U,W]$ be an $(\epsilon,d)$-super-regular bipartite graph with $|U|=|W|=n$.
Then a perfect matching $\sigma$ of $G$ chosen uniformly at random satisfies the following.
\begin{itemize}

\item[(M1)] For every $u\in U$ and every $S\subseteq N_G(u)\subseteq W$ with $|S|=sn$,
$$
\Prob\left[\sigma(u)\in S\right]=(1\pm h(\epsilon))\frac{s}{d} .
$$
\item[(M2)] 
For every $w\in W$ and every $S\subseteq N_G(w)\subseteq U$ with $|S|=sn$,
$$
\Prob\left[w\in \sigma(S)\right]=(1\pm  h(\epsilon))\frac{s}{d} .
$$
\item[(M3)] 
Let $G'\subseteq G$ be a subgraph of $G$ with $V(G')=V(G)$ such that $\Delta(G')\leq d'n$. Then 
$$\Prob\left[|\{u\in U: u\sigma(u) \in E(G') \}| >  \frac{8d'}{d}n\right] < (1-c)^n .$$
\end{itemize}
\end{lem}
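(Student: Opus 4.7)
The plan is to derive (M1) and (M2) directly from Theorem~\ref{MM}, and to prove (M3) by a factorial moment computation built on the matching-count estimates of Theorem~\ref{thm for M3}.

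Items (M1) and (M2) follow immediately. For (M1), I would fix $u\in U$ and $S\subseteq N_G(u)$ with $|S|=sn$, note that the events $\{\sigma(u)=v\}$ for $v\in S$ are pairwise disjoint, and sum $\Prob[\sigma(u)=v]=(1\pm h(\epsilon))/(dn)$ (given by Theorem~\ref{MM}) over $v\in S$ to obtain the desired $(1\pm h(\epsilon))s/d$. Item~(M2) follows by the same argument with the roles of $u$ and $w$ interchanged.

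For (M3), set $X:=|\{u\in U:u\sigma(u)\in E(G')\}|$ and fix an integer $k$ just above $8d'n/d$. My plan is to bound $\Prob[X\ge k]\le \E[\binom{X}{k}]$ via Markov's inequality. Expanding the factorial moment (the tuples with repeated $v_i$'s contribute $0$ since $\sigma$ is a matching),
\[
\E\!\left[\binom{X}{k}\right] \;=\; \sum_{\{u_1,\dots,u_k\}\subseteq U}\,\sum_{\substack{(v_1,\dots,v_k)\\ v_i\in N_{G'}(u_i)}}\Prob[\sigma(u_i)=v_i\ \forall i\in[k]].
\]
Each probability equals $|M(G[U',W'])|/|M(G)|$ with $U':=U\setminus\{u_i\}_i$ and $W':=W\setminus\{v_i\}_i$. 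Because $d'\le d/9$ forces $k\le 8n/9$, Proposition~\ref{restriction} ensures $G[U',W']$ is still $(9\epsilon,d)$-regular, so the regular version of Theorem~\ref{thm for M3} bounds the numerator by $(d+27\epsilon)^{n-k}(n-k)!$, while the super-regular version yields $|M(G)|\ge (d-2\epsilon)^n n!$.

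Combining these bounds with the identity $\binom{n}{k}(n-k)!/n!=1/k!$, Stirling's estimate $k!\ge (k/e)^k$, and the trivial tuple count $(d'n)^k$ (using $\Delta(G')\le d'n$), the factorials telescope and the whole expression collapses to
\[
\E\!\left[\binom{X}{k}\right] \;\le\; \left(\frac{ed}{8(d-2\epsilon)}\right)^{\!k}\,e^{O(\epsilon n/d)}.
\]
Since $\epsilon\ll d$ the base is strictly less than $1$, so the first factor decays at exponential rate $\Omega(d'/d)$ in $n$; since $\epsilon\ll d'$, the slack factor $e^{O(\epsilon n/d)}$ is absorbed, giving $\E[\binom{X}{k}]\le (1-c)^n$ whenever $c\ll d'$, as required. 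The one delicate point is ensuring that the passage from super-regularity to mere regularity in Proposition~\ref{restriction} after deleting up to $\approx 8n/9$ vertices, together with the gap between the upper and lower bounds in Theorem~\ref{thm for M3}, only introduces a factor of size $e^{O(\epsilon n/d)}$; this is exactly what the hierarchy $\epsilon \ll d' \le d/9$ is designed to guarantee.
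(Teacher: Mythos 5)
Your proposal is correct and follows essentially the same route as the paper: (M1) and (M2) by summing the single-edge estimate of Theorem~\ref{MM} over $S$, and (M3) by bounding, for each set of roughly $8d'n/d$ vertices of $U$, the number of perfect matchings matching them all into $G'$ via the $(d'n)^k$ choice count, Proposition~\ref{restriction} and the upper bound of Theorem~\ref{thm for M3} for the remaining pair, divided by the lower bound $(d-2\epsilon)^n n!$ on $|M(G)|$. Your factorial-moment/Markov phrasing is just the paper's union bound over $m$-subsets in different notation, and the closing arithmetic (Stirling versus $m!>(m/3)^m$) is an equivalent way of extracting the same exponential decay.
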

\begin{proof}
To show (M1) and (M2), we use Theorem \ref{MM} to conclude that
$$\mathbb{P}[\sigma(u)\in S]= \sum_{w\in S} \mathbb{P}[ \sigma(u)=w] = (1\pm h(\epsilon)) \frac{|S|}{dn},$$
and 
$$\mathbb{P}[w \in \sigma(S)]= \sum_{u\in S} \mathbb{P}[ \sigma(u)=w] = (1\pm h(\epsilon)) \frac{|S|}{dn}.$$

To prove (M3), 
let $U_\sigma := \{ u\in U: u\sigma(u) \in E(G')\}$ and let $m:=8 d'n/d$.
For given $U'\subseteq U$ with $|U'| = m$, we shall now find an upper bound for the number of perfect matchings $\sigma$ in $G$ such that $U'\subseteq U_{\sigma}$.
For each vertex $u\in U'$, there are at most $d'n$ ways to choose $\sigma(u)$ such that $u\sigma(u)\in E(G')$. After having chosen $\sigma(u)$ for all $u\in U'$, by Proposition~\ref{restriction},
we are left with a $(\sqrt{\epsilon},d)$-regular graph $G[U\setminus U',W\setminus\sigma(U')]$, which contains at most $(d+3\sqrt{\epsilon})^{n-m} (n-m)!$
perfect matchings by Theorem \ref{thm for M3}. Thus for any set $U'\subseteq U$ with $|U'|= m$, the number of perfect matchings $\sigma$ in $G$ which satisfy $U'\subseteq U_{\sigma}$ is at most 
$(d'n)^{m} (d+3\sqrt{\epsilon})^{n-m} (n- m)!.$
Since there are at most $\binom{n}{m}$ choices for $U'$, the number of perfect matchings $\sigma$ with $|U_\sigma| \geq m$ is at most 
$$\binom{n}{m} (d'n)^{m} (d+3\sqrt{\epsilon})^{n-m} (n- m)!.$$
By Theorem \ref{thm for M3}, we know $|M(G)| \geq (d-2\epsilon)^n n!$.
 Thus, 
\begin{align*}
\Prob\left[|\{u\in U: u\sigma(u) \in E(G') \}| >  \frac{8d'}{d}n\right]  &\leq 
\frac{ \binom{n}{m} (d'n)^{m} (d+3\sqrt{\epsilon})^{n-m} (n- m)!}{(d-2\epsilon)^n n!} \\
&= \left(\frac{d+3\sqrt{\epsilon}}{d-2\epsilon}\right)^{n-m}\frac{(d'n)^{m} }{ (d-2\epsilon)^m m!} \\
& \leq (1 + \epsilon^{1/3})^n \frac{d^m (m/8)^m}{(d/2)^m (m/3)^m}\\
& \leq (1 + \epsilon^{1/3})^n \left(\frac{3}{4}\right)^{8d'n/d} \leq (1-c)^n.
\end{align*}
To obtain the third line we use that $8d'n = dm$ and $m! > (m/3)^m$ for large $m$.
To obtain the final inequality we use that $\epsilon,c \ll d', d$.
\end{proof}




\section{A uniform blow-up lemma}\label{sec:RR}
\subsection{Statement and discussion}
Our goal in this section is to establish a probabilistic version of the classical blow-up lemma, which finds a `uniformly distributed' copy of $H$ in $G$. While the classical version asserts that, informally, a super-regular graph $G$ contains a copy of any bounded degree graph $H$, here we show that such a copy of $H$ can be selected to have some additional `random-like' properties. Ideally this would take a similar form as Theorem~\ref{MM}: for a randomly chosen copy of $H$ in $G$, each edge of $G$ has the same probability of being in $H$. Unfortunately this is false. (Suppose $H$ is a triangle factor and that $e\in E(G)$ does not lie in a triangle.) However we will be able to show that a suitable randomised algorithm gives a copy of $H$ in $G$ which is randomly distributed in a very strong sense (see Lemma \ref{modified blow-up}).

First we need the following definitions. Let $\phi$ be an embedding of a graph $H$ into a graph $G$ and let $G'\subseteq G$. When applying Lemma~\ref{modified blow-up}, $G'$ will be a `bad' graph which we would like to avoid as much as possible when embedding $H$. We define
\begin{align}
\begin{split}
&\phi_E(H,G,G'):=\phi(E(H))\cap E(G'), \\
&\phi(H,G,G'):= \{v\in V(G): \text{ there exists } e\in \phi_E(H,G,G') \text{ such that }v\in e\}, \\
&\phi_2(H,G,G'):=\{v\in V(G): \text{ there exists } u\in V(G) \text{ such that } uv\in \phi(E(H)), u\in \phi(H,G,G')\}.
\end{split}
\end{align}
Note that $\phi(H,G,G')\subseteq \phi_2(H,G,G')$.

We would also like our embedding of $H$ to be well behaved with respect to additional given graphs $A_0$ and $P$ (where $A_0$ encodes the initial candidates for the images of each vertex of $H$, and $P$ is the patching graph which will be used to adjust the embeddings later on). To formalise this, let $R$ be a graph on $[r]$.
Let $H$ be a graph and let $\mathcal{X}=\{X_1,\dots, X_{r}\}$ be a partition of $V(H)$. We say that a (not necessarily uniform) hypergraph $N$ with vertex set $V(H)= X_1\cup \dots \cup X_{r}$ and edge set $\{N_x : x\in V(H)\}$ is an {\em $(H,R,\mathcal{X})$-candidacy hypergraph} if all $x,y\in V(H)$ satisfy the following properties:
\begin{itemize}
\item if $x\in X_i$, then $|N_x\cap X_{j}|\leq 1$ for any $j\in [r]$, and $|N_x\cap X_{j}|=0$ if $j\notin N_R(i)$,
\item $N_H(x)\subseteq N_x$, 
\item $y\in N_x$ if and only if $x\in N_y$.
\end{itemize}

Notice that the above definition is only applicable to graphs $H$ which admit the vertex partition $(R,X_1,\dots, X_r)$ with $\Delta(H[X_i,X_j])\leq 1$ for all $i,j \in [r]$ (and thus $\Delta(H)\leq \Delta(R)$).
Let $P$ be a graph admitting vertex partition $(R,\mathcal{V})$ where $\mathcal{V}=( V_1,\dots, V_{r})$. Suppose that $|X_i|=|V_i|$ for all $i\in [r]$ and that $\phi: V(H)\rightarrow V(P)$ is a bijection, mapping each $X_i$ to $V_i$. Let $A_0$ be a graph on $V(P)\cup V(H)$.
We call a bipartite graph $F$ on  $V(F)=(V(H),V(P))$ an \emph{$(H,P,R,A_0,\phi,\mathcal{X},\mathcal{V},N)$-candidacy bigraph} if it satisfies the following conditions.
\begin{itemize}
\item [(CB1)] $N$ is an $(H,R,\mathcal{X})$-candidacy hypergraph.
\item [(CB2)] $N_{F}(x) \subseteq N_{A_0}(x) \cap \bigcap_{y\in N_x} N_{P}(\phi(y))\cap V_i$ for all $x\in X_i$.
\end{itemize}
Note that in particular $E(F[X_i,V_j])=\emptyset$ for all $i\neq j$. A candidacy bigraph $F$ will always encode permissible images for embeddings in the patching step,
i.e.~in the patching step of the proof of Theorem~\ref{main lemma} we may only embed $x$ to $v$ if $xv\in E(F)$.

For a given graph $R$ on $[r]$ 
and a symmetric $r\times r$ matrix $\vec{\beta}$ with entries $\beta_{i,j}$, we denote 
\begin{align}\label{p R def}
p(R,\vec{\beta},i):= \prod_{\ell \in N_R(i)} \beta_{i,\ell}.
\end{align}
This will be convenient for measuring the densities of the bipartite subgraphs of the candidacy bigraph $F$.
Recall that $R_K$ denotes the $K$-fold blow-up of $R$.
Now we can state our main result in this section.

\begin{lemma}[Uniform blow-up lemma]\label{modified blow-up}
Suppose 
$$0< 1/n \ll c \ll \epsilon\ll \gamma \ll \beta,d,d_0,1/k,1/\Delta_R, 1/(C+1) \text{ and } 1/n \ll 1/r.$$ Let $K:=(k+1)^2\Delta_R$,
let $w:=K^2\Delta_R^2(\Delta_R+1)$ and let $f$ be the function defined in \eqref{eq:functions}. 
 Suppose that $R$ is a graph on $[r]$ with $\Delta(R)=\Delta_R$. Let $\vec{d}, \vec{\beta},\vec{k}$ be symmetric $r \times r$ matrices such that $d=\min_{ij\in E(R)} d_{i,j} , \beta=\min_{ij\in E(R)}\beta_{i,j}, k=\max_{ij \in E(R)} k_{i,j}$ and $k_{i,j} \in \mathbb{N}$ for all $ij\in E(R)$. Suppose that the following hold.
\begin{itemize}
\item $G$ is an $(\epsilon, \vec{d})$-super-regular graph with respect to $(R,\mathcal{V})$, where $\mathcal{V}=(V_1,\dots, V_r)$,
$\max_{i \in [r]} |V_i|=n$ and $n -C \leq |V_i| \leq n$ for all $i\in [r]$. 
\item $P$ is an $(\epsilon,\vec{\beta})$-super-regular graph with respect to $(R,\mathcal{V})$.
\item $H$ is an $(R,\vec{k},C)$-near-equiregular graph admitting the vertex partition $(R,\mathcal{X})$ with $\mathcal{X}=(X_1,\dots, X_r)$ where $|X_i|=|V_i|$. 
\item $A_0$ is a bipartite graph with bipartition $(V(H),V(G))$ such that $N_{A_0}(X_i)=V_i$ and $A_0[X_i,V_i]$ is $(\epsilon,d_0)$-super-regular for each $i\in [r]$.
\end{itemize}
Then there exists a randomised algorithm (the `uniform embedding algorithm') which succeeds with probability at least $1-(1-c)^n$ in finding an embedding $\phi$ of $H$ into $G$ such that $\phi(X_i)=V_i$ for each $i\in [r]$ and $\phi(x) \in N_{A_0}(x)$ for each $x\in V(H)$. Conditional on being successful, this algorithm returns $(\phi,\mathcal{Y}, \mathcal{U}, F,N)$ with the following properties. 
\begin{itemize}
\item[(B1)] For all $ij\in E(R)$, $v\in V_i$ and $S\subseteq V_j\cap N_G(v)$ with $|S| > f(\epsilon) n$, 
$$\mathbb{E}\left[ |N_{\phi(H)}(v)\cap S| \right] = (1\pm f(\epsilon)) \frac{k_{i,j}|S|}{d_{i,j}n}. $$
\item[(B2)] $\mathcal{U}= \{U_1,\dots, U_{Kr}\}$ is a partition refining $V_1,\dots, V_r$ and $\mathcal{Y}= \{Y_1,\dots, Y_{Kr}\}$ is a partition refining $X_1,\dots, X_r$ such that  $U_i\subseteq V_{\lceil i/K \rceil}$, $Y_i\subseteq X_{\lceil i/k \rceil}$, $|Y_i|=|U_i|$,
$\max_{i \in [Kr]}|Y_i|=\lceil n/K \rceil$ and $|Y_i|-|Y_j| \leq C$ for all $i\neq j$. 
$F$ is a disjoint union of bipartite graphs $F_1,\dots,F_{Kr}$ such that each $F_j$ has bipartition $(Y_j,U_j)$. $N$ is a hypergraph with vertex set $V(H)$ and a hyperedge $N_x$ for each $x\in V(H)$.  Moreover, the following conditions hold. 
\begin{itemize}
\item[(B2.1)] $N$ is an $(H,R_K,\mathcal{Y})$-candidacy hypergraph with $\max\{|N_x|: N_x\in N\} \leq K\Delta_R$. 
\item[(B2.2)] For all $j\in [Kr]$ and $x\in Y_j$, $N_{F_j}(x)\subseteq U_j\cap N_{A_0}(x)\cap \bigcap_{y\in N_x} N_{P}(\phi(y))$, thus $F$ is a $(H,P,R_K,A_0,\phi,\mathcal{Y},\mathcal{U},N)$-candidacy bigraph.
\item[(B2.3)] $P$ is $(\epsilon^{1/3},\vec{\beta'})$-super-regular with respect to $(R_K,\mathcal{U})$, where $\vec{\beta'}$ is the symmetric $Kr\times Kr$ matrix with entries $\beta'_{\ell,\ell'} :=\beta_{i,j}$ whenever $i=\lceil \ell/K \rceil$ and $j=\lceil \ell'/K \rceil.$
\item[(B2.4)] For each $j\in [Kr]$ the graph $F_j$ is $(f(\epsilon),d_0 p(R_K,\vec{\beta'},j))$-super-regular.
\end{itemize}

\item[(B3)] For all $u\neq v\in V(G)$ and $Z\subseteq V(H)$ with $|Z|\leq \gamma^3 n$,
\begin{itemize}
\item[(B3.1)] 
$\mathbb{P}[ N_H(\phi^{-1}(u))\cap N_H(\phi^{-1}(v)) \neq\emptyset] \leq 1/\sqrt{n}.$
\item[(B3.2)]  $\mathbb{P}[ \phi^{-1}(v) \in Z] \leq \gamma^{2}$. 
\end{itemize}

\item[(B4)] Let $G''$ be a subgraph of $G$ with $V(G'')=V(G)$ such that $\Delta(G'')\leq \gamma n$. 
\begin{itemize}
\item[(B4.1)] For any vertex $v\in V(G)$, $\mathbb{P}[v\in \phi_2(H,G,G'')] \leq \gamma^{1/2}$.
\item[(B4.2)] $\mathbb{P}[|\{u : u \in U_{j}, u\in \phi_2(H,G,G'') \}| \leq \gamma^{3/5} n \text{ for all }j \in [Kr]] \geq 1-(1-2 c)^n.$
\end{itemize}

\item[(B5)]
 Suppose $v_1,\dots, v_s\in V_i$ where $i\in [r]$ with $s\leq K$.
For all $j\in N_{R}(i)$ and $v\in V_j$, let $B_{v}$ be the random variable such that 
$$
B_{v}:= \left\{\begin{array}{ll}
1 & \text{ if there exists }\ell \in [s] \text{ with } v_\ell v \in \phi(E(H)), \\
0 & \text{ otherwise.} 
\end{array}\right.
$$
Then for all $j\in N_R(i)$ and all but at most $2 f(\epsilon) n$ vertices $v\in N_{G}(v_1,\dots, v_s) \cap V_j$ we have that 
$$\mathbb{P}[B_{v}=1] = \left(1\pm 2f(\epsilon)\right)\frac{k_{i,j}s}{d_{i,j} n}.
$$

\item[(B6)]
For all $i\in[r]$ and all sets $Q\subseteq X_i$ and $W\subseteq V_i$ with $|Q|,|W|> f(\epsilon) n$, $$\mathbb{P}\left[|\phi(Q)\cap W| = \frac{(1\pm f(\epsilon))|Q||W|}{n} \right] \geq 1-(1-2c)^n.$$
\end{itemize}

\end{lemma}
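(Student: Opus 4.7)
The plan is to follow the strategy sketched in Section~\ref{sec:sketch}: embed $H$ into $G$ round by round, each round extending the partial embedding by a uniformly random perfect matching of an updated candidacy bigraph. First I would construct the refined partitions $\mathcal{Y}$ and $\mathcal{U}$. Within each $X_i$, consider the auxiliary graph on $X_i$ in which two vertices are adjacent iff they share an $H$-neighbour; exploiting the bound $\Delta(H)\le (k+1)\Delta_R$, its chromatic number is at most $K=(k+1)^2\Delta_R$, so Theorem~\ref{thm:HS} yields an equitable $K$-colouring that splits each $X_i$ into $Y_{(i-1)K+1},\dots,Y_{iK}$ with $|N_H(x)\cap Y_j|\le 1$ for every $x\in V(H)$ and $j\in[Kr]$. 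Correspondingly, take a matching random balanced partition $\mathcal{U}$ of each $V_i$; by Chernoff this preserves super-regularity of $G[U_j,U_{j'}]$ and $P[U_j,U_{j'}]$, yielding (B2.3). Put $N_x:=N_H(x)$ and take the initial candidacy bigraphs $F^0_{j}:=A_0[Y_j,U_j]$.

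The algorithm runs in $Kr$ rounds. At the start of round $j$ we have a partial embedding of $Y_1\cup\dots\cup Y_{j-1}$ and, for each $j'\ge j$, a candidacy bigraph $F^{j-1}_{j'}$ with parts $Y_{j'}$ and $U_{j'}$. In round $j$, choose a uniformly random perfect matching $\sigma_j$ of $F^{j-1}_j$ (it exists by the maintained super-regularity) and set $\phi(x):=\sigma_j(x)$ for $x\in Y_j$; then for each $j'>j$ update $F^{j-1}_{j'}$ by deleting at each $x\in Y_{j'}$ every $u\in U_{j'}$ that fails to be a $G$-neighbour of some already-embedded $y\in N_H(x)$, fails to be a $P$-neighbour of some already-embedded $y\in N_x$, or has already been used as an image in round $j$. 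The main inductive invariant is that each $F^j_{j'}$ is $(g(\epsilon),\rho_{j,j'})$-super-regular, where $\rho_{j,j'}$ is obtained by multiplying $d_0\,p(R_K,\vec{\beta'},j')$ by one shrinkage factor for each completed round. The tools of Section~\ref{sec:matching} are essential here: (M1)/(M2) show that each edge of $F^{j-1}_{j'}$ survives round $j$ with a near-uniform probability, and Azuma applied to an exposure martingale over $\sigma_j$, together with Theorem~\ref{codegree implies regularity}, gives the required degree/codegree concentration with failure probability $(1-c')^n$ per round, absorbed by a union bound into the final $(1-c)^n$.

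Given the maintained super-regularity of the candidacy bigraphs, properties (B1)--(B6) follow by concentration arguments. For (B1), write $|N_{\phi(H)}(v)\cap S|$ as a sum of indicators over pairs $(x,y)\in E(H)$ with $x\in X_i$, $y\in X_j$; Theorem~\ref{MM} applied to the round embedding $x$ gives $\Prob[\phi(x)=v]$ up to a factor $(1\pm h(\epsilon))$, and conditionally (M1) gives $\Prob[\phi(y)\in S\mid\phi(x)=v]=(1\pm h(\epsilon))|S|/(d_{i,j}n)$; the multiplicative definition of $\rho$ makes the product of candidacy-densities across rounds cancel exactly, leaving $(1\pm f(\epsilon))k_{i,j}|S|/(d_{i,j}n)$. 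Conditions (B2) and (B3.1) are immediate from the construction (the latter since the refinement ensures at most one $H$-neighbour of any vertex in each $Y_j$). (B3.2), (B5) and (B6) are single-set applications of (M1)/(M2) combined with Azuma/Chernoff over rounds. For (B4), the set $\phi_2(H,G,G'')$ consists of vertices within $\phi(H)$-distance one of an endpoint of a $G''$-edge; (M3) applied round by round gives an exponential bound on how many images in each $Y_j$ become endpoints of a $G''$-edge, yielding (B4.1) by a first-moment bound and (B4.2) by the $(1-c)^n$ concentration in (M3).

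The main obstacle is the inductive preservation of $(g(\epsilon),\rho_{j,j'})$-super-regularity of the candidacy bigraphs across all $Kr$ rounds, with the tight $f(\epsilon)$ error needed for (B1). A priori, degrees in $F^{j}_{j'}$ can shrink by a factor of $(1-1/d_{i,j})$ per relevant round, and there is no automatic control on how these losses correlate. The key is that a uniformly random matching in a super-regular bigraph behaves edge-uniformly (Theorem~\ref{MM}), so degree and codegree shrinkages concentrate around their means up to an $h(\epsilon)$-type error; combined with Theorem~\ref{codegree implies regularity} this is precisely what closes the induction within the hierarchy $\epsilon\ll h(\epsilon)\ll g(\epsilon)\ll q_*(\epsilon)\ll f(\epsilon)$ from~\eqref{eq:functions}. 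The multiplicative definition of $\rho_{j,j'}$ is what makes the telescoping in the proof of (B1) give a crisp expected value rather than an accumulated error.
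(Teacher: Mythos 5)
There is a genuine gap, and it is exactly the pitfall the paper flags after describing its Slender graph algorithm: you embed one class $Y_j$ per round, so your algorithm runs for $Kr$ rounds, and your inductive invariant asks every candidacy bigraph to stay $(g(\epsilon),\rho_{j,j'})$-super-regular throughout. But in Lemma~\ref{modified blow-up} the hierarchy only requires $1/n\ll 1/r$; the parameter $r$ is \emph{not} bounded in terms of $1/\epsilon$ (this is essential for the application after Szemer\'edi's regularity lemma, cf.\ Theorem~\ref{thm:main3}). Each round's maintenance step costs a power-type loss in the regularity parameter: passing from codegree concentration (Theorem~\ref{MM}/Azuma) back to regularity via Theorem~\ref{codegree implies regularity}, or equivalently one application of the four graphs lemma (Lemma~\ref{The four graphs lemma}), turns an error $a$ into roughly $g'(a)=a^{1/120}$. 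Iterating this a \emph{bounded} number of times keeps the error of the form $\epsilon^{c}$, but iterating it $Kr$ times gives an error like $\epsilon^{(1/300)^{Kr}}$, which is no longer small when $r$ grows with $n$; likewise the multiplicative factors $(1\pm h(\cdot))$ you accumulate in the proof of (B1) would be raised to a power of order $Kr$, destroying the $(1\pm f(\epsilon))$ conclusion. The paper avoids this by partitioning the $Kr$ cluster indices into $w=K^2\Delta_R^2(\Delta_R+1)$ sets that are independent in $(R_K)^2$ and embedding all classes of such a set simultaneously in one round (their embeddings cannot interact), so the number of rounds, and hence the number of iterations of $g$ in \eqref{eq:xirecursion}, is a constant depending only on $k$ and $\Delta_R$. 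Your partition step (Hajnal--Szemer\'edi on $H^2[X_i]$) is the same as the paper's, but without this grouping of rounds the induction you describe cannot close within the hierarchy \eqref{eq:functions}.

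A secondary, related gap is in your derivation of (B1). Your telescoping argument implicitly assumes that the class of $v$ is embedded before the classes meeting $S$, so that you can condition on $\phi(x)=v$ and then apply (M1) in later rounds. When the classes containing $N_H(x)\cap X_j$ are embedded \emph{before} the round that determines $\phi^{-1}(v)$, the quantity $|N_{\phi(H)}(v)\cap S|$ is decided by the single choice of $\phi^{-1}(v)$, and one must instead track, across all earlier rounds, how the candidate set for $\phi^{-1}(v)$ intersects the preimages $\phi^{-1}(S_b)$ (partitioned further according to which clusters contain the $H$-neighbours of each candidate); this is the content of the paper's Case~2 of Claim~\ref{claim:B1}, with the events $(\mathrm{C}1_{\ell'})$, $(\mathrm{C}2_{\ell'})$, and it does not follow from a product of per-round edge-uniformity statements. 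So even with a bounded number of rounds you would need an additional argument of this kind before (B1) is established.
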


Note that it is property (B1) that most intuitively encapsulates the `randomness' of the embedding $\phi$. Informally speaking, it says that a sufficiently large set $S$ in the neighbourhood of $v$ will contain approximately as many neighbours of $v$ in the embedded copy of $H$ as expected. 
As mentioned earlier, the graphs $F_i$ in (B2) can be viewed as `candidacy graphs'. If they are super-regular, this means that the embedding $\phi$ is well behaved with respect to the patching graph $P$. We will need this when replacing some edges of $\phi(E(H))$ by edges of $P$. (B3.1) says that for any pair of vertices of $G$, the preimages are unlikely to be joined by a path of length two in $H$. (B3.2) says that for a very small subset $Z$ of $V(H)$ and a vertex $v$ in $G$, $v$ is unlikely to be contained in $\phi(Z)$. When applying Lemma \ref{modified blow-up} in Section~\ref{sec:main}, $G''$ in (B4) will play the role of the union of certain previously embedded graphs. (B4) shows that the overlap of these previous embeddings with the new embedding $\phi(H)$ is small.
 Note in (B5), $N_{\phi(H)}(v)\cap \{v_1,\dots, v_s\} \neq \emptyset$ if and only if $B_v=1$. So (B5) can be viewed as a `localized' version of (B1) which applies to most vertices $v$ and small sets $S=\{v_1,\dots, v_s\}$. 
The case $s=1$ of (B5) can also be viewed as a generalization of Theorem~\ref{MM} from matchings to arbitrary bounded degree graphs $H$ which holds for most edges $e$ of $G$. We will only use the case $s\leq 2$ of (B5).
(B6) will only be used in order to prove property (T3) of the packing in Theorem~\ref{main lemma}.

To prove Lemma \ref{modified blow-up} we shall introduce first the \emph{Slender graph algorithm} and then the \emph{Uniform embedding algorithm}. Roughly speaking the Slender graph algorithm finds an embedding of $H$ into $G$ where $H$ is `slender' in the sense that it is the union of suitable perfect matchings. The Uniform embedding algorithm then takes $H$ as in Lemma~\ref{modified blow-up}, and after suitable preprocessing, it embeds the resulting graph $H'$ into $G$ via the Slender graph algorithm.

\subsection{The Four graphs lemma}
Here we state the Four graphs lemma of R\"odl and Ruci\'nski \cite{RR}, which is required for the analysis of the Slender graph algorithm and was a key tool in their proof of the classical blow-up lemma. Roughly speaking, it asserts the following: given three super-regular graphs forming a `blown-up triangle', choosing a random perfect matching $M$ in one of the pairs and then contracting the edges of $M$ yields a super-regular pair. 

More precisely, suppose that $W_1, W_2,W_3$ are disjoint sets of size $n$, and that for each pair $i,j$ with $1\leq i<j\leq 3$ we have a bipartite graph $F_{ij}$ with partition $(W_i,W_j)$. We say that the triple of graphs $(F_{12},F_{13},F_{23})$ is \emph{$(\epsilon,d_{12},d_{13},d_{23})$-super-regular} if 

\begin{itemize}
\item[(R1)] each $F_{ij}$ is $(2\epsilon,d_{ij})$-super-regular,
\item[(R2)] every edge $uv\in E(F_{12})$ (where $u\in W_1$, $v\in W_2$) satisfies
$$|N_{F_{13}}(u) \cap N_{F_{23}}(v)| = (d_{13}d_{23}\pm 2\epsilon) n.$$
\end{itemize}
Moreover, given a bijection (which might be inducing a perfect matching) $\sigma: W_1\rightarrow W_2$ in $F_{12}$ we define a fourth graph $A_\sigma$ with bipartition $(W_1,W_3)$, where $uv \in E(A_\sigma)$ with $u\in W_1$ if and only if both $uv \in E(F_{13})$ and $\sigma(u)v \in E(F_{23})$.

\begin{lem}[Four graphs lemma~\cite{RR}]\label{The four graphs lemma}
Suppose $0<1/n\ll c\ll \epsilon \ll d_{12},d_{13},d_{23},1/(C+1)\leq 1$. Let $g'$ be the function defined in \eqref{eq:functions}. Let $(F_{12},F_{13},F_{23})$ be a $(\epsilon,d_{12},d_{13},d_{23})$-super-regular triple of graphs with vertex sets $W_1, W_2, W_3$, each of size $n$. Then if we choose a perfect matching $\sigma: W_1\rightarrow W_2$ in $F_{12}$ uniformly at random, then $A_\sigma$ is $(g'(\epsilon), d_{13}d_{23})$-super-regular with probability at least $1-(1-c)^n$. \newline
Moreover, suppose some $x_1,\dots,x_C\in W_1, y_1,\dots, y_C\in W_2$ are distinct and for each $i\in [C]$ they satisfy
\begin{align}\label{eq:codensities}
|N_{F_{13}}(x_i) \cap N_{F_{23}}(y_i)| = (d_{13}d_{23}\pm2\epsilon) n.
\end{align}
Then assigning $\sigma(x_i):=y_i$ and choosing a perfect matching in $F_{12}\setminus \{x_1,\dots, x_C,y_1,\dots,y_C\}$ uniformly at random, we still obtain, with probability at least $1-(1-c)^n$, a bijection $\sigma: W_1\rightarrow W_2$ for which $A_\sigma$ is $(g'(\epsilon),d_{13}d_{23})$-super-regular. 
\end{lem}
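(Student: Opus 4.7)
The plan is to establish $(g'(\epsilon), d_{13}d_{23})$-super-regularity of $A_\sigma$ by verifying the degree condition at every vertex and then deducing regularity from the Duke--Lefmann--R\"odl codegree characterization (Theorem~\ref{codegree implies regularity}). Degrees at $W_1$-vertices are in fact deterministic: for $u \in W_1$, the edge $u\sigma(u) \in E(F_{12})$ invokes condition (R2), giving $d_{A_\sigma}(u) = |N_{F_{13}}(u) \cap N_{F_{23}}(\sigma(u))| = (d_{13}d_{23} \pm 2\epsilon)n$ for any perfect matching~$\sigma$. For $y \in W_3$, one has $d_{A_\sigma}(y) = |\sigma(N_{F_{13}}(y)) \cap N_{F_{23}}(y)|$, and since $|N_{F_{13}}(y)|$ and $|N_{F_{23}}(y)|$ both comfortably exceed $h'(\epsilon)n$, applying Theorem~\ref{M0} to $F_{12}$ with $S=N_{F_{13}}(y)$ and $T=N_{F_{23}}(y)$ gives $d_{A_\sigma}(y) = (1 \pm h'(\epsilon))d_{13}d_{23}\,n$ with failure probability at most $(1-c_0)^n$ for a suitable constant $c_0$ with $c \ll c_0 \ll \epsilon$. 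A union bound over $y \in W_3$ controls all $W_3$-degrees simultaneously with failure probability at most $n(1-c_0)^n \le \tfrac{1}{2}(1-c)^n$.

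For regularity, the next step is to invoke Theorem~\ref{codegree implies regularity} from the $W_3$-side with parameter $\epsilon' := g'(\epsilon)^6$. For a pair $\{y,y'\} \subseteq W_3$, writing $S_{yy'} := N_{F_{13}}(y) \cap N_{F_{13}}(y')$ and $T_{yy'} := N_{F_{23}}(y) \cap N_{F_{23}}(y')$, the $A_\sigma$-codegree of $\{y,y'\}$ equals $|\sigma(S_{yy'}) \cap T_{yy'}|$. Proposition~\ref{regularity implies codegree} applied to $F_{13}$ and $F_{23}$ says that for all but $O(\epsilon n^2)$ pairs one has $|S_{yy'}| = (d_{13}^2 \pm 6\epsilon)n$ and $|T_{yy'}| = (d_{23}^2 \pm 6\epsilon)n$, both well above $h'(\epsilon)n$. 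For each such pair, Theorem~\ref{M0} delivers $|\sigma(S_{yy'}) \cap T_{yy'}| = (1 \pm h'(\epsilon))(d_{13}d_{23})^2 n$ with failure probability $(1-c_0)^n$, and a union bound over the $O(n^2)$ pairs handles them simultaneously. Since $g'(\epsilon)^6 = \epsilon^{1/20} \ge \epsilon^{1/10} = h'(\epsilon)$, a short expansion gives $(1+h'(\epsilon))(d_{13}d_{23})^2 \le (d_{13}d_{23}+g'(\epsilon)^6)^2$, so the hypotheses of Theorem~\ref{codegree implies regularity} hold for at least $(1 - 5g'(\epsilon)^6)\binom{n}{2}$ pairs and yield $(g'(\epsilon), d_{13}d_{23})$-regularity; combined with the degree bounds above, this is the asserted super-regularity, with total failure probability at most $(1-c)^n$.

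For the moreover clause, fixing $\sigma(x_i) = y_i$ for $i \in [C]$ and drawing $\sigma$ uniformly on the remainder is equivalent to drawing a uniform perfect matching in $F_{12}[\widetilde{W}_1, \widetilde{W}_2]$, where $\widetilde{W}_1 := W_1 \setminus \{x_1, \dots, x_C\}$ and $\widetilde{W}_2 := W_2 \setminus \{y_1, \dots, y_C\}$. Since $C$ is constant and $1/n \ll \epsilon$, Proposition~\ref{restriction} together with an obvious degree adjustment shows that this induced subgraph is still $(3\epsilon, d_{12})$-super-regular. The prescribed pairs contribute deterministically to $A_\sigma$: hypothesis \eqref{eq:codensities} gives $d_{A_\sigma}(x_i) = (d_{13}d_{23} \pm 2\epsilon)n$, while each such pair shifts any codegree by at most $1$. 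Replacing $F_{12}$ by $F_{12}[\widetilde{W}_1, \widetilde{W}_2]$ and $n$ by $n-C$ in every invocation of Theorem~\ref{M0} carries the rest of the argument through without essential change.

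The main obstacle is that regularity is an assertion about the exponentially many pairs of large subsets $(U,V)$, which precludes a naive union bound of Theorem~\ref{M0} over such pairs. Routing the proof through the Duke--Lefmann--R\"odl criterion reduces this to $O(n^2)$ codegree conditions, each of which is controlled by a single application of Theorem~\ref{M0}; the union bound over this polynomial family then fits comfortably inside the required $(1-c)^n$ failure probability.
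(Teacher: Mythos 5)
Your proposal is correct, and it is essentially the argument the paper points to: the paper gives no self-contained proof, merely stating that the original proof of R\"odl and Ruci\'nski "based on Theorem~\ref{M0}" carries over, and your reconstruction — deterministic $W_1$-degrees via (R2), $W_3$-degrees and codegrees via Theorem~\ref{M0} applied to the fixed sets $N_{F_{13}}(y)$, $N_{F_{23}}(y)$ and their pairwise intersections (using Proposition~\ref{regularity implies codegree}), followed by the Duke--Lefmann--R\"odl criterion (Theorem~\ref{codegree implies regularity}) to convert $O(n^2)$ codegree checks into regularity — is exactly that route, correctly exploiting that only polynomially many applications of Theorem~\ref{M0} are needed. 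Your treatment of the prescribed pairs (using \eqref{eq:codensities} for their degrees and passing to $F_{12}[\widetilde W_1,\widetilde W_2]$ with $C$ constant) matches the paper's remark that this only costs a slight adjustment in the parameters; the remaining issues are routine constant bookkeeping (e.g.\ applying the codegree criterion with a marginally smaller parameter to absorb the $\pm 2\epsilon$ shift in the true density of $A_\sigma$), not gaps.
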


Note that we do not require $y_i\in N_{F_{12}}(x_i)$.

\begin{proof}

The original proof (based on Theorem~\ref{M0}) from~\cite{RR} carries over. It is straightforward to check that artificially assigning $\sigma(x_i):=y_i$ does not affect the proof other than slightly increasing the value of $g'$.
\end{proof}

\subsection{The Slender graph algorithm}
The Slender graph algorithm embeds a `slender' graph $H$ into a graph $G$, where the reduced graph is given by $R_*$. We now formally describe the graphs and parameters forming part of the input. After this, we describe the algorithm itself. Lemma~\ref{slender lemmas} and Lemma~\ref{Slendered blow-up} then show the algorithm has the desired properties. \newline

{\noindent \bf Valid input.}
We say $\mathcal{S}= (G,P,H,R_*,A_0,\mathcal{U},\mathcal{Y},\mathcal{I},c,\epsilon,d_0,\vec{d},\vec{\beta},K,\Delta_R,C)$ is a valid input for the Slender graph algorithm if the following holds, where $m$ denotes the size of the largest partition class in $\mathcal{U}$ and $\mathcal{Y}$, and $r:=|R_*|/K$.
\begin{itemize}
\item[(V1)] $\mathcal{I} = (I_1,\dots, I_w)$ is a partition of $[Kr]$ with $w:=K^2\Delta_R^2(\Delta_R+1)$. Here we allow some of the $I_{i'}\in \mathcal{I}$ to be empty.
\item[(V2)] $R_*$ is a graph on $[Kr]$ such that each $I_{i'} \in \mathcal{I}$ is an independent set of $R_*$. Moreover, $\Delta(R_*)\leq K\Delta_R$ and $\Delta(R_*[I_{i'},I_{j'}])\leq 1$ for all $i',j'\in [w]$.
\item[(V3)] Both $\vec{d}$ and $\vec{\beta}$ are symmetric $Kr \times Kr$ matrices where $d:=\min_{ij \in E(R_*)}d_{i,j}$ and $\beta:= \min_{ij \in E(R_*)}\beta_{i,j}$. Moreover, $1/m \ll c \ll \epsilon \ll \beta,d,d_0, 1/K, 1/\Delta_R, 1/(C+1) \leq 1$, and $1/m \ll 1/r$.

\item[(V4)] $G$ is a $(\epsilon,\vec{d})$-super-regular graph with respect to $(R_*,\mathcal{U})$ where $\mathcal{U}=(U_1,\dots, U_{Kr})$ and $m-C\leq |U_i|\leq m$ for all $i\in [Kr]$.
\item[(V5)] $P$ is a $(\epsilon,\vec{\beta})$-super-regular graph with respect to $(R_*,\mathcal{U})$.
\item[(V6)] $H$ is a graph admitting vertex partition $(R_*,\mathcal{Y})$ with $\mathcal{Y}=(Y_1,\dots, Y_{Kr})$ such that $|Y_i|=|U_i|$ for $i\in[Kr]$ and $H[Y_i,Y_j]$ is a matching of size $\min\{|Y_i|,|Y_j|\}$ if $ij\in E(R_*)$ and empty otherwise.
\item[(V7)] $A_0$ is a disjoint union of bipartite graphs $A_0(1),\dots,A_0(Kr)$ such that each $A_0(i)$ has bipartition $(Y_i,U_i)$
and $A_0(i)=A_0[Y_i,U_i]$ is $(\epsilon,d_0)$-super-regular for each $i\in [Kr]$.

\end{itemize} \vspace{0.2cm}

We can now formally introduce the Slender graph algorithm for graphs $G,P,H,R_*,A_0$, partitions $\mathcal{U},\mathcal{Y},\mathcal{I}$ and constants $c,\epsilon, d_0, K, \Delta_R, C$ and $Kr \times Kr$ matrices $\vec{\beta},\vec{d}$ forming a valid input ($R_*$ will play the role of $R_K$ in Lemma \ref{modified blow-up}). As indicated by (V6) it views $H$ as a union of perfect matchings. Their vertex classes are embedded into $G$ in $w$ rounds, where $w$ is defined as in (V1). The vertex classes of $H$ to be embedded in a given round correspond to an independent set of $R_{*}$, so their embeddings do not `interfere' with each other. We will choose the images of the vertices  in $x \in Y_i$ by choosing a random perfect matching in an auxiliary graph $A(i)$, where $xv$ is an edge in $A(i)$ if $v$ is a suitable candidate for the image $\phi(x)$ of $x$ (see \eqref{def A(i)}).

\vspace{0.2cm}
\noindent {\bf The Slender graph algorithm on $(G,P,H,R_*,A_0,\mathcal{U},\mathcal{Y},\mathcal{I},c,\epsilon,d_0,\vec{d},\vec{\beta},K,\Delta_R,C)$.}
\vspace{0.2cm}

\noindent {\bf Preparation Round.} The goal of the preparation round is to reduce the problem to the setting where the vertex classes all have equal size by adding additional artificial vertices to the smaller classes. 

For all $i\in [Kr]$ and $j\in [m-|Y_i|]$ we introduce additional `artificial' vertices $y_{i,j}, u_{i,j}$. Let $Y'_i$ be the union of $Y_i$ together with the $y_{i,j}$ and define $U'_i$ similarly. We now define graphs $G',P',H'$ and $A'_0$ with $G\subseteq G', P\subseteq P', H\subseteq H', A_0\subseteq A'_0$ and  $V(G'):=V(P'):= \bigcup_{i=1}^{Kr} U'_i$, $V(H'):= \bigcup_{i=1}^{Kr} Y'_i$ and $V(A'_0):= V(H')\cup V(G')$ as follows. For all vertices $u_{i,j} \in U'_i$ and $u\in U_{\ell}$ with $i\ell \in E(R_{*})$ we include the edge $uu_{i,j}$ in $E(G')$ independently with probability $d_{i,\ell}$ and include the edge $uu_{i,j}$ in $E(P')$ independently with probability $\beta_{i,\ell}$. 
Also, for all vertices $x\in Y_i$ and $u_{i,j} \in U'_i$, we include the edge $xu_{i,j}$ in $E(A'_0)$ independently with probability $d_0$.  Similarly for all $y_{i,j} \in Y'_i, u\in U_i$, we include the edge $uy_{i,j}$ in $E(A'_0)$ independently with probability $d_0$. For all $1\leq i\neq j \leq Kr$ with $ij\in E(R_*)$ add at most $C$ missing edges to extend the matching $H[Y'_i,Y'_j]$ to a perfect matching $H'[Y'_i,Y'_j]$. We define the following events.
\begin{itemize}
\item[(P1)] $G'[V(G)]=G$, $P'[V(P)]=P$ and $A'_0[V(G)\cup V(H)]=A_0$.
\item[(P2)] $G'$ is $(2\epsilon,\vec{d})$-super-regular with respect to $(R_*,U'_1,\dots, U'_{Kr})$ and $P'$ is $(2\epsilon,\vec{\beta})$-super-regular with respect to $(R_*,U'_1,\dots, U'_{Kr})$. Also, $A'_0[Y'_i,U'_i]$ is $(2\epsilon,d_0)$-super-regular for all $i\in [Kr]$.
\item[(P3)] For all $i\in [Kr]$ the following holds for any set $Q_1=\{q_1,q_2,\dots, q_{s}\}$ of $s\leq K\Delta_R$ vertices such that $q_\ell \in U'_{j_{\ell}}\setminus U_{j_{\ell}}$ and $j_{\ell} \in N_{R_*}(i)$ for every $\ell\in[s]$, any set $Q_2\subseteq V(G)$ with $|Q_2| \leq K\Delta_R$ and any vertex $y\in Y'_i$:
\begin{enumerate}
\item[(i)] $\displaystyle |N_{G'}(Q_1) \cap N_{G'}(Q_2) \cap N_{A'_0}(y)| = (\prod_{\ell=1}^{s} d_{i,j_{\ell}}) | N_{G'}(Q_2)\cap N_{A'_0}(y)| \pm 2\epsilon m.$ 
\item[(ii)] $\displaystyle |N_{P'}(Q_1) \cap N_{P'}(Q_2)\cap N_{A'_0}(y) | = (\prod_{\ell=1}^{s} \beta_{i,j_{\ell}}) | N_{P'}(Q_2)\cap N_{A'_0}(y)| \pm 2\epsilon m.$ 
\end{enumerate}
\item[(P4)] $H'[V(H)]=H$ and $H'[Y'_i,Y'_j]$ is a perfect matching for all $ij\in E(R_*)$.
\end{itemize}
Note that (P1) and (P4) always hold. If (P2) or (P3) does not hold, then we end the algorithm with \textbf{failure of type 1}. So if failure of type 1 does not occur, then (P1)--(P4) hold.\newline

\noindent {\bf Round $0$.} Here we initialise the settings for the algorithm. Let $f_0$ be an empty partial embedding which sends no vertex in $H'$ to no vertex in $G'$. Recall from (V1) that $\mathcal{I}=(I_1,\dots, I_w)$ and for all $i'\in [w]$ let $I_1^{i'}:= \bigcup_{j'=1}^{i'} I_{j'}$.
For all $x\in V(H')$ and $i'\in [w]$ let $$N_{i'}(x):= \bigcup_{j \in I_1^{i'}} Y'_j \cap N_{H'}(x).$$ 
So $N_{i'}(x)\subseteq \bigcup_{j\in I_1^{i'} \cap N_{R_*}(\ell)} Y'_{j}$ for all $x\in Y'_\ell$. Also since $\Delta(R_*[I_{i'},I_{j'}])\leq 1$ by (V2) and $H'[Y'_{i},Y'_{j}]$ is empty if $ij\notin E(R_*)$ and a perfect matching if $ij\in E(R_*)$, we have $|\bigcup_{j\in I_{j'}} Y_j\cap N_{i'}(x)|\leq 1$ for $i',j'\in[w]$. Thus $|N_{i'}(x)|=|N_{i'}(y)| \leq i'$ for all $x,y\in Y'_{j}$, $i'\in [w]$ and $j \in [Kr]$. 

Let $g$ be the function defined in \eqref{eq:functions}. So $g(a) = a^{1/300} \geq (4K\Delta_R )^{1/120}a^{1/240}= g'(4K\Delta_R \sqrt {a})$ for $a\ll 1/K,1/\Delta_R$. For all $t\in [w]$ let 
\begin{align}\label{eq:xirecursion}
\xi_t:= g^t(2\epsilon) \text{ and let } \xi_0 := 2\epsilon.
\end{align} 
So 
\begin{align}\label{xi property}
\xi_{t+1} \geq g'(4K\Delta_R \sqrt{\xi_t})\text{ } ~\text{ for any } t \in [w].
\end{align}
For all $j \in [Kr]$, let $A^j_0:=A'_0[Y'_j,U'_j]$ and $B^j_0:=A^j_0$ and define 
\begin{align}\label{definition p 0}
p(\vec{d},j,0):=d_0 \text{ and } p(\vec{\beta},j,0):=d_0.
\end{align}

So (P2) implies that $A^j_0$ is $(\xi_0,p(\vec{d},j,0))$-super-regular and $B^j_0$ is $(\xi_0,p(\vec{\beta},j,0))$-super-regular for each $j\in [Kr]$. $A^j_0$ and $B^j_0$ encode the initial constraints (or candidates) for the embedding. Move to Round 1. \newline

\noindent {\bf Round $i'$.} Given an embedding $f_{i'-1}$ of $H'[\bigcup_{j\in I_1^{i'-1}} Y'_j ]$ into $G'[\bigcup_{j\in I_1^{i'-1}} U'_j]$ constructed in Round $i'-1$, we aim to extend it to $f_{i'}:H'[\bigcup_{j\in I_1^{i'}} Y'_j ]\rightarrow G'[\bigcup_{j\in I_1^{i'}} U'_j ] $.

Assume that for each $j\in [Kr]$ the bipartite graphs $A^j_{i'-1}$ and $B^j_{i'-1}$ on $(Y'_j,U'_j)$ constructed in Round $i'-1$ are such that 
\begin{itemize}
\item $A^j_{i'-1}$ is $(\xi_{i'-1} ,p(\vec{d},j,i'-1))$-super-regular,
\item $B^j_{i'-1}$ is $(\xi_{i'-1},p(\vec{\beta},j,i'-1))$-super-regular. 
\end{itemize}
Here the candidacy graph $A^j_{i'-1}$ encodes the constraints accumulated until Round $i'-1$ that we have for embedding $Y'_j$ into $U'_j$ in order to extend $f_{i'-1}$ to $f_{i'}$. The candidacy graph $B^j_{i'-1}$ tracks analogous constraints for the patching graph $P$ which we will use in Section~\ref{sec:main} to resolve overlaps between embeddings of distinct graphs $H$.

Recall that $|N_{R_*}(j)\cap I_{i'}|\leq 1$ for $i'\in[w]$, and $j\in [Kr]$ by (V2). So for any symmetric $Kr \times Kr$ matrix $\vec{z}$ we can let
\begin{align} \label{P relation}
p(\vec{z},j,i') := \left\{\begin{array}{ll}
z_{j,\ell} p(\vec{z},j,i'-1) & \text{ if } N_{R_*}(j)\cap I_{i'}=\{\ell\},\\
p(\vec{z},j,i'-1) & \text{ if } N_{R_*}(j)\cap I_{i'}=\emptyset.
\end{array}\right.
\end{align}
We will apply this with $\vec{d}$ and $\vec{\beta}$ playing the role of $\vec{z}$. \eqref{P relation} will be used to update the densities of the candidacy graphs after each round. For $i\in I_{i'}$, we define a spanning subgraph $A(i)$ of $A^{i}_{i'-1}$ which contains all those edges $xv \in E(A_{i'-1}^{i})$ with $x\in Y'_i$ and $v\in U'_i$ satisfying 
\begin{align}\label{def A(i)}
&| N_{A^{j}_{i'-1}}(x_j) \cap N_{G'}(v) | = (p(\vec{d},j,i') \pm 2\xi_{i'-1} )m ~\text{ and } \nonumber \\ &| N_{B^{j}_{i'-1}}(x_j) \cap N_{P'}(v) | = (p(\vec{\beta},j,i') \pm 2\xi_{i'-1} )m
\end{align}
for every $j\in N_{R_*}(i)$, where $\{x_j\} := N_{H'}(x)\cap Y'_j$. Excluding the edges not satisfying \eqref{def A(i)} will enable us to apply the four graphs lemma (Lemma~\ref{The four graphs lemma}) with $A(i)$ playing the role of $F_{12}$.

We now choose an embedding $\sigma_i(x)$ for all $x\in Y'_i$ with $i\in I_{i'}$. Let $\sigma_{i}(y_{i,j}):=u_{i,j}$ for all $i\in I_{i'}$ and $j\in [m-|Y_i|]$ 
and extend $\sigma_i$ to a bijection from $Y'_i$ to $U'_i$ by choosing a perfect matching of $A(i)[Y_i, U_i]$ uniformly at random (such a perfect matching will be shown to exist in Lemma~\ref{slender lemmas}(i)). 
We now extend the current embedding. Let
\begin{align}\label{embedding extension}
 f_{i'}(x)  := \left\{ \begin{array}{ll}
f_{i'-1}(x) &\text{ if } x\in Y'_i \text{ for } i\in I_1^{i'-1},\\ \sigma_{i}(x)   &\text{ if } x\in Y'_{i} \text{ for } i\in I_{i'}. 
\end{array}\right. \end{align}
\noindent
We now update the candidacy graphs to ensure that they incorporate the new constraints arising from the embeddings in the current round. For each $j\in [Kr]$, we let $A^j_{i'}$ be the bipartite graph on $(Y'_j,U'_j)$ with the edge set
\begin{align}\label{Aji x def}
E(A^j_{i'}) := \left\{xv : x\in Y'_j, v\in N_{A^j_0}(x) \text{ and } uv\in E(G') \text { for each } u\in f_{i'}(N_{i'}(x))\right\}.
\end{align}
So for all $x\in Y'_j$,
\begin{align}\label{eq:updatecandgraphs}
N_{A^j_{i'}}(x) = N_{A^j_0}(x) \cap \bigcap_{y\in N_{i'}(x)} N_{G'}(f_{i'}(y)).
\end{align}
For each $j\in [Kr]$, we let $B^j_{i'}$ be the bipartite graph on $(Y'_j, U'_j)$ with the edge set
\begin{align}\label{Bji def}
E(B^j_{i'}) := \{xv : x\in Y'_j, v\in N_{B^j_0}(x) \text{ and } uv\in E(P') \text{ for each } u\in f_{i'}(N_{i'}(x))\}.
\end{align}
Note that $A_{i'}^j \subseteq A_{i'-1}^j$ and $B_{i'}^j \subseteq B_{i'-1}^j$ for all $j\in [Kr]$.
Note also that 
\begin{align}\label{stay same}
A_{i'}^j=A_{i'-1}^j\text{ and }B_{i'}^j=B_{i'-1}^j\text{ if }N_{R_*}(j)\cap I_{i'} =\emptyset.
\end{align}
If for some $j \in [Kr]$, $A^j_{i'}$ is not  $(\xi_{i'} ,p(\vec{d},j,i'))$-super-regular or $B^j_{i'}$ is not $(\xi_{i'} ,p(\vec{\beta},j,i'))$-super-regular, then we abort the algorithm with \textbf{failure of type 2}. If $i'<w$, then proceed to Round $i'+1$. If $i'=w$,  then we return $\phi(x) := f_{w}(x)$ for every $x\in V(H)$ and $F_j:= B^j_{w}[Y_j, U_j]$ for every $j\in [Kr]$, and we are done with success. This completes the description of the Slender graph algorithm. 

Note that the description would be slightly simpler if we embed only one class in each round. However, then the number of rounds would depend on $|R_*|$ rather than on $K$ and $\Delta_R$, in which case our errors would accumulate too much. \newline

A straightforward application of the Chernoff-Hoeffding bound in Lemma \ref{Chernoff Bounds} shows that failure of type 1 is very unlikely. We omit the proof.%
\COMMENT{
\begin{proof}
By Lemma \ref{Chernoff Bounds}, for $u_{i,j}\in U'_{i}\setminus U_i$ and $\ell \in N_{R_*}(i)$,
$$\mathbb{P}\left[ | N_{G'}(u_{i,j})\cap U'_\ell|=(d_{i,\ell} \pm \epsilon)m \right] \geq 1 - (1-4c)^{Km} \text{ and } \mathbb{P}\left[ | N_{P'}(u_{i,j})\cap U'_\ell|=(\beta_{i,\ell} \pm \epsilon)m \right] \geq 1 - (1-4c)^{Km}.$$
Also adding at most $C$ artificial vertices on each $G[U_i,U_j]$ or $P[U_i,U_j]$ does not significantly affect the $\epsilon$-regularity. So $P'$ is $(2\epsilon,\vec{\beta})$-super-regular with probability at least $1-CKr\Delta_R(1-4c)^{Km}$, and $G'$ is $(2\epsilon,\vec{d})$-super-regular with probability at least $1-CKr\Delta_R(1-4c)^{Km}$. Also for fixed set $Q_1=\{q_1,\dots, q_s\}, Q_2$, and vertex $y$ satisfying the conditions in (P3), $|N_{G'}(Q_1) \cap N_{G'}(Q_2) \cap N_{A_0}(y)| = (\prod_{\ell=1}^{s} d_{i,j_{\ell}})| N_{G'}(Q_2)\cap N_{A_0}(y)| \pm 2\epsilon m$ holds with probability at least $1-(1-4c)^{Km}$. Since there are at most $m(Kr m)^{2K\Delta_R}$ choices for $Q_1$ and $Q_2$ and $y$, (P3)(i) holds with probability at least $1- m(Krm)^{2K\Delta_R}(1-4c)^{Km}$. By the same argument, (P3)(ii) also holds with probability at least $1-m(Krm)^{2K\Delta_R} (1-4c)^{Km}$.
Thus failure of type 1 happens with probability at most $(2m(Krm)^{2K\Delta_R} + CKr\Delta_R)(1-4c)^{Km}\leq (1-3c)^{Km}$. \end{proof}}

\begin{prop}\label{preparation}
Suppose $\mathcal{S}=(G,P,H,R_*,A_0,\mathcal{U},\mathcal{Y},\mathcal{I},c,\epsilon,d_0,\vec{d},\vec{\beta},K,\Delta_R,C)$ is a valid input.
Then in the Slender graph algorithm, failure of type 1 occurs with probability at most $(1-3c)^{Km}$.
\end{prop}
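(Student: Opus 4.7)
Failure of type 1 is precisely the event that (P2) or (P3) fails after the random construction of $G'$, $P'$ and $A'_0$ in the Preparation Round. Since both events depend only on the independent random edges added at artificial vertices (together with the $A'_0$-edges incident to artificial vertices on the $Y'_i$-side), the plan is a routine application of Chernoff-Hoeffding (Lemma~\ref{Chernoff Bounds}) together with a union bound over a $\mathrm{poly}(m)$-sized collection of events, exploiting that $c\ll\epsilon$.

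For (P2), fix any artificial vertex $u_{i,j}\in U'_i\setminus U_i$ and any $\ell\in N_{R_*}(i)$. The degree $|N_{G'}(u_{i,j})\cap U'_\ell|$ is a sum of independent Bernoullis of mean $d_{i,\ell}|U'_\ell|$, so by Lemma~\ref{Chernoff Bounds} it lies in $(d_{i,\ell}\pm\epsilon)m$ except with probability $\le 2e^{-2\epsilon^2 m}$. The same argument handles degrees in $P'$ and on both sides of $A'_0[Y'_i,U'_i]$. For the density-on-large-subsets part of super-regularity, $G[U_i,U_\ell]$ is already $(\epsilon,d_{i,\ell})$-super-regular; passing to $U'_i,U'_\ell$ enlarges each side by at most $C$ vertices (handled by Proposition~\ref{restriction}), and on the ``new rectangle'' $G'[U'_i\setminus U_i,U'_\ell]\cup G'[U'_i,U'_\ell\setminus U_\ell]$ a Chernoff-plus-union-bound over the $2^{|U'_i|+|U'_\ell|}$ pairs of test sets of size at least $\epsilon m$ gives $(2\epsilon,d_{i,\ell})$-regularity except with probability $\le 2e^{-\epsilon^3 m^2}$. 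The same reasoning applies to $P'$ and to $A'_0[Y'_i,U'_i]$. A union bound over the $O(Kr\Delta_R Cm)$ degree events and $O(Kr\Delta_R)$ regularity events gives the failure probability of (P2) at most $(1-4c)^{Km}$.

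For (P3), fix $i\in[Kr]$, a set $Q_1=\{q_1,\dots,q_s\}$ of artificial vertices with $q_\ell\in U'_{j_\ell}\setminus U_{j_\ell}$, a set $Q_2\subseteq V(G)$ of size at most $K\Delta_R$, and $y\in Y'_i$. Condition on the original edges of $G$ and on the random $A'_0$-edges incident to $y$, which together determine the set $T:=N_{G'}(Q_2)\cap N_{A'_0}(y)$. Each $v\in T$ then independently satisfies $v\in N_{G'}(Q_1)$ exactly when the $s$ random edges $q_\ell v$ ($\ell\in[s]$) are all present, an event of probability $\prod_{\ell=1}^s d_{i,j_\ell}$; crucially, these edges are disjoint from (hence independent of) the edges used to define $T$, because $Q_1\subseteq\bigcup_\ell U'_{j_\ell}\setminus U_{j_\ell}$ is disjoint from $V(G)\cup\{y\}$. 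Lemma~\ref{Chernoff Bounds} applied across $v\in T$ gives the desired $\pm\epsilon m$ approximation in (P3)(i) with failure probability $\le 2e^{-2\epsilon^2 m}$, and the same argument with $P'$, $\beta_{i,j_\ell}$ handles (P3)(ii). Since there are at most $m\cdot(Krm)^{2K\Delta_R}$ triples $(Q_1,Q_2,y)$, a union bound shows (P3) fails with probability at most $(1-3.5c)^{Km}$. Summing the (P2) and (P3) bounds and using $c\ll\epsilon$ yields the advertised $(1-3c)^{Km}$.

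The main thing to check carefully is the independence claim in the (P3) analysis — namely that the random edges defining $N_{G'}(Q_1)$ are independent of all randomness used to determine $T$. This is the one place where it is essential that $Q_1$ consists of artificial vertices in classes disjoint from those hosting $Q_2\subseteq V(G)$ and $y\in Y'_i$. Once that is verified, no real obstacle remains; everything else is routine bookkeeping of exponentially small probabilities against a polynomial number of events.
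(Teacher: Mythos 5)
Your proposal is correct and follows essentially the same route as the paper, which omits this proof as a routine Chernoff--Hoeffding plus union-bound argument: concentration for the random edges at the artificial vertices (degrees for (P2), and the codegree-type counts over the at most $m(Krm)^{2K\Delta_R}$ choices of $Q_1,Q_2,y$ for (P3)), with $c\ll\epsilon$ absorbing the polynomial union bound. The only cosmetic imprecision is that in the (P3) step the set $T$ also depends on random edges at the artificial vertices of $U'_i$ (so one should condition on those as well, or note that these at most $C$ candidates are absorbed in the error term); since such edges are still disjoint from the edges between $Q_1$ and $U_i$, your independence argument goes through unchanged.
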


For each $0\leq i' \leq w$ let $\mathcal{A}_{i'}$ be the event that the following two statements hold. 

\begin{itemize}
\item[(A$1_{i'}$)] For each $j\in [Kr]$, $A^j_{i'}$ is $(\xi_{i'},p(\vec{d},j,i'))$-super-regular.

\item[(A$2_{i'}$)] For each $j\in [Kr]$, $B^j_{i'}$ is $(\xi_{i'},p(\vec{\beta},j,i'))$-super-regular.
\end{itemize}
So we have failure of type $2$ if and only if $\mathcal{A}_{i'}$ fails to hold for some $i'$.
Note that $\mathcal{A}_0$ always holds for the Slender graph algorithm on $\mathcal{S}$ if $\mathcal{S}$ is a valid input (as observed after \eqref{definition p 0}).
We also define $\mathcal{A}_0^{i'}:= \bigwedge_{j'=0}^{i'} \mathcal{A}_{j'}$.

The next lemma states that the super-regularity of the candidacy graphs is preserved throughout the algorithm and thus that properties (M1)--(M3) from Lemma~\ref{lem:matchings} carry over into the current setting. 

\begin{lemma}\label{slender lemmas}
Suppose $\mathcal{S}=(G,P,H,R_*,A_0,\mathcal{U},\mathcal{Y},\mathcal{I},c,\epsilon,d_0,\vec{d},\vec{\beta},K,\Delta_R,C)$ is a valid input. Assume that in the Slender graph algorithm on input $\mathcal{S}$, there was no failure prior to Round $i'$. Then the following holds.
\begin{itemize}
\item[(i)] For all $i\in I_{i'}$ we have $\Delta( A^i_{i'-1} - A(i)) \leq 4K\Delta_R \xi_{i'-1} m$. In particular, this means that $A(i)$ is $(4K\Delta_R\sqrt{\xi_{i'-1}},p(\vec{d},i,i'-1))$-super-regular for all $i\in I_{i'}$.
\item[(ii)] Let $\mathcal{B}_{i'-1}$ be an event which depends only on the history of the Slender graph algorithm prior to Round $i'$
and such that $\Prob[\mathcal{A}_{0}^{i'-1}, \mathcal{B}_{i'-1}]>0$. Then 
$$\mathbb{P}[\mathcal{A}_{i'}\mid \mathcal{A}_{0}^{i'-1}, \mathcal{B}_{i'-1}] \geq 1 - (1-3c)^{Km}.$$
\item[(iii)] Let $\mathcal{B}_{i'-1}$ be as in (ii), and let $h'(a)=a^{1/10}$ and $h(a)=a^{1/20}$ be as defined in \eqref{eq:functions}. Then the following statements hold.
\begin{itemize}
\item[$(\text{M}'1)_{i'}$] For all $i\in I_{i'}$, $x\in Y_{i}$ and every $S\subseteq N_{A(i)}(x)\cap U_{i}$,
$$\Prob[\sigma_{i}(x)\in S\mid \mathcal{A}_0^{i'-1},\mathcal{B}_{i'-1}]=(1\pm h(4K\Delta_R\sqrt{\xi_{i'-1}}))\frac{|S|}{p(\vec{d},i,i'-1)m} .
$$
\item[$(\text{M}'2)_{i'}$] For all $i\in I_{i'}$, $v\in U_{i}$ and every $S\subseteq N_{A(i)}(v)\cap Y_{i}$,
$$
\Prob[v\in \sigma_{i}(S)\mid \mathcal{A}_0^{i'-1},\mathcal{B}_{i'-1}]=(1\pm  h(4K\Delta_R\sqrt{\xi_{i'-1}}))\frac{|S|}{p(\vec{d},i,i'-1)m} .
$$
\item[$(\text{M}'3)_{i'}$] For all $i\in I_{i'}$, all $d'$ with $c \ll d' \leq d^{K\Delta_R}/9$ and all $A'\subseteq A(i)$ with $V(A')=U_i\cup Y_i$ and $\Delta(A')\leq d'm$,
$$\Prob[|\{x\in Y_i: x\sigma_{i}(x) \in E(A') \}| >  8d'm/{ p(\vec{d},i,i'-1)}\mid \mathcal{A}_0^{i'-1},\mathcal{B}_{i'-1}] < (1-4Kc)^m.$$
\item[$(\text{M}'4)_{i'}$] For all $i\in I_{i'}$, all $S\subseteq Y'_i$ and all $T\subseteq U'_i$ with $|S|,|T|\ge h'(4K\Delta_R\sqrt{\xi_{i'-1}})m$,
$$
\Prob[|\sigma_{i}(S)\cap T|=(1\pm h'(4K\Delta_R\sqrt{\xi_{i'-1}}))|S||T|/m \mid \mathcal{A}_0^{i'-1},\mathcal{B}_{i'-1}]\ge 1-(1-4Kc)^m.
$$
\end{itemize}
\end{itemize}
\end{lemma}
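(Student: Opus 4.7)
My plan for Lemma \ref{slender lemmas} breaks the proof into its three parts, with all three resting on the valid input assumption and the inductive hypothesis $\mathcal{A}_0^{i'-1}$.

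For (i), I will show that for each $j \in N_{R_*}(i)$, the two conditions in \eqref{def A(i)} cut $A_{i'-1}^i$ by a controllable amount. Fix such $j$. Applying Proposition \ref{typical degree} to the $(2\epsilon,d_{i,j})$-regular pair $G'[U'_i,U'_j]$ (from (P2)) with $B':=N_{A_{i'-1}^j}(x_j)\subseteq U'_j$ (whose size is at least $\xi_{i'-1}m\geq 2\epsilon m$ by $\mathcal{A}_{i'-1}$), at most $4\epsilon m$ vertices $v\in U'_i$ violate the $G'$-inequality of \eqref{def A(i)}; a short arithmetic check using $|B'|=(p(\vec{d},j,i'-1)\pm\xi_{i'-1})m$ and $\epsilon\ll\xi_{i'-1}$ shows that the product $(d_{i,j}\pm 4\epsilon)|B'|$ lies in the allowed tolerance $(p(\vec{d},j,i')\pm 2\xi_{i'-1})m$. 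A symmetric application of Proposition \ref{typical degree} to $A_{i'-1}^j[Y'_j,U'_j]$ with $B':=N_{G'}(v)\cap U'_j$ (of size at least $\xi_{i'-1}m$ since $d\gg\xi_{i'-1}$) bounds, for each fixed $v$, the number of bad $x\in Y'_i$ by $2\xi_{i'-1} m$. The same reasoning with $P'$ replacing $G'$ handles the second line of \eqref{def A(i)}. A union bound over the $K\Delta_R$ choices of $j$ and the two graph types gives $\Delta(A_{i'-1}^i-A(i))\leq 4K\Delta_R\xi_{i'-1}m$, and Proposition \ref{regularity after edge deletion} then yields the claimed $(4K\Delta_R\sqrt{\xi_{i'-1}},p(\vec{d},i,i'-1))$-super-regularity of $A(i)$.

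For (ii), the main tool is the four graphs lemma. By \eqref{stay same}, I only need to handle pairs $(j,\ell)$ with $\ell\in N_{R_*}(j)\cap I_{i'}$; by (V2) there is at most one such $\ell$ for each $j$. Fix such a pair and identify $Y'_j$ with $Y'_\ell$ via the perfect matching $\pi\colon Y'_j\to Y'_\ell$ from $H'[Y'_j,Y'_\ell]$. I apply Lemma \ref{The four graphs lemma} with $W_1:=Y'_\ell$, $W_2:=U'_\ell$, $W_3:=U'_j$, $F_{12}:=A(\ell)$, $F_{23}:=G'[U'_\ell,U'_j]$, and $F_{13}$ the graph on $(Y'_\ell,U'_j)$ obtained from $A_{i'-1}^j$ by relabelling through $\pi$. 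Under this setup, after relabelling, $A_\sigma$ is exactly $A_{i'}^j$. Condition (R1) holds with parameter $2\xi':=4K\Delta_R\sqrt{\xi_{i'-1}}$ by part (i) for $F_{12}$, by $\mathcal{A}_{i'-1}$ for $F_{13}$, and by (P2) for $F_{23}$. For (R2) on an edge $uv\in A(\ell)$, the first line of \eqref{def A(i)} with the chosen $j$ is precisely $|N_{F_{13}}(u)\cap N_{F_{23}}(v)|=(p(\vec{d},j,i')\pm 2\xi_{i'-1})m$. For the pre-assigned artificial pairs $\sigma_\ell(y_{\ell,k}):=u_{\ell,k}$, condition \eqref{eq:codensities} follows from (P3)(i) applied with $Q_1=\{u_{\ell,k}\}$ and $Q_2=f_{i'-1}(N_{i'-1}(\pi^{-1}(y_{\ell,k})))$, combined with the super-regularity estimate $|N_{A_{i'-1}^j}(\pi^{-1}(y_{\ell,k}))|=(p(\vec{d},j,i'-1)\pm\xi_{i'-1})m$ from $\mathcal{A}_{i'-1}$. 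Lemma \ref{The four graphs lemma} then yields that $A_{i'}^j$ is $(g'(4K\Delta_R\sqrt{\xi_{i'-1}}),p(\vec{d},j,i'))$-super-regular with probability at least $1-(1-c)^m$; by \eqref{xi property} the first parameter is at most $\xi_{i'}$, and by \eqref{P relation} the density equals $p(\vec{d},j,i')=d_{j,\ell}p(\vec{d},j,i'-1)$. An identical argument with $P'$ in place of $G'$, using $B_{i'-1}^j$ and (P3)(ii), treats $B_{i'}^j$. The matchings $\sigma_\ell$ are chosen independently across $\ell\in I_{i'}$ and, conditional on $\mathcal{A}_0^{i'-1}\wedge\mathcal{B}_{i'-1}$, each remains uniform on the relevant $A(\ell)[Y_\ell,U_\ell]$. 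A union bound over at most $2Kr$ candidacy graphs together with the hierarchy $1/m\ll c$ and $1/m\ll 1/r$ yields the claimed $1-(1-3c)^{Km}$.

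For (iii), all four statements reduce to Lemma \ref{lem:matchings} (for $(M'1)_{i'}$--$(M'3)_{i'}$) and Theorem \ref{M0} (for $(M'4)_{i'}$) applied to the random perfect matching $\sigma_i$ on $A(i)[Y_i,U_i]$. By part (i) combined with Proposition \ref{restriction}, this bipartite graph is $(4K\Delta_R\sqrt{\xi_{i'-1}},p(\vec{d},i,i'-1))$-super-regular; the passage from $A(i)$ on $(Y'_i,U'_i)$ to $A(i)[Y_i,U_i]$ removes at most $C$ vertices from each side, and the resulting $O(C/m)$ perturbation is absorbed into the stated $\epsilon$. Because $\mathcal{A}_0^{i'-1}\wedge\mathcal{B}_{i'-1}$ is determined by the history prior to Round $i'$, the conditional distribution of $\sigma_i$ is still uniform, so Lemma \ref{lem:matchings} applies directly and the densities, error parameters $h$, $h'$ match those in $(M'1)_{i'}$--$(M'4)_{i'}$. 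For $(M'4)_{i'}$, Theorem \ref{M0} gives the conclusion for $S\subseteq Y_i$, $T\subseteq U_i$; extending to $S\subseteq Y'_i$, $T\subseteq U'_i$ only adds a deterministic contribution of at most $C$ coming from the artificial pre-assignment, negligible compared with the threshold $h'(4K\Delta_R\sqrt{\xi_{i'-1}})m$.

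The main obstacle lies in part (ii): packaging the update rule \eqref{eq:updatecandgraphs} as a single four-graphs-lemma instance requires the geometric observation that the $H'$-matching $\pi$ between $Y'_j$ and $Y'_\ell$ lets us view the new constraint on $A_{i'}^j$ as selecting, through $\sigma_\ell$, an edge of $G'$ into $U'_j$. Verifying (R2) for the artificial pairs via (P3) is the second delicate point, as is propagating every arithmetic error so that $g'(4K\Delta_R\sqrt{\xi_{i'-1}})\leq\xi_{i'}$, where the chain of constants in \eqref{eq:functions} and \eqref{xi property} is critical.
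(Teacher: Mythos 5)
Your proposal is correct and follows essentially the same route as the paper: (i) by bounding, vertex by vertex, the edges removed for violating \eqref{def A(i)} and invoking Proposition~\ref{regularity after edge deletion}; (ii) by the Four graphs lemma with the $H'$-matching identification, using \eqref{def A(i)} for (R2) and (P3) for the artificial pairs; and (iii) by Lemma~\ref{lem:matchings} and Theorem~\ref{M0} applied to $A(i)[Y_i,U_i]$ conditional on the pre-round history. The only cosmetic point is that your single application of (P3) takes $Q_2=f_{i'-1}(N_{i'-1}(\cdot))$, which may contain artificial vertices outside $V(G)$; as in the paper's (P3$'$) one should move the artificial part into $Q_1$ (i.e.\ apply (P3) twice), which changes nothing substantive.
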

\begin{proof}
First, we show (i). Let us fix $i\in I_{i'}$, $v\in U'_{i}$ and $j \in N_{R_*}(i)$, and define
$$N^{\leq}_{v,j}(G'): = \left\{ x\in Y'_{i} : |N_{A^j_{i'-1}}(x_j)\cap N_{G'}(v)| \leq (p(\vec{d},j,i')- 2\xi_{i'-1})m\right\},$$
where $\{x_j\} = N_{H'}(x)\cap Y'_j$. 
Since $G'[U'_{i},U'_j]$ is $(2\epsilon,d_{i,j})$-super-regular by (P2), $|N_{G'}(v)\cap U'_j| \geq (d_{i,j}-2\epsilon)m$. 
Together with the fact that $A^j_{i'-1}$ is $(\xi_{i'-1},p(\vec{d},j,i'-1))$-regular (since there was no failure prior to Round $i'$) and 
$p(\vec{d},j,i'-1)(d_{i,j}-2\epsilon)-\xi_{i'-1}>p(\vec{d},j,i')-2\xi_{i'-1}$
by \eqref{P relation}, it follows that $|N_{v,j}^{\leq }(G')|\leq \xi_{i'-1} m$. 
Similarly we get 
$$|N_{v,j}^{\leq}(P')|\leq \xi_{i'-1} m, ~ |N_{v,j}^{\geq}(G')|\leq \xi_{i'-1} m ~ \text{ and } ~|N_{v,j}^{\geq}(P')|\leq \xi_{i'-1} m,$$
where
$$N^{\leq}_{v,j}(P'): = \left\{ x\in Y'_i : |N_{B^j_{i'-1}}(x_j)\cap N_{P'}(v)| \leq (p(\vec{\beta},j,i')- 2\xi_{i'-1})m\right\},$$
$$N^{\geq}_{v,j}(G'): = \left\{ x\in Y'_i : |N_{A^j_{i'-1}}(x_j)\cap N_{G'}(v)| \geq (p(\vec{d},j,i')+ 2\xi_{i'-1})m\right\},$$
$$N^{\geq}_{v,j}(P'): = \left\{ x\in Y'_i : |N_{B^j_{i'-1}}(x_j)\cap N_{P'}(v)| \geq (p(\vec{\beta},j,i')+ 2\xi_{i'-1})m\right\}.$$

When creating $A(i)$, we have removed an edge $xv$ from $A^{i}_{i'-1}$ if and only if $x\in N^{\leq}_{v,j}(G') \cup N_{v,j}^{\geq}(G') \cup N^{\leq}_{v,j}(P') \cup N_{v,j}^{\geq}(P')$ for at least one $j$ with $j\in N_{R_*}(i)$.
Since $|N_{R_*}(i)| \leq K\Delta_R$ by (V2) this implies that
\begin{align}\label{deg diff u}
d_{A^i_{i'-1}}(v) - d_{A(i)}(v) \leq 4K\Delta_R \xi_{i'-1} m.
\end{align}
In the same way, we can show that for each $x\in Y'_{i}$,\COMMENT{  Let us fix $i\in I_{i'}$, $x\in Y'_{i}$ and $j\in N_{R_*}(i)$ and define 
$$N^{\leq}_{x,j}(G') := \left\{ v\in U'_{i} : |N_{A^j_{i'-1}}(x_j)\cap N_{G'}(v)| \leq (p(\vec{d},j,i')- 2\xi_{i'-1})m\right\},$$
where $\{x_j\} = N_{H'}(x)\cap Y'_j$. Note that $N_{H'}(x_j)\cap \bigcup_{i\in I_{i'}} Y'_{i} = \{x\}$.
Since $A^j_{i'-1}$ is $(\xi_{i'-1},p(\vec{d},j,i'-1))$-super-regular, $|N_{A^j_{i'-1}}(x_j)| \geq (p(\vec{d},j,i')-\xi_{i'-1})m\geq 2\epsilon m$. 
Also, by (\ref{P relation}), $d_{i,j}(p(\vec{d},j,i'-1)-\xi_{i'-1})-2\epsilon >p(\vec{d},j,i')-2\xi_{i'-1}$.
Thus, together with the fact that $G'[U'_{i'},U'_{j}]$ is $(2\epsilon,d_{i,j})$-super-regular by (P2)  it follows that $|N_{x,j}^{\leq}(G')|\leq 2\epsilon m$. 
Similarly we get 
$$|N_{x,j}^{\leq}(P')|\leq 2\epsilon m, ~ |N_{x,j}^{\geq}(G')|\leq 2\epsilon m ~ \text{ and } ~|N_{x,j}^{\geq}(P')|\leq 2\epsilon m,$$
where
$$N^{\leq}_{x,j}(P'): = \left\{ v\in U'_i : |N_{B^j_{i'-1}}(x_j)\cap N_{P'}(v)| \leq (p(\vec{\beta},j,i')- 2\xi_{i'-1})m\right\},$$
$$N^{\geq}_{x,j}(G'): = \left\{ v\in U'_i : |N_{A^j_{i'-1}}(x_j)\cap N_{G'}(v)| \geq (p(\vec{d},j,i')+ 2\xi_{i'-1})m\right\},$$
$$N^{\geq}_{x,j}(P'): = \left\{ v\in U'_i : |N_{B^j_{i'-1}}(x_j)\cap N_{P'}(v)| \geq (p(\vec{\beta},j,i')+ 2\xi_{i'-1})m\right\}.$$
When creating $A(i)$, we have removed an edge $xv$ from $A^{i}_{i'-1}$ if and only if $v\in N^{\leq}_{x,j}(G') \cup N_{x,j}^{\geq}(G') \cup N^{\leq}_{x,j}(P') \cup N_{x,j}^{\geq}(P')$ for at least one $j$ with $j\in N_{R_*}(i)$ and $|N_{R*}(i)|\leq K\Delta_R$. Hence, for each $x\in Y'_{i}$,
\begin{align*}
d_{A^i_{i'-1}}(x) - d_{A(i)}(x) \leq 8K\Delta_R \epsilon m \leq 4K\Delta_R \xi_{i'-1} m.
\end{align*}}
$$d_{A^i_{i'-1}}(x) - d_{A(i)}(x) \leq 8K\Delta_R \epsilon m \leq 4K\Delta_R \xi_{i'-1} m.$$   
Together with (\ref{deg diff u}) this shows that for any $z\in Y'_{i}\cup U'_{i}$, 
\begin{align*}
d_{A^i_{i'-1}}(z) - d_{A(i)}(z) \leq 4K\Delta_R \xi_{i'-1} m.
\end{align*}
Thus, by Proposition~\ref{regularity after edge deletion} and (A$1_{i'-1}$) it follows that (i) holds.

Now we show (ii). Assume $i'\in [w]$ and that both $\mathcal{A}_0^{i'-1}$ and $\mathcal{B}_{i'-1}$ hold. Recall from \eqref{P relation} and \eqref{stay same} that for all $j\notin \bigcup_{i\in I_{i'}} N_{R_*}(i)$ we have $A^j_{i'} = A^j_{i'-1}$, $B^j_{i'} = B^j_{i'-1}$,  $p(\vec{d},j,i')=p(\vec{d},j,i'-1)$ and $p(\vec{\beta},j,i')=p(\vec{\beta},j,i'-1)$. So these $A^j_{i'}$ are $(\xi_{i'},p(\vec{d},j,i'))$-super-regular and these $B^j_{i'}$ are $(\xi_{i'},p(\vec{\beta},j,i'))$-super-regular. Thus we can restrict our attention to
all the $j\in \bigcup_{i\in I_{i'}} N_{R_*}(i)$. Consider any $i\in I_{i'}$.
Note from (V2) that $R_{*}[I_{i'}]$ is an empty graph and $\Delta(R_*[I_{i'},I_{j'}])\leq 1$ for $i'\neq j'$. Property (ii) will follow from several applications of the four graphs lemma (Lemma~\ref{The four graphs lemma}) with $A(i)$ playing the role of $F_{12}$. To prepare for this, we introduce the following graphs.
\begin{itemize}
\item For all $j\ell \in E(R_*)$ let $\psi_{j,\ell} : Y'_{j} \rightarrow Y'_{\ell}$ be a bijection such that $\{\psi_{j,\ell}(x)\} = N_{H'}(x) \cap Y'_{\ell}$ for all $x\in Y'_j$.
%

\item For all $j\in N_{R_*}(i)$, let $F^j_{13}(i):= (Y'_{i},U'_j;E)$, where $xv \in E$ if and only if  $x\in Y'_i$, $v\in U'_j$ and $\psi_{i,j}(x)v \in E(A_{i'-1}^j)$.

\item For all $j\in N_{R_*}(i)$, let $F'^j_{13}(i) := (Y'_{i},U'_j;E')$, where $xv \in E'$ if and only if $x\in Y'_i$, $v\in U'_j$ and $\psi_{i,j}(x)v \in E(B_{i'-1}^j)$.

\item For all $j \in N_{R_*}(i)$, let $F^j_{23}(i):= G'[U'_{i},U'_j]$ and $F'^j_{23}(i) := P'[U'_{i},U'_j]$.
\end{itemize}

Observe that, since $H'[Y'_{i},Y'_j]$ is a perfect matching for each $j\in N_{R_*}(i)$, the graph $F_{13}^j(i)$ is isomorphic to $A_{i'-1}^j$ under the isomorphism $\psi'_{i,j}$ which keeps the elements of $U'_j$ fixed while mapping $x$ to $\psi_{i,j}(x)$ for all $x\in Y'_{i}$.
Since we are conditioning on $\mathcal{A}_0^{i'-1}$, properties (A$1_{i'-1})$ and (A$2_{i'-1})$ hold and so for all $j\in N_{R_*}(i)$ we have
\begin{itemize}
\item[(a)] $F_{13}^j(i)$ is $(\xi_{i'-1},p(\vec{d},j,i'-1))$-super-regular, 
\item[(a$'$)] $F'^{j}_{13}(i)$ is $(\xi_{i'-1},p(\vec{\beta},j,i'-1))$-super-regular, 
\item[(b)] $F_{23}^j(i)$ is $(2\epsilon,d_{i,j})$-super-regular, 
\item[(b$'$)] $F'^{j}_{23}(i)$ is $(2\epsilon,\beta_{i,j})$-super-regular, 
\end{itemize}
where (b) and (b$'$) follow by (P2). For $x\in Y'_{i}$ and $v\in U'_{i}$, note that  
\begin{align}\label{eq:psycodegrees}
N_{F_{13}^j(i)}(x) \cap N_{F_{23}^j(i)}(v) =  N_{A^j_{i'-1}}(\psi_{i,j}(x))\cap N_{G'}(v), \\
N_{F'^j_{13}(i)}(x) \cap N_{F'^j_{23}(i)}(v) =  N_{B^j_{i'-1}}(\psi_{i,j}(x))\cap N_{P'}(v).
\end{align}
Moreover, by \eqref{P relation} for all $j\in N_{R_*}(i)$ we have \begin{align}\label{pp relation}
p(\vec{d},j,i') = d_{i,j} p(\vec{d},j,i'-1) \enspace \text{ and } \enspace p(\vec{\beta},j,i')=\beta_{i,j} p(\vec{\beta},j,i'-1).
\end{align}
Together with \eqref{def A(i)} this  means that we obtain $A(i)$ from $A^{i}_{i'-1}$ 
by deleting every edge $xv$ which does not satisfy one of
\begin{align}\label{A(i) edge deletion}
&|N_{F_{13}^j(i)}(x) \cap N_{F_{23}^j(i)}(v)| = (d_{i,j} p(\vec{d},j,i'-1)\pm 2\xi_{i'-1}) m, \nonumber\\
&|N_{F'^{j}_{13}(i)}(x) \cap N_{F'^{j}_{23}(i)}(v)| = (\beta_{i,j} p(\vec{\beta},j,i'-1)\pm 2\xi_{i'-1}) m
\end{align}
for some index $j \in N_{R_*}(i)$. This will be used to verify condition (R2) for Lemma~\ref{The four graphs lemma}.
To apply Lemma~\ref{The four graphs lemma} we also need to check \eqref{eq:codensities} for the artificial vertices introduced in the preparation round. For this, note that two applications of (P3) imply the following property.
\begin{itemize}
\item[(P$3'$)] For all $ij \in E(R_*)$, all $Q_1 =\{q_1,\dots, q_s\}$ with $|Q_1|\leq K\Delta_R-1$ where $q_\ell \in U'_{j_{\ell}}\setminus U_{j_\ell}$ and $j_{\ell} \in N_{R_*}(j)$, all $Q_2 \subseteq V(G)$ with $|Q_2|\leq K\Delta_R$, all $v'\in U'_{i}\setminus (U_i\cup Q_1)$ and all $y\in Y'_j$, we have
\begin{itemize}
\item[(i)] $\displaystyle |N_{G'}(v') \cap N_{G'}(Q_1)\cap N_{G'}(Q_2)\cap  N_{A'_0}(y) | = d_{i,j} | N_{G'}(Q_1)\cap N_{G'}(Q_2)\cap  N_{A'_0}(y) | \pm 4\epsilon m,$

\item[(ii)] $\displaystyle|N_{P'}(v') \cap N_{P'}(Q_1)\cap N_{P'}(Q_2)\cap  N_{A'_0}(y)| = \beta_{i,j} | N_{P'}(Q_1)\cap N_{P'}(Q_2) \cap  N_{A'_0}(y) | \pm 4\epsilon m.$
\end{itemize}
\end{itemize}

For all vertices $x'\in Y'_{i}\setminus Y_{i}, v' \in U'_{i}\setminus U_{i}$ with $i\in I_{i'}$ and $ij\in E(R_*)$, we apply (P$3'$)(i) with $Q_1:= f_{i'}(N_{i'-1}(\psi_{i,j}(x'))\setminus V(H))$, $Q_2:= f_{i'}(N_{i'-1}(\psi_{i,j}(x'))\cap V(H))$, $v'$ and $\psi_{i,j}(x')$ to obtain
\begin{eqnarray}\label{artificial vertex fine}
|N_{F_{13}^j(i)}(x') \cap N_{F_{23}^j(i)}(v') |&\stackrel{\eqref{eq:psycodegrees}}{=}&  |N_{A^j_{i'-1}}(\psi_{i,j}(x'))\cap N_{G'}(v')| \nonumber\\
&\stackrel{\eqref{eq:updatecandgraphs}}{=}& |N_{A_0^j}(\psi_{i,j}(x'))\cap N_{G'}(Q_1)\cap N_{G'}(Q_2)\cap N_{G'}(v')| \nonumber\\
&\stackrel{(\text{P}3')(i)}{=}& d_{i,j}|N_{A_0^j}(\psi_{i,j}(x'))\cap N_{G'}(Q_1)\cap N_{G'}(Q_2)| \pm 4\epsilon m \nonumber\\
&\stackrel{\eqref{eq:updatecandgraphs}}{=} & d_{i,j}| N_{A^j_{i'-1}}(\psi_{i,j}(x'))| \pm 4\epsilon m \nonumber\\
&\stackrel{(\text{A}1)_{i'-1}}{=} & d_{i,j}(p(\vec{d},j,i'-1)\pm \xi_{i'-1} )m \pm 4\epsilon m \nonumber\\
&=& (d_{i,j} p(\vec{d},j,i'-1) \pm 2\xi_{i'-1} )m.
\end{eqnarray}
Similarly, we use (P$3'$)(ii) to obtain
\begin{eqnarray*}
 |N_{F'^j_{13}(i)}(x') \cap N_{F'^j_{23}(i)}(v')| &=&  |N_{B^j_{i'-1}}(\psi_{i,j}(x'))\cap N_{P'}(v')| = (\beta_{i,j} p(\vec{\beta},j,i'-1)\pm 2\xi_{i'-1} )m.
\end{eqnarray*}
Note also that for all $j\in N_{R_*}(i)$, $x\in Y'_j$ and $v\in U'_j$,
\begin{eqnarray} \label{equivalence}
 xv \in E(A^{j}_{i'}) 
&\stackrel{(\ref{Aji x def})}{\Leftrightarrow}& xv \in E(A^j_0) \text{ and }uv \in E(G') \text{ for each } u\in f_{i'}(N_{i'}(x)) \nonumber \\
&\Leftrightarrow& xv \in E(A^{j}_{i'-1}) \text{ and } f_{i'}(\psi^{-1}_{i,j}(x))\in N_{G'}(v) \nonumber \\
&\Leftrightarrow& \psi^{-1}_{i,j}(x)v \in E(F^j_{13}(i)) \text{ and } f_{i'}(\psi^{-1}_{i,j}(x))v \in E(F^j_{23}(i)).
\end{eqnarray}

We now wish to apply Lemma \ref{The four graphs lemma} for each $i\in I_{i'}$ and $j\in N_{R_*}(i)$ with the following graphs and parameters. \newline

{
\begin{tabular}{c|c|c|c|c|c|c|c}
object/parameter &  $A(i)$ & $F^j_{13}(i)$ & $F^j_{23}(i)$ & $A^j_{i'}$ & $5Kc$ & $m$ & $4K\Delta_R\sqrt{\xi_{i'-1}}$ \\ \hline
playing the role of &  $F_{12}$ & $F_{13}$ & $F_{23}$ & $A_{\sigma_i}$ & $c$ & $n$ & $\epsilon$
\\ 
\end{tabular}
}\newline\vspace{0.2cm}

\noindent Note that Lemma~\ref{slender lemmas}(i), as well as the properties (a) and (b) (which were defined before \eqref{eq:psycodegrees}) imply that the graphs satisfy the regularity condition (R1) stated before Lemma~\ref{The four graphs lemma}. Note also that \eqref{artificial vertex fine} implies that \eqref{eq:codensities} is satisfied. Recall from the description of the Slender graph algorithm that $\sigma_{i}: Y'_i\rightarrow U'_i$ is the bijection obtained in Round $i'$ for $A(i)$ and $f_{i'}(\psi_{i,j}^{-1}(x)) = \sigma_{i}(\psi_{i,j}^{-1}(x))$ for all $x\in Y'_j$ with $j \in N_{R_*}(i)$ by \eqref{embedding extension}. So \eqref{equivalence} implies that the graph $A_{\sigma_i}$ defined before Lemma \ref{The four graphs lemma} is indeed isomorphic to $A_{i'}^j$ via the
isomorphism $\psi'_{i,j}$ defined by $\psi'_{i,j}(x)=\psi_{i,j}(x)$ for $x\in Y'_i$ and $\psi'_{i,j}(v)=v$ for $v\in U'_j$. Also \eqref{A(i) edge deletion} means that condition (R2) before Lemma~\ref{The four graphs lemma} is satisfied. So altogether this means we can indeed apply Lemma~\ref{The four graphs lemma}.
Recall that $\mathcal{A}_0^{i'-1}, \mathcal{B}_{i'-1}$ only depend on the history prior to Round $i'$ and that $\xi_{i'} \geq g'(4K\Delta_R\sqrt{\xi_{i'-1}})$ by \eqref{xi property}. So Lemma~\ref{The four graphs lemma} together with \eqref{pp relation} and a union bound over all $i\in I_{i'}$ and $j\in N_{R_*}(i)$ implies that 
\begin{align}\label{A1i}
\mathbb{P}[(\text{A}1_{i'}) \mid \mathcal{A}_0^{i'-1},\mathcal{B}_{i'-1} ] \geq 1-K^2r\Delta_R(1-5Kc)^m\geq 1-(1-4c)^{Km}.
\end{align}
\noindent
Similarly as in \eqref{equivalence} it follows that
\begin{align} \label{equivalence2}
\begin{split}
xv \in E(B^{j}_{i'})
&\Leftrightarrow \psi^{-1}_{i,j}(x)v \in E(F'^j_{13}(i)) \text{ and } f_{i'}(\psi^{-1}_{i,j}(x))v \in E(F'^j_{23}(i)).
\end{split}
\end{align}
So similarly as before, for each $i\in I_{i'}$ and $j\in N_{R_*}(i)$ we can apply Lemma~\ref{The four graphs lemma} to $A(i), F'^{j}_{13}(i), F'^{j}_{23}(i)$ to obtain
\begin{align}\label{A2i}
\mathbb{P}[(\text{A}2_{i'}) \mid \mathcal{A}_0^{i'-1},\mathcal{B}_{i'-1} ] \geq 1-(1-4c)^{Km}.
\end{align}
\noindent
By (\ref{A1i}) and (\ref{A2i}), we obtain
\begin{align*} \mathbb{P}[\mathcal{A}_{i'} \mid \mathcal{A}_0^{i'-1}, \mathcal{B}_{i'-1}] &= \mathbb{P}[ (\text{A}1_{i'}), (\text{A}2_{i'}) \mid \mathcal{A}_0^{i'-1},\mathcal{B}_{i'-1}] \geq  1 - 2(1-4c)^{Km} \geq 1-(1-3c)^{Km}.
\end{align*} 
Property (iii) follows immediately from (i) as well as Lemma~\ref{lem:matchings} and Theorem~\ref{M0} applied to $A(i)[Y_i,U_i]$.
Indeed, to check $(\text{M}'3)_{i'}$, note that $p(\vec{d},i,i'-1)\geq d^{\Delta(R_*)} \geq d^{K\Delta_R} \geq 9 d'$
by~\eqref{P relation}, (V1) and (V2).
(Actually, we get an additional error arising from the existence of the artificial vertices (i.e.~from considering
$A(i)[Y_i,U_i]$ instead of $A(i)$) as well as the fact that we only have
$m-C\le |Y_i|=|U_i|\le m$. However, this error is insignificant. Alternatively,
one could have used that Proposition~\ref{regularity after edge deletion} actually implies that
$A(i)$ is $(3\sqrt{K\Delta_R \xi_{i'-1}},p(\vec{d},i,i'-1))$-super-regular.)
\end{proof}

We can now deduce from Proposition~\ref{preparation} and Lemma~\ref{slender lemmas} that the Slender graph algorithm succeeds with high probability, and that the candidacy graphs $F_j$ for the patching process have strong regularity properties.

\begin{lemma}\label{Slendered blow-up}
Suppose  $\mathcal{S}=(G,P,H,R_*,A_0,\mathcal{U},\mathcal{Y},\mathcal{I},c,\epsilon,d_0,\vec{d},\vec{\beta},K,\Delta_R,C)$ is a valid input. Let $q_*$ be the function defined in \eqref{eq:functions}. Then the Slender graph algorithm applied to $\mathcal{S}$ succeeds with probability at least $1-(1-2c)^{Km}$. Moreover, if it succeeds, then it returns an embedding $\phi$ of $H$ into $G$ and bipartite graphs $F_j$ on $(Y_j,U_j)$ (for each $j\in [Kr]$) such that $\phi(Y_i)=U_i$ for all $i\in[r]$, $\phi(x) \in N_{A_0}(x)$ for all $x\in V(H)$ and such that $\phi$ and the graphs $F_j$ satisfy the following condition.
\begin{itemize}
\item[(i)] Each $F_j$ is a $(q_*(\epsilon), d_0 p(R_*,\vec{\beta},j))$-super-regular bipartite graph on $(Y_j,U_j)$ such that $ N_{F_j}(x)\subseteq U_j \cap N_{A_0}(x) \cap \bigcap_{u\in \phi(N_H(x))} N_P(u)$ for all $x\in Y_j$. 
\end{itemize}
\end{lemma}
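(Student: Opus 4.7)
The plan is as follows. First I would bound the failure probability. The algorithm fails only through a failure of type 1 in the preparation round, or a failure of type 2 in some Round $i'\in [w]$. Proposition~\ref{preparation} bounds the former by $(1-3c)^{Km}$. For the latter, failure of type 2 in Round $i'$ is exactly the event $\neg \mathcal{A}_{i'}$, and Lemma~\ref{slender lemmas}(ii), applied with the trivial event $\mathcal{B}_{i'-1}$, gives $\Prob[\neg\mathcal{A}_{i'}\mid \mathcal{A}_0^{i'-1}]\le (1-3c)^{Km}$ conditional on surviving the previous rounds. A union bound over the (at most) $w+1$ possible failures, together with $w\le 1/c$, yields an overall failure probability of at most $(w+1)(1-3c)^{Km} \le (1-2c)^{Km}$, as required.

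Next I would verify the output properties assuming the algorithm succeeds. Since in each Round $i'$ and each $i\in I_{i'}$, $\sigma_i$ is obtained by extending the fixed assignment of the artificial vertices by a perfect matching of $A(i)[Y_i,U_i]$ (which exists by Lemma~\ref{slender lemmas}(i) combined with the super-regularity of $A(i)$), $\sigma_i$ is a bijection from $Y'_i$ to $U'_i$. Hence $\phi(Y_i)=U_i$ for every $i\in[Kr]$. Moreover every edge $x\sigma_i(x)$ lies in $A(i)\subseteq A^i_{i'-1}\subseteq A^i_0=A'_0[Y'_i,U'_i]$, so $\phi(x)\in N_{A'_0}(x)=N_{A_0}(x)$ for all $x\in V(H)$ (using (P1)).

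Finally I would analyse the candidacy graphs $F_j=B^j_w[Y_j,U_j]$. The containment property follows directly from the definition \eqref{Bji def} of $B^j_w$: for $x\in Y_j$ the neighbourhood $N_{B^j_w}(x)$ is contained in $N_{B^j_0}(x)\subseteq N_{A_0}(x)$ and in $\bigcap_{y\in N_w(x)}N_{P'}(f_w(y))$; since for $x\in Y_j\subseteq V(H)$ we have $N_w(x)\cap V(H)=N_H(x)$ and $P'[V(P)]=P$ by (P1), restriction to $U_j$ gives the stated inclusion. For super-regularity, iterating \eqref{P relation} along the $w$ rounds and using that $\mathcal{I}$ partitions $[Kr]$, together with $p(\vec{\beta},j,0)=d_0$, yields
\begin{equation*}
p(\vec{\beta},j,w)=d_0\prod_{\ell\in N_{R_*}(j)}\beta_{j,\ell}=d_0\, p(R_*,\vec{\beta},j).
\end{equation*}
Since no failure of type 2 occurred, $B^j_w$ is $(\xi_w,p(\vec{\beta},j,w))$-super-regular. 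From the recursion $\xi_t=g^t(2\epsilon)$ with $g(a)=a^{1/300}$ in \eqref{eq:functions} and $w=K^2\Delta_R^2(\Delta_R+1)\le 1/(300)^{w}$ one checks that $\xi_w\le q_*(\epsilon)/2$. Removing the at most $C$ artificial vertices in each class perturbs the super-regularity parameter only by an amount negligible compared with $q_*(\epsilon)$, so $F_j$ is $(q_*(\epsilon),d_0\,p(R_*,\vec{\beta},j))$-super-regular, completing the proof. The only subtle point is tracking the iterated regularity constant $\xi_w$; the main obstacle was already handled in Lemma~\ref{slender lemmas}, and here the argument is essentially bookkeeping.
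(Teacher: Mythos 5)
Your proposal is correct and follows essentially the same route as the paper: Proposition~\ref{preparation} together with Lemma~\ref{slender lemmas}(ii) (applied with $\mathcal{B}_{i'-1}=\mathcal{A}_0^{i'-1}$, i.e.\ a trivial conditioning event) for the success probability, and then the definition \eqref{Bji def} of $B^j_w$, the recursion \eqref{P relation} giving $p(\vec{\beta},j,w)=d_0\,p(R_*,\vec{\beta},j)$, and the comparison of $\xi_w$ with $q_*(\epsilon)$ (plus removal of the at most $C$ artificial vertices per class) for property (i). The only blemish is the aside ``$w=K^2\Delta_R^2(\Delta_R+1)\le 1/(300)^w$'', which is false as written; what is actually needed (and true, since $\epsilon\ll 1/K,1/\Delta_R$) is that $\xi_w=g^w(2\epsilon)=(2\epsilon)^{(1/300)^w}\le \epsilon^{(1/300)^{w+1}}/2=q_*(\epsilon)/2$, exactly as the paper asserts.
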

\begin{proof}
To show that the Slender graph algorithm succeeds with probability at least $1-(1-2c)^{Km}$, recall from Proposition \ref{preparation} that the failure of type 1 occurs with probability at most $(1-3c)^{Km}$.
Once failure of type 1 does not occur, the event $\mathcal{A}_0^{w}$ is equivalent to the algorithm being successful. By applying Lemma \ref{slender lemmas}(ii) with $\mathcal{B}_{i'-1} = \mathcal{A}_0^{i'-1}$, we obtain 
\begin{align*}
\mathbb{P}[\mathcal{A}_0^{w}] = \prod_{i'=1}^{w} \mathbb{P}[\mathcal{A}_{i'} \mid \mathcal{A}_0^{i'-1}] \geq \left(1-(1- 3c)^{Km} \right)^{w} \geq 1 - w (1-3c)^{Km}. \end{align*} 
Thus the algorithm succeeds with probability at least $1 - w(1-3c)^{Km} -(1-3c)^{Km} \geq 1- (1-2c)^{Km}$. Also
$\phi(Y_i)=U_i$ and $\phi(x) \in N_{A^j_{0}}(x)\cap U_j \subseteq N_{A_0}(x)$ for $x\in Y_j$ are trivial from the description of the algorithm.

Now let us assume that the algorithm succeeds, i.e. $\mathcal{A}_0^{w}$ occurs. Note that $\xi_w=g^w(2\epsilon)$ by \eqref{eq:xirecursion}. 
In particular, this means that for all $j\in [Kr]$ we obtain graphs $B_{w}^j$ on  $(Y'_j, U'_j)$, each of which is $(q_*(\epsilon)/2,p(\vec{\beta},j,w))$-super-regular. 
Thus $F_j= B_{w}^j[Y_j, U_j]$ is $(q_*(\epsilon),p(\vec{\beta},j,w))$-super-regular (as $|Y'_j\setminus Y_j|+|U'_j\setminus U_j|\leq 2C$).
But $p(\vec{\beta},j,w) = d_0 p(R_*,\vec{\beta},j)$ by \eqref{p R def}, \eqref{definition p 0} and \eqref{P relation}, so $F_j$ is $(q_*(\epsilon), d_0 p(R_*,\vec{\beta},j))$-super-regular. 
Also, by the definition of $B_{w}^j$, for any $x\in Y_j$ 
\begin{eqnarray*}
N_{F_j}(x) &\stackrel{(\ref{Bji def})}{=}&  U_j\cap N_{B^j_0}(x) \cap \bigcap_{u\in f_{w} (N_{H'}(x))} N_{P'}(u)\subseteq  U_j\cap N_{A_0}(x) \cap \bigcap_{u\in f_{w}(N_H(x))} N_{P}(u) \\
&=&U_j\cap N_{A_0}(x) \cap \bigcap_{u\in \phi(N_H(x))} N_{P}(u)
\end{eqnarray*}
as $U_j\cap N_{P'}(u) = U_j\cap N_{P}(u)$ for all $u\in V(G)=V(P)$, and $\phi$ is the restriction of $f_{w}$ to $V(H)$. Thus (i) holds.
\end{proof}

\subsection{The Uniform embedding algorithm}
We will find the required embedding $\phi$ in Lemma \ref{modified blow-up} via the following random algorithm, which first preprocesses the input to satisfy the requirements of the Slender graph algorithm and then runs the Slender graph algorithm.
So suppose that $(G,P,H,R,A_0,\mathcal{V},\mathcal{X},c,\epsilon,d_0,\vec{d},\vec{\beta},k,\Delta_R,C)$ satisfies the assumption in Lemma \ref{modified blow-up}. Recall that $H^2$ denotes the square of the graph $H$. Recall $R_K$ denotes the $K$-fold blow-up of $R$. In Steps 1--3 below we will refine the partitions given by Lemma~\ref{modified blow-up} in a suitable way to ensure that we have a valid input for the Slender graph algorithm satisfying (V1)--(V7). So in Step 4 we can then apply the Slender graph algorithm to obtain the desired embedding. \newline

\noindent
{\bf The Uniform embedding algorithm on $(G,P,H,R,A_0,\mathcal{V},\mathcal{X},c,\epsilon,d_0,\vec{d},\vec{\beta},k,\Delta_R,C)$.}
\vspace{0.2cm}

\noindent {\bf Step 1}.
Recall that we assume that $H$ is $(R,\vec{k},C)$ near-equiregular with respect to $\mathcal{X}$
(which was defined in Section~\ref{subsec:notation}). We apply the Hajnal-Szemer\'edi theorem (Theorem \ref{thm:HS})
to $H^2[X_i]$ for each $i\in [r]$ to find an equitable partition of $X_i$ into $K:=(k+1)^2 \Delta_R$ sets which are independent in $H^2$. (Note that this is possible since $\Delta(H^2[X_i])\leq k(k+1)\Delta_R$.)
Let $\mathcal{Y}:=\{ Y_1,\dots, Y_{Kr}\}$ be the resulting partition of $V(H)$ such that for all $i\in[r]$
 $$X_i = \bigcup_{j\in J_i} Y_j,$$  
where $J_i := \{(i-1)K+1,\dots, iK\}$ and $m-C \leq |Y_j| \leq m$ with $m:=\lceil n/K \rceil$.

Since $\Delta(R)\leq \Delta_R$, we can find a vertex partition of $R$ into independent sets $W^*_1,\dots, W^*_{\Delta_R+1}$. Let $w:= (K\Delta_R)^2(\Delta_R+1).$ We now view the $K$-fold blow-up $R_K$ of $R$ as being obtained from $R$ by replacing each vertex $i$ of $R$ with the set of vertices $J_i$. Note that $\Delta(R_K)\leq K \Delta_R$. Let $W^{**}_1,\dots, W^{**}_{\Delta_R+1}$ be the vertex partition of $V(R_K)=[Kr]$ such that $i\in W^*_{j'}$ if and only if $J_i \subseteq W^{**}_{j'}$. For each $i'\in [\Delta_R+1]$, we choose a vertex partition of $(R_K)^2[W^{**}_{i'}]$ into $(K\Delta_R)^2$ sets which are independent in $(R_K)^2$.\COMMENT{it's ok since $\Delta((R_K)^2[W^{**}_{i'}]) \leq (K\Delta_R-1)K \Delta_R$}
Denote the classes of the resulting vertex partition of $W^{**}_{i'}$ by $I_{j'}$ where $(i'-1)(K\Delta_R)^2+1 \leq j' \leq i'(K\Delta_R)^2$. Note that for all $i',j' \in [w]$ we have
\begin{align}\label{max deg R_K I}
\Delta(R_K[I_{i'},I_{j'}]) \leq 1.
\end{align}

For all $i\in [r]$, let $i'_{\max}$ be the largest index $i'$ such that $J_i\cap I_{i'}\neq \emptyset$, and let $i'_{\min}$ be the smallest such index. Then for any $i,j \in [r]$ with $ij\in E(R)$, one of the following holds:\COMMENT{Because, $ij \in E(R)$ implies that $R_K[J_i,J_j]$ is a complete bipartite graph. Furthermore, $J_i \subset W^{**}_{i'}, J_j\subset W^{**}_{j'}$ for some $i'\neq j'$. WLOG, assume $i'< j'$. Then $i'_{\max} \leq i'(K\Delta)^2$ while $j'_{\min} \geq (j'-1)(K\Delta)^2+1$. Thus we get one of these. }
\begin{align}\label{round index}
i'_{\max} < j'_{\min}, ~\text{ or }\enspace j'_{\max} < i'_{\min}.
\end{align}
We let $\mathcal{I}:= \{ I_1,\dots, I_{w}\}$.
Let $\vec{d'}, \vec{\beta'}$ be $Kr\times Kr$ matrices such that $d'_{\ell,\ell'} := d_{i,j}$ and $\beta'_{\ell,\ell'} := \beta_{i,j}$ whenever $\ell\in J_i, \ell'\in J_j$. So we have now satisfied (V1)--(V3) in the definition of a valid input (with $R_K$ playing the role of $R_*$ and $d'_{\ell,\ell'},\beta'_{\ell,\ell'}$ playing the roles of $d_{i,j}$ and $\beta_{i,j}$ respectively).
\vspace{0.1cm}

\noindent {\bf Step 2}. Note that for all $1\leq i \neq j \leq Kr$, the graph $H[Y_i,Y_j]$ is a matching if $ij\in E(R_K)$ and empty if $ij\notin E(R_K)$. For all $ij\in E(R_K)$ we add edges to $H[Y_i,Y_j]$ to obtain a graph $H_*$ on $V(H)$ so that each $H_*[Y_i,Y_j]$ is a matching of size $\min\{|Y_i|,|Y_j|\}$. So $H_*$ satisfies (V6) (with $H_*$ playing the role of $H$). \vspace{0.1cm}

\noindent {\bf Step 3}. For each $i\in [r]$ we now choose an equitable partition $U_{(i-1)K+1}, \dots, U_{iK}$ of $V_i$.
In the case when $A_0[X_i,V_i]$ is complete bipartite, we could simply consider an equitable partition chosen uniformly at random.
In general, there are a few vertices we need to be more careful about. More precisely, we consider%
\COMMENT{below we have $2\epsilon$ instead of $\epsilon$ since $|Y_j|$ might be a little smaller than $m$}
$$
W_i := \{v \in V_i: \exists j \in J_i \text{ such that } |N_{A_0}(v)\cap Y_j|\neq (d_0\pm 2\epsilon) m \}.
$$ 
Since $A_0[X_i,V_i]$ is $(d_0,\epsilon)$-super-regular, 
\begin{align}\label{eq:K2epsm}
|W_i|\leq K\epsilon n \leq K^2 \epsilon m.
\end{align}
Note that for each $v\in W_i$, there exists some $j \in J_i$ such that $|N_{A_0}(v)\cap Y_j| > (d_0-2\epsilon)m$.\COMMENT{Since $\sum_{j\in J_i} |N_{A_0}(v)\cap Y_j| = |N_{A_0}(v)| = (d_0\pm \epsilon)n$, use pigeonhole principle.}
Let $(W'_{(i-1)K+1},\dots, W'_{iK})$ be a partition of $W_i$ such that $v \in W'_{j}$ implies that $|N_{A_0}(v)\cap Y_j| > (d_0-2\epsilon) m$.
Now we partition $V_i\setminus W_{i}$ into $W''_{(i-1)K+1}, \dots, W''_{iK}$ uniformly at random subject to the condition $|W''_j| = |Y_j|-|W'_j|$. Let $U_j := W'_j\cup W''_j$.
For each $j\in J_i$ and each $v\in W'_j$ we delete some arbitrary edges in $A_0$ between $v$ and $Y_i$ to obtain a spanning subgraph $A^\diamond_0$ of $A_0$ such that $|N_{A^\diamond_0}(u)\cap Y_j| = (d_0\pm 2\epsilon)m$ for all $u\in U_j$ and $j\in J_i$.
Let $A^*_0:=\bigcup_{j\in [Kr]} A^\diamond_0[Y_j,U_j]$.%
\COMMENT{So $A^*_0$ is obtained from $A^\diamond_0$ by deleting all (cross-) edges between $Y_j$ and $U_i$ with $i\neq j\in J_\ell$ for some
$\ell\in [r]$.}
So $A^*_0\subseteq A_0$.

Let $\mathcal{U}:= \{U_1,\dots, U_{Kr}\}$. Note that $G$ and $P$ admit vertex partition $(R_K,\mathcal{U})$ and $H_*$ admits vertex partition $(R_K,\mathcal{Y})$. We define the following events.
\begin{itemize}
\item[($\mathcal{G}$)] $G[U_{i},U_{j}]$ is $(\epsilon^{1/3},d'_{i,j})$-super-regular for all $ij \in E(R_K)$.
\item[($\mathcal{P}$)] $P[U_{i},U_{j}]$ is $(\epsilon^{1/3},\beta'_{i,j})$-super-regular for all $ij \in E(R_K)$.
\item[($\mathcal{A}_0^*$)] $A^*_0[Y_j,U_j]$ is $(\epsilon^{1/3},d_0)$-super-regular for all $j\in [Kr]$. 
\end{itemize}
If one of these events does not hold, we end the algorithm with \textbf{failure of type 1}. \vspace{0.1cm}

\noindent {\bf Step 4}. We apply the Slender graph algorithm on 
$$\mathcal{S}=(G,P,H_*,R_K,A^*_0,\mathcal{U},\mathcal{Y},\mathcal{I},c,\epsilon^{1/3},d_0,\vec{d}',\vec{\beta}',K,\Delta_R,C).$$
\COMMENT{Note that $c,m$ here are exactly playing the role of $c,m$ in the slender graph algorithm, respectively.} Note that if we have no failure of type 1, then $\mathcal{S}$ satisfies (V1)--(V7). Recall that in the preparation round of the Slender graph algorithm we define 
$G', H_*', P'$ and $(A^*_0)'$ by adding artificial vertices and we have $Y'_j \supseteq Y_i$ and $U'_j \supseteq U_j$ such that $|Y'_j|=|U'_j|=m$ and
\begin{itemize}
\item[($\mathcal{G}'$)]$G'$ is $(2\epsilon^{1/3},\vec{d}')$-super-regular with respect to $(R_K,U'_1,\dots, U'_{Kr})$, 
\item[($\mathcal{P}'$)] $P'$ is $(2\epsilon^{1/3},\vec{\beta}')$-super-regular with respect to $(R_K,U'_1,\dots, U'_{Kr})$, 
\item[($\mathcal{A}'_0$)] $(A^*_0)'[Y'_j,U'_j]$ is $(2\epsilon^{1/3},d_0)$-super-regular for each $j\in [Kr]$ 
\end{itemize}
(provided that the Slender graph algorithm does not abort with failure of type 1). If the Slender graph algorithm fails, we abort the Uniform embedding algorithm with {\bf failure of type 2}. Otherwise, we obtain an embedding $\phi$ and bipartite graphs $F_j$ on $(U_j, Y_j)$ for each $j\in [Kr]$. For each $x\in V(H)$, let $N_x:= N_{H_*}(x)$. Return $(\phi,\mathcal{Y},\mathcal{U},F,N)$, where $F:= \bigcup_{j=1}^{Kr} F_j$ and $N:=\{N_x : x\in V(H)\}$. \vspace{0.1cm} \newline

\begin{claim}
If $\mathcal{S}=(G,P,H,R,A_0,\mathcal{V},\mathcal{X},c,\epsilon, d_0, \vec{d},\vec{\beta},k,\Delta_R,C)$ satisfies the assumption in Lemma \ref{modified blow-up}, then
the Uniform embedding algorithm on $\mathcal{S}$ 
fails with probability at most $(1-c)^n$. 
\end{claim}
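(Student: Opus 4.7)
The plan is to bound the two possible failure modes of the Uniform embedding algorithm separately and then combine the bounds. The key observation is that once Step 3 succeeds (i.e.\ no failure of type 1 occurs), the tuple $\mathcal{S}$ passed to the Slender graph algorithm in Step 4 satisfies conditions (V1)--(V7), so Lemma~\ref{Slendered blow-up} directly yields a failure-of-type-2 probability of at most $(1-2c)^{Km} \leq (1-2c)^{n}$. It therefore suffices to show that failure of type 1 occurs with probability at most, say, $(1-2c)^n$ as well; a union bound then gives the claimed $(1-c)^n$.

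To control failure of type 1, I would analyse the random equitable partition $(W''_{(i-1)K+1}, \dots, W''_{iK})$ of $V_i \setminus W_i$ introduced in Step 3. The deterministic part $W'_j$ has size at most $|W_i| \leq K^2 \epsilon m$ by \eqref{eq:K2epsm}, so it contributes only a $K^2\epsilon$-fraction to each $U_j$ and cannot by itself disturb super-regularity by more than an $\epsilon^{1/2}$-error. The random part is a uniformly chosen partition into roughly equal classes, so for any fixed vertex $v \in V(G)$ and any $j \in [Kr]$, the intersection $|N_G(v) \cap U_j|$ is hypergeometrically distributed with mean $(d'_{i,j} \pm 2\epsilon)m$ (where $i$ is the index such that $v \notin V_i$ and $j \in J_{i'}$ with $ii' \in E(R)$). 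Lemma~\ref{Chernoff Bounds} then gives a failure probability of at most $2e^{-\epsilon^2 m/2}$ per vertex and per partition class. Similarly, for any two sets $S \subseteq U_i$, $T \subseteq U_j$ of size at least $\epsilon^{1/3} n/K$ which are determined by the random partition, $e_G(S,T)$ concentrates around its mean with exponentially small failure probability; one then applies the usual $\epsilon$-net / bounded-difference argument (via Azuma on the exposure martingale obtained by revealing one vertex's class at a time) to a union bound over all subsets. An analogous argument, using the super-regularity of $P$ and of $A_0[X_i,V_i]$ together with the defining property of $W'_j$ (namely that every $v \in W'_j$ satisfies $|N_{A_0}(v)\cap Y_j|>(d_0-2\epsilon)m$), yields concentration for $(\mathcal{P})$ and $(\mathcal{A}_0^*)$.

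A union bound over the $O(Kr)$ pairs and the $O(2^n)$ subsets shows that each of $(\mathcal{G})$, $(\mathcal{P})$, $(\mathcal{A}_0^*)$ fails with probability at most $(1-3c)^n$, provided the hierarchy $c \ll \epsilon$ is respected (which it is by the assumption of Lemma~\ref{modified blow-up}). Combining the two failure modes,
\[
\Prob[\text{algorithm fails}] \;\leq\; \Prob[\text{failure of type 1}] + \Prob[\text{failure of type 2}] \;\leq\; 3(1-3c)^n + (1-2c)^n \;\leq\; (1-c)^n,
\]
as required.

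The mildly delicate point, and the step I expect to require the most care, is verifying that the random partition of $V_i \setminus W_i$ preserves \emph{super-regularity} of $G[U_\ell,U_{\ell'}]$ rather than just regularity: one needs not only the density condition on large sets but also the individual degree condition $d_{G}(v) \cap U_{\ell'} = (d'_{\ell,\ell'}\pm \epsilon^{1/3})|U_{\ell'}|$ for \emph{every} $v \in U_\ell$. The vertices in $W_i$ could a priori have atypical degrees into some $U_{\ell'}$, but the defining property of $W_i$ ensures that they have near-average $A_0$-degrees; the corresponding $G$-degree condition follows from the original $(\epsilon,d_{i,j})$-super-regularity of $G[V_i,V_j]$, which is preserved under the random refinement since $|W_i|/m \leq K^2\epsilon$ is far smaller than the target error $\epsilon^{1/3}$.
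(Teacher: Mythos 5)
Your overall structure matches the paper's proof: split the failure into type~1 (the events $(\mathcal{G})$, $(\mathcal{P})$, $(\mathcal{A}_0^*)$ in Step~3) and type~2 (the Slender graph algorithm in Step~4), handle the degree conditions by Chernoff--Hoeffding for hypergeometric variables after splitting each $U_j$ into its deterministic part $W'_j$ (of size at most $K^2\epsilon m$) and its random part $W''_j$, invoke Lemma~\ref{Slendered blow-up} for type~2, and finish with a union bound. The one step that does not work as written is your treatment of the pair-density (regularity) part of super-regularity for $G[U_\ell,U_{\ell'}]$, $P[U_\ell,U_{\ell'}]$ and $A^*_0[Y_j,U_j]$: you propose to show, for every pair of large subsets $S,T$ of the random classes, concentration of $e_G(S,T)$ via Azuma and then a union bound ``over the $O(2^n)$ subsets''. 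Quantitatively this does not close: the number of pairs of subsets is $2^{\Theta(n)}$ while the concentration you can get per pair is only $e^{-\Theta(\epsilon^{2}n)}$ (or $e^{-\Theta(\epsilon^{3}n)}$ for sets of size $\Theta(\epsilon n)$), and since $\epsilon\ll 1$ the union bound is of order $2^{\Theta(n)}e^{-\Theta(\epsilon^{2}n)}$, which is enormous rather than exponentially small. The hierarchy $c\ll\epsilon$ does not rescue this, because the competing exponents are $\log 2$ versus $\epsilon^{2}$, not $c$ versus $\epsilon$.

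Fortunately no probabilistic argument is needed for that condition at all, and this is exactly how the paper proceeds: $e_G(S,T)$ is a deterministic quantity once $S\subseteq V_i$ and $T\subseteq V_{i'}$ are fixed, and since each $U_j$ has size roughly $m=\lceil n/K\rceil$, i.e.\ a constant fraction of its ambient class, Proposition~\ref{restriction} gives that $G[U_j,U_{j'}]$ is $(K\epsilon,d'_{j,j'})$-regular deterministically (and similarly for $P$, while for $A^*_0$ one additionally notes that only edges at the at most $K\epsilon n$ vertices of $W'_j$ were deleted, which perturbs densities over sets of size $\ge\epsilon^{1/3}m$ by $O(K^2\epsilon^{2/3})$). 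Thus only the individual degree conditions require randomness, and your hypergeometric/Chernoff argument for those is exactly the paper's (including the point you raise about vertices of $W_i$: the paper treats every $u\in U_j$ identically, bounding the $W'$-contribution by $K^2\epsilon m$ and applying Lemma~\ref{Chernoff Bounds} to the $W''$-contribution). With the union bound over subsets replaced by Proposition~\ref{restriction}, your bound of $(1-2c)^n$ for type~1, combined with $(1-2c)^{Km}$ for type~2 from Lemma~\ref{Slendered blow-up}, gives the claim as in the paper.
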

\begin{proof}
We first consider the event $(\mathcal{A}^*_0)$. Consider any $i\in [r]$. For all $j\in J_i$ and each $u \in U_{j}$ we have $|N_{A^*_0}(u)\cap Y_j|=(d_0\pm 2\epsilon) m = (d_0\pm \epsilon^{1/3}) m$ by construction. 
Also for $x\in Y_{j}$, 
\begin{align}\label{A*0 deg}
|N_{A^*_0}(x)\cap U_j| = |N_{A^*_0}(x)\cap W''_j| + |N_{A^*_0}(x)\cap W'_j| \stackrel{\eqref{eq:K2epsm}}{=} |N_{A^*_0}(x)\cap W''_j| \pm K^2\epsilon m.
\end{align}
Using the Chernoff-Hoeffding bound in Lemma~\ref{Chernoff Bounds}, it is easy to check that with probability at least $1-(1-3c)^n$ we have
\begin{align*}
|N_{A^*_0}(x)\cap W''_j| &= (1\pm \epsilon) \frac{|W''_j|}{|V_i\setminus W_i|}|N_{A_0}(x)\cap (V_i\setminus W_i)| \stackrel{\eqref{eq:K2epsm}}{=} (1\pm \epsilon) \frac{m\pm K\epsilon n}{n\pm K\epsilon n}(d_0\pm 2K\epsilon)n \\
&=(d_0\pm \epsilon^{1/3}/2)m.
\end{align*}
Together with \eqref{A*0 deg} this implies that
$$\mathbb{P}[ |N_{A^*_0}(x)\cap U_j| = (d_0\pm \epsilon^{1/3})m ] \geq 1 - (1-3c)^n.$$
Moreover, it is easy to check that $A^*_0[Y_j,U_j]$ is $(\epsilon^{1/3},d_0)$-regular.%
\COMMENT{$A_0[Y_j,U_j]$ is trivially $(K\epsilon,d_0)$-regular. To get $A^*_0$, we only delete edges incident to $W'_i\cap U_j$ which has size at most $K\epsilon n$. Thus for any set $A\subset Y_j, B\subset U_j$ with $|A|,|B| \geq \epsilon^{1/3} m$, $\frac{e_{A^*_0}(A,B)}{|A||B|} = \frac{e_{A_0}(A,B) \pm |W'_i \cap U_j||A|}{|A||B|} = (d_0\pm K\epsilon) + \frac{K\epsilon n|A|}{|A||B|} = (d_0\pm K\epsilon) + K^2\epsilon^{2/3} = (d_0\pm \epsilon^{1/3}).$ Thus $A^*_0$ is $(\epsilon^{1/3},d_0)$-regular. } Together with a union bound over all $x\in V(H)$ this shows that $A^*_0[Y_j, U_j]$ is $(\epsilon^{1/3},d_0)$-super-regular for all $j\in [Kr]$ with probability at least $1-rn(1-3c)^n$. 

Second, we consider the events ($\mathcal{G}$) and ($\mathcal{P}$). For this consider $G[U_{j}, U_{j'}]$ for $j\in J_i$ and $j'\in J_{i'}$ with $ii'\in E(R)$.  
We have that for all $u\in U_j$
$$|N_{G}(u)\cap U_{j'}| = |N_{G}(u)\cap W''_{j'}| + |N_{G}(u)\cap W'_{j'}| \stackrel{\eqref{eq:K2epsm}}{=} |N_{G}(u)\cap W''_{j'}|\pm K^2\epsilon m.$$ 
Again using Lemma~\ref{Chernoff Bounds}, with probability at least $1-(1-3c)^n$ we have 
\begin{align}\label{eq:longandboring}
|N_{G}(u)\cap W''_{j'}| = (1\pm \epsilon) \frac{|W''_{j'}|}{|V_{i'}\setminus W_{i'}|}|N_{G}(u)\cap (V_{i'}\setminus W_{i'})| =(d'_{j,j'}\pm \epsilon^{1/3}/2) m.
\end{align}
(Here we use that $d'_{j,j'}=d_{i,i'}$.) Hence, if \eqref{eq:longandboring} holds, then $|N_{G}(u)\cap U_{j'}| =(d'_{j,j'}\pm \epsilon^{1/3}/2) m \pm K^2\epsilon m = (d'_{j,j'}\pm \epsilon^{1/3}) m$. Also, $G[U_j, U_{j'}]$ is $(\epsilon^{1/3},d'_{j,j'})$-regular by Proposition \ref{restriction}. Together with a union bound over all $2n$ vertices of $G[U_j,U_{j'}]$ and all $|E(R_K)|\leq K^2\Delta_Rr$ edges of $R_K$, this shows that ($\mathcal{G}$) holds 
with probability at least $1-2nK^2\Delta_R r(1-3c)^n$. Similarly, ($\mathcal{P}$) holds 
with probability at least $1-2nK^2\Delta_R r(1-3c)^n$. 
Thus failure of type 1 in the uniform embedding algorithm occurs with probability at most $(1-2 c)^n$.

If failure of type 1 does not occur, then $\mathcal{S}$ 
is a valid input for the Slender graph algorithm.
 Thus Lemma \ref{Slendered blow-up} implies that the Slender graph algorithm on input $\mathcal{S}$ fails with probability at most $(1-2c)^{Km}$. Therefore, the uniform embedding algorithm succeeds with probability at least $ 1 - (1-2c)^n- (1-2c)^{Km} \geq 1 - (1-c)^n.$
\end{proof}

We now proceed to the main part of the proof of Lemma \ref{modified blow-up}, which is establishing (B1).

\begin{claim}\label{claim:B1}
(B1) holds.
\end{claim}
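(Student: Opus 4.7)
The plan is to trace $\Pr[vw\in\phi(E(H))]$ for each $w\in S$ through the randomized structure of the Slender graph algorithm. Let $j_0\in J_i$ be the unique index with $v\in U_{j_0}$, set $x_0:=\phi^{-1}(v)\in Y_{j_0}$, and write $S_{j_1}:=S\cap U_{j_1}$ for $j_1\in J_j$. Since Observation~\ref{J scatter} implies that $Y_{j_0}$ and $Y_{j_1}$ are independent in $H^2$, $H[Y_{j_0},Y_{j_1}]$ is a matching, so rewriting $vw\in\phi(E(H))$ as $\phi^{-1}(v)\phi^{-1}(w)\in E(H)$ and splitting over the refined partition yields
\[
\mathbb{E}[|N_{\phi(H)}(v)\cap S|]=\sum_{j_1\in J_j}\sum_{xy\in E(H[Y_{j_0},Y_{j_1}])}\Pr[\phi(x)=v,\ \phi(y)\in S_{j_1}].
\]

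By~\eqref{round index} we may assume that $Y_{j_0}$ is embedded before $Y_{j_1}$, in rounds $i'_0<i'_1$ (the reverse case is symmetric). Conditioning on the history up to the start of Round $i'_1$ (which records $\phi(x)=v$) and applying Theorem~\ref{MM} to the uniform random perfect matching $\sigma_{j_1}$ in the $(4K\Delta_R\sqrt{\xi_{i'_1-1}},p(\vec d,j_1,i'_1-1))$-super-regular graph $A(j_1)[Y_{j_1},U_{j_1}]$ of Lemma~\ref{slender lemmas}(i) yields
\[
\Pr[\phi(y)=w\mid\text{history}]=\frac{1\pm h(4K\Delta_R\sqrt{\xi_{i'_1-1}})}{p(\vec d,j_1,i'_1-1)\,m}\mathbf{1}[w\in N_{A(j_1)}(y)].
\]
Summing over $w\in S_{j_1}$, combined with the analogous estimate $\Pr[\phi(x)=v]=(1\pm o(1))/|Y_{j_0}|$ obtained by applying the same argument to $\sigma_{j_0}$, reduces the per-edge probability to controlling $\mathbb{E}[|N_{A^{j_1}_{i'_1-1}}(y)\cap S_{j_1}|\mid\phi(x)=v]$, since Lemma~\ref{slender lemmas}(i) tells us $A(j_1)$ differs from $A^{j_1}_{i'_1-1}$ in only $O(\xi_{i'_1-1})m$ edges per vertex.

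The key technical task is to show that for any fixed $S_{j_1}\subseteq N_G(v)\cap U_{j_1}$ with $|S_{j_1}|\ge f(\epsilon)n/(2K)$ (which holds w.h.p.\ by a Chernoff estimate for the random equitable partition $\mathcal{U}$),
\[
\mathbb{E}[|N_{A^{j_1}_{i'_1-1}}(y)\cap S_{j_1}|\mid\phi(x)=v]=\left(1\pm\tfrac{1}{2}f(\epsilon)\right)\frac{p(\vec d,j_1,i'_1-1)\,|S_{j_1}|}{d_{i,j}}.
\]
The crucial observation is that $x\in N_{i'_1-1}(y)$ with $\phi(x)=v$ forces $N_{A^{j_1}_{i'_1-1}}(y)\subseteq N_{G'}(v)\cap U_{j_1}$, a set of size $(d_{i,j}\pm\epsilon^{1/3})m$, so the effective density of $y$'s candidacies within the reachable set is $p(\vec d,j_1,i'_1-1)/d_{i,j}$ rather than $p(\vec d,j_1,i'_1-1)$. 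I will establish the estimate by exposing the images $f_{i'_1-1}(y')$ of the already-embedded $H_*$-neighbors $y'\in N_{i'_1-1}(y)\setminus\{x\}$ one at a time. At each exposure, the super-regularity of $G[U_{j'_1(y')},U_{j_1}]$ (where $y'\in Y_{j'_1(y')}$) together with Proposition~\ref{typical degree} shows that outside a set of $\xi_{i'_1-1}$-fraction of atypical targets, intersecting with $N_{G'}(f_{i'_1-1}(y'))$ multiplies the running count by $d_{j'_1(y'),j_1}$ up to an additive $O(\xi_{i'_1-1})m$ error; the small contribution of the atypical cases is easily absorbed. Since $|N_{i'_1-1}(y)|\le K\Delta_R$, the product over all $y'\neq x$ is exactly $p(\vec d,j_1,i'_1-1)/d_{i,j}$ (the factor $d_{i,j}$ being the missing contribution of $x$).

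Plugging back in gives $\Pr[\phi(x)=v,\ \phi(y)\in S_{j_1}]=(1\pm f(\epsilon)/4)|S_{j_1}|/(d_{i,j}\,m\,|Y_{j_0}|)$ per matching edge. Summing over edges in $H[Y_{j_0},Y_{j_1}]$, and then over $j_1\in J_j$, the $(R,\vec k,C)$-near-equiregularity of $H$ yields $\sum_{j_1\in J_j}|E(H[Y_{j_0},Y_{j_1}])|=k_{i,j}|Y_{j_0}|\pm Ck_{i,j}$, while a Chernoff estimate for the random equitable partition gives $\sum_{j_1\in J_j}|S_{j_1}|=|S|$ with the individual $|S_{j_1}|$ concentrated around $|S|/K$; together these produce $\mathbb{E}[|N_{\phi(H)}(v)\cap S|]=(1\pm f(\epsilon))k_{i,j}|S|/(d_{i,j}n)$. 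The hardest step is expected to be the iterated exposure argument in the third paragraph: the multiplicative errors from up to $K\Delta_R$ codegree exposures, together with the atypicality fractions, must be absorbed into the final $f(\epsilon)$ slack, which is why the hierarchy in~\eqref{eq:functions} is defined so that $\xi_w\ll f(\epsilon)$.
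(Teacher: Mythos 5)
Your treatment of the first ordering (the cluster of $v$ embedded before the clusters meeting $S$) is essentially the paper's Case~1: conditioning on $\phi(x)=v$ and maintaining, by exposing the images of the already-embedded $H_*$-neighbours of $y$ one round at a time, that $|N_{A^{j_1}_{\ell'}}(y)\cap S_{j_1}|$ stays close to $d_{i,j}^{-1}p(\vec{d'},j_1,\ell')|S_{j_1}|$ is exactly what the events $(\text{B}1^b_{\ell'})$ and Subclaims~1--2 do, and your final aggregation (using $|S_{j_1}|\approx |S|/K$ and the near-equiregularity of $H$) matches the paper's. One intermediate claim is false as stated: $\mathbb{P}[\phi(x)=v]$ is not $(1\pm o(1))/|Y_{j_0}|$ per vertex --- by Theorem~\ref{MM} it is $\approx 1/(p(\vec{d'},j_0,i'_0-1)m)$ on $N_{A(j_0)}(v)$ and $0$ elsewhere --- but this is repairable, since only $\sum_x\mathbb{P}[\phi(x)=v]=1$ together with the uniformity of the conditional factor over candidates is needed, plus the paper's bookkeeping for the conditioning events and for the $k_{i,j}$ versus $k_{i,j}+1$ degrees.

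The genuine gap is the sentence ``the reverse case is symmetric.'' It is not. By \eqref{round index} the only other possibility is that \emph{all} clusters of $J_j$ are embedded before the cluster of $v$ (the paper's Case~2), and there the conditioning is reversed: $\phi^{-1}(v)$ is chosen last, and what must be controlled is the joint event that $y$ was embedded into $S_{j_1}$ \emph{and} that $v$ survives as a candidate for $x$ through all intermediate rounds. This is a question about how the random matchings distribute the entire preimage set $\phi^{-1}(S_{j_1})$ (equivalently the set $N^a(S_b)$ of their $H_*$-partners in $Y'_a$) relative to the evolving candidate neighbourhood $N_{A^a_{\ell'}}(v)$ --- a set-versus-set well-distribution statement, for which the single-edge estimates (Theorem~\ref{MM}, $(\text{M}'1)$, $(\text{M}'2)$) driving your exposure argument do not suffice. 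The paper needs Theorem~\ref{M0} via $(\text{M}'4)$, the tracked intersections in the events $(\text{C}1_{\ell'})$, $(\text{C}2^b_{\ell'})$ of Subclaim~3, and the classification of the candidates of $v$ into the classes $X(L)$ according to which clusters of $J_j$ contain their neighbours (which also handles the degree fluctuation at the level of the as-yet-unknown preimage). None of this is obtained by mirroring your Case~1 exposure, so roughly half of the proof is missing. A smaller omission: some exposed $H_*$-neighbours may be artificial vertices from the preparation round, whose images are deterministic, so ``typical except with small probability'' does not apply to them; the paper uses the separate codegree events (P3)/(BB2) precisely for this.
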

\begin{proof}
Suppose that $ij\in E(R)$, $v\in V_i$ and $S\subseteq V_j \cap N_G(v)$ satisfies $|S|> f(\epsilon)n$. Recall from Step 1 of the Uniform embedding algorithm that $J_a= \{(a-1)K+1,\dots, a K\}$ for each $a \in [r]$. Recall that in Step 3 of this algorithm, we take a partition of $V_a\setminus W_a$ (chosen uniformly at random) into parts $W''_\ell$ of size $|Y_\ell|-|W'_\ell|$ with $\ell\in J_a$ and that $|W'_\ell|\leq |W_a| \leq K\epsilon n$. Then $U_\ell = W''_\ell\cup W'_\ell$. For every $\ell \in J_j$ let $$S_\ell:=S\cap U_\ell.$$  
So $S$ is partitioned into $S_{(j-1)K+1},\dots,S_{jK}$. We define the following event.
\begin{itemize}
\item[(BB1)] $|S_{\ell}| = (1\pm \epsilon^{1/3})\frac{|S|}{K} \text{ for all } \ell \in J_j.$
\end{itemize}
Since $|S_\ell \cap W''_{\ell}|$ has the hypergeometric distribution, and $|S_\ell\cap W'_{\ell}| \leq K\epsilon n$, Lemma~\ref{Chernoff Bounds} and the fact that $c\ll \epsilon$ imply that \COMMENT{ 
$\mathbb{E}[|S_\ell \cap W''_{\ell}|] = \frac{|W''_j|}{|V_j\setminus W_j|}|S \setminus W_j| = (1\pm \epsilon^{1/3}/2) \frac{|S|}{K}$. Thus $|S_\ell| = |S_\ell\cap W''_{\ell}| + |S_\ell\cap W'_{\ell}| = (1\pm \epsilon^{1/3}/2)\frac{|S|}{K} + K\epsilon n = (1\pm \epsilon^{1/3})\frac{|S|}{K}$ with high probability.}
\begin{align}\label{mathcal B prob 1}
\mathbb{P}[(\text{BB}1) \text{ holds }] \geq 1- (1-2c)^n.
\end{align}
Recall that in Step 4 of the Uniform embedding algorithm we apply the Slender graph algorithm on $(G,  P, H_*,R_K,A^*_0, \mathcal{U}, \mathcal{Y},\mathcal{I}, c,\epsilon^{1/3}, d_0, \vec{\beta'}, \vec{d'},K,\Delta_R,C)$. In the Preparation round of the Slender graph algorithm we obtain equipartite graphs $G',H'_*, P'$ as well as the graph $(A_0^*)'$ by adding artificial vertices.  
Let $U'_1,\dots,U'_{Kr}$ denote the partition classes of $G'$ such that each $U'_{i'}$ is obtained from $U_{i'}$ by adding at most $C$ 
artificial vertices $u_{i',j'}$. 
Analogously, let $Y'_1,\dots,Y'_{Kr}$ denote the partition classes of $H'_*$ such that each $Y'_{i'}$ is obtained from $Y_{i'}$ by adding at most $C$ artificial vertices $y_{i',j'}$. 
We define the following event. It implies that the neighbourhoods of the artificial vertices are `well behaved' with respect to $S$.
Recall that $A_0^\ell$ is defined in terms of $(A_0^*)'$ before~(\ref{definition p 0}).
\begin{itemize}
\item[(BB2)] For all $\ell\in J_j$, all $Q_1 = \{q_1,\dots, q_s\} \subseteq V(G')\setminus V(G)$ with $|Q_1|\leq K\Delta_R$, $q_{\ell'} \in U'_{j_{\ell'}}$ and $j_{\ell'} \in N_{R_K}(\ell)$, for all $x\in Y'_{\ell}$ and all $Q_2 \subseteq  V(G)\setminus U_{\ell}$ with $|Q_2|\leq K\Delta_R$ we have
$$ |S_{\ell}\cap N_{A_0^\ell}(x) \cap N_{G'}(Q_1) \cap N_{G'}(Q_2)| = (\prod_{\ell'=1}^{s}d'_{\ell,j_{\ell'}}) |S_{\ell}\cap N_{A_0^\ell}(x)\cap N_{G'}(Q_2)| \pm 2\epsilon n.$$ 
\end{itemize}
So the set $Q_1$ consists only of artificial vertices outside $U'_\ell$. 
Recall from the Preparation round of the Slender graph algorithm that  $|N_{G'}(u_{i',j'})\cap Q|$ has the binomial distribution for each artificial vertex $u_{i',j'} \in U'_{i'} \setminus U_{i'}$ and each $Q\subseteq U_{i''}$ with $i'i''\in E(R_K)$. So%
\COMMENT{Also because the choice of $Q_1,Q_2$ are at most $n^{2Kr}$.} Lemma~\ref{Chernoff Bounds} and the fact that $c\ll \epsilon$ imply that
\begin{align}\label{mathcal B prob 2}
\mathbb{P}[(\text{BB}2) \text{ holds }] \geq 1- (1-2c)^n.
\end{align}
\noindent
We define $\mathcal{B}_S := (\text{BB}1)\wedge (\text{BB}2)$, then (\ref{mathcal B prob 1}) and (\ref{mathcal B prob 2}) imply
\begin{align}\label{mathcal B prob}
\mathbb{P}[\mathcal{B}_S] \geq 1- 2(1-2c)^n.
\end{align}
Note that it is easy to see that two applications of (BB2) imply the following.
\begin{itemize}
\item[(BB$2'$)] For all $\ell \in J_j$, any $Q_1\subseteq \bigcup_{j'\in N_{R_K}(\ell)}(U'_{j'}\setminus U_{j'})$ with $|Q_1| \leq K\Delta_R-1$, any $Q_2 \subseteq V(G)\setminus U_{\ell}$ with $|Q_2|\leq K\Delta_R$, any $x\in Y'_{\ell}$, and any $z\in U'_{\ell'} \setminus (V(G)  \cup Q_1)$ with $\ell'\in N_{R_K}(\ell)$,
$$|N_{G'}(z) \cap S_\ell \cap N_{A^{\ell}_0}(x) \cap N_{G'}(Q_1)\cap N_{G'}(Q_2)| = d'_{\ell,\ell'} |S_\ell \cap N_{A^{\ell}_0}(x) \cap N_{G'}(Q_1)\cap N_{G'}(Q_2)| \pm 4\epsilon n.$$
\end{itemize}
(BB$2'$) will be used to check that after each round the artificial vertices have the `right' number of neighbours in $S_\ell$ in the current candidacy graph. Recall that in Step $1$ of the Uniform embedding algorithm we have chosen a partition $\mathcal{I}=(I_1,\dots, I_w)$ of $V(R_K)$ such that each $I_b$ is an independent set in $(R_K)^2$. Together with Observation \ref{J scatter} this means that there are
$a, b_1,\dots,b_K \in [Kr]$ and $a',b'_1,\dots, b'_K\in [w]$ satisfying $b'_1< \dots< b'_K$ as well as
\begin{align}\label{aa'bb'}
v\in U'_{a},\enspace a\in I_{a'},\enspace a\in J_i, \enspace J_j\subseteq \bigcup_{s=1}^{K}I_{b'_{s}} \enspace \text{ and } \enspace J_j\cap I_{b'_s} =\{b_s\},  
\end{align}
where $v\in V_i$ is as defined in $(\text{B}1)$. So $J_j=\{b_1,\dots, b_K\}$.

Note that by \eqref{eq:xirecursion} for all $t\in[w]$ we have 
\begin{align}\label{eq:xitexplicit}
\xi_{t}=g^{t}(2\epsilon^{1/3}) ~\text{ and }~ \xi_0= 2\epsilon^{1/3}.
\end{align}
(Indeed, recall that we apply the Slender graph algorithm 
with $\epsilon^{1/3}$ playing the role of $\epsilon$.\COMMENT{while other constants like $c,m,\beta,d,K,C,w$ are playing the role of itselves where $m$ is defined as below.})
Moreover,
\begin{align}
m=|U'_\ell|=|Y'_\ell|= \left\lceil \frac{n}{K}\right\rceil.
\end{align}
So $m$ is playing the role of $m$ in the Slender graph algorithm, which is the size of a largest partition class in $\mathcal{U}$ and $\mathcal{Y}$. Let
\begin{align}
&x:= \phi^{-1}(v), \nonumber\\
\label{tau def}&\tau_{\ell,\ell'}: Y'_{\ell} \rightarrow Y'_{\ell'} \text{ with } \{ \tau_{\ell,\ell'}(z)\} = N_{H'_*}(z)\cap Y'_{\ell'} \text{ for all } z\in Y'_{\ell} \text{ with } \ell\ell' \in E(R_K), \\
&y'_{b} := \tau_{a,b}(x) ~\text{ and }~ y'_{b,\ell}:= \tau_{b,\ell}(y'_b) \text{ whenever } ab, b\ell  \in E(R_K).\nonumber
\end{align}
Note that $x, y'_b$ and $y'_{b,\ell}$ are random variables, where $x$ is fixed in Round $a'$, and that $\tau_{\ell,\ell'}$ is a bijection whenever $\ell \ell' \in E(R_K)$.
Recall that $S\subseteq V_j$, so $S\cap U'_{b}\neq \emptyset$ only for $b \in J_j=\{b_1,\dots, b_K\}$ (cf.~\eqref{aa'bb'}). By (\ref{round index}), either $a' < b'_1$ or $b'_K < a'$ holds. We treat these two cases separately. Roughly speaking, in the first case we condition on an embedding of a vertex $x^*$ onto $v$ in Round~$a'$ and show that with high probability the `right' number of vertices of $N_H(x^*)$ is embedded to $S$. In the second case we condition on an embedding of $H$ restricted to $S$; (B1) is then determined by the choice of $x=\phi^{-1}(v)$.
\vspace{0.2cm}

\noindent CASE 1. $a'< b'_1$.

For $a'\leq \ell' \leq w$, and $b\in J_j$ we define the following events.
\begin{itemize}
\item[(B$1_{\ell'}^b$)] $|N_{A^{b}_{\ell'}}(y'_{b})\cap S_{b}| = (d_{i,j}^{-1}p(\vec{d'},b,\ell')\pm \xi_{\ell'})|S_{b}|$.
\item[(B$1_{\ell'}$)] $\displaystyle \bigwedge_{b_s \in J_j: b'_s>\ell'} (\text{B}1_{\ell'}^{b_s})$ holds.
\end{itemize}
Here $p(\vec{d'},b,\ell')$ is as defined in \eqref{P relation} with $R_K$ playing the role of $R_*$, and $A_{\ell'}^b$ is as defined in \eqref{Aji x def}.
Note that (B$1_{\ell'}$) is vacuously true if $\ell'\ge b'_K$. Moreover, by construction, $ab\in E(R_K)$ for every $b\in J_j$,
and hence the vertex $y'_b$ in (B$1_{\ell'}^b$) is well-defined.

Recall that the event $\mathcal{A}_{\ell'}$ was defined before Lemma~\ref{slender lemmas}. To motivate (B$1_{\ell'}^b$), note that conditional on $\mathcal{A}_{\ell'}$, $p(\vec{d'},b,\ell')$ is the density of $A_{\ell'}^b$. As one would expect from the algorithm, in Round $\ell'>a'$ (i.e~after the pre-image $x$ of $v$ has been chosen), 
the probability that $y_b'$ is embedded onto a vertex in $S_b$ will turn out to be close to
$|N \cap S_b|/|N|$, where $N:=N_{A_{\ell'}^b}(y_b')$. This is because the set of candidates for $\phi(y_b')$ will be
almost all of $N$ and every candidate will be (almost)
equally likely. Since $S_b \subseteq N_{G'}(v) \cap U_b$, and since $|N_{G'}(v) \cap U_b| \sim d_{i,j}m$, for $\ell'>a'$
we expect this probability to be close to $|S_b|/d_{i,j}m$, which is equivalent to (B1$_{\ell'}^b$).

We define events $\mathcal{B}_{\ell'}$ in the following way:
$$ \mathcal{B}_{\ell'} := \left\{ \begin{array}{ll}
\mathcal{A}_{\ell'} & \text{ for }0\leq \ell' \leq a'-1,\\
\mathcal{A}_{\ell'} \wedge (\text{B}1_{\ell'}) & \text{ for }a'\leq \ell'\leq w.\\
\end{array} \right.$$
We will see that $\mathcal{B}_{\ell'}$ typically holds for any $0\leq \ell'\leq w$. 
We let $\mathcal{B}_{0}^{\ell'}:=\bigwedge_{j'=0}^{\ell'} \mathcal{B}_{j'}$ and let $\mathcal{B}_0^{-1}$ be the event which always occurs. Thus $\mathcal{B}_0^0 = \mathcal{B}_0^{-1}$ since $\mathcal{A}_0$ always holds. 
Note that $\mathcal{B}_0^{\ell'-1}$ and $\mathcal{B}_S$ are events which depend only on the history of the Slender graph algorithm prior to Round $\ell'$. Thus Lemma \ref{slender lemmas}(ii) implies that for all $0\leq\ell' \leq w$,\COMMENT{Note that we are on slender graph algorithm on $(G,P,H_*,R_K,A^*_0,\mathcal{U},\mathcal{Y},\mathcal{I},c,\epsilon^{1/3},d_0,\vec{d}',\vec{\beta}',K,\Delta_R,C)$. Thus $c,m$ are playing the role of $c,m$ in Lemma~\ref{slender lemmas}(ii), respectively. Thus we get $1-(1-3c)^{Km}$.}
\begin{align}\label{A ell B B}
\mathbb{P}[\mathcal{A}_{\ell'} \mid \mathcal{B}_0^{\ell'-1} ,\mathcal{B}_S]\geq 1- (1-3c)^{Km} \geq 1 - (1-3c)^n.
\end{align}
Together with (\ref{mathcal B prob}) this implies that
\begin{align}\label{B 1 a-1 B}
\mathbb{P}[\mathcal{B}_0^{a'-1},\mathcal{B}_S] &= \mathbb{P}[\mathcal{B}_S]\prod_{\ell'=0}^{a'-1}\mathbb{P}[\mathcal{B}_{\ell'} \mid \mathcal{B}_0^{\ell'-1} ,\mathcal{B}_S] = \mathbb{P}[\mathcal{B}_S]\prod_{\ell'=0}^{a'-1}\mathbb{P}[\mathcal{A}_{\ell'} \mid \mathcal{B}_0^{\ell'-1} ,\mathcal{B}_S] \geq  1-3(1-2c)^n. 
\end{align}

\noindent {\bf Subclaim 1.}
$\mathbb{P}[\mathcal{B}_{a'}\mid \mathcal{B}_0^{a'-1},\mathcal{B}_S] \geq 1- \xi_{a'}.$

To prove Subclaim 1, assume that $\mathcal{B}_S$ and $\mathcal{B}_0^{a'-1}$ hold and we are in Round $a'$ of the Slender graph algorithm. For each $b\in J_j$ consider the graph 
$F^b_{13}(a) := (Y'_{a},U'_b;E)$, as described in the proof of Lemma \ref{slender lemmas}(ii), i.e. $yv \in E$ if and only if $y\in Y'_a, v\in U'_b$ and $\tau_{a,b}(y)v \in E(A_{a'-1}^b)$. Let $A(a)$ be the subgraph of $A^{a}_{a'-1}$ as defined in \eqref{def A(i)} of the Slender graph algorithm.

Since $\tau_{a,b}$ is a bijection, 
the graph $F_{13}^b(a)$ is isomorphic to $A_{a'-1}^b$ under the isomorphism which keeps the elements of $U'_b$ fixed while mapping $Y'_a$ to $Y'_b$ via $\tau_{a,b}$.
Thus $F_{13}^b(a)$ is  $(\xi_{a'-1} ,p(\vec{d'},b,a'-1))$-super-regular using $\mathcal{B}_{a'-1}=\mathcal{A}_{a'-1}$. Also Lemma~\ref{slender lemmas}(i) implies that 
\begin{align}\label{A(a) super-regular}
A(a)\text{ is }(4K\Delta_R\sqrt{\xi_{a'-1}},p(\vec{d'},a,a'-1))\text{-super-regular}.
\end{align}
Moreover, by Proposition \ref{restriction}, 
\begin{align}\label{Fb13a}
F^b_{13}(a)[N_{A(a)}(v) , S_b]\text{ is } (\xi_{a'-1}^{1/3},p(\vec{d'},b,a'-1))\text{-regular} 
\end{align}
since $|S_b|> f(\epsilon)m/2$ by $\mathcal{B}_S$. (Here we also use that $2\xi_{a'-1}/f(\epsilon)\leq \xi^{1/3}_{a'-1}$ by \eqref{eq:xitexplicit}). 
Let 
$$T_b:=\{y\in N_{A(a)}(v)\cap Y_a: |N_{F_{13}^b(a)}(y)\cap S_b|=p(\vec{d'},b,a'-1)(1\pm \xi_{a'-1}^{1/4})|S_b|\}.$$
Recall that the vertex $x=\phi^{-1}(v)$ is determined in Round $a'$ of the Slender graph algorithm.
So $T_b$ is the set of candidates for $x=\phi^{-1}(v)$ so that $\tau_{a,b}(x)$ is `typical' with respect to $S_b$.
Then \eqref{A(a) super-regular} and \eqref{Fb13a} together imply that 
\begin{align}\label{size of Tb}
|T_b| \geq (1-\xi_{a'-1}^{1/4})p(\vec{d'},a,a'-1) m.
\end{align}
\noindent
Since $N_{A^b_{a'}}(y'_b) = N_{A^b_{a'-1}}(y'_b) \cap N_{G'}(v)$ (see (\ref{Aji x def})) and $S_b\subseteq N_{G'}(v)$, it follows that $N_{A^b_{a'}}(y'_b)\cap S_b  = N_{A^b_{a'-1}}(y'_b)\cap S_b$. This implies that if $x\in T_b$, then 
\begin{eqnarray*}
|N_{A^b_{a'}}(y'_b)\cap S_b|  &=& |N_{A^b_{a'-1}}(y'_b)\cap S_b|=|N_{F^b_{13}(a)}(x)\cap S_b| = p(\vec{d'},b,a'-1) (1\pm \xi_{a'-1}^{1/4})|S_b|\\ &\stackrel{\eqref{xi property}}{=}& (p(\vec{d'},b,a'-1)\pm \xi_{a'})|S_b|= (d^{-1}_{i,j} p(\vec{d'},b,a')\pm \xi_{a'})|S_b|.
\end{eqnarray*}
We get the final equality by (\ref{P relation}) and the fact that $d'_{a,b} = d_{i,j}$ as $a\in J_i, b\in J_j$ and $N_{R_K}(b)\cap I_{a'} = \{a\}$ as $ij\in E(R)$ and so $ab\in E(R_K)$. Thus by $(\text{M}'2)_{a'}$, 
\begin{align*} 
\mathbb{P}\left[ |N_{A^b_{a'}}(y'_b)\cap S_b| = ( d^{-1}_{i,j} p(\vec{d'},b,a') \pm \xi_{a'})|S_b| \mid \mathcal{B}_0^{a'-1}, \mathcal{B}_S\right] \geq \mathbb{P}\left[\phi^{-1}(v) \in T_b \mid \mathcal{B}_0^{a'-1},\mathcal{B}_S\right]
\end{align*}
\begin{eqnarray}\label{Nbay Sb}
 &\geq& (1-h(4K\Delta_R\sqrt{\xi_{a'-1}})) \frac{|T_b|}{p(\vec{d'},a,a'-1)m} \hspace{7.3cm} \nonumber \\ 
&\stackrel{\eqref{size of Tb},\eqref{eq:functions}}{\geq}& 1-\frac{\xi_{a'}}{2K}.
\end{eqnarray}
Thus taking a union bound over all $b\in J_j$ in (\ref{Nbay Sb}), we obtain
\begin{align}\label{B1a}
\mathbb{P}[(\text{B}1_{a'}) \mid \mathcal{B}_0^{a'-1},\mathcal{B}_S ]\geq 1 - \frac{\xi_{a'}}{2}.
\end{align}
By (\ref{A ell B B}) and (\ref{B1a}) we conclude
$$\mathbb{P}[\mathcal{B}_{a'}\mid \mathcal{B}_0^{a'-1},\mathcal{B}_S]  =\mathbb{P}[ \mathcal{A}_{a'}, (\text{B}1_{a'})\mid \mathcal{B}_0^{a'-1},\mathcal{B}_S ] \geq 1-(1-3c)^n- \frac{\xi_{a'}}{2} \geq 1-\xi_{a'}.$$
This completes the proof of Subclaim 1. \vspace{0.2cm}

\noindent {\bf Subclaim 2.}
Suppose that $a'< \ell' \leq w$ and $x^*\in X_i$ are such that $\mathbb{P}[\mathcal{B}_S,\mathcal{B}_0^{\ell'-1},\phi^{-1}(v)=x^*]>0$.
Then $\mathbb{P}[\mathcal{B}_{\ell'} \mid \mathcal{B}_S, \mathcal{B}^{\ell'-1}_0, \phi^{-1}(v)=x^*] \geq 1 - \xi_{\ell'}.$ 

To prove Subclaim 2, assume that both $\mathcal{B}_S$, $\mathcal{B}_0^{\ell'-1}$ hold, that $\phi^{-1}(v)=x^*$ and that we are in Round $\ell'$ of the Slender graph algorithm. Recall that Round $a'$ of the Slender graph algorithm determines $\phi^{-1}(v)$, which in turn determines $y'_b$ and $y'_{b,\ell}$ for all $b \in N_{R_K}(a)$ and $\ell \in N_{R_K}(b)$.
Note that for fixed $\ell' > a'$, each of $\mathcal{B}_0^{\ell'-1}$, $\mathcal{B}_S$, and $\phi^{-1}(v)=x^*$ is an event which depends only on the history prior to Round $\ell'$. Thus by Lemma \ref{slender lemmas}(ii),
\begin{align}\label{mathcal A prob under xv}
\mathbb{P}[\mathcal{A}_{\ell'} \mid \mathcal{B}_0^{\ell'-1} ,\mathcal{B}_S,\phi^{-1}(v)=x^*]\geq 1- (1-3c)^{Km} \geq 1 - (1-3c)^n.
\end{align}

Consider some $b\in J_j$ with $b \in I_{b'}$ for some $b'> \ell'$. First, assume that $b \notin N_{R_K}(I_{\ell'})$. Then $A^{b}_{\ell'} = A^{b}_{\ell'-1}$ by \eqref{stay same}. So $(\text{B}1_{\ell'-1}^{b})$ implies $(\text{B}1_{\ell'}^{b})$ since in this case $p(\vec{d'},b,\ell'-1) = p(\vec{d'},b,\ell')$ by \eqref{P relation} and since $\xi_{\ell'-1} \leq \xi_{\ell'}$. So $(\text{B}1_{\ell'}^b)$ holds in this case.

So let us next assume that $b \in N_{R_K}(I_{\ell'})$.
Then by \eqref{max deg R_K I} there is a unique index $\ell \in N_{R_K}(b) \cap I_{\ell'}$. 

First we verify (B1$^b_{\ell'}$) in the case when $y'_{b,\ell}$ is an artificial vertex, i.e. $y'_{b,\ell} = y_{\ell,j'} \in Y'_{\ell}\setminus Y_{\ell}$ for some $j'\in [C]$. Then, as described after \eqref{def A(i)}, we have $f_{\ell'}(y'_{b,\ell})=u_{\ell,j'} \in V(G')\setminus V(G)$. 
Note that 
\begin{align}\label{NAbellybS 1}
N_{A^b_{\ell'}}(y'_b)\cap S_b \stackrel{\eqref{Aji x def}}{=} N_{G'}(f_{\ell'}(y'_{b,\ell})) \cap N_{A^b_{\ell'-1}}(y'_b)\cap S_b
\end{align}
 and 
 \begin{align}\label{NAbellybS 2}
 N_{A^b_{\ell'-1}}(y'_b)\cap S_b \stackrel{\eqref{eq:updatecandgraphs}}{=} N_{A_0^b}(y'_b)\cap N_{G'}(f_{\ell'-1}(N_{\ell'-1}(y'_{b}))) \cap S_b.
 \end{align}
  Let 
$$Q_1 := (V(G')\setminus V(G))\cap f_{\ell'-1}(N_{\ell'-1}(y'_{b}))
\enspace \text{ and } \enspace Q_2 :=V(G)\cap f_{\ell'-1}(N_{\ell'-1}(y'_{b})) .$$ 
Since $f_{\ell'}( y'_{b,\ell}) \in V(G')\setminus (V(G)\cup Q_1)$, similarly as in \eqref{artificial vertex fine}, (BB$2'$)
(with $b$, $\ell$ playing the roles of $\ell$, $\ell'$) implies that 
\begin{eqnarray} \label{artificial vertex fine 2}
|N_{A^b_{\ell'}}(y'_b)\cap S_b | &\stackrel{\eqref{NAbellybS 1},\eqref{NAbellybS 2}}{=}&  |N_{G'}(f_{\ell'}(y'_{b,\ell}) ) \cap S_b \cap N_{A_0^b}(y'_b)\cap N_{G'}(Q_1)\cap N_{G'}(Q_2)| \nonumber\\
&=& d'_{\ell,b} |S_b \cap N_{A_0^b}(y'_b)\cap N_{G'}(Q_1)\cap N_{G'}(Q_2)| \pm 4\epsilon n \nonumber\\
&\stackrel{\eqref{NAbellybS 2}}{=}& d'_{\ell,b} |N_{A^b_{\ell'-1}}(y'_b)\cap S_b| \pm 4\epsilon n \nonumber\\
&\stackrel{(\text{B}1_{\ell'-1}^b)}{=}& d'_{\ell,b}(d^{-1}_{i,j} p(\vec{d'},b,\ell'-1) \pm \xi_{\ell'-1})|S_b| \pm 4\epsilon n  \nonumber\\
&\stackrel{\eqref{P relation}}{=}& (d^{-1}_{i,j} p(\vec{d'},b,\ell')\pm \xi_{\ell'})|S_b|.
\end{eqnarray}
In the final equality we use that $|S_b| > f(\epsilon) n/(2K)$ since we conditioned on $\mathcal{B}_S$.\COMMENT{So need $f(\epsilon)\geq \frac{8K\epsilon}{\xi_{\ell'-1}}=\frac{8K\epsilon}{g^{\ell'-1}(2\epsilon^{1/3})}$, similarly in \eqref{A(b) S_b}.}
Thus \eqref{artificial vertex fine 2} implies that 
\begin{align}\label{artificial yb}
\mathbb{P}[(\text{B}1_{\ell'}^{b}) \mid \mathcal{B}_S,\mathcal{B}_0^{\ell-1},\phi^{-1}(v)=x^*, y'_{b,\ell} = y_{\ell,j'}] =1.
\end{align}
Now assume that $y'_{b,\ell}$ is not an artificial vertex, i.e. $y'_{b,\ell} \in V(H_*)$.
Note that 
\begin{align}\label{eq:Naly'bl}
|N_{A(\ell)}(y'_{b,\ell})|= (p(\vec{d'},\ell,\ell'-1)\pm 4K\Delta_R\sqrt{\xi_{\ell'-1}})m
\end{align}
since $A(\ell)$ is $(4K\Delta_R\sqrt{\xi_{\ell'-1}},p(\vec{d'},\ell,\ell'-1))$-super-regular by Lemma \ref{slender lemmas}(i) and $\mathcal{B}_{\ell'-1}$. Moreover, $\mathcal{B}_{\ell'-1}$ implies that
\begin{equation} \label{recall}
|N_{A^b_{\ell'-1}}(y'_b)\cap S_b|=(d^{-1}_{i,j} p(\vec{d'},b,\ell'-1) \pm \xi_{\ell'-1})|S_b|.
\end{equation}
Let%
\COMMENT{in the def of $N^{\ell}(S_b)$ we intersect with $U_\ell$ to avoid artificial vertices for later application of $(\text{M}'1)_{\ell'}$}
$$
N^{\ell}(S_b) := \{ u\in N_{A(\ell)}(y'_{b,\ell})\cap U_\ell : |N_{G'}(u)\cap N_{A^b_{\ell'-1}}(y'_b)\cap S_b|=d'_{b,\ell}(d^{-1}_{i,j} p(\vec{d'},b,\ell'-1) \pm \xi_{\ell'})|S_b|\}.
$$ 
Note that~\eqref{NAbellybS 1} implies that (B$1_{\ell'}^b$) holds if $\phi(y'_{b,\ell}) \in N^{\ell}(S_b)$ (recall that $d'_{b,\ell}p(\vec{d'},b,\ell'-1)=p(\vec{d'},b,\ell')$). 
We will show that almost all current candidates $ u\in N_{A(\ell)}(y'_{b,\ell})$ lie in $N^{\ell}(S_b)$. 

Since $G'[U'_\ell,U'_b]$ is $(2\epsilon^{1/3},d'_{b,\ell})$-super-regular by ($\mathcal{G}'$) in Step 4, 
$$
|N^\ell(S_b)| \stackrel{(\ref{recall})}{\geq} (1- \epsilon^{1/6})|N_{A(\ell)}(y'_{b,\ell})|\stackrel{\eqref{eq:Naly'bl}}{\geq} ( p(\vec{d'},\ell,\ell'-1)-5K\Delta_R\sqrt{\xi_{\ell'-1}}) m.$$
Then, by $(\text{M}'1)_{\ell'}$, 
\begin{align}\label{N ell S b}
\mathbb{P}[\phi(y'_{b,\ell}) \in N^{\ell}(S_b)\mid  \mathcal{B}_0^{\ell'-1},\mathcal{B}_S, \phi^{-1}(v)&=x^*,y'_{b,\ell} \in V(H_*)] \geq (1-h(4K\Delta_R\sqrt{\xi_{\ell'-1}})) \frac{|N_\ell(S_b)|}{ p(\vec{d'},\ell,\ell'-1) m } \nonumber\\ 
&\geq 1-2h(4K\Delta_R\sqrt{\xi_{\ell'-1}}). 
\end{align}
So (\ref{N ell S b}) implies that
 \begin{align}\label{B2ell}
\mathbb{P}[(\text{B}1_{\ell'}^b) \mid  \mathcal{B}_0^{\ell'-1},\mathcal{B}_S, \phi^{-1}(v)=x^*,y'_{b,\ell} \in V(H_*) ] \geq 1 -  2h(4K\Delta_R\sqrt{\xi_{\ell'-1}}).
  \end{align}

Let $B:=\{b\in J_j \cap N_{R_K}(I_{\ell'}) : b\in I_{b'} \text{ with } b'> \ell'\}$ and for $b\in B$ let $\ell_b$ be the unique index such that $\{\ell_b\} = N_{R_K}(b)\cap I_{\ell'}$. So $B$ keeps track of those cluster indices in $J_j$ where the future candidates will be affected by the embeddings in the current round. Note that $|B|\leq |J_j|=K$. Thus by (\ref{mathcal A prob under xv}), (\ref{artificial yb}), (\ref{B2ell}) and our previous observation that $(\text{B}1_{\ell'}^{b})$ holds for all $b\in J_j \setminus N_{R_K}(I_{\ell'})$ with $b\in I_{b'}$ for some $b'>\ell'$, we have
 \begin{align*} &\mathbb{P}[\overline{\mathcal{B}_{\ell'}} \mid \mathcal{B}_S, \mathcal{B}^{\ell'-1}_0, \phi^{-1}(v)=x^*] 
\leq \mathbb{P}[\overline{\mathcal{A}_{\ell'}}\mid \mathcal{B}_S,\mathcal{B}_0^{\ell'-1},\phi^{-1}(v)=x^*]  \\ 
&+ \sum_{b \in B }\mathbb{P}[ \overline{(B1_{\ell'}^b)} \mid \mathcal{B}_S, \mathcal{B}^{\ell'-1}_0, \phi^{-1}(v)=x^*, y'_{b,\ell_b} \in V(H_*)]\mathbb{P}[  y'_{b,\ell_b} \in V(H_*) \mid \mathcal{B}_S, \mathcal{B}^{\ell'-1}_0, \phi^{-1}(v)=x^*]\\
 &+\sum_{b\in B } \mathbb{P}[ \overline{(B1_{\ell'}^b)} \mid \mathcal{B}_S, \mathcal{B}^{\ell'-1}_0, \phi^{-1}(v)=x^*,  y'_{b,\ell_b} \notin V(H_*)]\mathbb{P}[  y'_{b,\ell_b} \notin V(H_*) \mid \mathcal{B}_S, \mathcal{B}^{\ell'-1}_0, \phi^{-1}(v)=x^*] \\ &\leq  (1-3c)^n + 2 K\cdot h(4K\Delta_R\sqrt{\xi_{\ell'-1}}) +0 \leq \xi_{\ell'},
\end{align*} 
 which proves Subclaim 2. \vspace{0.2cm}
 
We are now ready to conclude the proof of Claim \ref{claim:B1} in Case 1. Consider any $b\in J_j$ and let $b'$ be such that $b\in I_{b'}$.
Then $b'>a'$ since we are in Case 1. Let $X_i^{nice}$ be the set of all those $x^*\in X_i$ with
$\mathbb{P}[\mathcal{B}_S, \mathcal{B}_0^{a'},\phi^{-1}(v)=x^*]>0$.%
    \COMMENT{In particular, $X_i^{nice}\subseteq Y_a$. Moreover, if $x^*\in X_i^{nice}$ then by Subclaim~2
$\mathbb{P}[\mathcal{B}_S, \mathcal{B}_0^{i'},\phi^{-1}(v)=x^*]>0$ for all $i'>a'$, and so the calculations
in (\ref{assume b x v}) and (\ref{pro fixed x}) below make sense.}
Consider any $x^*\in X_i^{nice}$.
We first calculate the probability that $\phi(y'_b) \in S_b$ conditional on $\mathcal{B}_S,\mathcal{B}_0^{b'-1}, \phi^{-1}(v)=x^*$. Note that Lemma \ref{slender lemmas}(i) implies that
\begin{eqnarray}\label{A(b) S_b}
|N_{A(b)}(y'_b)\cap S_b|&=& |N_{A^b_{b'-1}}(y'_b)\cap S_b| \pm 4K\Delta_R \xi_{b'-1} m \nonumber \\
&\stackrel{(\text{B}1_{b'-1})}{=}&(d^{-1}_{i,j} p(\vec{d'},b,b'-1) \pm 4K\Delta_R \sqrt{\xi_{b'-1}})|S_b|.
\end{eqnarray}
(In the last equality we also use that $|S_b| \geq f(\epsilon) m /2$ by $\mathcal{B}_S$.)
 Suppose that $y'_b \in V(H_*)$. Then by $(\text{M}'1)_{b'}$,
\begin{eqnarray}\label{assume b x v}
\mathbb{P}[\phi(y'_b) \in S_b \mid \mathcal{B}_S, \mathcal{B}_0^{b'-1},\phi^{-1}(v)=x^*]  &=& (1\pm h(4K\Delta_R\sqrt{\xi_{b'-1}}))\frac{|N_{A(b)}(y'_b)\cap S_b|}{p(\vec{d'},b,b'-1) m} \nonumber\\& \stackrel{\eqref{A(b) S_b},(\text{BB}1)}{=}& \left(1\pm \xi_{b'}\right) \frac{|S|}{d_{i,j}n}.
\end{eqnarray}
By Subclaim 2, (\ref{assume b x v}), and the fact that $2 w \xi_{w} < f(\epsilon)^2/3$ and $\xi_{i'}< \xi_{i''}$ for $i'< i''$ (cf. \eqref{eq:functions} and \eqref{eq:xitexplicit}), we obtain
\begin{align}\label{pro fixed x}
&\mathbb{P}[\phi(y'_b) \in S_b \mid \mathcal{B}_S, \mathcal{B}_0^{a'},\phi^{-1}(v)=x^*]  \nonumber \\
&= \mathbb{P}[\phi(y'_b)\in S_b \mid \mathcal{B}_S,\mathcal{B}_0^{b'-1},\phi^{-1}(v)=x^*]\prod_{i'=a'}^{b'-2}\mathbb{P}[\mathcal{B}_{i'+1} \mid \mathcal{B}_S,\mathcal{B}_0^{i'} , \phi^{-1}(v)=x^* ]\nonumber \\
& \ \ \  \pm \sum_{i'=a'}^{b'-2}\mathbb{P}[\overline{\mathcal{B}_{i'+1}} \mid \mathcal{B}_S,\mathcal{B}_0^{i'} , \phi^{-1}(v)=x^* ] \nonumber \\
& = \left(1\pm \xi_{b'}\right) \frac{|S|}{d_{i,j}n} (1\pm \xi_{w})^{b'-a'-1} \pm (b'-a'-1)\xi_{w}= \left(1 \pm \frac{f(\epsilon)}{3}\right) \frac{|S|}{d_{i,j}n}.
\end{align}

Note that once $x=\phi^{-1}(v)$ has been determined, the set $N_H(x)\cap X_j$ is also determined. Recall that $|N_H(x)\cap X_j| \in \{k_{i,j}, k_{i,j}+1\}$ since $H$ is $(R,\vec{k},C)$-near-equiregular. Let $X_i^{good}$ be the set of all those vertices $x^*\in X_i^{nice}$ that satisfy $|N_H(x^*)\cap X_j|=k_{i,j}$. First assume that $x\in X^{good}_i$. Write $N_H(x)\cap X_j = \{x_1,\dots, x_{k_{i,j}}\}$ and for $\ell \in [k_{i,j}]$ define $b''_\ell$ by $x_\ell \in Y_{b''_\ell}$.  So $b''_{\ell}\in J_j$ and $x_{\ell}=y'_{b''_\ell}$. Thus by (\ref{pro fixed x}), for each $x^*\in X^{good}_i$ we have
\begin{align}\label{Expect B, B1a, xv}
\mathbb{E}\left[ |\phi( N_H(x^*) )\cap S| \mid \mathcal{B}_S,\mathcal{B}_0^{a'}, \phi^{-1}(v)=x^*\right] &= \sum_{\ell=1}^{k_{i,j}} \mathbb{P}[\phi(y_{b''_\ell})\in S_{b''_\ell} \mid  \mathcal{B}_S,\mathcal{B}_0^{a'},\phi^{-1}(v)=x^*] \nonumber \\
&= \left(1\pm \frac{f(\epsilon)}{3}\right) \frac{k_{i,j}|S|}{d_{i,j}n}.
\end{align}

We now show that the contribution from the vertices in $X_i^{nice}\setminus X^{good}_i$ is insignificant (see \eqref{k+1 neighbours}).
Note that \eqref{B 1 a-1 B} and Subclaim 1 together imply that
\begin{align}\label{B 1 to a}
\mathbb{P}[\mathcal{B}_S,\mathcal{B}_0^{a'}] \geq 1 - 2\xi_{a'}.
\end{align}

Since  $A(a)$ is $(4K\Delta_R\sqrt{\xi_{a'-1}},p(\vec{d'},a,a'-1))$-super-regular by Lemma \ref{slender lemmas}(i) and $\mathcal{B}_0^{a'-1}$, we can apply Theorem \ref{MM} to see that for any $x^*\in X_i^{nice}$,
\begin{align}\label{Prob xv mid B B1a-1}
\mathbb{P}[ \phi^{-1}(v)=x^* \mid \mathcal{B}_S,\mathcal{B}_0^{a'-1} ] \leq \frac{2}{ p(\vec{d'},a,a'-1) m}.
\end{align}
Note that
\begin{align*} &\mathbb{P}[ \phi^{-1}(v)=x^* \mid \mathcal{B}_S,\mathcal{B}_0^{a'-1} ] = \mathbb{P}[ \phi^{-1}(v)=x^*\mid \mathcal{B}_S,\mathcal{B}_0^{a'} ] \mathbb{P}[\mathcal{B}_{a'}\mid \mathcal{B}_S,\mathcal{B}_0^{a'-1} ] 
\pm \mathbb{P}[\overline{\mathcal{B}_{a'}}\mid \mathcal{B}_S,\mathcal{B}_0^{a'-1} ].
\end{align*}
Together with Subclaim 1 and the fact that $3kC\xi_{a'} < q_*(\epsilon^{1/3})$\COMMENT{$\epsilon^{1/3}$ here because $\epsilon^{1/3}$ plays the role of $\epsilon$ for Slender graph algorithm, and $\xi_{t}= g^t(2\epsilon^{1/3})$} (by (\ref{eq:functions}) and
the assumption $\epsilon \ll 1/k,1/(C+1)$) this yields
\begin{eqnarray}\label{eq:case1eqnarray}  
\mathbb{P}[ \phi^{-1}(v)=x^* \mid \mathcal{B}_S,\mathcal{B}_0^{a'} ] &=&
\frac{\mathbb{P}[ \phi^{-1}(v)=x^* \mid \mathcal{B}_S,\mathcal{B}_0^{a'-1} ] \pm \mathbb{P}[\overline{\mathcal{B}_{a'}}\mid \mathcal{B}_S,\mathcal{B}_0^{a'-1} ]}{ \mathbb{P}[\mathcal{B}_{a'}\mid \mathcal{B}_S,\mathcal{B}_0^{a'-1} ]} \nonumber\\
&\stackrel{(\ref{Prob xv mid B B1a-1})}{\leq}& \frac{2/(p(\vec{d'},a,a'-1) m)\pm \xi_{a'}}{1\pm \xi_{a'}} \leq \frac{q_*(\epsilon^{1/3})}{Ck}.
\end{eqnarray}
Now \eqref{eq:case1eqnarray} together with the fact that $|X_i^{nice}\setminus X_i^{good}|\leq Ck$ 
implies
\begin{align}\label{k+1 neighbours}
\sum_{x^*\in X_i^{nice}\setminus X_i^{good}} \mathbb{P}[x^*=\phi^{-1}(v) \mid \mathcal{B}_S,\mathcal{B}_0^{a'} ] \leq q_*(\epsilon^{1/3}).
\end{align}
Therefore,
\begin{flalign*}
&\mathbb{E}[|N_{\phi(H)}(v )\cap S|] &&
\end{flalign*}
\begin{eqnarray}\label{Case1 conclusion}
&=& \sum_{x^*\in X_i^{nice}} \mathbb{P}[\mathcal{B}_S,\mathcal{B}_0^{a'}] \mathbb{P}[\phi^{-1}(v)=x^*\mid \mathcal{B}_S,\mathcal{B}_0^{a'}] \mathbb{E}[ |\phi( N_H(x^*) )\cap S| \mid \mathcal{B}_S,\mathcal{B}_0^{a'}, \phi^{-1}(v)=x^*] \nonumber\\
& & \pm (k_{i,j}+1) \mathbb{P}[\overline{\mathcal{B}_S } \vee \overline{\mathcal{B}_0^{a'}}] \nonumber \\
&\stackrel{(\ref{B 1 to a})}{=}& \sum_{x^*\in X_i^{nice}} \left(1 \pm 2\xi_{a'} \right) \mathbb{P}[\phi^{-1}(v)=x^*\mid \mathcal{B}_S,\mathcal{B}_0^{a'}]  \mathbb{E}[ |\phi( N_H(x^*) )\cap S| \mid \mathcal{B}_S,\mathcal{B}_0^{a'}, \phi^{-1}(v)=x^*] \nonumber \\
& & \pm 2(k_{i,j}+1) \xi_{a'} \nonumber \\
&\stackrel{(\ref{Expect B, B1a, xv},\ref{k+1 neighbours})}{=}& (1\pm2\xi_{a'}) \left[ \sum_{x^*\in X_i^{good}} \mathbb{P}[\phi^{-1}(v)=x^*\mid \mathcal{B}_S,\mathcal{B}_0^{a'}] \left(1\pm \frac{f(\epsilon)}{3}\right) \frac{k_{i,j}|S|}{d_{i,j}n} 
\pm (k_{i,j}+1)q_*(\epsilon^{1/3}) \right] \nonumber \\ 
& & \pm 2(k_{i,j}+1)\xi_{a'} \nonumber \\
& \stackrel{\eqref{k+1 neighbours}}{=} &(1\pm 2\xi_{a'}) \left[ (1\pm q_*(\epsilon^{1/3})) \left(1\pm \frac{f(\epsilon)}{3}\right) \frac{k_{i,j}|S|}{d_{i,j}n} 
\pm (k_{i,j}+1)q_*(\epsilon^{1/3}) \right] \pm 2(k_{i,j}+1)\xi_{a'} \nonumber \\
&=& (1\pm f(\epsilon))\frac{k_{i,j}|S|}{d_{i,j}n}.
\end{eqnarray}
To obtain the final equality we used that $|S| > f(\epsilon)n$ and $$3(k_{i,j}+1)(q_*(\epsilon^{1/3})+\xi_{a'}) \leq 6(k+1)\epsilon^{(1/3)(1/300)^{w+1}} < \epsilon^{3(1/300)^{w+2}}=(f(\epsilon))^3$$ by (\ref{eq:functions}) and the assumption $\epsilon \ll 1/k$. This proves Claim \ref{claim:B1} in Case 1.
\vspace{0.2cm}

\noindent CASE 2. $b'_K < a'$.

So in this case $\phi^{-1}(S)$ is determined before $\phi^{-1}(v)$.
For each $y\in Y_{a}$, we consider 
\begin{align}\label{Ix define}
L_y := \{ \ell \in J_j : N_H(y)\cap Y_{\ell}\neq \emptyset\} = \{j_1,\dots, j_{k'}\},
\end{align}
where $k'\in \{k_{i,j},k_{i,j}+1\}$. For each $L\in \binom{J_j}{k_{i,j}}$ define 
\begin{align}\label{XL def}
X(L):=\left\{y\in Y_{a}\cap N_{A_0^a}(v): L_y = L\right\} \text{ and } x_L:= \frac{|X(L)|}{m}
\end{align}
So the $X(L)$ partition most of the initial set $Y_a\cap N_{A_0^a}(v)$ of candidates $y$ for $\phi^{-1}(v)$ according to the set $L_y$ that records the clusters $Y_\ell\subseteq X_j$ which contain the neighbours of $y$. Let 
\begin{align}\label{mathcal I define}
\mathcal{L}:=\left\{L \in \binom{J_j}{k_{i,j}}: x_L \geq (f(\epsilon))^3 n\right\}, \enspace X':= \{y\in Y_a \cap N_{A^a_0}(v) : L_y \notin \mathcal{L}\}.
\end{align}
Since $|J_j|=K$ we have $|\mathcal{L}| \leq \binom{K}{k_{i,j}} \leq \binom{K}{k}$ and 
 \begin{align}\label{size of X'}
 |X'|\leq (f(\epsilon))^3n\binom{K}{k} + Ck \leq (f(\epsilon))^{11/4}m. 
 \end{align}\COMMENT{$Ck$ here because at most $Ck$ vertices have degree $k_{i,j}+1$ in $G$.}
 We will calculate $\E[|N_{\phi(H)}(v)\cap S|]$ by conditioning on $\phi^{-1}(v)\in X(L)$ for each fixed $L$.
For this, we consider the following sets: for $b\in J_j$, let 
 \begin{align}\label{Na def}
 N^a(S_b):= N_{H'_*}(\phi^{-1}(S_b))\cap Y'_a = \tau_{b,a}(\phi^{-1}(S_b)).
 \end{align}
These sets are relevant for the following reason:
 the definitions of $X(L)$ in \eqref{XL def} and $N^a(S_{b})$ in \eqref{Na def} imply that 
under the assumption that $\phi^{-1}(v) \in X(L)$ and $b\in J_j$ we have
\begin{align}\label{eq:NphiHv cap Sb}
|N_{\phi(H)}(v)\cap S_{b}| = \left\{ \begin{array}{ll}
1 & \text{ if }\phi^{-1}(v) \in N^a(S_{b}) \text{ and } b\in L,\\
0 & \text{ else.}
\end{array} \right.
\end{align} 
We now define events (C$1_{\ell'}$) and (C$2^{b}_{\ell'}$) for all $0\leq \ell'<a'$ and $b\in (I_1\cup \dots \cup I_{\ell'})\cap J_j$ as follows. 
\begin{itemize}
\item [(C$1_{\ell'}$)] $|N_{A^a_{\ell'}}(v)\cap X(L)| = d_0^{-1} p(\vec{d'},a,\ell')(1\pm \xi_{\ell'})x_L m$ for all $L\in \mathcal{L}$.
\item [(C$2^b_{\ell'}$)] $|N_{A^a_{\ell'}}(v)\cap N^a(S_b)\cap X(L)| = d_0^{-1}d^{-1}_{i,j}p(\vec{d'},a,\ell') (1\pm \xi_{\ell'})x_L|S_b|$ for all $L\in \mathcal{L}$.
\end{itemize}
Let (C$2_{\ell'}$) be the event that (C$2^b_{\ell'}$) holds for all $b\in (I_1\cup \dots \cup I_{\ell'})\cap J_j$.
Note that if $\ell'=0$, then $(\text{C}1_{\ell'})$ holds by \eqref{definition p 0}, and $(\text{C}2_{\ell'})$ is vacuous. 

Note that conditional on $(\text{A}1_{\ell'})$, $p(\vec{d'},a,\ell')$ is the density of $A^a_{\ell'}$, so $(\text{C}1_{\ell'})$ is exactly what one would expect (the extra $d_0^{-1}$ arises since we assumed $X(L) \subseteq N_{A_0^a}(v)$). 
Similarly as in (B1$_{\ell'}^b$), the extra $d^{-1}_{i,j}$ in $(\text{C}2^b_{\ell'})$ comes from the fact that $S_b\subseteq S\subseteq N_{G'}(v)$.

For all $0\leq \ell' < a'$ we next define the event $\mathcal{C}_{\ell'}$ by:
$$ \mathcal{C}_{\ell'} := 
\mathcal{A}_{\ell'} \wedge (\text{C}1_{\ell'}) \wedge (\text{C}2_{\ell'})$$
 and let $\mathcal{C}_{0}^{\ell'}:=\bigwedge_{j'=0}^{\ell'} \mathcal{C}_{j'}$. So in particular, $\mathcal{C}_0$ is the event which always holds. \vspace{0.2cm}

{\noindent \bf Subclaim 3.} For all $\ell' \in [a'-1]$, $\mathbb{P}[\mathcal{C}_{\ell'} \mid \mathcal{B}_S,\mathcal{C}_0^{\ell'-1} ] \geq 1 - (1-2c)^n$.

To prove Subclaim 3, note that $\mathcal{B}_S$ and $\mathcal{C}_0^{\ell'-1}$ are events only depending on the history of the Slender graph algorithm prior to Round $\ell'$. Thus Lemma \ref{slender lemmas}(ii) implies that for each $\ell'\in [a'-1]$,
\begin{align}\label{mathcal A mid C}
\mathbb{P}[ \mathcal{A}_{\ell'} \mid \mathcal{B}_S,\mathcal{C}_{0}^{\ell'-1}] \geq 1-(1-3c)^{Km} \geq 1- (1-3c)^n.
\end{align}
If $I_{\ell'}\cap N_{R_K}(a) =\emptyset$, then $A^{a}_{\ell'} = A^{a}_{\ell'-1}$ by \eqref{stay same}
and $p(\vec{d'},a,\ell')=p(\vec{d'},a,\ell'-1)$ by (\ref{P relation}).
Moreover, $I_{\ell'}\cap J_j=\emptyset$ as $J_j \subseteq N_{R_K}(a)$. So in this case (C$1_{\ell'}$) and (C$2_{\ell'}$) are immediate from $\mathcal{C}_{\ell'-1}$. 

So let us next assume that  $I_{\ell'}\cap N_{R_K}(a) =\{\ell\}$.
We now show that 
\begin{align}\label{C1ell}
\begin{split}
\mathbb{P}[(\text{C}1_{\ell'})\mid \mathcal{B}_S,\mathcal{C}_0^{\ell'-1}] 
\geq 1-(1-3c)^n.
\end{split}
\end{align}
Recall that $\sigma_\ell:Y'_\ell \rightarrow U'_\ell$ for $\ell \in I_{\ell'}$ denotes the bijection chosen in Round $\ell'$ of the Slender graph algorithm. 
By the definition of $A^{a}_{\ell'}$ in \eqref{Aji x def}, (similarly as in (\ref{equivalence}))
\begin{align}\label{Q equiv}
 N_{A^{a}_{\ell'}}(v)\cap X(L) = N_{A^{a}_{\ell'-1}}(v)\cap X(L) \cap \tau_{\ell,a}(\sigma_\ell^{-1}(N_{G'}(v)\cap U'_\ell)).
 \end{align}
Since we assume that (C$1_{\ell'-1}$) holds, for all $L\in \mathcal{L}$ we have 
\begin{align}\label{C1ell-1}
|N_{A^{a}_{\ell'-1}}(v) \cap X(L)| = d_0^{-1} p(\vec{d'},a,\ell'-1)(1\pm \xi_{\ell'-1})x_L m.
\end{align} 
For each $L \in \mathcal{L}$, we let
\begin{align}\label{QI define}
Q_L^\ell:= \tau_{a,\ell}(N_{A^{a}_{\ell'-1}}(v) \cap X(L)).
\end{align} 
So $Q^\ell_{L} \subseteq Y'_\ell$ and
\begin{align}\label{QI size a}
|Q_{L}^\ell|\stackrel{(\ref{C1ell-1})}{=}d_0^{-1} p(\vec{d'},a,\ell'-1)(1 \pm \xi_{\ell'-1})x_L m.
\end{align} 
Using that $\sigma_\ell\circ \tau_{a,\ell}: Y'_a\rightarrow U'_\ell$ is a bijection, we obtain 
\begin{eqnarray*}
\sigma_\ell(Q_L^\ell)\cap N_{G'}(v)
&\stackrel{(\ref{QI define})}{=}&\sigma_\ell\left(\tau_{a,\ell}\left( N_{A^{a}_{\ell'-1}}(v) \cap X(L)\right)\right)
\cap (N_{G'}(v)\cap U'_\ell) \nonumber \\ 
 &=& \sigma_\ell\circ \tau_{a,\ell} \left( N_{A^{a}_{\ell'-1}}(v)\cap X(L) \right) \cap \sigma_\ell\circ \tau_{a,\ell} \left (\tau_{\ell,a}(\sigma_\ell^{-1}(N_{G'}(v)\cap U'_\ell)) \right)\nonumber \\
&=& \sigma_\ell\circ \tau_{a,\ell} \left( N_{A^{a}_{\ell'-1}}(v)\cap X(L) \cap \tau_{\ell,a}(\sigma_\ell^{-1}(N_{G'}(v)\cap U'_\ell)) \right) \nonumber \\ 
&\stackrel{(\ref{Q equiv})}{=}& \sigma_\ell\circ \tau_{a,\ell} ( N_{A^{a}_{\ell'}}(v)\cap X(L)).
\end{eqnarray*}
Since $\sigma_\ell\circ \tau_{a,\ell}$ is a bijection, this implies that
\begin{align}\label{sigma Q N v size}
|\sigma_\ell(Q_L^\ell)\cap N_{G'}(v)| = |N_{A^{a}_{\ell'}}(v) \cap X(L)|.
\end{align}
We will now apply (M$'$4)$_{\ell'}$ with $Q^{\ell}_L$, $N_{G'}(v)\cap U'_{\ell}$ playing the roles of $S$, $T$.
Note that $|N_{G'}(v)\cap U'_\ell| = (1\pm 2\epsilon^{1/3}) d'_{a,\ell} m$ by ($\mathcal{G}'$) in Step 4 and also that 
\begin{align}\label{eq:QellLineq}
|Q_L^\ell|\geq d^wx_Lm \geq d^w(f(\epsilon))^3m \geq h'(4K\Delta_R \sqrt{\xi_{\ell'-1}})m
\end{align}
by \eqref{mathcal I define} and \eqref{QI size a}, so the conditions of (M$'$4)$_{\ell'}$ are satisfied. Moreover 
\begin{flalign*}
&(1\pm h'(4K\Delta_R\sqrt{\xi_{\ell'-1}}))|N_{G'}(v)\cap U'_{\ell}| |Q^{\ell}_L|/m &&
\end{flalign*}
\begin{eqnarray*} 
 &\stackrel{\eqref{QI size a}}{=}& (1\pm h'(4K\Delta_R\sqrt{\xi_{\ell'-1}})) (1\pm 2\epsilon^{1/3})
d'_{a,\ell} d_0^{-1}p(\vec{d'},a,\ell'-1)(1 \pm \xi_{\ell'-1}) x_L m \\
&\stackrel{\eqref{P relation},\eqref{eq:xitexplicit}}{=}&  d_0^{-1} p(\vec{d'},a,\ell') (1\pm \xi_{\ell'}) x_L m.
\end{eqnarray*}
So (M$'$4)$_{\ell'}$ implies 
\begin{eqnarray*}\label{C1ell : BSCell'}
\mathbb{P}[(\overline{\text{C}1_{\ell'}})\mid \mathcal{B}_S,\mathcal{C}_0^{\ell'-1}] 
&\stackrel{\eqref{sigma Q N v size}}{\leq}& 
\sum_{L\in \mathcal{L}} \mathbb{P}[|\sigma_\ell(Q_L^\ell)\cap N_{G'}(v)| \neq d_0^{-1} p(\vec{d'},a,\ell')(1\pm \xi_{\ell'})x_L m \mid \mathcal{B}_S, \mathcal{C}_0^{\ell'-1} ] \nonumber \\
&\leq & |\mathcal{L}|(1-4Kc)^{m} \leq (1-3c)^n,
\end{eqnarray*}
and so (\ref{C1ell}) holds. 

Our next aim is to show that for all $\ell' \in [a'-1]$ we have
\begin{align}\label{C2ell}
\mathbb{P}[(\text{C}2_{\ell'}) \mid \mathcal{B}_S,\mathcal{C}_0^{\ell'-1}] \geq 1-K(1-3c)^n.
\end{align}
Consider any $b\in (I_1\cup\dots\cup I_{\ell'})\cap J_j$. We first consider the case when $b\in I_{\ell'}$. Then $N_{R_K}(a)\cap I_{\ell'} =\{b\}$. 
Note 
\begin{eqnarray*}(1\pm h'(4K\Delta_R\sqrt{\xi_{\ell'-1}}))|Q_L^b||S_b|/m &\stackrel{\eqref{QI size a}}{=}& d_0^{-1} p(\vec{d'},a,\ell'-1)(1\pm \xi_{\ell'})x_L|S_b|\\ 
&=& d^{-1}_0 d^{-1}_{i,j} p(\vec{d'},a,\ell')(1\pm \xi_{\ell'})x_L |S_b|,
\end{eqnarray*} 
because $d_{i,j} = d'_{a,b}$. 
This shows that, for each $L\in \mathcal{L}$,  
we can apply (M$'$4)$_{\ell'}$ with $Q^{b}_L$, $S_b$ playing the roles of $S$, $T$ to see that
\begin{align}\label{sigma Q cap Sb}
\mathbb{P}[|\sigma_b(Q_L^b)\cap S_b| &= d_0^{-1}d^{-1}_{i,j} p(\vec{d'},a,\ell')(1\pm \xi_{\ell'})x_L |S_b| \mid \mathcal{B}_S , \mathcal{C}_0^{\ell'-1}]\nonumber \\ &\geq 1-(1-4Kc)^{m} \geq 1 - (1-3c)^n.
\end{align}
(Here we also use \eqref{eq:QellLineq} and that
$|S_b| = (1\pm \epsilon^{1/3})\frac{|S|}{K}$ by (BB1) to verify the conditions of (M$'$4)$_{\ell'}$.)
We now consider $\tau_{b,a}( \sigma_{b}^{-1}(\sigma_b(Q_L^b)\cap S_b)) \subseteq Y'_a$. Since $\tau_{b,a}, \sigma_b$ are both bijections, 
\begin{align}\label{psi sigma sigma Q Sb}
|\tau_{b,a}( \sigma_b^{-1}(\sigma_b(Q_L^b)\cap S_b))|= |\sigma_b(Q_L^b)\cap S_b|.
\end{align}
Moreover, using that $S_b\subseteq N_{G'}(v) \cap U'_b$,
\begin{eqnarray}\label{psi sigma sigma Q Sb 2}
\tau_{b,a}( \sigma_b^{-1}(\sigma_b(Q_L^b)\cap S_b))&=& \tau_{b,a}(Q_L^b)\cap \tau_{b,a}(\sigma_b^{-1}(S_b))\nonumber \\ 
&\stackrel{(\ref{QI define})}{=}& N_{A_{\ell'-1}^a}(v)\cap X(L) \cap \tau_{b,a}( \sigma_b^{-1}(S_b)) \nonumber \\
&=& N_{A_{\ell'-1}^a}(v)\cap X(L) \cap \tau_{b,a}(\sigma^{-1}_b(S_b)) \cap \tau_{b,a}( \sigma_b^{-1}(N_{G'}(v)\cap U'_b)) \nonumber \\
&\stackrel{\eqref{Na def},\eqref{Q equiv}}{=}& N_{A^a_{\ell'}}(v)\cap X(L) \cap N^a(S_b).
\end{eqnarray}
So (\ref{sigma Q cap Sb}), (\ref{psi sigma sigma Q Sb}) and (\ref{psi sigma sigma Q Sb 2}) together imply that for each $b\in I_{\ell'}\cap J_j$
\begin{align} \label{C2ell1}
\mathbb{P}[ (\text{C}2_{\ell'}^b)
\mid \mathcal{B}_S, \mathcal{C}_0^{\ell'-1}]   \geq 1 -(1-3c)^n.
\end{align}
So assume next that $b\in (I_1\cup \dots \cup I_{\ell'-1})\cap J_j$. (In particular, this means that $\ell'\geq 2$.) Then $\mathcal{C}_{\ell'-1}$ implies that for all $L\in \mathcal{L}$ we have
$$|N_{A_{\ell'-1}^a}(v) \cap N^a(S_b)\cap X(L)| = d_0^{-1}d^{-1}_{i,j}p(\vec{d'},a,\ell'-1)(1\pm \xi_{\ell'-1})x_L |S_b|.$$ 
Thus, this time we consider $\ell \in I_{\ell'}\cap N_{R_K}(a)$ and for each $L\in \mathcal{L}$ we let 
$$Q'^\ell_L:= \tau_{a,\ell}(N_{A^{a}_{\ell'-1}}(v)\cap N^a(S_b)\cap X(L)). $$ Note $Q'^\ell_L \subseteq Y'_\ell$. 
Similarly as in \eqref{sigma Q cap Sb} (but with $N_{G'}(v) \cap U'_\ell$ playing the role of $T$) one can use (M$'$4)$_{\ell'}$ to see that 
\begin{align}\label{C2ell-0}
\mathbb{P}[|\sigma_\ell(Q'^\ell_L)\cap (N_{G'}(v)\cap U'_\ell)|= d_0^{-1}d_{i,j}^{-1}p(\vec{d'},a,\ell') (1\pm \xi_{\ell'})x_L|S_b|\mid \mathcal{B}_S,\mathcal{C}_0^{\ell'-1}] \geq 1-(1-3c)^n.
\end{align}
\COMMENT{because $(1\pm h(4K\Delta_R\sqrt{\xi_{\ell'-1}})) d_0^{-1} d'_{\ell,a}(d^{-1}_{i,j} p(\vec{d'},a,\ell'-1) \pm \xi_{\ell'-1}) x_L|S_b| =d_0^{-1}d^{-1}_{i,j} p(\vec{d'},a,\ell') (1\pm \xi_{\ell'})x_L|S_b|$. }
Similarly as in \eqref{psi sigma sigma Q Sb 2},
\begin{eqnarray*}
\tau_{\ell,a}\left(\sigma_\ell^{-1}\left( \sigma_\ell(Q'^\ell_L)\cap (N_{G'}(v)\cap U'_\ell) \right)\right) &=&N_{A^a_{\ell'-1}}(v)\cap N^a(S_b)\cap X(L) \cap \tau_{\ell,a}\left( \sigma^{-1}_\ell\left( N_{G'}(v)\cap U'_\ell \right)\right)\\
& \stackrel{(\ref{Q equiv})}{=} &N_{A^{a}_{\ell'}}(v) \cap X(L) \cap N^a(S_b).
\end{eqnarray*}
Thus $|\sigma_\ell(Q'^\ell_L)\cap (N_{G'}(v)\cap U'_\ell)|= |N_{A^{a}_{\ell'}}(v) \cap X(L) \cap N^a(S_b)|$. So if $b\in (I_1\cup\dots\cup I_{\ell'-1})\cap J_j$, we have 
$$\Prob[(\text{C}2^b_{\ell'})\mid \mathcal{B}_S,\mathcal{C}_0^{\ell'-1}]\geq 1-(1-3c)^n.$$
Together with (\ref{C2ell1}) 
and a union bound taken over all $b\in(I_1\cup\dots \cup I_{\ell'}) \cap J_j$, this implies (\ref{C2ell}).
Thus, by (\ref{mathcal A mid C}), (\ref{C1ell}), (\ref{C2ell}) we obtain
$$\mathbb{P}[\mathcal{C}_{\ell'} \mid \mathcal{B}_S, \mathcal{C}_0^{\ell'-1}] \geq 1-(1-2c)^n.$$ 
This completes the proof of Subclaim 3.\newline
\vspace{0.2cm}

We now proceed with the proof of Claim \ref{claim:B1} in Case 2. Subclaim 3, (\ref{mathcal B prob}) and the fact that $\mathcal{C}_0$ always holds together imply that
\begin{align}\label{B C1a-1}
\mathbb{P}[\mathcal{B}_S,\mathcal{C}_0^{a'-1}] = \mathbb{P}[\mathcal{B}_S]\prod_{\ell'=1}^{a'-1} \mathbb{P}[\mathcal{C}_{\ell'} \mid \mathcal{C}_0^{\ell'-1},\mathcal{B}_S] \geq 1 - (1-c)^n.
\end{align}
Consider any $L\in \mathcal{L}$. We now compute the expectation of $|N_{\phi(H)}(v)\cap S|$ conditional on $\mathcal{B}_S,\mathcal{C}_0^{a'-1}$, and $\phi^{-1}(v) \in X(L)$. Since $\mathcal{C}_{a'-1}$ holds we have $\mathcal{A}_{a'-1}$, (C$1_{a'-1}$) and (C$2_{a'-1}$).
By Lemma \ref{slender lemmas}(i), $|N_{A(a)}(v)| = |N_{A^a_{a'-1}}(v)| \pm 4K\Delta_R \xi_{a'-1}m$. Together with  (C$1_{a'-1}$), (C$2_{a'-1}$) this implies that for any $b\in J_j$
\begin{align}\label{NA' X size1}
&|N_{A(a)}(v)\cap X(L)|=d_0^{-1} p(\vec{d'},a,a'-1) x_L m \pm 5K\Delta_R \xi_{a'-1} m, \\  
\label{NA' X size2}
&|N_{A(a)}(v)\cap X(L) \cap N^a(S_{b})|=d_0^{-1} d^{-1}_{i,j} p(\vec{d'},a,a'-1)x_L |S_{b}| \pm 5K\Delta_R \xi_{a'-1}m.
\end{align} (Here we also use that $J_j \subseteq I_1\cup \dots \cup I_{a'-1}$ by \eqref{aa'bb'} and since we are in Case 2.) 
 $(\text{M}'2)_{a'}$ and (\ref{eq:functions}) together imply that for every $L\in \mathcal{L}$ and every $b\in J_j$ we have
\begin{eqnarray}\label{B1 1} 
\mathbb{P}[ \phi^{-1}(v) \in X(L)\cap N^a(S_{b}) \mid \mathcal{B}_S,\mathcal{C}_0^{a'-1}] &=& (1\pm h(4K\Delta_R\sqrt{\xi_{a'-1}}))\frac{|N_{A(a)}(v)\cap X(L) \cap N^a(S_{b})|}{p(\vec{d'},a,a'-1)  m} \nonumber \\
&\stackrel{(\ref{NA' X size2})}{=}&  (1\pm q_*(\epsilon^{1/3})/2 ) \frac{x_L |S_{b}|}{d_0 d_{i,j}m} \stackrel{(\text{BB}1)}{=} (1\pm q_*(\epsilon^{1/3}))\frac{x_L|S|}{d_0 d_{i,j}n} \nonumber
\end{eqnarray}
and
\begin{eqnarray*}
\mathbb{P}[\phi^{-1}(v) \in X(L) \mid \mathcal{B}_S,\mathcal{C}_0^{a'-1}] &=& (1\pm h(4K\Delta_R\sqrt{\xi_{a'-1}})) \frac{|N_{A(a)}(v)\cap X(L)| }{ p(\vec{d'},a,a'-1) m} \\
&\stackrel{(\ref{NA' X size1})}{=}& (1\pm q_*(\epsilon^{1/3}))\frac{x_L}{d_0}.
\end{eqnarray*}
Thus
\begin{align}\label{B1 0}
\mathbb{P}[\phi^{-1}(v)\in N^a(S_{b}) \mid \mathcal{B}_S,\mathcal{C}_0^{a'-1}, \phi^{-1}(v) \in X(L) ] 
&= \frac{\mathbb{P}[ \phi^{-1}(v)\in X(L)\cap N^a(S_{b}) \mid \mathcal{B}_S,\mathcal{C}_0^{a'-1}]}{\mathbb{P}[\phi^{-1}(v)\in X(L) \mid \mathcal{B}_S,\mathcal{C}_0^{a'-1}]} \nonumber \\ &= (1\pm 3q_*(\epsilon^{1/3})) \frac{|S|}{d_{i,j}n}.
\end{align}
Thus for any $L\in \mathcal{L}$
\begin{flalign*}
&\mathbb{E}[|N_{\phi(H)}(v)\cap S | \mid \mathcal{B}_S,\mathcal{C}_0^{a'-1}, \phi^{-1}(v)\in X(L)] &&
\end{flalign*}
\begin{eqnarray} \label{B1 2}
 &\stackrel{\eqref{eq:NphiHv cap Sb}}{=}& \sum_{b\in L} \mathbb{P}[\phi^{-1}(v)\in N^a(S_{b}) \mid \mathcal{B}_S,\mathcal{C}_0^{a'-1}, \phi^{-1}(v) \in X(L)] \nonumber \\
&\stackrel{(\ref{B1 0})}{=}& \sum_{b\in L} (1\pm 3q_*(\epsilon^{1/3}))\frac{|S|}{d_{i,j}n} \stackrel{\eqref{mathcal I define}}{=} \left(1\pm \frac{f(\epsilon)}{3}\right) \frac{k_{i,j}|S|}{d_{i,j}n}.
\end{eqnarray}
Moreover, by $(\text{M}'2)_{a'}$ and (\ref{size of X'}), 
\begin{align}\label{X' prob}
\mathbb{P}[\phi^{-1}(v)\in X'\mid \mathcal{B}_S,\mathcal{C}_0^{a'-1}] &\leq
2\frac{|X'|}{p(\vec{d'},a,a'-1) m} \leq \frac{2f(\epsilon)^{11/4}}
{d_0 d^w} \leq f(\epsilon)^{8/3}.
\end{align}
Furthermore, note that $|\phi(N_H(\phi^{-1}(v)))\cap S| \leq \Delta(H)\leq (k+1)\Delta_R$ always holds. (Recall $k_{i,j}\leq k$ for all $ij \in E(R)$.) Thus,
\begin{flalign*}
&\mathbb{E}\left[|N_{\phi(H)}(v)\cap S| \mid \mathcal{B}_S,\mathcal{C}_0^{a'-1}\right] &&
\end{flalign*}
\begin{eqnarray}\label{B1 3}
&=& \sum_{L\in \mathcal{L}} \mathbb{E}\left[|N_{\phi(H)}(v)\cap S|\mid \mathcal{B}_S,\mathcal{C}_0^{a'-1},\phi^{-1}(v)\in X(L)\right]\cdot \mathbb{P}[\phi^{-1}(v)\in X(L) \mid \mathcal{B}_S,\mathcal{C}_0^{a'-1}]\nonumber \\ 
& &\pm (k+1)\Delta_R \mathbb{P}[\phi^{-1}(v)\in X' \mid \mathcal{B}_S,\mathcal{C}_0^{a'-1}] \nonumber\\
&\stackrel{(\ref{B1 2}, \ref{X' prob})}{=}&\sum_{L\in \mathcal{L}} \left(1\pm \frac{f(\epsilon)}{3}\right)\frac{k_{i,j}|S|}{d_{i,j}n} \mathbb{P}[\phi^{-1}(v)\in X(L) \mid \mathcal{B}_S,\mathcal{C}_0^{a'-1}] \pm (k+1)\Delta_R f(\epsilon)^{8/3}\nonumber\\
& =& \left(1\pm \frac{f(\epsilon)}{3}\right)\frac{k_{i,j}|S|}{d_{i,j}n}\mathbb{P}[\phi^{-1}(v)\notin X' \mid \mathcal{B}_S,\mathcal{C}_0^{a'-1}]\pm f(\epsilon)^{7/3}\nonumber \\
& \stackrel{(\ref{X' prob})}{=}& \left(1\pm \frac{f(\epsilon)}{2}\right) \frac{k_{i,j}|S|}{d_{i,j}n}.
\end{eqnarray}
Therefore, by Subclaim 3, (\ref{mathcal B prob}), (\ref{B C1a-1}) and (\ref{B1 3})
\begin{eqnarray*} 
\mathbb{E}\left[|N_{\phi(H)}(v)\cap S|\right] & = &
\mathbb{E}\left[|N_{\phi(H)}(v)\cap S| \mid \mathcal{B}_S,\mathcal{C}_0^{a'-1}\right]\mathbb{P}[\mathcal{B}_S,\mathcal{C}_0^{a'-1}] \\
& &\pm (k+1)\Delta_R\mathbb{P}[\overline{\mathcal{B}_S}]\pm (k+1)\Delta_R\sum_{\ell'=1}^{a'-1} \mathbb{P}[\overline{\mathcal{C}_{\ell'}}\mid \mathcal{C}_0^{\ell'-1},\mathcal{B}_S]\\
 &=& \left(1\pm \frac{f(\epsilon)}{2}\right) \frac{k_{i,j}|S|}{d_{i,j}n}(1\pm (1-c)^n) \pm (k+1) \Delta_R\left( 2(1-2c)^n + w (1-2c)^n\right) \\
& =& (1\pm f(\epsilon)) \frac{k_{i,j}|S|}{d_{i,j}n}.
\end{eqnarray*}
This completes the proof of Case 2 of Claim \ref{claim:B1}.
\end{proof}

\begin{claim}
(B2) holds.
\end{claim}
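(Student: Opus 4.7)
The plan is to verify the five parts of (B2) by tracing through the definitions in the Uniform embedding algorithm and invoking Lemma~\ref{Slendered blow-up}. The proof should be largely bookkeeping, with no substantial new argument. Throughout, I will use that we are conditioning on the algorithm succeeding, so in particular failure of type~1 does not occur and the events $(\mathcal{G}), (\mathcal{P}), (\mathcal{A}_0^*)$ hold.

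First I would verify the structural claims about $\mathcal{Y}, \mathcal{U}$, and $F$. The partition $\mathcal{Y}=\{Y_1,\dots,Y_{Kr}\}$ is produced in Step~1 by applying the Hajnal--Szemer\'edi theorem to each $H^2[X_i]$, giving an equitable partition of $X_i$ into $K$ classes whose union is $X_i$ (so $Y_j\subseteq X_{\lceil j/K\rceil}$). The size bounds $m-C\le |Y_j|\le m$ with $m=\lceil n/K\rceil$ follow from equitability of the Hajnal--Szemer\'edi partition together with $|X_i|=n\pm C$, giving the required $|Y_i|-|Y_j|\le C$. The partition $\mathcal{U}$ is defined in Step~3 with $|U_j|=|Y_j|$ and $U_j\subseteq V_{\lceil j/K\rceil}$, as required. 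The graphs $F_j$ are returned by the Slender graph algorithm in Step~4 on bipartition $(Y_j,U_j)$, and $N_x:=N_{H_*}(x)$ is defined in Step~4.

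For (B2.1), I would observe that by construction of $H_*$ in Step~2, each $H_*[Y_i,Y_j]$ is a matching if $ij\in E(R_K)$ and empty otherwise. Hence for any $x\in Y_i$ we have $|N_x\cap Y_j|=|N_{H_*}(x)\cap Y_j|\le 1$, with equality possible only when $ij\in E(R_K)$. Since $H\subseteq H_*$, $N_H(x)\subseteq N_x$, and symmetry of $N$ is inherited from symmetry of the edge relation in $H_*$; these are the three defining conditions of an $(H,R_K,\mathcal{Y})$-candidacy hypergraph. The bound $|N_x|\le K\Delta_R$ holds because $x$ has at most one neighbour in each class $Y_j$ with $j\in N_{R_K}(i)$, and $|N_{R_K}(i)|\le \Delta(R_K)\le K\Delta_R$.

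For (B2.2), I would simply invoke Lemma~\ref{Slendered blow-up}(i) as applied in Step~4. Recall that the Slender graph algorithm is run on $H_*$, so the hypothesis ``$N_H(x)$'' in the conclusion of Lemma~\ref{Slendered blow-up} refers to $N_{H_*}(x)=N_x$. Thus
\[
N_{F_j}(x)\subseteq U_j\cap N_{A_0^*}(x)\cap \bigcap_{u\in\phi(N_{H_*}(x))}N_P(u)
\subseteq U_j\cap N_{A_0}(x)\cap \bigcap_{y\in N_x}N_P(\phi(y)),
\]
where we use $A_0^*\subseteq A_0$. Together with (B2.1), this yields that $F$ is an $(H,P,R_K,A_0,\phi,\mathcal{Y},\mathcal{U},N)$-candidacy bigraph.

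For (B2.3), I would note that the event $(\mathcal{P})$ defined in Step~3, which holds since there is no failure of type~1, states precisely that $P[U_i,U_j]$ is $(\epsilon^{1/3},\beta'_{i,j})$-super-regular for all $ij\in E(R_K)$; this is exactly (B2.3). For (B2.4), Lemma~\ref{Slendered blow-up}(i) guarantees that each $F_j$ is $(q_*(\epsilon^{1/3}),d_0\,p(R_K,\vec{\beta'},j))$-super-regular. It remains to compare the regularity parameter: by~\eqref{eq:functions},
\[
q_*(\epsilon^{1/3})=\epsilon^{(1/3)(1/300)^{w+1}}\ <\ \epsilon^{(1/300)^{w+2}}=f(\epsilon),
\]
so super-regularity with parameter $q_*(\epsilon^{1/3})$ is strictly stronger than with parameter $f(\epsilon)$, finishing (B2.4). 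There is no serious obstacle here---the whole claim amounts to assembling definitions set up in the preceding sections, with Lemma~\ref{Slendered blow-up} doing the real work.
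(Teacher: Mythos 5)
Your proposal is correct and follows essentially the same route as the paper: unpack the definitions of $\mathcal{Y},\mathcal{U},N,F$ from Steps 1--4, get (B2.1) from the matching structure of $H_*$ and $\Delta(R_K)\le K\Delta_R$, derive (B2.2) and (B2.4) from Lemma~\ref{Slendered blow-up}(i) (using $A_0^*\subseteq A_0$ and $q_*(\epsilon^{1/3})\le f(\epsilon)$), and (B2.3) from the event $(\mathcal{P})$ holding when failure of type~1 does not occur. No gaps.
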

\begin{proof}
Note that if the Slender graph algorithm applied to $\mathcal{S}$ as defined in Step 4 
does not fail, the graphs $F_i$ satisfy property (i) in Lemma \ref{Slendered blow-up}. Since we defined $N_x = N_{H_*}(x)$ in Step 4, we have $x\in N_y$ if only if $y\in N_x$ for $x,y\in V(H)$.
Also, since $H_*[Y_i,Y_j]$ is a matching if $ij\in E(R_K)$ and $H_*[Y_i,Y_j]$ is empty otherwise, for every $x\in Y_i$ we have $|N_x \cap Y_j|\leq 1$ for all $j\in[Kr]$, and $|N_x\cap Y_j|=0$ if $j\notin N_{R_K}(i)$. Thus for all $x\in V(H),|N_x|\leq \Delta(R_K)\leq K\Delta_R$ and so (B2.1) holds. Properties (B2.2) and (B2.4) are immediate consequences of property (i) in Lemma~\ref{Slendered blow-up} (note that $H_*$, $A^*_0$ play the roles
of $H$, $A_0$ in Lemma~\ref{Slendered blow-up} and $A^*_0\subseteq A_0$) and the fact that $q_*(\epsilon^{1/3}) = \epsilon^{(1/3)(1/300)^{w+1}} \leq \epsilon^{(1/300)^{w+2}} =f(\epsilon)$ by (\ref{eq:functions}) with room to spare. Finally, (B2.3) holds since we assume that the Uniform embedding algorithm does not fail (and thus $(\mathcal{P})$ in Step 3 holds).
\end{proof}                     
            
\begin{claim}\label{distance two}
(B3) holds.
\end{claim}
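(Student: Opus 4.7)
The plan is to handle (B3.1) and (B3.2) by exploiting that in the Slender graph algorithm each class $Y_j$ is embedded into $U_j$ via a uniformly random perfect matching $\sigma_j$ of the candidacy graph $A(j)$, which by Lemma~\ref{slender lemmas}(i) is super-regular with density bounded below by a constant depending only on $d_0, d, K, \Delta_R$ (by \eqref{P relation} and \eqref{definition p 0}). This allows us to use $(\text{M}'2)_{j'}$ (equivalently, Theorem~\ref{MM}) to control both the distribution of $\phi^{-1}(v)$ and of joint preimages.

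For (B3.2), I would fix $v\in V_i$ and locate the indices $j,j'$ with $v\in U_j$ and $j\in I_{j'}$, so that $\phi^{-1}(v)\in Y_j$ always. Applying $(\text{M}'2)_{j'}$ with the singleton $S=\{x\}$ gives $\mathbb{P}[\sigma_j(x)=v]\leq 2/(p(\vec{d'},j,j'-1)m)$ for every $x\in Y_j$ (the inequality is trivial when $xv\notin E(A(j))$). A union bound over $x\in Z\cap Y_j$, combined with $|Z|\leq \gamma^3 n$, $m\geq n/K$, and $p(\vec{d'},j,j'-1)\geq d_0 d^{K\Delta_R}$, then yields $\mathbb{P}[\phi^{-1}(v)\in Z]\leq 2K\gamma^3/(d_0 d^{K\Delta_R})\leq \gamma^2$, where the last step uses $\gamma\ll d_0, d, 1/K, 1/\Delta_R$.

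For (B3.1), the event forces $\phi^{-1}(u)$ and $\phi^{-1}(v)$ to share a common neighbour in $H$. Writing $u\in V_i, v\in V_{i'}$ and letting $\mathcal{P}\subseteq X_i\times X_{i'}$ denote the set of ordered pairs $(a,b)$ with $a\neq b$ and $N_H(a)\cap N_H(b)\neq\emptyset$, we have $|\mathcal{P}|\leq \Delta^2 n$ since $\Delta(H)\leq \Delta$. A union bound reduces matters to showing $\mathbb{P}[\phi(a)=u,\phi(b)=v]=O(1/n^2)$ for each $(a,b)\in\mathcal{P}$; summing then yields $O(\Delta^2/n)\leq 1/\sqrt n$ for $n$ large. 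Writing $a\in Y_j,\, b\in Y_\ell$ with $j\in I_{j'},\, \ell\in I_{\ell'}$ and WLOG $j'\leq \ell'$, I would apply $(\text{M}'2)$ twice in a tower-rule fashion: condition on the history through round $\ell'-1$ (so $\sigma_j(a)$ is already determined and each step is controlled by $(\text{M}'2)_{j'}$), then apply Theorem~\ref{MM} to the candidacy graph $A(\ell)$, which remains super-regular under the conditioning by $(\text{A}1_{\ell'-1})$, to bound the probability that $\sigma_\ell(b)=v$. This produces $\mathbb{P}[\phi(a)=u,\phi(b)=v]=O(1/(p^2 m^2))=O(1/n^2)$.

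The main case requiring care is the same-round case $j=\ell$ (so $j'=\ell'$), where $\sigma_j(a)$ and $\sigma_j(b)$ are coupled through a single random perfect matching of $A(j)$. Here, after conditioning on $\sigma_j(a)=u$, the residual distribution is uniform on perfect matchings of $A(j)[Y_j\setminus\{a\},U_j\setminus\{u\}]$. Proposition~\ref{restriction} together with the super-regularity of $A(j)$ and the fact that removing one vertex per side barely perturbs the min/max degrees imply that this residual graph is still super-regular (with slightly worse parameters), justifying a second application of Theorem~\ref{MM}. An additional constant factor absorbed from conditioning on the algorithm's success (via Lemma~\ref{Slendered blow-up}, which gives success probability at least $1/2$) does not affect the conclusion.
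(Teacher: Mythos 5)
Your treatment of (B3.2) coincides with the paper's: fix the class $U_j\ni v$ and the round $\ell''$ in which $Y_j$ is embedded, apply $(\text{M}'2)_{\ell''}$ to the at most $\gamma^3n$ candidates in $Y_j\cap Z$, and use $p(\vec{d'},j,\ell''-1)\ge d_0d^{K\Delta_R}$ together with $\gamma\ll d,d_0,1/K,1/\Delta_R$; the only thing you leave implicit is that $(\text{M}'2)$ is conditional on $\mathcal{A}_0^{\ell''-1}$, so one must add the exponentially small probability that this event fails (as the paper does via Lemma~\ref{slender lemmas}(ii)), which is harmless.

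For (B3.1) your route differs from the paper's and has one hole. The paper first observes that if the two relevant classes lie in the same $I_{\ell'}$ then the probability is exactly zero: the $I_{\ell'}$ were chosen in Step 1 of the Uniform embedding algorithm as independent sets of $(R_K)^2$, and each $Y_j$ is independent in $H^2$ by the Hajnal--Szemer\'edi step, so two vertices embedded in the same round can never share an $H$-neighbour. In the remaining case it conditions on the earlier round (so $\phi^{-1}(u)$ is fixed), notes that only the at most $\Delta(H)^2$ vertices of $Y'_j$ at distance two from it are dangerous, and applies $(\text{M}'2)$ once. Your alternative --- a union bound over the at most $\Delta^2 n$ distance-two pairs $(a,b)$ and a product of two matching-probability bounds of order $1/(pm)$ each --- is also viable and gives $O(1/n)\le 1/\sqrt n$, provided the conditioning events $\mathcal{A}_0^{\cdot}$ are carried along as above. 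However, your case analysis does not cover $j'=\ell'$ with $j\neq\ell$: there $\sigma_j(a)$ is \emph{not} determined by the history through Round $\ell'-1$, so your tower argument does not apply as written, and your specially treated case only concerns $j=\ell$. This sub-case is easy to close, either by the paper's structural observation (it is vacuous) or by using that the matchings $\sigma_j,\sigma_\ell$ for distinct $j,\ell\in I_{\ell'}$ are chosen independently given the history; note also that your ``main case'' $j=\ell$, which you handle via the residual-matching argument, is in fact impossible since each $Y_j$ is independent in $H^2$, so that extra work (though essentially sound) is unnecessary.
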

\begin{proof}
Assume $u\in U_i,  v\in U_j$ and so $\phi^{-1}(u)\in Y'_i$ and $\phi^{-1}(v)\in Y'_j$. If $i,j\in I_{\ell'}$, then the fact that $(R_K)^2[I_{\ell'}]$ is an independent set implies that  
$$\mathbb{P}\left[N_H(\phi^{-1}(u))\cap N_H(\phi^{-1}(v))\neq \emptyset\right] =0.$$

Therefore, without loss of generality, we may assume that $i\in I_{\ell'}, j\in I_{\ell''}$ with $\ell'<\ell''$. Then during Round $\ell''$ of the Slender graph algorithm, when we are about to determine how to map $Y'_j$ to $U'_j$, $x:=\phi^{-1}(u)$ is already determined. Let $Y''_j \subseteq Y'_j$ be the set of all those vertices which are at distance two from $x$ in $H$. So $|Y''_j|\leq (\Delta(H))^2 \le ((k+1)\Delta_R)^2$.
Recall that $\mathcal{A}_0^{\ell''-1}$ was defined before Lemma \ref{slender lemmas}. By ($M'2)_{\ell''}$ we have 
$$\mathbb{P}[\phi^{-1}(v)\in Y''_j\mid \mathcal{A}_0^{\ell''-1}] \leq \frac{2((k+1)\Delta_R)^2}{p(\vec{d'},j,\ell''-1)m} \leq \frac{1}{2\sqrt{n}}.$$
\noindent
Moreover, Lemma \ref{slender lemmas}(ii) implies that 
$\mathbb{P}[\mathcal{A}_0^{\ell''-1}] \geq 1 - (1-c)^n.$ 
Thus
\begin{align*}
\mathbb{P}[N_H(\phi^{-1}(u))\cap N_H(\phi^{-1}(v))\neq \emptyset]
&\leq \mathbb{P}[N_H(\phi^{-1}(u))\cap N_H(\phi^{-1}(v))\neq \emptyset \mid \mathcal{A}_0^{\ell''-1}]\mathbb{P}[\mathcal{A}_0^{\ell''-1}] + \mathbb{P}[\overline{\mathcal{A}_0^{\ell''-1}}] \\&\leq \mathbb{P}[\phi^{-1}(v) \in Y''_j\mid \mathcal{A}_0^{\ell''-1}] + (1-c)^n < \frac{1}{\sqrt{n}}.
\end{align*}
This proves (B3.1).

To show (B3.2), again we may assume that $v\in U_j$, $\phi^{-1}(v) \in Y'_j$ and $j\in I_{\ell''}$. Then we determine how to map $Y'_j$ to $U'_j$ during Round $\ell''$ of the Slender graph algorithm.
Note that  $|Y'_j\cap Z| \leq |Z| \leq \gamma^3 n$.
 In Round $\ell''$ of the Slender graph algorithm, by ($M'2)_{\ell''}$ we have 
$$\mathbb{P}[\phi^{-1}(v)\in Y'_j\cap Z \mid \mathcal{A}_0^{\ell''-1}] \leq \frac{2 |Y'_j\cap Z| }{p(\vec{d'},j,\ell''-1)m} \leq \frac{2\gamma^3 n}{p(\vec{d'},j,\ell''-1)m} \leq \frac{\gamma^2}{2}.$$
\noindent
Moreover, Lemma \ref{slender lemmas}(ii) implies that 
$\mathbb{P}[\mathcal{A}_0^{\ell''-1}] \geq 1 - (1-c)^n.$ 
Thus
\begin{align*}
\mathbb{P}[\phi^{-1}(v)\in Z]
&\leq \mathbb{P}[\phi^{-1}(v)\in Z \mid \mathcal{A}_0^{\ell''-1}]\mathbb{P}[\mathcal{A}_0^{\ell''-1}] + \mathbb{P}[\overline{\mathcal{A}_0^{\ell''-1}}] \\&\leq \mathbb{P}[\phi^{-1}(v)\in Y'_j\cap Z \mid \mathcal{A}_0^{\ell''-1}] + (1-c)^n < \gamma^2.
\end{align*}
This proves (B3.2).

\end{proof}

\begin{claim}
(B4) holds.
\end{claim}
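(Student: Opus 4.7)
The plan is to prove (B4.1) first and then use the resulting expectation bound as input for a concentration argument establishing (B4.2).

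For (B4.1), fix $v\in V_i$ and decompose
\[
\Prob[v\in\phi_2(H,G,G'')]\leq \Prob[v\in\phi(H,G,G'')] + \Prob[v\in\phi_2(H,G,G'')\setminus \phi(H,G,G'')].
\]
The first summand I bound by $\E[|N_{\phi(H)}(v)\cap N_{G''}(v)|]$ via Markov, then apply (B1) to $v$ with $S=N_{G''}(v)\cap V_j$ for each $j\in N_R(i)$, padding $S$ with arbitrary vertices of $N_G(v)\cap V_j$ up to size just above $f(\epsilon)n$ when necessary. Summing over $j\in N_R(i)$ and using $|N_{G''}(v)|\leq \Delta(G'')\leq\gamma n$ yields a bound of $O(k\gamma/d_0)$. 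For the second summand, membership in $\phi_2\setminus\phi(H,G,G'')$ requires a proper length-two path $v\sim_{\phi(H)} u\sim_{\phi(H)} w$ with $uw\in E(G'')$ and $u,w\neq v$, so by Markov the probability is at most
\[
\E\Bigl[\sum_{u\in N_{\phi(H)}(v)\setminus\{v\}}\bigl|N_{\phi(H)}(u)\cap N_{G''}(u)\setminus\{v\}\bigr|\Bigr]=\sum_{u} \E\bigl[\mathbbm{1}[uv\in\phi(E(H))]\cdot |N_{\phi(H)}(u)\cap N_{G''}(u)\setminus\{v\}|\bigr].
\]

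To handle this double expectation, I apply (B1) to each fixed $u$ (playing the role of the fixed vertex) to obtain $\E[|N_{\phi(H)}(u)\cap N_{G''}(u)|]\leq O(k\gamma/d_0)$, and then factor the joint expectation as (approximately) a product. The key step is to show
\[
\E\bigl[\mathbbm{1}[uv\in\phi(E(H))]\cdot|N_{\phi(H)}(u)\cap N_{G''}(u)|\bigr]\leq (1+o(1))\,\Prob[uv\in\phi(E(H))]\cdot \E[|N_{\phi(H)}(u)\cap N_{G''}(u)|],
\]
i.e., that conditioning on a single edge $uv\in\phi(E(H))$ barely shifts the distribution of the other $\phi(H)$-edges incident to $u$. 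This follows from the structure of the Slender graph algorithm: the edge $uv$ corresponds to a single assignment $\sigma_{a}(\phi^{-1}(v))=v$ in some round $a'$, so the remaining matchings (hence the remaining $\phi(H)$-neighbors of $u$) can be analyzed through the four-graphs framework with one extra assignment, as in Lemma~\ref{The four graphs lemma}. Summing $\sum_u \Prob[uv\in\phi(E(H))]=\E[|N_{\phi(H)}(v)|]\leq\Delta(H)$ then gives a bound of $O(k^2\Delta_R^2\gamma/d_0)$ on the second summand, which combined with the first is at most $\gamma^{1/2}$ by the hierarchy $\gamma\ll 1/(k\Delta_R)$.

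For (B4.2), I apply Azuma's inequality (Theorem~\ref{Azuma}) to the Doob martingale $X_t:=\E[|U_j\cap\phi_2(H,G,G'')|\mid \sigma_1,\dots,\sigma_t]$, where $\sigma_t$ denotes the random perfect matching chosen in Round $t$ of the Slender graph algorithm (so $X_0=\E[|U_j\cap\phi_2|]$ and $X_w=|U_j\cap\phi_2|$). By (B4.1) summed over $u\in U_j$, $X_0\leq|U_j|\gamma^{1/2}\leq \gamma^{1/2}m$; the hierarchy $\gamma^{1/10}\ll 1/K$ ensures this is at most $\gamma^{3/5}n/2$. Replacing $\sigma_t$ by another matching changes the images of at most $m$ vertices, and each such vertex affects $|U_j\cap\phi_2|$ only through its (bounded-degree) $\phi(H)$-neighborhood and the neighborhoods of those neighbors; thus the martingale is $L$-Lipschitz with $L=O(m\Delta(H)^2)$. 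Azuma then gives $\Prob[X_w-X_0\geq \gamma^{3/5}n/2]\leq 2\exp(-\Omega(n/w))\leq (1-2c)^n/(Kr)$, and a union bound over $j\in[Kr]$ completes (B4.2).

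The principal obstacle is the near-independence estimate in (B4.1), namely controlling $\E[\mathbbm{1}[uv\in\phi(E(H))]\cdot|N_{\phi(H)}(u)\cap N_{G''}(u)|]$. A direct Cauchy--Schwarz bound is too lossy (it gives $O(\sqrt{n})$ loss), and the alternative of invoking (B5) with $s=2$ fails when $v$ and the other endpoint $w$ lie in different partition classes; accordingly, I will derive the required factorization via the single-edge conditioning argument described above. A secondary but manageable difficulty in (B4.2) is sharpening the Lipschitz constant, which can be done by partitioning the martingale according to the rounds $I_1,\dots,I_w$ of the algorithm and summing round-by-round the (much smaller) per-round variations.
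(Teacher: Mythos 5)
Your reduction of (B4.1) hinges entirely on the factorization $\E\bigl[\mathbbm{1}[uv\in\phi(E(H))]\cdot|N_{\phi(H)}(u)\cap N_{G''}(u)|\bigr]\leq (1+o(1))\,\Prob[uv\in\phi(E(H))]\,\E[|N_{\phi(H)}(u)\cap N_{G''}(u)|]$, and this is exactly the step you do not prove. The ``moreover'' part of the Four graphs lemma (Lemma~\ref{The four graphs lemma}) only says that prescribing a bounded number of \emph{artificial} assignments before choosing the random matching preserves super-regularity of the candidacy graph $A_\sigma$; it says nothing about the conditional law of the remaining embedding given the event that a particular edge $uv$ lies in $\phi(E(H))$. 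Making your claim rigorous would require a conditional analogue of (B1) given $\phi^{-1}(v)=x^*$, i.e.\ you would have to track how fixing one matching edge in Round $a'$ propagates through all later candidacy graphs -- which is precisely the content of the paper's own argument: it never conditions on edge events globally, but runs a round-by-round analysis with the events (D$1_{\ell'}$), (D$2_{\ell'}$) (smallness of the conflict sets $W^\ell_{\ell'}$, and avoidance of them by the images of $N_H(\phi^{-1}(v))$), using $(\text{M}'1)$--$(\text{M}'3)$ of Lemma~\ref{slender lemmas} within each round. So your plan assumes, as a ``key step,'' a statement at least as hard as the claim itself; as you note, Cauchy--Schwarz and (B5) do not rescue it, and neither does (B3.1) (summing $1/\sqrt n$ over all pairs $u,w$ is far too lossy).

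For (B4.2) there are two concrete problems. First, the quantitative reduction to (B4.1) goes the wrong way: $\gamma^{1/2}m'\leq\gamma^{3/5}n/2$ requires $\gamma^{1/10}\geq 2/K$, the \emph{opposite} of your stated hierarchy, and it genuinely fails once $\gamma< (2/K)^{10}$, a regime the lemma must cover since $\gamma$ may be arbitrarily small. The paper avoids this by not summing per-vertex probabilities at all: conditional on (D$1_w$), the bound $|U_j\cap\phi_2(H,G,G'')|\leq \gamma^{3/5}n$ holds \emph{deterministically}, because each such vertex either lies in $W^j_w$ or is matched (in the cluster matching $\phi(E(H))$ between $U_\ell$ and $U_j$) to a vertex of some $W^\ell_w$, and each $|W^\ell_w|\leq K\Delta_R\gamma^{4/5}n$. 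Second, your concentration step miscomputes Azuma: with your own Lipschitz constant $L=\Theta(m\Delta(H)^2)=\Theta(n)$ per round and only $w$ rounds, Theorem~\ref{Azuma} gives a failure probability of order $\exp\bigl(-\Theta(\gamma^{6/5}/w)\bigr)$, a constant -- not $\exp(-\Omega(n/w))$ and nowhere near the required $(1-2c)^n$. The per-round variation really is of order $n$ (re-randomising one round's perfect matching can relocate $\sim m$ vertices), so ``summing smaller per-round variations'' cannot repair this; one would need either a swap-type bounded-differences inequality at the level of individual matching edges, or, as the paper does, the matching-counting estimate $(\text{M}'3)_{\ell'}$ applied to the ``dangerous'' graph in each round, which yields the exponentially small failure probabilities directly.
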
            
\begin{proof}
Given $v\in V(G)$, as before, let $a$ denote the index such that $v\in U'_a$, and $a'$ be such that $a\in I_{a'}$. Let $f_{\ell'}$ be the partial embedding we obtain after Round $\ell'$ in the Slender graph algorithm. For all $\ell \in [Kr]$ let $W^{\ell}_{\ell'}$ be the set of vertices in $U'_{\ell}$ which are incident to an edge in $E(G'')\cap f_{\ell'}(E(H))$. (So $W_{\ell'}^{\ell} \subseteq U_{\ell}$.) Let $\tau_{i,j}: Y'_{i} \rightarrow Y'_{j}$ be as defined in \eqref{tau def}.

Recall the number of rounds $w$ was defined in (V1). For each $0\leq i'\leq w$ we define $I_1^{i'}:= I_1\cup \dots \cup I_{i'}$. For $i\in[Kr]$ and $i'\in [w]$ we let $R_K^{i,i'}:= |N_{R_K}(i)\cap I_{1}^{i'}|$. So $R_K^{i,i'} \leq K\Delta_R$. 
For each $0 \le \ell' \le w$ we define the following event:
\begin{itemize}
\item[(D$1_{\ell'}$)] $|W_{\ell'}^{\ell}|\leq R_K^{\ell,\ell'} \gamma^{4/5}n$ for all $\ell\in N_{R_K}(a)\cap I_{1}^{\ell'}$.
\end{itemize}
Note that $(\text{D}1_0)$ is vacuously true. For $a' \le \ell \le w$ we define the following event:
\begin{itemize}
\item[(D$2_{\ell'}$)] $\phi(\tau_{a,\ell}(\phi^{-1}(v))) \notin W_{\ell'}^{\ell}$ for all $\ell \in N_{R_K}(a)\cap I_{1}^{\ell'}$.
\end{itemize}
Recall that $\mathcal{A}_{\ell'}$ was defined before Lemma \ref{slender lemmas}. For each $0\leq \ell'\leq w$ we define $\mathcal{D}_{\ell'}$ and $\mathcal{D}^*_{\ell'}$ as follows:
\begin{align} \label{mathcal D define}
\mathcal{D}_{\ell'} := \left\{ \begin{array}{ll}
\mathcal{A}_{\ell'} \wedge (\text{D}1_{\ell'}) &\text{ if } \ell' <a',\\
\mathcal{A}_{\ell'} \wedge (\text{D}1_{\ell'}) \wedge (\text{D}2_{\ell'}) &\text{ if } \ell'\geq a',\\
\end{array} \right.
\end{align}
\begin{align} \label{mathcal D* define}
\mathcal{D}^*_{\ell'} := 
\mathcal{A}_{\ell'} \wedge (\text{D}1_{\ell'}) 
\end{align}
We denote $\mathcal{D}_{0}^{j'}:= \bigwedge_{\ell'=0}^{j'} \mathcal{D}_{\ell'}$ and $\mathcal{D}^{*,j'}_{0} = \bigwedge_{\ell'=0}^{j'} \mathcal{D}^*_{\ell'}$.
Note that (D$2_{w}$) implies that there is no $y\in V(H)$ with $\phi^{-1}(v)y\in E(H)$ such that $\phi(y)$ is incident to an edge in $\phi(E(H))\cap E(G'')$. Thus $v\notin \phi_2(H,G,G'')$. Accordingly, our aim is to show that both $\mathbb{P}[\mathcal{D}_{\ell'}\mid \mathcal{D}_0^{\ell'-1}]$ and $\mathbb{P}[\mathcal{D}^{*}_{\ell'}\mid \mathcal{D}_0^{*,\ell'-1}]$ are close to $1$ (see \eqref{prob ell>a} and \eqref{D* relation}). 

Consider any $\ell \in I_{\ell'}$ and let $A(\ell)\subseteq A^{\ell}_{\ell'-1}$ be as defined in \eqref{def A(i)}. 
Since $\mathcal{D}_{0}^{\ell'-1}$ and $\mathcal{D}_0^{*,\ell'-1}$ are events only depending on the history prior to Round $\ell'$ and are contained in $\mathcal{A}_{\ell'-1}$, Lemma~\ref{slender lemmas}(ii) together with the fact that $(1-3c)^{Km} \leq (1-3c)^n$ implies that 
\begin{align}\label{D0ell}
\mathbb{P}[\mathcal{A}_{\ell'} \mid \mathcal{D}_0^{*,\ell'-1}] \geq 1-(1-3c)^{n} \enspace \text{ and } \enspace \mathbb{P}[ \mathcal{A}_{\ell'} \mid \mathcal{D}_0^{\ell'-1}] \geq 1-(1-3c)^n.
\end{align}
We now show that $\mathbb{P}[(\text{D}1_{\ell'}) \mid \mathcal{D}_0^{\ell'-1}]$ is close to $1$ (see \eqref{D1ell}).
We say that $y\in Y'_{\ell}$ is {\em dangerous} for $u\in U'_{\ell}$ if there exists $q\in N_{R_K}(\ell)\cap I_1^{\ell'-1}$ such that
\begin{align} \label{1condi}
f_{\ell'-1}(\tau_{\ell,q}(y))u \in E(G'').
 \end{align} 
Note that if $y$ is dangerous for $u$ then $u\in U_\ell$,%
   \COMMENT{and $\tau_{\ell,q}(y)\in Y_q$, but we might have $y\in Y'_{\ell}\setminus Y_\ell$}
and choosing $\phi(y)=u$ would mean $u\in W_{\ell'}^\ell$, unless
$y\tau_{\ell,q}(y)\in E(H'_*)\setminus E(H)$. 
Let $A'_\ell$ be the subgraph of $A(\ell)$ such that $yu \in E(A'_\ell)$ if $y$ is dangerous for $u \in U'_{\ell}$. Note that $E(A'_\ell)=E(A'_\ell[Y'_\ell, U_\ell])$.  
 
We now bound $d_{A'_{\ell}}(u)$ for $u\in U'_\ell$.
Since $d_{G''}(u)\leq \gamma n$, there are at most $\gamma n$ vertices $y'\in V(H)$ such that $f_{\ell'-1}(y') \in N_{G''}(u)$. Thus for any fixed $u$, (\ref{1condi}) can occur for at most $\gamma n$ distinct vertices $y$ in $Y'_\ell$, i.e. $d_{A'_{\ell}}(u) \leq \gamma n$.

Next we bound $d_{A'_{\ell}}(y)$ for $y\in Y'_\ell$.
Note that $uy \in E(A'_\ell)$ implies that 
$$u \in \bigcup_{q\in N_{R_K}(\ell)\cap I_{1}^{\ell'-1}
} N_{G''}(f_{\ell'-1}(\tau_{\ell,q}(y)))\cap U'_{\ell},$$ 
so $d_{A'_\ell}(y) \leq \Delta(R_K)\Delta(G'')\leq K\Delta_R \gamma n$. Thus
\begin{align}\label{A'ell max degree}
\Delta(A'_\ell)\leq K\Delta_R \gamma n \leq K^2 \Delta_R \gamma m \leq \gamma^{9/10}m.
\end{align}

Note that once $f_{\ell'}$ is determined, if a vertex $u \in U'_\ell$ is in $W_{\ell'}^\ell$, then there must exist an index $q\in I_1^{\ell'-1}\cap N_{R_{K}}(\ell)$ such that $f_{\ell'-1}(\tau_{\ell,q}(f^{-1}_{\ell'}(u)))u \in E(G'')$. Thus $f^{-1}_{\ell'}(u)u \in E(A'_\ell[Y_\ell,U_\ell])$.%
\COMMENT{if $u\in W^\ell_{\ell'}$ then $u\in U_\ell$ and so $f^{-1}_{\ell'}(u)\in Y_\ell$}
Since we are conditioning on $\mathcal{D}_0^{\ell'-1}\subseteq \mathcal{A}_0^{\ell'-1}$, 
by $(\text{M}'3)_{\ell'}$ of Lemma~\ref{slender lemmas} (with $\gamma^{9/10}$ playing the role of $d'$ and $A'_\ell[Y_\ell,U_\ell]$ the role of $A'$)
\begin{align}\label{W ell prob}
\Prob\left[|W_{\ell'}^{\ell}|\geq \gamma^{4/5}n\mid \mathcal{D}_0^{\ell'-1}\right] \leq \mathbb{P}\left[|W_{\ell'}^{\ell}| \geq 8 \gamma^{9/10}m / p(\vec{d'},\ell,\ell'-1) \mid \mathcal{D}_0^{\ell'-1}\right] \leq (1-3c)^n.
\end{align}
Assume 
\begin{align}\label{eq:Wellineq}
|W_{\ell'}^\ell| \leq \gamma^{4/5} n \text{ for all } \ell \in I_{\ell'}.
\end{align}
We now consider $|W_{\ell'}^q|$ for $q\in I_1^{\ell'-1}$ under this assumption.
Note that if $u\in W_{\ell'}^{q} \setminus W_{\ell'-1}^{q}$ then $u$ must be incident to an edge in $E(G'')\cap f_{\ell'}(E(H)) \setminus ( E(G'')\cap f_{\ell'-1}(E(H)))$. Recall that for each $q\in I_{1}^{\ell'-1}$, we have $|N_{R_K}(q) \cap I_{\ell'}| \leq 1$. We let $q_{\ell'}$ be such that $\{q_{\ell'}\} = N_{R_K}(q)\cap I_{\ell'}$ if it exists. Then if $q_{\ell'}$ exists,
$$
|W_{\ell'}^{q}\setminus W_{\ell'-1}^{q}| \leq |W_{\ell'}^{q_{\ell'}}|
\stackrel{(\ref{eq:Wellineq})}{\leq} \gamma^{4/5} n.
$$
If $N_{R_K}(q)\cap I_{\ell'}=\emptyset$, then $W_{\ell'}^{q} = W_{\ell'-1}^{q}$. Suppose $\mathcal{D}_{\ell'-1}$ holds in addition to \eqref{eq:Wellineq}. Since $\mathcal{D}_{\ell'-1}$ implies (D$1_{\ell'-1}$), it follows that 
\begin{align*} |W_{\ell'}^{q}| &\leq |W_{\ell'-1}^{q}| + |W_{\ell'}^{q}\setminus W_{\ell'-1}^{q}| \stackrel{(\text{D}1_{\ell'-1})}{\leq}  R_{K}^{q,\ell'} \gamma^{4/5} n.
\end{align*}
In other words, (D$1_{\ell'}$) holds. Thus, for all $\ell' \in [w]$ 
\begin{eqnarray}\label{D1ell}
\mathbb{P}[(\text{D}1_{\ell'}) \mid \mathcal{D}_0^{\ell'-1}] &\geq& \mathbb{P}[|W_{\ell'}^{\ell}| \leq \gamma^{4/5}n \text{ for all }\ell\in I_{\ell'} \mid \mathcal{D}_0^{\ell'-1}] \nonumber \\
&\stackrel{\eqref{W ell prob}}{\geq}& 1- |I_{\ell'}|(1-3c)^n \geq 1 - Kr (1-3c)^n.
\end{eqnarray}
The same argument also shows that for any $\ell'\in [w]$,
\begin{align}\label{D*1ell}
\mathbb{P}[(\text{D}1_{\ell'}) \mid \mathcal{D}^{*,\ell'-1}_{0}] \geq 1- Kr(1-3c)^n.
\end{align}
In particular, \eqref{D1ell} together with (\ref{D0ell}) gives us that for any $\ell' < a'$, 
\begin{align}\label{prob ell<a}
\mathbb{P}[\mathcal{D}_{\ell'} \mid \mathcal{D}_0^{\ell'-1}]
\geq 1 - 2Kr(1-3c)^n.
\end{align}
Now we consider (D$2_{\ell'}$) for the case when $\ell'=a'$. 
Let $N^a(v)$ be the set of vertices $y\in Y'_a$ which 
\begin{itemize}
\item[(i)] are dangerous for $v$ (so there exists $q\in N_{R_K}(a)\cap I_1^{a'-1}$ such that
$f_{a'-1}(\tau_{a,q}(y))v \in E(G'')$) or
\item[(ii)] for which there exists $q \in N_{R_K}(a)\cap I_{1}^{a'-1}$ such that
$f_{a'-1}(\tau_{a,q}(y)) \in W_{a'-1}^{q}.$
\end{itemize}
Note that $f_{a'}^{-1}(v) \notin N^a(v)$ guarantees that (D$2_{a'}$) holds.
(Indeed, 
first observe that $N_{R_K}(a) \cap I_1^{a'}=N_{R_K}(a) \cap I_1^{a'-1}$ since $a \in I_{a'}$.
Write $x:=f_{a'}^{-1}(v)$.
So if $x \notin N^a(v)$, then (ii) means that $f_{a'}(\tau_{a,\ell}(x)) \notin W_{a'-1}^\ell$ for all $\ell \in N_{R_K}(a) \cap I_1^{a'}$.
But if $f_{a'}(\tau_{a,\ell}(x)) \in W_{a'}^\ell \setminus W_{a'-1}^\ell$, then $f_{a'}(\tau_{a,\ell}(x))$
is incident to an edge in $E(G'') \cap f_{a'}(E(H)) \setminus (E(G'') \cap f_{a'-1}(E(H)))$.%
\COMMENT{(by the definition of $W_{a'}^\ell$ and $W_{a'-1}^\ell$).
But the unique edge at $f_{a'}(\tau_{a,\ell}(x))$ in $f_{a'}(E(H)) \setminus f_{a'-1}(E(H))$ is $f_{a'}(\tau_{a,\ell}(x))v$
(since $\ell$ in $N_{R_K}(a)$ and so $N_{R_K}(\ell) \cap I_{a'}= \{a\}$ and $f_{a'}(x)=v$).
So $f_{a'}(\tau_{a,\ell}(x)) \in W_{a'}^\ell \setminus W_{a'-1}^\ell$ implies $f_{a'}(\tau_{a,\ell}(x))v \in E(G'')$.}
So $f_{a'}(\tau_{a,\ell}(x))v \in E(G'')$, i.e.~$x$ is dangerous for $v$, which means that (i) holds, a contradiction.
Thus $f_{a'}(\tau_{a,\ell}(x)) \notin W_{a'}^\ell$, as required.)

Now we estimate $|N^a(v)|$. By (\ref{A'ell max degree}) there are at most $K\Delta_R\gamma n$ vertices $y$ such that $y$ is dangerous for $v$.
Also $y$ satisfies (ii) if and only if $y\in \tau_{q,a}(f^{-1}_{a'-1}(W_{a'-1}^{q}))$ for some $q \in N_{R_K}(a)\cap I_{1}^{a'-1}$. 
Suppose $\mathcal{D}_{a'-1}$ and thus (D$1_{a'-1}$) holds. This shows that the number of vertices $y$ satisfying (ii) is at most $\sum_{q\in N_{R_K}(a)\cap I_1^{a'-1}} |W_{a'-1}^{q}| \leq K^2\Delta_R^2 \gamma^{4/5} n$. Thus 
\begin{align*}
|N^a(v)| \leq K^2 \Delta_R^2 \gamma^{4/5}n + K\Delta_R\gamma n\leq \gamma^{2/3}m.
\end{align*}
So by $(\text{M}'2)_{a'}$,
\begin{align}\label{D2ell}
\mathbb{P}[ f_{a'}^{-1}(v) \in N^a(v)\mid \mathcal{D}_0^{a'-1}] \leq (1+ h(4K\Delta_R\sqrt{\xi_{a'-1}}))\frac{ \gamma^{2/3} m}{p(\vec{d'},a,a'-1)m} \leq \gamma^{3/5}.
\end{align}
Thus by (\ref{D0ell}), (\ref{D1ell}) and (\ref{D2ell}),
\begin{align}\label{prob ell=a}
\begin{split}
\mathbb{P}[\mathcal{D}_{a'} \mid \mathcal{D}_0^{a'-1}] &= \mathbb{P}[\mathcal{A}_{a'}, (\text{D}1_{a'}), (\text{D}2_{a'}) \mid \mathcal{D}_0^{a'-1}]  \geq 1- (Kr+1)(1-3c)^n  -   \gamma^{3/5}  \geq 1 - \frac{\gamma^{2/3}}{w}.
\end{split}
\end{align}

Finally we consider $(\text{D}2_{\ell'})$ for the case when $\ell'>a'$. Let $x:= \phi^{-1}(v)$. 
For any $q\in N_{R_K}(a)$, let $y'_{q}:= \tau_{a,q}(x)$, and let $y'_{q,\ell}:= \tau_{q,\ell}(y'_{q})$ and $y'_{q,q}:=y'_{q}$
for $\ell \in N_{R_K}(q)$. Note that $y'_{q,\ell} \in U'_{\ell}$.
Let $N^*_{R_K}(a)$ be the set of all those $q\in N_{R_K}(a)$ for which either $|N_{R_K}(q)\cap I_{\ell'}|=1$ or $q \in I_{\ell'}$. For $q\in N^*_{R_K}(a)\setminus I_{\ell'}$, let $\ell_{q}$ be such that $N_{R_K}(q)\cap I_{\ell'} =\{\ell_q\}$, and for $q\in N^*_{R_K}(a)\cap I_{\ell'}$, let $\ell_q:=q$.
Then (\ref{A'ell max degree}) and $(\text{M}'1)_{\ell'}$ of Lemma~\ref{slender lemmas} together imply that  
\begin{align}\label{gamma4/5}
\sum_{q\in N^*_{R_K}(a)} \mathbb{P}[ f_{\ell'}(y'_{q,\ell_q}) \in N_{A'_{\ell_q}}(y'_{q,\ell_q}) \mid \mathcal{D}_0^{\ell'-1}] 
\leq  \sum_{q\in N^*_{R_K}(a)}2  \frac{\gamma^{9/10} }{p(\vec{d'},\ell_q,\ell'-1)}\leq \gamma^{4/5}.
\end{align}\COMMENT{We want to avoid artificial vertices to apply $(M'1)_{\ell'}$. To see that this is ok note that firstly if $y'_{q,\ell_q}$
is an artificial vertex then $ f_{\ell'}(y'_{q,\ell_q})\notin N_{A'_{\ell_q}}(y'_{q,\ell_q})$ since $N_{A'_{\ell_q}}(y'_{q,\ell_q})\subseteq V(G)$.
So in the sum we only need to consider $q\in N^*_{R_K}(a)$ for which $y'_{q,\ell_q}$ is not an artificial vertex. But then we can
apply $(M'1)_{\ell'}$ since $N_{A'_{\ell_q}}(y'_{q,\ell_q})\subseteq V(G)$.}
Note that if $\mathcal{D}_0^{\ell'-1}$ (and thus (D$2_{\ell'-1}$)) holds, then $f_{\ell'-1}(y'_{q})\notin W_{\ell'-1}^{q}$ for all $q \in N_{R_K}(a)\cap I_{1}^{\ell'-1}$. Thus if in addition we have $f_{\ell'}(y'_{q,\ell_q}) \notin N_{A'_{\ell_q}}(y'_{q,\ell_q})$ for all $q\in N^*_{R_K}(a)$, then $f_{\ell'}(y'_{\ell}) \notin W_{\ell'}^{\ell}$ for all $\ell \in N_{R_K}(a)\cap I_{1}^{\ell'}$, which implies (D$2_{\ell'}$).%
\COMMENT{As an intermediate step, the weaker additional assumption
$f_{\ell'}(y'_{q,\ell_q}) \notin N_{A'_{\ell_q}}(y'_{q,\ell_q})$ for all $q\in N^*_{R_K}(a) \setminus I_{\ell'}$ already gives
 $f_{\ell'}(y'_{\ell}) \notin W_{\ell'}^{\ell}$ for all $\ell \in N_{R_K}(a)\cap I_{1}^{\ell'-1}$.} 
Hence \eqref{gamma4/5} gives
\begin{align}\label{D3ell}
\mathbb{P}[(\text{D}2_{\ell'}) \mid \mathcal{D}_0^{\ell'-1}] \geq 1- \gamma^{4/5}.
\end{align}
Now (\ref{D0ell}), (\ref{D1ell}) and (\ref{D3ell}) together imply that for $\ell'>a'$
\begin{align}\label{prob ell>a}
\mathbb{P}[\mathcal{D}_{\ell'} \mid \mathcal{D}_0^{\ell'-1}] \geq 1- (Kr+1)(1-3c)^n -  \gamma^{4/5} \geq 1-\frac{\gamma^{2/3}}{w}.
\end{align}
Therefore, by (\ref{prob ell<a}), (\ref{prob ell=a}) and (\ref{prob ell>a}),
\begin{align*}
\mathbb{P}[\mathcal{D}_0^{w}] = \prod_{\ell'=1}^{w}\mathbb{P}[\mathcal{D}_{\ell'}\mid \mathcal{D}_0^{\ell'-1}]\geq \left(1-\frac{\gamma^{2/3}}{w}\right)^{w} \geq 1 - \gamma^{1/2}.
\end{align*}

Recall that $\mathcal{D}_0^{w}$ implies 
$v\notin \phi_2(H,G,G'').$ Hence, $\mathbb{P}[v\in \phi_2(H,G,G'')] \leq 1- \mathbb{P}[\mathcal{D}_0^{w}]  \leq \gamma^{1/2}, $ i.e. (B4.1) holds.

Now we show that (B4.2) holds.
Assume that $(\text{D}1_{w})$ holds. 
If $u \in U_{j}$ satisfies $u\in \phi_2(H,G,G'')$, then either $u\in W^j_{w}$ or there exists $v,\ell$ such that $v\in W^\ell_w$, $\ell \in N_{R_K}(j)$ and $uv \in \phi(E(H))$. 
Since $\phi(E(H))$ is a matching between $U_\ell$ and $U_j$, there can be at most $|W^\ell_{w}|$ vertices $u \in U_j$ such that $uv \in \phi(E(H))$ for some $v\in W^{\ell}_w$. Thus (D$1_w$) implies that for all $j\in [Kr]$ we have
\begin{align*}|\{u : u \in U_{j}, u\in \phi_2(H,G,G'') \}| &\leq |W^j_{w}| + \sum_{\ell \in N_{R_K}(j)} |W^\ell_{w}| \leq R^{j,w}_{K} \gamma^{4/5} n + \sum_{\ell \in N_{R_K}(j)} R^{\ell,w}_{K} \gamma^{4/5} n \\ 
&\leq (K\Delta_R+1)(K\Delta_R)\gamma^{4/5} n \leq \gamma^{3/5} n. \end{align*}
Also, by \eqref{D0ell} and \eqref{D*1ell},  
\begin{align}\label{D* relation}
\mathbb{P}[\mathcal{D}^*_{\ell'} \mid \mathcal{D}^{*,\ell'-1}_{0}] \geq 1 - 2Kr(1-3c)^n.
\end{align}
Thus we get
$$\mathbb{P}[(\text{D}1_{w})] \geq \mathbb{P}[\mathcal{D}^{*,w}_0]
\geq \prod_{\ell'=1}^{w} \mathbb{P}[\mathcal{D}^*_{\ell'} \mid \mathcal{D}^{*,\ell'-1}_{0} ] \geq (1 - 2Kr(1-3c)^n )^w \geq 1 - (1-2 c)^n.$$
Thus  
$$\mathbb{P}[|\{u : u \in U_{j}, u\in \phi_2(H,G,G'') \}|\leq \gamma^{3/5}n \text{ for all }j\in[Kr]] \geq 1 - (1-2c)^n,$$
i.e. (B4.2) holds.
\end{proof}  

We can now deduce (B5) from (B1) and (B3).

\begin{claim}\label{s edges}
(B5) holds.
\end{claim}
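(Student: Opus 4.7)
The plan is to deduce (B5) from (B1) and (B3.1) via a Bonferroni inclusion--exclusion sandwich. Fix $j\in N_R(i)$ and, for each $\ell\in[s]$ and $v\in V_j$, set $p_v^\ell:=\Prob[v_\ell v\in \phi(E(H))]$ and $q_v^{\ell,\ell'}:=\Prob[v_\ell v\in\phi(E(H))\wedge v_{\ell'}v\in\phi(E(H))]$. Bonferroni gives
\[
\sum_{\ell=1}^s p_v^\ell \;-\; \sum_{1\le\ell<\ell'\le s} q_v^{\ell,\ell'} \;\le\; \Prob[B_v=1] \;\le\; \sum_{\ell=1}^s p_v^\ell,
\]
so it will suffice to show that outside an exceptional set of at most $2f(\epsilon)n$ vertices $v\in N_G(v_1,\dots,v_s)\cap V_j$, the main sum $\sum_\ell p_v^\ell$ lies in $(1\pm \tfrac{5}{4}f(\epsilon))\,sk_{i,j}/(d_{i,j}n)$, while $\sum_{\ell<\ell'}q_v^{\ell,\ell'}\le \tfrac{1}{2}f(\epsilon)\,sk_{i,j}/(d_{i,j}n)$.

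For the main term I apply (B1) directly to the bad level sets. Let $B^+$ be the set of $v\in N_G(v_1,\dots,v_s)\cap V_j$ with $\sum_\ell p_v^\ell > (1+\tfrac{5}{4}f(\epsilon))\,sk_{i,j}/(d_{i,j}n)$ and define $B^-$ analogously. Suppose for contradiction that $|B^+|>f(\epsilon)n$. Since $B^+\subseteq N_G(v_\ell)\cap V_j$ for every $\ell$, (B1) applies to $v_\ell$ with $S=B^+$ and gives $\sum_{v\in B^+}p_v^\ell=(1\pm f(\epsilon))k_{i,j}|B^+|/(d_{i,j}n)$; summing over $\ell\in[s]$ yields $\sum_{v\in B^+}\sum_\ell p_v^\ell=(1\pm f(\epsilon))sk_{i,j}|B^+|/(d_{i,j}n)$, contradicting the definition of $B^+$. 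Hence $|B^+|\le f(\epsilon)n$, and symmetrically $|B^-|\le f(\epsilon)n$.

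The harder step will be controlling the correction, because the pointwise bound $q_v^{\ell,\ell'}\le 1/\sqrt n$ supplied directly by (B3.1) is of order $1/\sqrt n$, while the admissible error $f(\epsilon)sk_{i,j}/(d_{i,j}n)$ is of order $1/n$. I overcome this by aggregating over $v$:
\[
\sum_{v\in V_j} q_v^{\ell,\ell'} \;=\; \E\bigl[|N_{\phi(H)}(v_\ell)\cap N_{\phi(H)}(v_{\ell'})\cap V_j|\bigr] \;\le\; \Delta(H)\cdot\Prob\bigl[N_H(\phi^{-1}(v_\ell))\cap N_H(\phi^{-1}(v_{\ell'}))\neq\emptyset\bigr] \;\le\; \frac{(k+1)\Delta_R}{\sqrt n},
\]
using that the common-neighbour set in $V_j$ is bounded deterministically by $\Delta(H)\le (k+1)\Delta_R$ and is nonempty only when $\phi^{-1}(v_\ell)$ and $\phi^{-1}(v_{\ell'})$ share an $H$-neighbour, an event of probability at most $1/\sqrt n$ by (B3.1). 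Markov's inequality then bounds the number of $v\in V_j$ for which $q_v^{\ell,\ell'}>f(\epsilon)k_{i,j}/(K^2 d_{i,j}n)$ by $O(\sqrt n/f(\epsilon))$; a union bound over the $\binom{s}{2}\le K^2$ pairs $\ell<\ell'$ confines the vertices where $\sum_{\ell<\ell'}q_v^{\ell,\ell'}$ exceeds $\tfrac12 f(\epsilon)sk_{i,j}/(d_{i,j}n)$ to an exceptional set of size $O(\sqrt n/f(\epsilon))\ll f(\epsilon)n$ by the hierarchy $1/n\ll f(\epsilon)$.

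Combining $B^+\cup B^-$ with this correction exceptional set yields a total exceptional set of size at most $2f(\epsilon)n$, and for every $v\in N_G(v_1,\dots,v_s)\cap V_j$ outside this set the Bonferroni sandwich gives $\Prob[B_v=1]=(1\pm 2f(\epsilon))sk_{i,j}/(d_{i,j}n)$, as required. The main obstacle is the gap between the pointwise bound on $q_v^{\ell,\ell'}$ coming from (B3.1) and the per-vertex precision one actually needs; this is handled by the Markov averaging step, which is available because (B3.1) automatically forces the \emph{total} expected intersection to be $O(1/\sqrt n)$.
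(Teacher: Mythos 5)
Your argument is in essence the paper's: both proofs combine (B1) (applied to sets of atypical vertices, exploiting that $B^+,B^-\subseteq N_G(v_\ell)\cap V_j$ for every $\ell$) with (B3.1) to control the double-counting caused by pairs $v_\ell,v_{\ell'}$ whose preimages share a common $H$-neighbour, and your aggregate bound $\sum_{v\in V_j}q_v^{\ell,\ell'}\le \Delta(H)\cdot\Prob[N_H(\phi^{-1}(v_\ell))\cap N_H(\phi^{-1}(v_{\ell'}))\neq\emptyset]\le (k+1)\Delta_R/\sqrt n$ is exactly the role played by the quantity $A(v_1,\dots,v_s)$ in the paper. The main-term step ($|B^+|,|B^-|\le f(\epsilon)n$ via (B1)) and the correction bound are both sound.

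The one place where your write-up falls short of the stated claim is the final count of exceptional vertices. Your exceptional set is $B^+\cup B^-\cup C$, where $C$ is the Markov/union-bound set of vertices with an atypically large pair-correction; you only get $|C|=O(\sqrt n/f(\epsilon))$, so the total is $2f(\epsilon)n+O(\sqrt n/f(\epsilon))$, not the claimed $2f(\epsilon)n$ (and since (B1) itself only applies to sets of size $>f(\epsilon)n$, you cannot shrink $|B^\pm|$ below $f(\epsilon)n$ to absorb $C$). The paper avoids the third set by never passing to pointwise control of the correction: it works directly with the full lower-bad set $T_2:=\{v:\Prob[B_v=1]<(1-2f(\epsilon))sk_{i,j}/(d_{i,j}n)\}$, uses the deterministic inequality that the number of edges of $\phi(H)$ between $\{v_1,\dots,v_s\}$ and $T_2$ is at most $\sum_{v\in T_2}B_v$ plus $(k+1)$ times the number of coincidence pairs, takes expectations, and contradicts (B1) applied to $T_2$; this gives $|T_2|\le f(\epsilon)n$ outright and hence the exact bound $|T_1|+|T_2|\le 2f(\epsilon)n$. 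Your proof is repaired the same way: assume $|B^-\cup C|>f(\epsilon)n$ (or just work with $T_2$), sum your Bonferroni lower bound over that set, and bound $\sum_{v}\sum_{\ell<\ell'}q_v^{\ell,\ell'}\le K^2(k+1)\Delta_R/\sqrt n$ in aggregate, which is negligible against $f(\epsilon)\cdot sk_{i,j}|T_2|/(d_{i,j}n)=\Omega(1)$; this removes the extra set $C$ and yields the claim with the constant $2$ as stated.
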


\begin{proof}
First, by part (B1) of Lemma \ref{modified blow-up}, for any $T\subseteq N_{G}(v_1,\dots, v_s)\cap V_j$ with $|T| \geq f(\epsilon) n$,
\begin{align}\label{s vertices}
&\mathbb{E}\left[ |\phi(E(H))\cap \{v_\ell v: v\in T, \ell\in [s]\}| \right]  = \sum_{\ell=1}^{s} \mathbb{E}\left[ |N_{\phi(H)}(v_\ell)\cap T| \right] = (1\pm f(\epsilon))\frac{k_{i,j}s}{d_{i,j}n}|T|.
\end{align}
Let $W:=N_G(v_1,\dots,v_s)\cap V_j$. Consider $$T_1:=\left\{ v \in W: \mathbb{P}[B_v=1] > (1+2f(\epsilon)) \frac{k_{i,j}s}{d_{i,j} n} \right\} \text{ and }T_2:=\left\{ v \in W: \mathbb{P}[B_v=1]<(1-2f(\epsilon)) \frac{k_{i,j}s}{d_{i,j} n}\right\}.$$ 
First, we show that $|T_1| \leq f(\epsilon) n$. Assume $|T_1| > f(\epsilon) n$.
Since $B_v=1$ implies that $|\phi(E(H))\cap \{v_\ell v : \ell\in [s]\}|\geq 1$, we get
$$\mathbb{E}\left[|\phi(E(H))\cap \{v_\ell v: v\in T_1, \ell\in [s]\}|\right] \geq \sum_{v\in T_1} \mathbb{P}[B_v=1] > (1+2f(\epsilon)) \frac{k_{i,j}s|T_1|}{d_{i,j}n},$$ which is a contradiction to (\ref{s vertices}). Thus $|T_1| \leq f(\epsilon) n$.

Now, we show that $|T_2| \leq f(\epsilon) n$.  Assume $|T_2| > f(\epsilon) n$. 
Let $A(v_1,\dots, v_s)$ be  the number of pairs $v_\ell,v_{\ell'}$ such that $\phi^{-1}(v_\ell)$ and $\phi^{-1}(v_\ell')$ share a common neighbour in $H$. Then by part (B3.1) of Lemma \ref{modified blow-up}, 
\begin{align}\label{A common nbr} \mathbb{E}[A(v_1,\dots, v_s)] \leq \frac{s^2}{\sqrt{n}}.
\end{align}
Let $a$ be defined by $$|\phi(E(H))\cap \{v_\ell v: v\in T_2, \ell \in [s]\}|  = a+\sum_{v\in T_2} B_v.$$
Thus $a>0$. 
By considering the bipartite subgraph of $\phi(H)$ spanned by $\{v_1,\dots, v_s\}$ and $T_2$, it is easy to see that there are at least $a/\Delta(H) \geq a/(k+1)$ pairs $v_\ell,v_{\ell'}$ for which there exists a vertex $v\in T_2$ such that $v_\ell v, vv_{\ell'} \in \phi(E(H))$. Thus in this case $\phi^{-1}(v_\ell)$ and $\phi^{-1}(v_{\ell'})$ share a common neighbour in $H$, and so $a \leq (k+1)A(v_1,\dots, v_s)$.
Therefore, 
$$ |\phi(E(H))\cap \{v_\ell v: v\in T_2, \ell \in [s]\}|  \leq \sum_{v\in T_2} B_v + (k+1) A(v_1,\dots, v_s).$$
Thus by (\ref{A common nbr}),
\begin{align*} 
\mathbb{E}\left[|\phi(E(H))\cap \{v_\ell v: v\in T_2, \ell \in [s]\}| \right] &\leq \sum_{v\in T_2} \mathbb{P}[B_v=1] + (k+1)\mathbb{E}[A(v_1,\dots,v_s)] \\
&\leq (1-2f(\epsilon)) \frac{k_{i,j}s|T_2|}{d_{i,j}n} + \frac{(k+1)s^2}{\sqrt{n}} \\ 
&\leq \left(1-\frac{3}{2}f(\epsilon)\right) \frac{k_{i,j}s|T_2|}{d_{i,j} n},
\end{align*} a contradiction to (\ref{s vertices}). Thus $|T_1|+|T_2| \leq 2f(\epsilon)n$, which proves (B5).
\end{proof}

\begin{claim}
(B6) holds.
\end{claim}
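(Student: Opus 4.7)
The plan is to decompose $|\phi(Q)\cap W|$ along the refined partition from Step 1 of the Uniform embedding algorithm and apply property $(\text{M}'4)$ from Lemma~\ref{slender lemmas}(iii) to each piece. For each $j\in J_i$ set $Q_j:=Q\cap Y_j$ and $W_j:=W\cap U_j$; since $\phi$ restricted to $Y_j$ is the bijection $\sigma_j\colon Y'_j\to U'_j$ chosen in the unique round $i'(j)$ of the Slender graph algorithm with $j\in I_{i'(j)}$, we have
$$
|\phi(Q)\cap W|=\sum_{j\in J_i}|\sigma_j(Q_j)\cap W_j|.
$$
The target quantity is $|Q||W|/n$, and since $|Y_j|=(1\pm o(1))n/K=(1\pm o(1))m$, the natural estimate for each term is $|Q_j||W_j|/m$.

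First I would control $|W_j|$. The random partition of $V_i\setminus W_i$ in Step~3 of the Uniform embedding algorithm is uniform, so by Lemma~\ref{Chernoff Bounds} applied to the hypergeometric random variable $|W\cap W''_j|$, together with $|W'_j|\le K\epsilon n$ and a union bound over $j\in J_i$, the event
$$
\mathcal{F}:=\bigl\{|W_j|=(1\pm\epsilon^{1/3})|W|/K\text{ for all }j\in J_i\bigr\}
$$
holds with probability at least $1-(1-3c)^n$. Separately, as in the proof of Lemma~\ref{Slendered blow-up}, iterating Lemma~\ref{slender lemmas}(ii) yields $\Prob[\mathcal{A}_0^w]\ge 1-w(1-3c)^n$. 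I would then set a threshold $\tau$ with $h'(4K\Delta_R\sqrt{\xi_w})\ll\tau\ll f(\epsilon)^3$ (such a $\tau$ exists by the hierarchy in \eqref{eq:functions}; this is the technical point to verify) and call $j$ \emph{big} if $|Q_j|\ge\tau m$ and \emph{small} otherwise.

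For each big $j$, conditional on $\mathcal{A}_0^{i'(j)-1}$, the sets $S=Q_j$ and $T=W_j$ both have size at least $\tau m\ge h'(4K\Delta_R\sqrt{\xi_{i'(j)-1}})m$ (using $\mathcal{F}$ for $W_j$, since $f(\epsilon)/K\gg\tau$), so $(\text{M}'4)_{i'(j)}$ gives
$$
\Prob\bigl[|\sigma_j(Q_j)\cap W_j|=(1\pm h'(4K\Delta_R\sqrt{\xi_{i'(j)-1}}))|Q_j||W_j|/m\,\bigm|\,\mathcal{A}_0^{i'(j)-1}\bigr]\ge 1-(1-4Kc)^m.
$$
Union-bounding over the at most $K$ big classes, the total failure probability stays below $(1-2c)^n$. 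For small $j$, I would bound trivially $|\sigma_j(Q_j)\cap W_j|\le|Q_j|\le\tau m$, contributing at most $K\tau m$ in total.

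Combining on the intersection of all these good events, and using $\sum_{j\text{ big}}|Q_j|=|Q|\pm K\tau m$,
$$
|\phi(Q)\cap W|=(1\pm(h'(\cdot)+\epsilon^{1/3}))\frac{|W|}{n}\,(|Q|\pm K\tau m)\pm K\tau m=(1\pm f(\epsilon))\frac{|Q||W|}{n},
$$
where the final step uses $|Q|,|W|>f(\epsilon)n$ so that $K\tau m\cdot|W|/n$ and $K\tau m$ are both dominated by $f(\epsilon)|Q||W|/n$ (this is where $\tau\ll f(\epsilon)^3$ is needed), and $h'(\cdot)+\epsilon^{1/3}\ll f(\epsilon)$ absorbs the multiplicative error. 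The main obstacle is arithmetic rather than conceptual: carefully checking that the constant hierarchy in \eqref{eq:functions} permits a choice of $\tau$ between $h'(4K\Delta_R\sqrt{\xi_w})$ and $f(\epsilon)^3$, which reduces to comparing the iterated exponents $(1/300)^w$ against $(1/300)^{w+2}$ and is comfortably satisfied.
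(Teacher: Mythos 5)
Your proposal is correct and follows essentially the same route as the paper: decompose $|\phi(Q)\cap W|$ over the refined classes $J_i$, control $|W\cap U_j|$ by hypergeometric concentration (the paper's event (BW1)), apply $(\text{M}'4)$ in the relevant round to the classes where $|Q\cap Y_j|$ is large, and bound the small classes trivially, with the threshold playing the role of the paper's dichotomy $q_\ell \gtrless f(\epsilon)^3/K$. The only cosmetic difference is that the paper chains the events round by round (via $\mathcal{Q}_0^{\ell'}$) rather than introducing an intermediate $\tau$, but the probability bookkeeping is the same.
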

\begin{proof}
We are given sets $Q\subseteq X_i, W\subseteq V_i$ with $|Q|,|W| \geq f(\epsilon) n$. For all $\ell \in J_i$, let $Q_\ell:= Q\cap Y_\ell$, and define $q_\ell$ by $|Q_\ell| = q_\ell n$. Then 
\begin{align}\label{q sum} \sum_{\ell \in J_i} q_\ell  = \frac{|Q|}{n}.\end{align} 
Similarly for all $\ell\in J_i$, let $W_\ell:= W\cap U_\ell$.
We define the following event.
\begin{itemize}
\item[(\text{BW}1)] $|W_{\ell}| = (1\pm \epsilon^{1/3})\frac{|W|}{K} \text{ for all } \ell \in J_i.$
\end{itemize}
By a similar argument as in \eqref{mathcal B prob 1}, we have
\begin{align}\label{R1}
\mathbb{P}[(\text{BW}1) \text{ holds }] \geq 1- (1-3c)^n.
\end{align}

Similarly as in \eqref{aa'bb'}, let $\ell'_1 < \dots < \ell'_{K}$ be the indices such that $J_i \subseteq \bigcup_{j=1}^{K} I_{\ell'_{j}}$ and $J_i \cap I_{\ell'_{j}} = \{\ell_{j}\}$ for some $\ell_{j}$. So $J_i= \{\ell_1,\dots, \ell_K\}$. For $j\in [K]$ define the event $(\text{QW}_{\ell'_{j}})$ by
\begin{itemize}
 \item[$(\text{QW}_{\ell'_{j}})$] $|f_{\ell'_{j}}(Q_{\ell_{j}})\cap W_{\ell_{j}}| =  q_{\ell_{j}} |W| \pm f(\epsilon)^3 m.$
\end{itemize}
For $0 \le \ell'\le w$, where $w$ is defined as in (V1) of the definition of a valid input, we define 
$$\mathcal{Q}_{\ell'} := \left\{ \begin{array}{ll} \mathcal{A}_{\ell'}\wedge (\text{QW}_{\ell'}) & \text{ if }\ell'=\ell'_{j} \text{ for some }j\in [K], \\
\mathcal{A}_{\ell'} &\text{ otherwise,}
\end{array}\right.$$
and let $\mathcal{Q}_0^{\ell'} = \bigwedge_{\ell''=0}^{\ell'} \mathcal{Q}_{\ell''}$. In particular, $\mathcal{Q}_0$ is the event which always occurs.
Since $\mathcal{Q}_0^{\ell'-1}$ and $(\text{BW}1)$ are events which only depend on the history of algorithm prior to Round $\ell'$, Lemma~\ref{slender lemmas}(ii) implies that for all $\ell'\in [w]$
\begin{align}\label{QA}
\mathbb{P}[\mathcal{A}_{\ell'} \mid \mathcal{Q}_0^{\ell'-1},(\text{BW}1) ] \geq 1- (1-3c)^{Km} \geq 1-(1-3c)^n.
\end{align}
Now we show that when $\ell'=\ell'_{j}$ for some $j\in [K]$, we have
\begin{align}\label{QR}
\mathbb{P}[ (\text{QW}_{\ell'}) \mid \mathcal{Q}_0^{\ell'-1},(\text{BW}1) ] \geq 1-(1-3c)^n.
\end{align}
Let $\ell:= \ell_{j}$. 
If $q_{\ell} < f(\epsilon)^3/K$, then we have $|f_{\ell'}(Q_{\ell})\cap W_{\ell}| = q_{\ell}|W| \pm f(\epsilon)^3 m$, so we immediately get $(\text{QW}_{\ell'})$. So suppose $q_{\ell} \geq f(\epsilon)^3/K$ and note that $f(\epsilon)^3 \geq 2K h'(4K\Delta_R \sqrt{\xi_{\ell'-1}})$.
Also note that by (BW1) we have $|W_\ell|=(1\pm \epsilon^{1/3})|W|/K\ge h'(4K\Delta_R\sqrt{\xi_{\ell-1}})$
and 
$$
\left(1\pm h'(4K\Delta_R\sqrt{\xi_{\ell-1}})\right)\frac{|W_\ell||Q_\ell|}{m}\stackrel{\text{(BW1)}}{=}\left(1\pm \frac{f(\epsilon)^3}{2K}\right)(1\pm \epsilon^{1/3})\frac{|W|}{K}Kq_\ell=q_\ell |W|\pm f(\epsilon)^3m.
$$
Thus we can apply (M$'$4)$_{\ell'}$ with $Q_\ell$, $W_\ell$ playing the roles of $S$, $T$ to see that
\begin{align}
\mathbb{P}[ (\text{QW}_{\ell'}) \mid \mathcal{Q}_0^{\ell'-1},(\text{BW}1) ]  
&= \mathbb{P}[|f_{\ell'}(Q_{\ell})\cap W_{\ell}| =  q_{\ell}|W|\pm f(\epsilon)^3 m \mid \mathcal{Q}_0^{\ell'-1},(\text{BW}1) ] \nonumber \\
&\geq \mathbb{P}[|f_{\ell'}(Q_{\ell})\cap W_{\ell}| =  (1\pm h'(4K\Delta_R \sqrt{\xi_{\ell'-1}}))  |W_{\ell}||Q_\ell|/m \mid \mathcal{Q}_0^{\ell'-1},(\text{BW}1) ] \nonumber\\
&\geq 1-(1-4Kc)^m \geq 1-(1-3c)^n. \nonumber
\end{align}
Hence, \eqref{QR} holds.
Therefore, \eqref{QA} together with \eqref{QR} imply
\begin{align}
\mathbb{P}[\mathcal{Q}_{\ell'} \mid \mathcal{Q}_0^{\ell'-1},(\text{BW}1) ] \geq 1-2(1-3c)^n.
\end{align}
Thus by \eqref{R1}
$$\mathbb{P}[\mathcal{Q}_0^w,(\text{BW}1) ] \geq 1 - (2w+1)(1-3c)^n \geq 1-(1-2c)^n.$$
Note that if $\mathcal{Q}_0^w$ 
holds, then 
\begin{align*}
|\phi(Q)\cap W| =\sum_{\ell \in J_i} |\phi(Q_\ell) \cap W_{\ell}| 
= \sum_{j \in [K]} \left(q_{\ell_j}|W| \pm f(\epsilon)^3 m\right) \stackrel{\eqref{q sum}}{=} (1\pm f(\epsilon))\frac{|Q||W|}{n}.
\end{align*}
In the final equality we used that $|Q|,|W| \geq f(\epsilon)n$. This proves (B6).
\end{proof}

\subsection{A blow-up lemma for partially prescribed embeddings}
We now deduce from Lemma~\ref{modified blow-up} another extension of the blow-up lemma, which we shall also apply in our main algorithm in the next section. Suppose we are given an embedding $\phi$ of $H$ into $G$ and a set $Z$ of vertices whose embedding is unsuitable (this will be the case if $\phi(H)$ overlaps with the edges of previously embedded graphs in the main packing algorithm in Section~\ref{sec:main}). Then we can re-embed these vertices using edges of a `patching graph' $P$ provided $\phi$ was well behaved with respect to $P$. This notion of being well behaved is captured by the candidacy bigraphs $F$ being super-regular. Let $Z_i:=Z\cap X_i$ and $W_i:=\phi(Z_i)\subseteq V_i$. A candidacy bigraph $F$ will encode the possible new images of a vertex, i.e. we may only embed $z\in Z_i$ to $w\in W_i$ if $zw \in E(F)$. So the case of Lemma~\ref{patching} when each $F[Z_i,W_i]$ is a complete bipartite graph means that there are no constraints.

\begin{lemma}\label{patching}
Suppose $0<1/n \leq 1/m \ll \delta \ll \beta',\beta, 1/k, 1/(C+1) \leq 1$, and $1/m \ll 1/r$ and $m\leq n-C$. 
Let $R$ be a graph on $[r]$ with $\Delta(R) \leq k$. Let $\vec{\beta}$ be a symmetric $r\times r$ matrix such that $\min_{ij \in E(R)} \beta_{i,j} = \beta$. 
Suppose $H$ is a graph admitting vertex partition $(R, \mathcal{X})$ with $\mathcal{X}=(X_1,\dots,X_r)$ such that $\Delta(H)\leq k$ and $\Delta(H[X_i,X_j])\leq 1$ for all $ij\in E[R]$.  Suppose $A_0$ is a graph with bipartition $(V(H),V(P))$ such that $E(A_0) = \bigcup_{i\in [r]} E(A_0[X_i,V_i])$.
Suppose $P$ is a graph admitting vertex partition $(R,\mathcal{V})$ with $\mathcal{V}=(V_1,\dots,V_r)$, where
$\max_{i\in [r]}|V_i|=n$ and $n-C\leq |V_i|=|X_i| \leq n$. Suppose further that $\phi: V(H) \rightarrow V(P)$ is a bijection between $V(H)$ and $V(P)$ such that $\phi(X_i)=V_i$.
Suppose $N$ is an $(H,R,\mathcal{X})$-candidacy hypergraph and $F$ is an $(H,P,R,A_0,\phi,\mathcal{X},\mathcal{V},N)$-candidacy bigraph.
Suppose also that $Z_i \subseteq X_i$, and $W_i=\phi(Z_i) \subseteq V_i$ are sets such that $|Z_i|=|W_i|=m$, and let $Z:=\bigcup_{i=1}^{r} Z_i$, and $W:=\bigcup_{i=1}^{r} W_i$. 
Finally, suppose the following conditions hold:

\begin{itemize}\label{patching condition 1}
\item[(a)]
$F[Z_i, W_i]\textit{ is }(\delta,\beta')\textit{-super-regular for every }i\in [r]$.

\item[(b)]
$P[W]\textit{ is }(\delta,\vec{\beta})\textit{-super-regular with respect to }(R,W_1,\dots, W_r)$.
\end{itemize}

Then we can find a bijection $\phi': V(H) \rightarrow V(P)$ with $\phi'(X_i)=V_i$ for all $i\in [r]$, and such that

\begin{enumerate}
\item [(i)]$\phi'(x)=\phi(x)$ for every $x \notin Z$,

\item  [(ii)]$\phi'(x)\phi'(y) \in E(P)$ for every edge $xy\in E(H)$ with $\{x,y\} \cap Z \neq \emptyset$,

\item[(iii)] $\phi'(x) \in N_{F}(x) \subseteq N_{A_0}(x)$ for every $x\in Z$.
\end{enumerate}

\end{lemma}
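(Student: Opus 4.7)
The plan is to reduce Lemma~\ref{patching} to the uniform blow-up lemma (Lemma~\ref{modified blow-up}). The key observation is that the only edges of $H$ at risk under a modification of $\phi$ on $Z$ are those with at least one endvertex in $Z$, and for such an edge $xy \in E(H)$ with $x \in Z$ and $y \notin Z$, condition (CB2) of the candidacy bigraph ensures that if we set $\phi'(x) \in N_F(x)$, then automatically $\phi'(x) \in N_P(\phi(y)) = N_P(\phi'(y))$. Thus the remaining task is to re-embed $Z$ into $W$ in a way that (a) respects the target sets $N_F(\cdot)$, and (b) sends every edge of $H[Z]$ to an edge of $P[W]$.

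First I would build an auxiliary near-equiregular graph $H^*$ on vertex set $Z$ containing $H[Z]$. Since $\Delta(H[X_i,X_j])\leq 1$ and $|Z_i|=|Z_j|=m$ for all $i,j$, each matching $H[Z_i,Z_j]$ with $ij\in E(R)$ extends to a perfect matching $H^*[Z_i,Z_j]$; for $ij\notin E(R)$ put $H^*[Z_i,Z_j]:=\emptyset$. The resulting graph $H^*$ admits the vertex partition $(R,Z_1,\dots,Z_r)$, is $(R,\vec{1},0)$-near-equiregular, contains $H[Z]$ as a subgraph, and has $\Delta(H^*)\leq \Delta(R)\leq k$.

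Next I would invoke Lemma~\ref{modified blow-up}, with $m$, $\delta$, $\beta'$, $1$, $k$, $0$ playing the roles of $n$, $\epsilon$, $d_0$, $k$, $\Delta_R$, $C$ respectively, with $\vec{\beta}$ playing the role of both density matrices $\vec{d}$ and $\vec{\beta}$, with $P[W]$ serving as both the host graph and the (formally required) patching graph (super-regular by (b)), with $H^*$ as the near-equiregular guest, and with $A_0:=F$ — which has exactly the super-regularity property demanded, by condition (a). Choose an intermediate $\gamma$ with $\delta\ll \gamma\ll \beta,\beta',1/k$ to satisfy the hierarchy of Lemma~\ref{modified blow-up}. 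The lemma then guarantees (with positive probability, which suffices) an embedding $\phi^*\colon Z\to W$ that sends each $Z_i$ to $W_i$, satisfies $\phi^*(z)\in N_F(z)$ for every $z$, and embeds $H^*$ — hence in particular $H[Z]$ — into $P[W]$.

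Finally I would set $\phi'(x):=\phi^*(x)$ for $x\in Z$ and $\phi'(x):=\phi(x)$ otherwise, and verify (i)--(iii). Properties (i) and (iii) are immediate; for (ii), an edge $xy\in E(H)$ with $x\in Z$ is handled by $\phi^*$ when $y\in Z$ (using $H[Z]\subseteq H^*$), and by (CB2) combined with $y\in N_H(x)\subseteq N_x$ when $y\notin Z$. The main design subtlety — which I would regard as the chief obstacle — is the construction of $H^*$: Lemma~\ref{modified blow-up} forces the guest graph to be near-equiregular, so $H[Z]$ cannot be fed in directly and must be padded with artificial matching edges, while ensuring that $H^*$ still contains $H[Z]$ and that $P[W]$ has sufficient density to absorb these extra edges alongside the genuine ones.
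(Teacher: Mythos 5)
Your proposal is correct and follows essentially the same route as the paper: extend $H[Z]$ to a union of perfect matchings between the $Z_i$, then apply Lemma~\ref{modified blow-up} with $P[W]$ as the host, $F$ (restricted to $Z\cup W$) as $A_0$, $\beta'$ as $d_0$, $k$ as $\Delta_R$, and let (CB2) handle the edges of $H$ leaving $Z$. The only deviation is cosmetic: the paper feeds in a complete bipartite blow-up $Q$ of $R$ on $W$ as the dummy patching graph (with density matrix all ones), whereas you reuse $P[W]$ itself, which is equally admissible since the patching-graph conclusions (B2.2)--(B2.4) are not needed here.
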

\begin{proof}
Choose additional constants $c,\gamma$ satisfying $1/m \ll c \ll \delta \ll \gamma \ll \beta',\beta,1/k,1/(C+1)$. Let $\mathcal{Z}:=(Z_1,\dots, Z_r)$ and $\mathcal{W}:=(W_1,\dots, W_r)$. Let $Q$ be an $r$-partite graph admitting vertex partition $(R,\mathcal{W})$ such that $Q[W_i,W_j]$ is a complete bipartite graph for all $ij \in E(R)$. Let $\vec{\tau}$ be the $r\times r$ matrix such that $\tau_{i,j}=1$ for $ij\in E(R)$
and $\tau_{i,j}=0$ for $ij\notin E(R)$. Since $\Delta(H[Z_i,Z_j])\le 1$ for all $ij\in E(R)$, we can add edges to $H[Z]$ to obtain a graph
$H'\supseteq H[Z]$ such that $H'[Z_i,Z_j]$ is a perfect matching for each $ij\in E(R)$.
Apply Lemma~\ref{modified blow-up} with the following graphs and parameters.\newline

{\small
\begin{tabular}{c|c|c|c|c|c|c|c|c|c|c|c}
object/parameter & $P[W]$ & $Q$ &  $H'$ & $F[Z\cup W]$ &$R$ & $\mathcal{W}$& $\mathcal{Z}$& $m$ & $c$ & $\delta$ & $r$  \\ \hline
playing the role of & $G$& $P$&  $H$ & $A_0$ & $R$ & $\mathcal{V}$& $\mathcal{X}$& $n$ & $c$ & $\epsilon$ & $r$
\\ \hline \hline
object/parameter & $\gamma$ & $1$& $\beta$  & $ \beta'$& $1$& 
$k$ & $0$ & $\vec{\beta}$ & $\vec{\tau}$ & $ \vec{\tau}$ \\ \hline
playing the role of &$\gamma $ & $\beta$& $d$ & $d_0$ &  $k$&$\Delta_R$ & $C$ & $\vec{d}$ & $\vec{\beta}$ & $\vec{k}$ \\ 
\end{tabular}
}\newline \vspace{0.2cm}

\noindent Then by Lemma~\ref{modified blow-up}, with probability at least $1-(1-c)^m$, we get an embedding $\phi':Z\rightarrow W$
of $H'$ into $P[W]$ (and thus also of $H[Z]$ into $P[W]$) such that for all $x\in Z$, $\phi'(x) \in N_{F}(x)$. Let $\phi'(x):=\phi(x)$ if $x\notin Z$. Then (i) holds by definition.

Since $\phi'$ is an embedding of $H[Z]$ into $P[W]$, $\phi'(x)\phi'(y) \in E(P)$ for every edge $xy \in E(H[Z])$. If $xy\in E(H)$, $x\in Z_i$ and $y\notin Z$, then $\phi'(y)=\phi(y)$. Since $F$ is an $(H,P,R,A_0,\phi,\mathcal{X},\mathcal{V},N)$-candidacy bigraph,
$\phi'(x) \in N_{F}(x)$ and (CB2) imply that 
\begin{align*}
\phi'(x) &\in N_{F}(x) \subseteq N_{A_0}(x)\cap \bigcap_{y'\in N_x} N_{P}(\phi(y'))\cap V_i \subseteq N_{A_0}(x)\cap \bigcap_{y'\in N_H(x)} N_{P}(\phi(y'))\cap V_i \\ &\subseteq N_{A_0}(x)\cap N_{P}(\phi(y)) = N_{A_0}(x)\cap N_{P}(\phi'(y)).
\end{align*}
Thus (ii) and (iii) hold. (Note that we do not require properties (B1)--(B6) in this application of Lemma~\ref{modified blow-up}.)
\end{proof}



\section{Main packing algorithm}\label{sec:main}

In this section we combine the results derived in previous sections to establish Theorem~\ref{main lemma}, which we consider to be the main packing result of this paper. It guarantees an approximate decomposition of a super-regular graph $G$ into bounded degree graphs $H_1,\dots,H_s$, provided the $H_i$ reflect the large scale structure of $G$. More precisely, we assume that $G$ has a reduced graph $R$ of moderate degree, and for each edge $ij$ of $R$, the corresponding pair is $\epsilon$-regular in $G$, and this pair also corresponds to an almost regular bipartite graph in each $H_i$.

\begin{thm}\label{main lemma}
Suppose 
$0<1/n\ll c \ll \epsilon \ll \eta , \eta', \alpha, d, d_0, 1/k, 1/(C+1), 1/\Delta_R$ and $1/n\ll 1/r$. Let $s\in \mathbb{N}$ be an integer such that $s \leq \eta^{-1} n$.
Suppose the following assertions hold.

\begin{itemize}
\item[(S1)] $R$ is a graph on $[r]$ with $\Delta(R)\leq \Delta_R$.
\item[(S2)] $\vec{d}$ and $\vec{k}^{i}$ are symmetric $r\times r$ matrices for all $i\in[s]$ such that $\min_{jj'\in E(R)}d_{j,j'}=d$,
$k^{i}_{j,j'}\in\mathbb{N}$ and $k^{i}_{j,j'}\leq k$ for all $jj'\in E(R)$, and $d_{j,j'}=k^{i}_{j,j'}=0$ if $jj'\notin E(R)$.  \item[(S3)] For all $i\in[s]$, $H_i$ is an $(R,\vec{k}^{i},C)$-near-equiregular graph with respect to $(R,X_1,\dots, X_r)$ such that $\max_{j\in [r]} |X_j|=n$ and
$n-C\leq |X_j|\leq n$ for all $j\in [r]$. 
\item[(S4)] For all $jj'\in E(R)$, $\sum_{i=1}^{s} k^{i}_{j,j'} \leq (1-\alpha) d_{j,j'} n$.
\item[(S5)] $G$ is an $(\epsilon,\vec{d})$-super-regular graph with respect to $(R,V_1,\dots, V_r)$ such that $|X_j|=|V_j|$ for all $j\in [r]$.
\item[(S6)] For all $i\in [s]$, $A_i$ is a bipartite graph with bipartition $(V(H_i),V(G))$ such that $N_{A_i}(X_j)=V_j$ and
$A_i[X_j,V_j]$ is $(\epsilon,d_0)$-super-regular for all $j\in[r]$.
\item[(S7)] For all $j\in [r]$, there is a collection $\mathcal{Q}_{j}$ of subsets of $X_j$ and a collection $\mathcal{W}_{j}$
of subsets of $V_j$ such that $|Q|,|W|\ge \eta' n$ for all $Q\in \mathcal{Q}_{j}$, $W\in \mathcal{W}_{j}$ and such that
$|\mathcal{Q}_{j}| |\mathcal{W}_{j}| \leq (1+c)^n$.
\item[(S8)] $\Lambda$ is a graph with  
$V(\Lambda) \subseteq [s]\times \bigcup_{j=1}^{r} X_j$ and $\Delta(\Lambda) \leq (1-2\alpha)d_0 n$ such that for all $(i,x)\in V(\Lambda)$ and $i' \in [s]$ we have
$ |\{x' : (i',x') \in  N_{\Lambda}((i,x))\}| \leq k.$
Moreover, for all $i\in [s]$ and $j\in [r]$, we have $|\{ (i,x) \in V(\Lambda) : x\in X_j\} | \leq \epsilon |X_j|.$
\end{itemize}
Then for each $i\in [s]$ there exists an embedding $\phi'_i: V(H_i) \rightarrow V(G)$ of $H_i$ into $G$
such that the following assertions hold.
\begin{itemize}
\item[(T1)] $\phi'_i(x) \in N_{A_i}(x)$ for all $x\in V(H_i)$.
\item[(T2)] $\phi'_i(E(H_i)) \cap \phi'_{i'}(E(H_{i'})) =\emptyset$ for $i\neq i'$.
\item[(T3)] For all $j\in[r]$ and $i \in [s]$ and any sets $Q\in \mathcal{Q}_{j}$ and $W\in \mathcal{W}_{j}$, we have $|\phi'_i(Q)\cap W| = (1\pm \eta')\frac{|Q||W|}{n}$.\COMMENT{We need this property just because we want it for the new paper with Felix. This property is not used in this paper.}
\item[(T4)] For all $(i,x)(i',y) \in E(\Lambda)$, we have $\phi'_i(x) \neq \phi'_{i'}(y)$.
\end{itemize}
\end{thm}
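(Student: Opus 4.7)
The plan is to apply a nibble-based iteration, using Lemma~\ref{modified blow-up} to embed a batch of graphs pseudorandomly in each round and Lemma~\ref{patching} to resolve overlaps. As a preparation step, apply Lemma~\ref{preparing patching graph} to each pair $G[V_j, V_{j'}]$ with $jj' \in E(R)$ to split off a sparse patching subgraph $P \subseteq G$ of density $\beta_{j,j'} := \beta d_{j,j'}$, where $\beta$ is a small constant with $\epsilon \ll \beta \ll \alpha, d, 1/k$. The residue $G^1 := G - P$ remains $(2\epsilon, (1-\beta)\vec{d})$-super-regular. Next, partition $[s]$ into $T$ batches $B_1, \ldots, B_T$, where $T$ is a large constant with $1/T \ll \beta$, chosen so that the total edge count of $\{H_i : i \in B_t\}$ in each pair $X_j \times X_{j'}$ is at most $\gamma d_{j,j'} n$ for some small $\gamma$ with $1/T \ll \gamma \ll \beta$.

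In Round $t$, iterate through $i \in B_t$. For each such $i$, apply Lemma~\ref{modified blow-up} to $H_i$ and the current residue $G^t$, with patching graph $P$ and with $A_0$ set so that for each $x \in V(H_i)$ the allowed image set $N_{A_0}(x)$ is obtained from $N_{A_i}(x)$ by deleting every $v$ that has already been `used up' by a previous embedding in the sense of (T4) (there are $O(\epsilon n + tk|B_t|/n)$ such vertices, well below any regularity threshold). This yields an embedding $\phi_i$ of $H_i$ into $G^t$, a refined partition $\mathcal{U}_i, \mathcal{Y}_i$, and a candidacy bigraph $F_i$ that encodes which vertices of $\phi_i$ may be reassigned to which images via $P$. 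Property (B2.4) gives us that each $F_i[Y, U]$ is super-regular, as required for patching. Crucially, the embeddings $\phi_i$ within Round $t$ are produced independently, so they may have small pairwise overlap on edges.

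After producing $\phi_i$ for every $i \in B_t$, perform the patching step: let $G''_i := \bigcup_{i' \in B_t \setminus \{i\}} \phi_{i'}(H_{i'})$ and, as outlined in Section~\ref{sec:sketch}, let $W_i \subseteq V(G)$ be the union of a small random set with the set of vertices within distance one of an edge of $\phi_i(E(H_i)) \cap E(G''_i)$. Property (B4) gives $|W_i \cap U| \leq \gamma^{3/5} n$ for every cluster $U \in \mathcal{U}_i$ with high probability. Set $Z_i := \phi_i^{-1}(W_i)$ and apply Lemma~\ref{patching} (with $P$, $F_i$, $A_i$, $\phi_i$, $Z_i$, $W_i$ playing the obvious roles) to obtain a re-embedding $\phi'_i$ that agrees with $\phi_i$ outside $Z_i$ and uses only edges of $P$ on $W_i$. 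By constructing the $W_i$ for $i \in B_t$ to be pairwise disjoint on the $P$-edges they use (one can achieve this by sequentially augmenting $W_i$ to include all neighbours in $P$ of already patched vertices; the budget in $P$ is comfortably large since $\beta \gg \gamma$), the $\phi'_i$ for $i \in B_t$ become pairwise edge-disjoint, and no $P$-edge is used twice. Finally, define $G^{t+1} := G^t - \bigcup_{i \in B_t} \phi'_i(E(H_i))$.

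The main obstacle is showing that $G^{t+1}$ remains super-regular so that Round $t+1$ can proceed. Property (B1) of Lemma~\ref{modified blow-up} says that each edge of $G^t$ is hit in expectation by the `right' fraction of the $\phi_i$, which means that, with respect to codegrees, $\bigcup_{i \in B_t} \phi_i(E(H_i))$ behaves like a random subgraph of the expected density. Concretely, for fixed $u, v$ in the same class, the codegree $|N_{G^{t+1}}(u,v)|$ is a sum (over $i \in B_t$) of contributions whose expectations can be computed via (B1) and (B5), and one establishes concentration of this codegree via an Azuma martingale argument (Theorem~\ref{Azuma}) on the exposure of the $\phi_i$. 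Combined with Proposition~\ref{regularity implies codegree} and Theorem~\ref{codegree implies regularity}, and with Lemma~\ref{regularity after edge deletion} used to control the small amount of additional damage from patching, we deduce that $G^{t+1}$ is $(\epsilon_{t+1}, \vec{d}^{\,t+1})$-super-regular with $d_{j,j'}^{t+1} \geq (1 - \beta - t\gamma) d_{j,j'}$ and $\epsilon_{t+1} \ll d$. Since $T \beta + T\gamma \leq 1 - \alpha/2$, the iteration can be carried out for all $T$ rounds, and after Round $T$ the union of the $\phi'_i$ is the desired packing.

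To verify the conclusions: (T1) holds because in every application of Lemma~\ref{modified blow-up} we restrict images to $N_{A_i}(\cdot)$, and patching preserves this by condition (iii) of Lemma~\ref{patching}; (T2) holds by construction, since within a round patching produces edge-disjoint $\phi'_i$ and across rounds edge-disjointness is automatic; (T3) is obtained by applying property (B6) in each round to every pair $Q \in \mathcal{Q}_j, W \in \mathcal{W}_j$ (the bound $|\mathcal{Q}_j||\mathcal{W}_j| \leq (1+c)^n$ and a union bound absorb the failure probability); (T4) is enforced by the dynamic restriction of $A_0$ noted above, together with (B3.2) to ensure that the small set of newly forbidden images in each round is almost never hit.
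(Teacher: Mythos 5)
Your overall architecture is the same as the paper's (split off a patching graph $P$, run a constant number of rounds in which Lemma~\ref{modified blow-up} is applied independently to a batch of linearly many $H_i$, maintain super-regularity of the leftover via (B1), (B5), (B3.1), an exposure martingale and Theorem~\ref{codegree implies regularity}, and repair overlaps with Lemma~\ref{patching}), but the step that is supposed to make the copies pairwise edge-disjoint does not work as described. Your mechanism for avoiding reuse of $P$-edges --- ``sequentially augmenting $W_i$ to include all neighbours in $P$ of already patched vertices'' --- fails quantitatively and conceptually. Quantitatively: each earlier $W_j$ occupies $\delta m'$ vertices of every cluster and there are $\gamma n$ indices $j$ in the batch, so after roughly $1/\delta$ patchings the set of already patched vertices (and a fortiori its $P$-neighbourhood) has size comparable to $|V(G)|$; the augmented $W_i$ would then violate the hypothesis of Lemma~\ref{patching} that $|W_i\cap V^i_j|$ is exactly the small fraction $\delta m'$. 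Conceptually: placing a vertex into $W_i$ only forces its image to be re-chosen; it does not prevent the re-embedding from selecting a $P$-edge that an earlier $\phi'_j$ already used, and across rounds you never delete used $P$-edges from $P$ at all, so later rounds may reuse them. What is actually needed (and what the paper does) is to maintain shrinking patching graphs $P\supseteq P^t\supseteq P^t_i$ obtained by deleting every $P$-edge already consumed by a previous $\phi'_j$, to run Lemma~\ref{patching} inside $P^t_i[W_i]$, and to propagate these deletions into the candidacy bigraphs (the updates $F_i\supseteq F'_i\supseteq F^*_i$); this is viable precisely because the deleted edges form a graph of small maximum degree (of order $k\Delta_R\delta\gamma n$ per round and $k\Delta_R\gamma n$ within a round), so Proposition~\ref{regularity after edge deletion} shows the super-regularity hypotheses (a), (b) of Lemma~\ref{patching} survive. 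Without this bookkeeping (T2) is not established.

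The treatment of (T4) has a similar gap. Within a round the $\phi_i$ are generated independently, so dynamically restricting $A_0$ cannot see same-round collisions, and (T4) is a deterministic statement with up to $\Theta(n^2)$ constraints between two graphs of the same round; a bound like (B3.2), which controls a single collision with probability $\gamma^2$, cannot be unioned over these constraints. The paper's remedy is deterministic: every special vertex $x$ with $(i,x)\in V(\Lambda)$ is placed into $U_i\subseteq W_i$, hence re-embedded during patching, and the forbidden images $D''_i(x)$ arising from same-round embeddings are removed from the candidacy bigraph via Lemma~\ref{lem: deletion of some edges still contain super-regular} before Lemma~\ref{patching} is applied. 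Also note that your across-round restriction is misquantified: the forbidden set $D'_i(x)$ can have size up to $(1-2\alpha)d_0 n$, a constant fraction of $N_{A_i}(x)$, not $O(\epsilon n)$, so it is not ``well below any regularity threshold''; one again needs Lemma~\ref{lem: deletion of some edges still contain super-regular} (only $\le\epsilon|X_j|$ vertices per class are special) to recover a super-regular $A''_i$ before invoking Lemma~\ref{modified blow-up}.
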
 

(T1) allows us to prescribe `target sets' for embedding some special vertices in each $H_i$
(see the remark below) and is used in \cite{JKKO}. 
(T4) allows us to to avoid undesired `collisions' between embeddings of special vertices
belonging to different $H_i$,
and will be applied in further work in progress on packing spanning trees.
We also believe (T3) to be of independent interest, with potential further applications. 

On the other hand, (T1), (T3) and (T4)  will not be required when we apply Theorem~\ref{main lemma}
in Section~\ref{sec:concl}.
Note that in such a situation, i.e.~when we apply Theorem~\ref{main lemma} but e.g.~conclusion (T3) is not required, we can ignore condition (S7)
along with the parameters $c$ and $\eta'$. Similarly, if we do not require (T1), then we can ignore (S6) along with the
parameter $d_0$ (by taking $A_i[X_j,V_j]$ to be a complete bipartite graph) and if we do not require (T4), then we can ignore (S8).

\begin{remark}
In \cite{JKKO} and in Section~\ref{sec:stacking}, it is convenient to use that Theorem~\ref{main lemma} holds even if we replace (S6) by the following.
\begin{itemize}
\item[(S6$'$)] 
For all $i\in [s]$, let $X^i_j\subseteq X_j$ be a subset with $|X^i_j|\leq \epsilon n$ and let $A_i$ be a bipartite graph with bipartition $(V(H_i),V(G))$ such that, for all $j\in [r]$,
\begin{itemize}
\item[$\bullet$] $N_{A_i}(X_j)=V_j$,
\item[$\bullet$] $d_{A_i}(x)\geq d_0 n$ for all $x\in X^i_j$, and $N_{A_i}(x)= V_j$ for all $x\in X_j\setminus X^i_j$.
\end{itemize}
\end{itemize}
\end{remark}
Indeed, if (S6$'$) is given, Lemma~\ref{lem: deletion of some edges still contain super-regular} gives a subgraph $A'_i$ of $A_i$ which satisfies (S6) (with $\epsilon^{1/3}$
playing the role of $\epsilon$).
Thus Theorem~\ref{main lemma} holds even after we replace (S6) by (S6$'$). In other words,
(S6$'$) and (T1) together imply that for each $i\in [s]$ we can specify a linearly sized target set for a small linear fraction of the
vertices of~$H_i$.

\medskip

Let us now briefly sketch the proof idea of Theorem~\ref{main lemma}. The desired packing will be constructed via a randomised algorithm, called the \emph{Main packing algorithm}, which will be shown to succeed with high probability. The algorithm runs in $T$ `rounds', indexed by a parameter $t$. In Round~$t$ we take a collection of $\gamma n$ graphs $H_i$  (where $\gamma$ is a small constant and $i \in I_t$ with $|I_t|=\gamma n$) and embed them into the current remainder $G^t$ of $G$. 
For this, we first apply Lemma~\ref{modified blow-up} to each $H_i$ with $i \in I_t$ independently, in order to define embeddings $\phi_i$ 
of $H_i$  into $G^t$ for all $i \in I_t$. We then let $G^{t+1}$ be the graph obtained from $G^t$
by deleting the edges in all the $\phi_i(H_i)$ with $i \in I_t$ from $G^t$, and proceed to the next round. 

Clearly, in each round the embeddings $\phi_i$ with $i \in I_t$ need not be pairwise edge-disjoint. 
However, the overlap can be shown to be small, which will allow us to apply Lemma \ref{patching} in order to define slightly modified new embeddings $\phi'_i$, which will be pairwise edge-disjoint, as desired. The `patching graph' $P\subseteq G$, used in Lemma \ref{patching} to carry out these `patching procedures' in all rounds, is set aside at the beginning of the algorithm and its edges are not used for any $\phi_i$.
These patching procedures are also designed to ensure that (T4) holds. Since they are repeated in every round, we have to ensure that certain graphs remain sufficiently super-regular throughout the algorithm.

Since the main packing algorithm is randomised, at various steps there will be a chance of failure, and our goal is to show that the total probability of all possible failures is small. 
\vspace*{0.2cm}

In order to describe the main packing algorithm, we need the following definitions. Choose additional constants $\beta, \gamma,\delta$ so that 
$$\frac{1}{n} \ll c\ll \epsilon \ll \gamma \ll \delta \ll \eta, \eta' ,\alpha, \beta,  d, d_0, \frac{1}{k}, \frac{1}{C+1}, \frac{1}{\Delta_R} \;\; \text{ and } \;\; \beta \ll \alpha, d, d_0.$$ 
Let $\vec{\beta}$ be the $r\times r$ matrix with entries $\beta_{j,j'}:= {\beta} d_{j,j'}/{d}$ for each $j,j'\in [r]$. So $\min_{jj'\in E(R)} \beta_{j,j'} =\beta$. Let $I_t:= \{(t-1)\gamma n+1, \dots, t\gamma n\}$. Let
\begin{align}\label{eq:Tdk}
T:=\left\lceil \frac{s}{\gamma n}\right\rceil \leq \frac{1}{\gamma \eta}, \enspace \vec{d}^1:=\vec{d}-\vec{\beta}, \text{ and } K:=(k+1)^2\Delta_R,
\end{align}
\begin{align}\label{eq:mainalgparams}
d^{t+1}_{j,j'}: = d^t_{j,j'}\prod_{i \in I_t}\left(1 - \frac{k^i_{j,j'}}{ d^t_{j,j'} n}\right) \text{ for all $t\in [T]$ and $jj' \in E(R)$}. 
\end{align}
$T$ will be the total number of rounds and the $d^t_{j,j'}$ track the densities of the unused leftover $G^t[V_j,V_{j'}]$ in the $t$-th round.
Let 
\begin{align}\label{eq:epsrecursion}
\epsilon_1:=\epsilon^{1/3} \;\; \text{ and } \;\; \epsilon_{t+1}: = q(\epsilon_t) \text{ for all } t\in [T],
\end{align}
where $q$ is the function defined in \eqref{eq:functions}. Note that by the choice of the functions in \eqref{eq:functions} (which depend only on $w:= (K\Delta_R)^2(\Delta_R+1)$ and the argument), we can assume that
\begin{align}\label{eq:epsilont}
\epsilon_T \ll \gamma.
\end{align}
  We are now ready to describe the main packing algorithm. 
\newline \vspace{0.2cm} 

\noindent {\bf Main packing algorithm} \vspace{0.2cm}

\noindent {\bf Round 0.} 
It will be convenient that in each of the $T$ rounds we embed exactly $\gamma n$ graphs. For this, let $H'$ be an arbitrary $(R,\vec{k'},C)$-near-equiregular graph with vertex partition $(R,X_1,\dots, X_r)$ where $k'_{j,j'}:=1$ for all $jj'\in E(R)$
and $k'_{j,j'}:=0$ for all $jj'\notin E(R)$. 
Also let $A$ be a union of $r$ complete bipartite graphs between $X_j$ and $V_j$ for $j\in [r]$. Now we let $(H_{p},A_{p}):= (H',A)$ for $s+1 \leq p \leq T\gamma n$.\COMMENT{Here, we are assuming that $\gamma n$ is an integer.} 
Let $\mathcal{H}:=\{ (H_i,A_i) : i \in [T\gamma n] \}$ and $\Phi: = \emptyset$. We apply Lemma \ref{preparing patching graph} to
find a graph $P\subseteq G$ such that $P^1:=P$ is $(\epsilon_1,\vec{\beta})$-super-regular 
and $G^1:=G-P^1$  is $(\epsilon_1,\vec{d}^1)$-super-regular with respect to $(R,\mathcal{V})$, where $\mathcal{V}:=(V_1,\dots, V_r)$. Let $\vec{d'}^t,\vec{\beta'}$ be the $Kr\times Kr$ matrices with entries $d'^t_{\ell,\ell'} := d^t_{j,j'}, \beta'_{\ell,\ell'
}:=\beta_{j,j'}$ where $\lceil \ell/K \rceil = j, \lceil \ell'/K \rceil=j'$ and $j,j'\in[r]$. 
Recall that $R_K$ denotes the $K$-fold blow-up of $R$. 
Let $\mathcal{X}:=(X_1,\dots, X_r)$. 
Let $t:=1$, and proceed to Round~$1$. \vspace{0.2cm}

\noindent {\bf Round ${\bf t}$.} \vspace{0.2cm}

\noindent {\bf Step 1.} Assume that in Round $t-1$ we have defined 
\begin{itemize}
\item[(A1)] a graph $G^t\subseteq G^1$, which is $(\epsilon_t,\vec{d}^t)$-super-regular with respect to $(R,V_1,\dots, V_r)$,
\item[(A2)] a graph $P^t\subseteq P^1$, which is $(\delta^{1/4},\vec{\beta})$-super-regular with respect to $(R,V_1,\dots, V_r)$, and  \COMMENT{The latter will be proved in Claim \ref{Pt}. It is not really necessary to state it here.} 
\item[(A3)] graph embeddings $\phi'_i : V(H_i) \rightarrow V(G)$ of $H_i$ into $G$ for each $i\in \bigcup_{t'=1}^{t-1} I_{t'}$.
\end{itemize}
$G^t\cup P^t$ is the set of currently available edges, the main part of the embedding in Round $t$ will be done in $G^t$, the patching graph $P^t$ will be used to partially re-embed copies of the~$H_i$ (for $i\in I_t$) in order to make these copies edge-disjoint from each other and in order to ensure (T4) `within' Round~$t$.

For all $i\in I_t$ and $j\in [r]$, let 
\begin{align}\label{eq: X'j def}
X'_{j}(i):= \{ x\in X_{j} : (i,x) \in V(\Lambda) \}.
\end{align}
Then (S8) implies that $|X'_j(i)|\leq \epsilon |X_j|$.
For each $x \in X'_j(i)$, we consider 
$$D'_i(x)  :=\left\{  \phi'_{i'} (x') : (i',x') \in  N_{\Lambda}( (i,x) ), i' \leq (t-1)\gamma n\right\}, \enspace \text{ and } \enspace
B'_i(x)  := N_{A_i}(x) \setminus D'_i(x).$$
Note that, in order to ensure (T4), when embedding $x$ it is necessary to avoid the vertices in $D'_i(x)$, i.e.~we need to embed $x$ into $B'_i(x)$.
To ensure this property, we will now update $A_i$ accordingly.
For all $i\in I_t$, $j\in [r]$ and $x \in X'_j(i)$, (S6) and (S8) imply that
$|B'_i(x)| \ge |N_{A_i}(x)| - |N_{\Lambda}((i,x))| \geq \alpha d_0 |X_j|$.
For all $i\in I_t$ and $j\in [r]$, $A_i[X_j, V_j]$ is $(\epsilon,d_0)$-super-regular, thus
we can apply Lemma~\ref{lem: deletion of some edges still contain super-regular} to obtain an
$(\epsilon_1,\alpha d_0)$-super-regular subgraph $A''_i[X_j, V_j]$ of $A_i[X_j, V_j]$ such that 
\begin{equation}\label{eq: disjointness}
\text{for each $x\in X'_j(i)$, we have $N_{A''_i[X_j,V_j]}(x) \cap D'_i(x) = \emptyset$.}
\end{equation}
For each $i\in I_t$, let
\begin{align}\label{eq: A'' def}
A''_i:= \bigcup_{j=1}^{r} A''_i[X_j,V_j] \subseteq \bigcup_{j=1}^{r} A_i[X_j,V_j].
\end{align}
We apply Lemma~\ref{modified blow-up} with the following graphs and parameters for each $i\in I_t$ one by one in increasing order to obtain an embedding $\phi_i$ of $H_i$ into $G^t$
(we may do so because of \eqref{eq:epsilont}, this is justified in detail in the proof of Claim \ref{type1}). Note that these applications of Lemma~\ref{modified blow-up} are carried out independently from each other.
\vspace*{0.2cm}

{\small

\noindent
\begin{tabular}{c|c|c|c|c|c|c|c|c|c|c|c|c|c|c|c|c|c|c|c}
object/parameter & $G^t$&$P$ &$H_i$ & $R$ & $A''_i$ & $\mathcal{V}$&$\mathcal{X}$ &$2c$ &$\epsilon_t$ &$4k\Delta_R\gamma $&$\vec{\beta}$ &$\vec{d}^t$ & $\alpha d_0$ &$\vec{k}^i$ &$k$ & $r$ & $\Delta_R$ & $C$ & $n$ \\ \hline
playing the role of &$G$ &$P$ &$H$ & $R$ & $A_0$ &$\mathcal{V}$ &$\mathcal{X}$ &$c$ &$\epsilon$ &$\gamma$&$\vec{\beta}$ &$\vec{d}$ &$d_0$ & $\vec{k}$&$k$ &$r$& $\Delta_R$&$C$ & $n$\\ 
\end{tabular}
}\newline
\vspace*{0.2cm}

\noindent Note that we apply it with $P$ rather than $P^t$; this is crucial to ensure that the regularity property of the candidacy bigraph in (F5) below is sufficiently strong. If the Uniform embedding algorithm fails for some $i\in I_t$, then end the algorithm with \textbf{failure of type 1}.

We also define the following event which will be relevant for (T3).
\begin{itemize}
\item[$(\text{QW}_i)$] For all  $j\in [r]$ and any sets $Q\in \mathcal{Q}_{j}$ and $W\in \mathcal{W}_{j}$, we have $|\phi_i(Q) \cap W| = (1\pm \gamma)\frac{|Q||W|}{n}$.
\end{itemize}
If $(\text{QW}_i)$ fails, then we end the algorithm with \textbf{failure of type 2}.

The above application of Lemma~\ref{modified blow-up} gives a tuple $(\phi_i,\mathcal{X}^i, \mathcal{V}^i,F_i ,N^i)$ satisfying (B1)--(B6) of Lemma \ref{modified blow-up}. 
In particular, for each $i\in I_t$,
\begin{itemize}
\item[(F0)] $G^t,P$ both admit the vertex partition $(R_K,\mathcal{V}^i)$, and $H_i$ admits the vertex partition $(R_K,\mathcal{X}^i)$,
\item[(F1)] $\phi_i: H_i \rightarrow G^t$ is an embedding such that $\phi_i(x) \in N_{A''_i}(x)$,
\item[(F2)] the partitions $\mathcal{X}^i=(X_1^i,\dots,X_{Kr}^i)$ refining $X_1,\dots, X_r$ and  $\mathcal{V}^i=(V_1^i,\dots,V_{Kr}^i)$ refining $V_1,\dots, V_r$ with $\max_{j\in [Kr]} |X_j^i|=m'$
and $m'-C\le |X_j^i|=|V_j^i|\le m'$, where $m'=\lceil n/K\rceil$, act as $\mathcal{Y}$ and $\mathcal{U}$ in Lemma \ref{modified blow-up}, respectively,
\item[(F3)] $N^i$ is an $(H_i,R_K,\mathcal{X}^i)$-candidacy hypergraph with $|N^i_x| \leq K\Delta_R$ for all $x\in V(H_i)$, 
\item[(F4)]
$P$ is $(\epsilon^{1/3}_t,\vec{\beta'})$-super-regular with respect to $(R_K,\mathcal{V}^i)$,
\item[(F5)] $F_i=\bigcup_{j\in [Kr]} F_i[X^i_j,V^i_j]$,
where $F_i[X_j^i,V_j^i]$ is $(f(\epsilon_t),\alpha d_0 p(R_K,\vec{\beta'},j))$-super-regular. Moreover, $F_i$ is an $(H_i,P,R_K,A''_i,\phi_i,\mathcal{X}^i,\mathcal{V}^i,N^i)$-candidacy bigraph. In particular, for all $x\in X_j^i$,
\begin{align}\label{eq:recallbigraph}
N_{F_i}(x)=N_{F_i[X^i_j,V^i_j]}(x) \subseteq N_{A''_i}(x)\cap\bigcap_{y\in N^i_x} N_{P}(\phi_i(y))\cap V_j^i.
\end{align}
\end{itemize}

For \eqref{eq:recallbigraph} recall that candidacy bigraphs are defined before Lemma~\ref{modified blow-up}. As remarked after the definition of a candidacy hypergraph, (F3) implies that $\Delta(H_i[X_j^i,X_{j'}^i])\leq 1$ for all $j\neq j'\in [Kr]$.
Define $F'_i\subseteq F_{i}$ by
\begin{align}\label{F' definition}
E(F'_i[X_j^i,V_j^i]):=\left\{xv\in E(F_{i}): x\in X^{i}_j, v\in V_j^{i}\cap \bigcap_{y\in N_x^{i}}N_{P^t}(\phi_{i}(y)) 
\right\}. 
\end{align}
The graph $F'_i$ can be viewed as an update of the candidacy bigraph $F_i$ which accounts for further restrictions imposed in the current round by the fact that the edges of $P-P^t$ are no longer available for the patching process.

Let 
\begin{align}\label{eq:updateGt}
G^{t+1}:= G^t - \bigcup_{i\in I_t} \phi_i(E(H_i)).
\end{align}

If $G^{t+1}$ is not $(\epsilon_{t+1} ,\vec{d}^{t+1})$-super-regular with respect to $(R,V_1,\dots, V_r)$, then end the algorithm with \textbf{failure of type 3}. Otherwise, proceed to Step 2.
\vspace{0.2cm}

\noindent {\bf Step 2.} Observe that in Step 1 we allowed the edge sets of different $\phi_i(H_i)$ to intersect and we allowed $\phi_{i}(x) = \phi_{i'}(y)$ for $(i,x)(i',y) \in E(\Lambda)$ with $i,i'\in I_t$. In Steps 2--4 we aim to resolve these issues by altering the embeddings $\phi_i$ in order to make them edge-disjoint and to satisfy (T4). 
For each $i\in I_t$, let 
\begin{align*}
E_i:&=\bigcup_{j\in I_t\setminus \{i\}}(\phi_i(E(H_i))\cap \phi_j(E(H_j))),\\
U_i:&=\{v\in e: e \in E_i\} \cup \bigcup_{j\in [r]} \phi_i(X'_j(i)),\\
NU_i:&=\bigcup_{j\in [r]} \phi_i(X'_j(i)) \cup \bigcup_{u\in U_i, e\in E(H_i)}  \{v\in V(G): \phi_i(e)=uv\}.
\end{align*}
Note that in particular we have $U_i\subseteq NU_i$.%
\COMMENT{12/7: added $\bigcup_{j\in [r]} \phi_i(X'_j(i)) \cup$ in the def of $NU_i$ to make sure that this sentence still holds - if a vertex in $(X'_j(i)$ is isolated in $H_i$ (which might happen if $R$ contains isolated vertices), then it would not lie in $NU_i$
with the old definition - obviously we don't care about this case, but we also don't want to make the statemet of Thm 6.1 more complicated. This lead to changes in Claim 6.11.}
We define the following two events.

\begin{itemize}
\item [(U1)] $|\{i\in I_t: v\in NU_i\}| < \gamma^{4/3} n \text{ holds for all } v\in V(G).$

\item [(U2)] 
$|NU_i \cap V^i_{j}| \leq \gamma^{2/5} m' \text{ for all } i\in I_t, \text{ and } j\in [Kr].$ 
\end{itemize}
If (U1) or (U2) does not hold, then end the algorithm with \textbf{failure of type 4}. Otherwise, proceed to Step 3.   \vspace{0.2cm}

\noindent {\bf Step 3.} Our aim is now to change the embedding $\phi_i$ for the vertices in $U_i$. It turns out that it is much easier to change the embedding on $U_i\cup Y_i$, where $Y_i$ is randomly chosen vertex set of appropriate size. For each $i\in I_t$ we choose a set $Y_i\subseteq V(G)\setminus U_i$ uniformly at random, subject to  $|(U_i\cup Y_i)\cap V^i_{j}| = \delta m'$ for all $j\in [Kr]$ (this is possible since, by (U2) above, we have $|U_i\cap V^i_{j}|\leq |NU_i\cap V^i_{j}|\leq \gamma^{2/5} m'\leq \delta m'$ for all $i\in I_t$). Let 
\begin{align}\label{eq:WiZidef}
W_i:=U_i\cup Y_i \; \text{ and } \; Z_i:=\phi_i^{-1}(W_i).
\end{align}
We define the following events.

\begin{itemize}
\item [(W1)] For all $i\in I_t$ the graph $P^t[W_i]$ is $( \delta^{1/25},\vec{\beta'})$-super-regular with respect to $(R_K,V^i_1 \cap W_i,\dots, V^{i}_{Kr} \cap W_i)$.
\item [(W2)] For all $v\in V(G)$ we have
\begin{itemize}
\item[(i)] $|\{i\in I_t: v\in W_i\}|\le 2\delta \gamma n$,
\item[(ii)] $\sum_{i\in I_t} |\{ e\in E(H_i): v\in \phi_i(e) , \phi_i(e)\cap W_i\neq \emptyset \}| \leq 3k\Delta_R\delta\gamma n$.
\end{itemize}
\item [(W3)] For all $i\in I_t$ and $j\in [Kr]$ the graph $F'_i[ Z_i\cap X^i_j,W_i\cap V^i_j]$ is $(\delta^{1/20},\alpha d_0 p(R_K,\vec{\beta'},j))$-super-regular. 

\end{itemize}
\noindent
So part~(ii) of (W2) says that the total number of necessary changes at a given vertex in the current round will be small, and (W3) implies that $\phi_i$ is compatible with the structure of $P^t$, so we can use $P^t$ to modify $\phi_i$.
If one of (W1)--(W3) fails, then end the algorithm with \textbf{failure of type~5}. 
Otherwise, proceed to Step~4.1. \vspace{0.2cm}

\vspace{0.2cm}

\noindent {\bf Step 4.$\ell$.} Consider $i=(t-1)\gamma n+\ell$.
Define
\begin{align}\label{eq:defPti}
P^t_{i} := P^t - \bigcup_{j\in I_t:j<i}\phi_j'(E(H_j)).
\end{align}
Let $F^*_i\subseteq F'_i$ be defined by
\begin{align}\label{eq:Ft*idef}
E(F^*_i[X_j^i,V_j^i]):=\left\{xv\in F'_i: x\in X^{i}_j, v\in V_j^{i}\cap \bigcap_{y\in N_x^{i}}N_{P_{i}^t}(\phi_{i}(y))\right\}. 
\end{align}
In other words, $F^*_i$ is the maximal subgraph of $F'_i$ that is an $(H_{i},P^t_{i},R_K,A''_i,\phi_{i},\mathcal{X}^i,\mathcal{V}^{i},N^{i})$-candidacy bigraph. It can be viewed as a further update of the candidacy bigraph $F'_i$ which takes into account restrictions arising
from the $\ell-1$ embeddings $\phi'_j$ with $j\in I_t$, $j<i$ already made in the current round. Note that
\begin{align}\label{eq:ptiftidef}
P^t_{(t-1)\gamma n+1} = P^t \;\;\; \text{ and } \;\;\; F^*_{(t-1)\gamma n+1} = F'_{(t-1)\gamma n+1}.
\end{align}
We now check whether the following conditions hold.

\begin{itemize}
\item[($a'$)] For all $j\in [Kr]$ the graph $F^*_i[Z_{i}\cap X^{i}_j,W_{i}\cap V^{i}_j]$ is $(\delta^{1/50},\alpha d_0 p(R_K,\vec{\beta'},j))$-super-regular.
\item[($b'$)] $P^t_{i}[W_{i}]$ is $(4\delta^{1/50},\vec{\beta'})$-super-regular with respect to $(R_K, V^{i}_1\cap W_{i}, \dots, V^{i}_{Kr}\cap W_{i})$.
\end{itemize}
 
If one of $(a')$ and $(b')$ does not hold, end the algorithm with \textbf{failure of type 6}.
Otherwise, note that by \eqref{p R def}, $p(R_{K},\vec{\beta}',j) = p(R,\vec{\beta},j)^K \geq \beta^{K\Delta_R}$ as $\beta =\min_{ij\in E(R)} \beta_{i,j}$. 

Our aim is to apply Lemma~\ref{patching} in order to obtain $\phi'_i$. Before this, we need to update $F^*_i$ further in order to ensure
that (T4) will be satisfied `within' the current round ((\ref{eq: disjointness}) will ensure that (T4) still holds `with respect to previous rounds'). Recall that we have defined $X'_{j'}(i)$ in \eqref{eq: X'j def} for each $j'\in [r]$.
For each $j\in [Kr]$, by (F2) there exists $j'\in [r]$ such that $X^i_j \subseteq X_{j'}$.
Note that $X^i_j\cap X'_{j'}(i)\subseteq  X^i_j\cap Z_i$.%
\COMMENT{12/7: added this sentence}
For each $x \in X^i_j\cap X'_{j'}(i)$,%
\COMMENT{12/7: deleted $\cap Z_i$}
we consider 
\begin{align*}
D''_i(x) & := \left\{  \phi'_{i'} (x') : (i',x') \in  N_{\Lambda}( (i,x) ), (t-1)\gamma n<  i' < i \right\},\\
B''_i(x) & := N_{F^*_i}(x) \setminus D''_i(x).
\end{align*}
Then $|D''_i(x)| \leq k\gamma n$ by (S8), thus for each $x \in  X^i_j\cap X'_{j'}(i)\subseteq  X^i_j \cap Z_i$,%
\COMMENT{12/7: changed $x \in  X^i_j\cap X'_{j'}(i)\cap Z_i$ to
$x \in  X^i_j\cap X'_{j'}(i)\subseteq  X^i_j \cap Z_i$ and deleted ``(S8) implies that''}
we have
$$|B''_i(x)| \ge |N_{F^*_i}(x)| - |D''_i(x)| \stackrel{(a')}{\geq} 
(\alpha d_0 p(R_K,\vec{\beta'},j)-\delta^{1/50})|Z_i\cap X^i_{j}| - k\gamma n \geq  \beta^{K\Delta_R+1} |Z_i\cap X^i_{j}|.
$$
Also $|X^i_j\cap X'_{j'}(i)| \leq |X'_{j'}(i)| \leq \epsilon |X_j| \leq \delta^{1/50}  |Z_i\cap X^i_{j}|$.%
\COMMENT{12/7: replaced $X^i_j\cap X'_{j'}(i)\cap Z_i$ by $X^i_j\cap X'_{j'}(i)$, also in the table below}
Together with $(a')$ this ensures that for each $j\in [Kr]$ we can apply Lemma~\ref{lem: deletion of some edges still contain super-regular} with the following graphs and parameters.\newline

{\small

\noindent
\begin{tabular}{c|c|c|c|c|c|c}
object/parameter & $F^*_i[Z_{i}\cap X^{i}_j,W_{i}\cap V^{i}_j]$ & $X^i_j\cap X'_{j'}(i)$ & $\delta^{1/50}$ & $\alpha d_0p(R_{K},\vec{\beta}',j)$ & $ \beta^{K\Delta_R+1}$ & $B''_i(x)$  \\ \hline
playing the role of &$G[A,B]$ & $A'$ & $\epsilon$ & $d$ & $d_0$ & $B'(v)$\\ 
\end{tabular}
}\newline
\vspace*{0.2cm}

\noindent
We obtain a $(\delta^{1/150}, \beta^{K\Delta_R+1})$-super-regular subgraph $F^{i}[Z_{i}\cap X^{i}_j,W_{i}\cap V^{i}_j]$ of $F^*_i[Z_{i}\cap X^{i}_j,W_{i}\cap V^{i}_j]$.
Let $F^i$ be the graph on $V(F^*_i)$ with edge set
$$E(F^i) := \bigcup_{j\in [Kr]}  E(F^{i}[Z_{i}\cap X^{i}_j,W_{i}\cap V^{i}_j]).$$ 
Then $F^i$ is a spanning subgraph of $F^*_i$ such that%
   \COMMENT{12/7: replaced $x\in X'_{j'}(i)\cap Z_i $ by $x\in X'_{j'}(i)$ below}
\begin{equation}\label{eq: disjointness 2}
\text{for all $j'\in [r]$ and $x\in X'_{j'}(i)$, we have $N_{F^{i}}(x) \cap D''_i(x) = \emptyset$.}
\end{equation}
Moreover,
\begin{itemize}
\item[($a''$)] $F^i[Z_{i}\cap X^{i}_j,W_{i}\cap V^{i}_j]$ is $(\delta^{1/150},  \beta^{K\Delta_R+1})$-super-regular for each $j\in [Kr]$.  
\end{itemize}
Let $A'_i:=\bigcup_{j\in [Kr]} A''_i[X^i_j,V^i_j]$.%
  \COMMENT{Need to get rid of cross edges in order to be able to apply Lemma~\ref{patching}. This is fine for candidacy bigraph
since there we look at $N_{A_i}(x)\cap V^i_j=N_{A'_i}(x)$ for $x\in X^i_j$ anyway.}
Note that since $F^i$ is a spanning subgraph of $F^*_i$, $F^i$ is an $(H_i,P^t_i,R_K,A'_i,\phi_i,\mathcal{X}^i,\mathcal{V}^i,N^i)$-candidacy bigraph. 
We apply Lemma \ref{patching} with the following graphs and parameters to find a new embedding $\phi_i'$ of $H_i$ into $G^t\cup P_i^t \subseteq G^t\cup P^t$ in order to make sure that the images of different $\phi_i'$ are edge-disjoint and (T4) holds
(see Claim~\ref{disjoint} for the proof).
\vspace*{0.2cm}

\noindent
{\small
\begin{tabular}{c|c|c|c|c|c|c|c|c|c|c}
object/parameter & $H_i$& $P^t_{i}$&  $F^i$&$A'_i$ &$N^i$ & $\mathcal{V}^i$& $\mathcal{X}^i$& $R_K$ & $W_i\cap V^i_j$ & $Z_i\cap X^i_j$ 
\\ \hline
playing the role of & $H$ & $P$&  $F$&$A_0$ & $N$&$\mathcal{V}$& $\mathcal{X}$& $R$ & $W_i$ & $Z_i$
\\ \hline \hline
object/parameter & $m'$& $\delta m'$  & $ \delta^{1/150}$& $\beta$& $\vec{\beta'}$ & 
$\beta^{K\Delta_R+1}$ & $Kr$ & $K\Delta_R$ & $C$  & $\phi_i$ \\ \hline
playing the role of & $n$& $m$ & $\delta$ &  $\beta$& $\vec{\beta}$ & $\beta'$ & $r$ & $k$ & $C$ & $\phi$\\ 
\end{tabular}
}\newline \vspace{0.2cm}

\noindent (Lemma~\ref{patching} can be applied by $(a'')$ and $(b')$.)
Then Lemma~\ref{patching}(i) and (iii) imply that
\begin{align}\label{eq:new}
\phi'_i(x)=\phi_i(x) \ \text{ for all $x\notin Z_i$ and }
\phi'_i(x)\in N_{F^i}(x)\subseteq N_{A'_i}(x) \ \text{ for all $x\in Z_i$.}
\end{align}
Together with (F1) and the fact that $N_{A'_i}(x)\subseteq N_{A''_i}(x)$ this implies that
\begin{align}\label{eq:hi}
\phi'_i(x)\in N_{A''_i}(x) \ \ \text{for all } x\in V(H_i).
\end{align}

If $\ell<\gamma n$, proceed to Step 4.$(\ell+1)$. If $\ell=\gamma n$ and $t\leq T-1$, then define the graph
\begin{align}\label{eq:Pt}
P^{t+1} :=  P^t - \bigcup_{i\in I_t} \phi'_i(E(H_i)),
\end{align}
and proceed to Round $t+1$.
If $\ell=\gamma n$ and $t=T$, then we end the algorithm with success.
\vspace{0.2cm}

\noindent
Let us now prove some properties of the above algorithm. The first one concerns the density $d^t_{j,j'}$ of $G^t[V_j,V_{j'}]$.

\begin{claim}\label{number of rounds}
For all $t\in [T]$ and all $jj'\in E(R)$ we have $d^t_{j,j'} \geq \alpha d/2$.
\end{claim}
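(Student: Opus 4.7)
My plan is to prove the bound by induction on $t$, using the elementary inequality $\prod_i (1-x_i) \geq 1 - \sum_i x_i$ valid for $x_i \in [0,1]$, which will let the recursion telescope essentially linearly.

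For the base case $t=1$, we have $d^1_{j,j'} = d_{j,j'} - \beta_{j,j'} = d_{j,j'}(1-\beta/d)$. Since $\beta \ll \alpha, d$ and $d_{j,j'}\geq d$, this is comfortably above $\alpha d/2$. For the inductive step, assume $d^{t'}_{j,j'} \geq \alpha d/2$ for all $t' \leq t$. Then each factor in the product defining $d^{t+1}_{j,j'}$ satisfies
\[
\frac{k^i_{j,j'}}{d^t_{j,j'} n} \leq \frac{k}{(\alpha d/2)\,n} \leq 1
\]
for $n$ large. Applying $\prod_{i\in I_t}(1-x_i) \geq 1 - \sum_{i\in I_t} x_i$ gives
\[
d^{t+1}_{j,j'} \;\geq\; d^t_{j,j'}\Bigl(1 - \sum_{i\in I_t}\frac{k^i_{j,j'}}{d^t_{j,j'}\,n}\Bigr) \;=\; d^t_{j,j'} - \frac{1}{n}\sum_{i\in I_t} k^i_{j,j'},
\]
and iterating from $t=1$ this telescopes to
\[
d^{t+1}_{j,j'} \;\geq\; d^1_{j,j'} - \frac{1}{n}\sum_{t'=1}^{t}\sum_{i\in I_{t'}} k^i_{j,j'}.
\]

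The only subtlety is that the indices $i \in I_{t'}$ with $i>s$ correspond to the padding graphs $H_p := H'$ introduced in Round $0$, for which $k^i_{j,j'}=k'_{j,j'}=1$. However, $T\gamma n \leq s + \gamma n$ by the definition of $T$ in \eqref{eq:Tdk}, so the contribution of these padding terms to the sum is at most $\gamma n$. Combining with (S4) gives
\[
\frac{1}{n}\sum_{t'=1}^{T}\sum_{i\in I_{t'}} k^i_{j,j'} \;\leq\; (1-\alpha)d_{j,j'} + \gamma,
\]
and hence
\[
d^{t+1}_{j,j'} \;\geq\; d_{j,j'}(1-\beta/d) - (1-\alpha)d_{j,j'} - \gamma \;=\; \alpha d_{j,j'} - \beta d_{j,j'}/d - \gamma.
\]
Since $d_{j,j'}\geq d$ and $\beta,\gamma \ll \alpha d$, the right-hand side is at least $\alpha d/2$, completing the induction.

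There is no real obstacle here: the claim is a purely arithmetic book-keeping statement about the recursion \eqref{eq:mainalgparams}, and the only thing to be careful about is remembering to include the dummy graphs $H'$ in the sum (which cost an extra $\gamma n$, negligible against the $\alpha d n$ slack afforded by assumption (S4) and the choice $\beta\ll \alpha d$).
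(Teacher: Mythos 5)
Your proof is correct and follows essentially the same route as the paper's: bound the product via $\prod_i(1-x_i)\geq 1-\sum_i x_i$, telescope, and apply (S4) together with $d^1_{j,j'}=d_{j,j'}(1-\beta/d)$ and $\beta,\gamma\ll\alpha,d$. Your extra care with the dummy graphs $H'$ and the explicit induction (which justifies that each factor lies in $[0,1]$) is harmless but not needed for $t\in[T]$, since $d^t_{j,j'}$ only involves indices $i\leq (T-1)\gamma n<s$.
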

\begin{proof}
Clearly, we may assume that $t \ge 2$. Then for $jj'\in E(R)$,
\begin{align}\label{di}
d^{t}_{j,j'} \stackrel{\eqref{eq:mainalgparams}}{\geq} 
d^{t-1}_{j,j'}\left(1 - \frac{\sum_{i \in I_{t-1}}k^{i}_{j,j'}}{d^{t-1}_{j,j'} n}\right)
= d^{t-1}_{j,j'}-\frac{\sum_{i \in I_{t-1}}k^{i}_{j,j'}}{n}.
\end{align}
By (\ref{di}), we get 
\begin{align*}
d^{t}_{j,j'}&\geq d^{T}_{j,j'} \geq d^1_{j,j'}- \sum_{i=1}^{(T-1)\gamma n} \frac{k^i_{j,j'}}{n} \stackrel{(\text{S}4)}{\geq} d^1_{j,j'} - (1-\alpha)d_{j,j'} \stackrel{\eqref{eq:Tdk}}{\geq} (1-\beta/d) d_{j,j'} - (1-\alpha) d_{j,j'}\\
&\geq \alpha d_{j,j'}/2\geq \alpha d/2.
\end{align*}
\end{proof}
\noindent
Part~(ii) of the next claim ensures that the super-regularity assumption (A2) in Step 1 is justified.
\begin{claim} \label{Pt}
Let $t\in [T]$. Assuming no failure prior to Round~$t$, the following hold.
\begin{itemize}
\item[{\rm (i)}] For any vertex $v\in V(P)$, $d_{P - P^{t}}(v) \leq \delta^{3/5}n$.
\item[{\rm (ii)}] $P^{t}$ is $(\delta^{1/4} , \vec{\beta})$-super-regular with respect to $(R,\mathcal{V})$
and $(\delta^{1/4},\vec{\beta'})$-super-regular with respect to $(R_K,\mathcal{V}^i)$. 
\item[{\rm (iii)}] For all $i\in I_t$ and all $jj'\in E(R_K)$, let
$$SP^t_{j,j'}(i):=\left\{\{v,v'\} \in \binom{V_j^i}{2} : |N_{P^t}(\{v,v'\})\cap V_{j'}^i| \neq ( \beta_{j,j'}'^2 \pm 3\delta^{3/5})m'\right\}.$$
Then $|SP^t_{j,j'}(i)|\leq \gamma m'^2$.
\end{itemize}
\end{claim}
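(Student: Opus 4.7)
The plan for $(i)$ is to carefully account for the edges of $P$ lost in each prior round $t'<t$. By construction the skeleton embedding $\phi_i$ chosen in Step~1 of Round~$t'$ maps into $G^{t'}$, which is edge-disjoint from $P$; hence an edge $\phi'_i(xy)$ with $xy\in E(H_i)$ lies in $P$ only when $\phi'_i(xy)\neq \phi_i(xy)$, which forces $\{x,y\}\cap Z_i\neq \emptyset$. For a fixed vertex $v$ and a fixed $i\in I_{t'}$, the number of such edges incident to $v$ is therefore at most $\Delta(H_i)\leq (k+1)\Delta_R$: either $v\in W_i$, or $v=\phi_i(x)$ for some $x\notin Z_i$ with at least one neighbour in $Z_i$. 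By (W2)(i) the number of $i\in I_{t'}$ of the first type is at most $2\delta\gamma n$, and since each $i$ of the second type contributes at least one to the sum in (W2)(ii), the number of the second type is at most $3k\Delta_R\delta\gamma n$. Hence $v$ loses at most $O(k^2\Delta_R^2)\delta\gamma n$ edges of $P$ in round $t'$; summing over the $T\le 1/(\gamma\eta)$ rounds and using $\delta\ll \eta,1/k,1/\Delta_R$ yields $d_{P-P^t}(v)\le \delta^{4/5}n\le \delta^{3/5}n$.

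For $(ii)$ I would combine $(i)$ with Proposition~\ref{regularity after edge deletion}. Recall that $P^1=P$ is $(\epsilon^{1/3},\vec{\beta})$-super-regular with respect to $(R,\mathcal{V})$ by construction in Round~$0$, and is $(\epsilon^{1/3}_t,\vec{\beta'})$-super-regular with respect to $(R_K,\mathcal{V}^i)$ by (F4). Viewing $P^t$ as a spanning subgraph of $P$ with per-vertex degree loss at most $\delta^{4/5} n$ (equivalently at most $K\delta^{4/5}m'$ relative to the partition class size $m'=\lceil n/K\rceil$), Proposition~\ref{regularity after edge deletion} gives that $P^t$ is $O(\delta^{2/5})$-super-regular with respect to $(R,\mathcal{V})$ and $O(\sqrt{K}\delta^{2/5})$-super-regular with respect to $(R_K,\mathcal{V}^i)$. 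Both of these are at most $\delta^{1/4}$ by the hierarchy $\delta\ll 1/k,1/\Delta_R$ (and hence $\delta\ll 1/K$), establishing $(ii)$.

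Part $(iii)$ requires a more delicate argument and is the main obstacle. By (F4) and Proposition~\ref{regularity implies codegree} applied to $P$, all but at most $\epsilon^{1/3}_t m'^2\le \gamma m'^2/2$ pairs $\{v,v'\}\in\binom{V^i_j}{2}$ satisfy $|N_P(\{v,v'\})\cap V^i_{j'}|=(\beta'^2_{j,j'}\pm 3\epsilon^{1/3}_t)m'$. For such a \emph{good} pair, the codegree in $P^t$ differs from that in $P$ by at most
\[
L(v,v'):=d_{(P-P^t)[V^i_j,V^i_{j'}]}(v)+d_{(P-P^t)[V^i_j,V^i_{j'}]}(v'),
\]
so it suffices to show $L(v,v')\le \delta^{3/5}m'$ for essentially all pairs. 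Here a na\"ive use of the global bound from $(i)$ fails because restricting to $V^i_{j'}$ of size $m'=n/K$ naively loses a factor of~$K$. The key is to exploit that the partition $\mathcal{V}^i$ is an (essentially) uniform random equitable refinement of $\mathcal{V}$ chosen in Step~3 of the Uniform embedding algorithm for~$i$ in Round~$t$, so for any fixed $v\in V_j$ the set $V^i_{j'}$ is an (essentially) uniform random subset of $V_{j'}$ of size~$m'$. Combined with $d_{P-P^t}(v)\le \delta^{4/5}n$, the Chernoff bound for hypergeometric distributions (Lemma~\ref{Chernoff Bounds}) gives $|N_{P-P^t}(v)\cap V^i_{j'}|\le 2\delta^{4/5}m'$ with probability at least $1-e^{-\Omega(\delta^{4/5}m')}$, and a union bound over all $v\in V(G)$, $i\in I_t$ and $j'\in N_{R_K}(j)$ shows this holds simultaneously with high probability. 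Under this event $L(v,v')\le 4\delta^{4/5}m'\le \delta^{3/5}m'$ and so every good pair has codegree in $P^t$ inside $\beta'^2_{j,j'}m'\pm 3\delta^{3/5}m'$, giving $|SP^t_{j,j'}(i)|\le \gamma m'^2$. Strictly speaking this high-probability event should be folded into the failure analysis of Round~$t$ as an additional success condition of the algorithm.
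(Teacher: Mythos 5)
Your parts (i) and (ii) are correct and follow essentially the paper's route: round by round you control the number of edges of $P$ lost at a fixed vertex via (W2) (your variant counts the indices $i\in I_{t'}$ that can contribute and multiplies by $\Delta(H_i)\le (k+1)\Delta_R$, rather than summing edge counts directly, which costs an extra factor $O(k\Delta_R)$ but is harmless given $\delta\ll \eta,1/k,1/\Delta_R$), and then you upgrade the initial super-regularity of $P=P^1$ (from Round~0 and from (F4)) to super-regularity of $P^t$ via Proposition~\ref{regularity after edge deletion}. This is exactly what the paper does.

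For (iii), however, your premise that the \emph{direct} subtraction argument fails is mistaken, and your probabilistic detour is both unnecessary and, as written, not a proof of the claim as stated. The factor $K$ you worry about when passing from $n$ to $m'=\lceil n/K\rceil$ is absorbed by the hierarchy: since $K=(k+1)^2\Delta_R$ and $\delta\ll 1/k,1/\Delta_R$, we have $K\delta^{4/5}\le \delta^{3/5}$ (indeed $\delta\le (2K)^{-5}$ may be assumed), so your own bound $d_{P-P^t}(v)\le \delta^{4/5}n\le K\delta^{4/5}m'$ already gives $L(v,v')\le 2K\delta^{4/5}m'\le 2\delta^{3/5}m'$ deterministically, whence every pair that is good for $P$ (all but $\epsilon_t^{1/3}m'^2\le\gamma m'^2$ pairs, by (F4) and Proposition~\ref{regularity implies codegree}) remains good for $P^t$ with tolerance $3\epsilon_t^{1/3}m'+2\delta^{3/5}m'\le 3\delta^{3/5}m'$. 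This containment $SP^t_{j,j'}(i)\subseteq SP_{j,j'}(i)$ is precisely the paper's one-line argument (using the stronger intermediate bound from the proof of (i), namely $T\cdot 5k\Delta_R\delta\gamma n\le \delta^{4/5}n$, rather than the weaker displayed $\delta^{3/5}n$). By contrast, your appeal to the randomness of the refinement $\mathcal{V}^i$ only yields the conclusion with high probability over the Round-$t$ randomness, whereas Claim~\ref{Pt} is a deterministic consequence of ``no failure so far'' and is used as such in the failure-of-type-5 analysis; making your version rigorous would require adding a new failure type to the Main packing algorithm and rechecking the downstream uses (and, as you note but do not carry out, verifying that $P-P^t$ is measurable with respect to the pre-Round-$t$ history so that Lemma~\ref{Chernoff Bounds} applies to the random partition). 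None of this extra structure is needed: the deterministic argument you had already set up closes the proof once you use $\delta\ll 1/K$.
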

\begin{proof}
Note that for each $v\in V(G)$ and all $i\in I_1\cup\dots\cup I_t$
$$|\{e\in E(H_i): v\in \phi'_i(e),\phi_i(e)\cap W_i\neq\emptyset\}|=|\{e\in E(H_i): v\in \phi_i(e),\phi_i(e)\cap W_i\neq\emptyset\}|\pm \Delta_R,
$$
where we need the $\pm \Delta_R$ only if $v\in W_i$.\COMMENT{Since then $v$ has degree between $\sum_{j'\in N_{R}(j) } k^i_{j,j'}$ and $\sum_{j'\in N_{R}(j)} (k^i_{j,j'}+1)$ and $|N_{R}(j)|\leq \Delta_R$.}
\noindent
Moreover, since the algorithm has not ended with failure before, (W2) in Step~3 was satisfied in all Rounds~$t'$ with $t'<t$.
Hence, for each $v\in V(G)$ and each $t'<t$
\begin{eqnarray}\label{eq:ApplW2}
d_{P^{t'}-P^{t'+1}}(v)&\stackrel{\eqref{eq:Pt}}{=}&\sum_{i\in I_{t'}}|\{ e\in E(H_i): v\in \phi'_i(e) , \phi_i(e)\cap W_i\neq \emptyset \}| \nonumber \\
&\leq& \sum_{i\in I_{t'}}|\{ e\in E(H_i): v\in \phi_i(e) , \phi_i(e)\cap W_i\neq \emptyset \}|+\Delta_R|\{i\in I_{t'}:v\in W_i\}|\nonumber \\
& \stackrel{\text{(W2)}}{\leq} & 5k\Delta_R\delta\gamma n.
\end{eqnarray} 
Thus 
\begin{align}\label{max deg P Pt}
d_{P - P^{t}}(v) \leq \sum_{t'=1}^{t-1} 5k\Delta_R\delta\gamma n \leq T\cdot 5k\Delta_R\delta\gamma n \stackrel{\eqref{eq:Tdk}}{\leq} \delta^{3/5}n.
\end{align}
The graph $P^1=P$, defined in Round $0$, is $(\epsilon^{1/3}, \vec{\beta})$-super-regular with respect to $(R,\mathcal{V})$ by construction,
and so also $(\delta^{3/5},\vec{\beta})$-super-regular since $\epsilon \ll \delta$. Thus
Proposition \ref{regularity after edge deletion} implies that $P^t$ is $(\delta^{1/4},\vec{\beta})$-super-regular
with respect to $(R,\mathcal{V})$. Similarly, (F4) and Proposition~\ref{regularity after edge deletion} together imply that $P^t$ is $(\delta^{1/4},\vec{\beta'})$-super-regular with respect to $(R_K,\mathcal{V}^i)$.

For all $i\in I_t$ and $jj'\in E(R_K)$, let 
\begin{align*}
SP_{j,j'}(i):=\left\{\{v,v'\} \in \binom{V_j^i}{2}: |N_{P}(\{v,v'\})\cap V_{j'}^i| \neq ( \beta_{j,j'}'^2\pm 3\epsilon_t^{1/3})m'\right\}.
 \end{align*}
Then \eqref{max deg P Pt} implies that $SP^t_{j,j'}(i)\subseteq SP_{j,j'}(i)$. Also (F4) and Proposition~\ref{regularity implies codegree}
together imply that $|SP_{j,j'}(i)|\leq \epsilon_t^{1/3} m'^2$. 
Thus $|SP^t_{j,j'}(i)|\leq \epsilon_t^{1/3} m'^2 \leq \gamma m'^2$. This completes the proof of Claim~\ref{Pt}.
%
\end{proof}

The following claim implies that the embedded copies of $H_1,\dots, H_{s}$ are indeed edge-disjoint and satisfy (T1) and (T4).

\begin{claim}\label{disjoint}
Assume that in Round $t$ we have constructed the embeddings $\phi'_i$ for all $i\in I_t$. 
Then the following statements hold.
\begin{itemize}
\item[(i)] $\bigcup_{i\in I_t} \phi'_i(E(H_i)) \subseteq E(G^t) \cup E(P^t)$, 
\item[(ii)]  $\phi'_i(E(H_i))\cap \phi'_{i'}(E(H_{i'}))=\emptyset$ \text{ for all distinct } $i,i'\in I_t$,
\item[(iii)] $\bigcup_{i\in I_t} \phi'_i(E(H_i)) \cap ( E(G^{t+1})\cup E(P^{t+1}) ) =\emptyset$,
\item[(iv)] $\phi'_i(x) \in N_{A_i}(x)$,
\item[(v)]  if $(i,x)(i',y) \in E(\Lambda)$ and $i,i' \in \bigcup_{t'=1}^{t} I_{t'}$,  then $\phi'_i(x) \neq \phi'_{i'}(y)$.
\end{itemize}
\end{claim}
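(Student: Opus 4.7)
The plan is to verify properties (i)--(v) of Claim~\ref{disjoint} by tracing through the construction of $\phi'_i$, using only Lemma~\ref{patching} and the bookkeeping identities \eqref{eq: disjointness}, \eqref{eq:hi}, \eqref{eq:defPti}, \eqref{eq:new}, \eqref{eq: disjointness 2}, \eqref{eq:updateGt} and \eqref{eq:Pt}. I expect no substantive obstacle: the entire claim is a verification that the successive updates $P^t\mapsto P^t_i$ and the successive restrictions $A_i\mapsto A''_i\mapsto F^i$ were designed precisely to yield (i)--(v) from the output of Lemma~\ref{patching}.

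First I would dispose of (i) and (iv). For (i), Lemma~\ref{patching}(i) gives $\phi'_i(e)=\phi_i(e)\in E(G^t)$ whenever $e\in E(H_i)$ has both endpoints outside $Z_i$, while Lemma~\ref{patching}(ii) (applied with $P^t_i$ in the role of ``$P$'') gives $\phi'_i(e)\in E(P^t_i)\subseteq E(P^t)$ when at least one endpoint lies in $Z_i$. Property (iv) is immediate from \eqref{eq:hi} together with the inclusion $A''_i\subseteq A_i$ built into \eqref{eq: A'' def}.

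For (ii) I would fix $i<j$ in $I_t$ and rule out a shared edge $f\in\phi'_i(E(H_i))\cap\phi'_j(E(H_j))$ by a short case split. Since $G^t\subseteq G^1=G-P$ and $P^t\subseteq P$, the sets $E(G^t)$ and $E(P^t)$ are disjoint, so $f$ lies entirely in one or entirely in the other. If $f\in E(G^t)$ then $f=\phi_i(e)=\phi_j(e')$, so $f\in E_i\cap E_j$, which forces both endpoints of $f$ into $U_i\subseteq W_i=\phi_i(Z_i)$; but then $e\subseteq Z_i$ and Lemma~\ref{patching}(ii) places $\phi'_i(e)$ in $E(P^t_i)$, contradicting $f\in E(G^t)$. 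If $f\in E(P^t)$ then $\phi'_j(e')\in E(P^t_j)$, yet \eqref{eq:defPti} removes all of $\phi'_i(E(H_i))$ from $P^t_j$, again a contradiction. Property (iii) follows at once: edges of $\phi'_i(E(H_i))$ lying in $G^t$ are of the form $\phi_i(e)$ and are deleted by \eqref{eq:updateGt}, edges lying in $P^t$ are deleted by \eqref{eq:Pt}, and the disjointness $E(G^t)\cap E(P)=\emptyset$ takes care of the cross cases.

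Finally I would deduce (v) by induction on $t$. Pairs $(i,x)(i',y)\in E(\Lambda)$ with $i,i'\in\bigcup_{t'<t} I_{t'}$ are covered by the induction hypothesis, and the case $i=i'$ is trivial because $\phi'_i$ is an embedding. If exactly one of $i,i'$, say $i'$, lies in $I_t$, then $\phi'_i(x)\in D'_{i'}(y)$ by the definition of $D'_{i'}$, whereas $\phi'_{i'}(y)\in N_{A''_{i'}}(y)$ by \eqref{eq:hi}, and \eqref{eq: disjointness} gives $N_{A''_{i'}}(y)\cap D'_{i'}(y)=\emptyset$. If both $i,i'\in I_t$ with $i<i'$ and $i'=(t-1)\gamma n+\ell'$, then $\phi'_i(x)\in D''_{i'}(y)$ by the definition of $D''_{i'}$, while \eqref{eq:new} places $\phi'_{i'}(y)$ in $N_{F^{i'}}(y)$, which by \eqref{eq: disjointness 2} is disjoint from $D''_{i'}(y)$. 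The only mild subtlety, shared by (ii) and the current-round subcase of (v), is that one must invoke the sequential ordering $i<i'$ at exactly the point where $P^t_{i'}$ (respectively $D''_{i'}$) is defined.
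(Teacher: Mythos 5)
Your proof is correct and follows essentially the same route as the paper's: (i) and (iv) via Lemma~\ref{patching} and \eqref{eq:hi}, (ii)/(iii) by exactly the disjointness-of-$E(G^t)$-and-$E(P^t)$ plus \eqref{eq:defPti}, \eqref{eq:updateGt}, \eqref{eq:Pt} bookkeeping that the paper dismisses as "immediate", and (v) by the same two-case split using \eqref{eq: disjointness} and \eqref{eq: disjointness 2} (your explicit induction framing for pairs from earlier rounds is if anything cleaner). The one small detail to add is that in the both-in-$I_t$ case of (v) you must check $y\in Z_{i'}$ before invoking \eqref{eq:new}; this holds because $(i',y)\in V(\Lambda)$ gives $y\in X'_{j'}(i')$, so $\phi_{i'}(y)\in U_{i'}\subseteq W_{i'}$ and hence $y\in Z_{i'}$ by \eqref{eq:WiZidef}.
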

\begin{proof}
By Lemma \ref{patching}, if $e=uv \in \phi'_i(E(H_i))\setminus E(G^t)$, then $uv \in  E(P^t_i) \subseteq E(P^t)$.  Thus (i) holds. Statements (ii) and (iii) are immediate consequences of \eqref{eq:updateGt}, \eqref{eq:defPti} and \eqref{eq:Pt}. (iv) follows from \eqref{eq: A'' def} and \eqref{eq:hi}. 
To show (v), we assume that $i'< i$. (Note that (v) is trivial if $i=i'$.)   Since $(i,x) \in V(\Lambda)$,
\eqref{eq: X'j def} and \eqref{eq:WiZidef} together with the definition of $U_i$ imply that $x\in Z_i$. Consider the following cases.

\noindent \emph{Case 1.} $i'\in \bigcup_{t'=1}^{t-1} I_{t'}$.
Then $\phi'_{i'}(y) \in D'_i(x)$. Thus \eqref{eq: disjointness} and 
\eqref{eq:hi} imply that $\phi'_{i'}(y)\neq  \phi'_i(x)$.

\noindent \emph{Case 2.} $i'\in I_t$. Then $\phi'_i(y) \in D''_i(x)$. So \eqref{eq: disjointness 2}  and  \eqref{eq:new} imply that $\phi'_{i'}(y)\neq  \phi'_i(x)$.
\end{proof}

\begin{claim}\label{cl:F't}
Let $t\in [T]$. Assume that the main packing algorithm did not fail prior to Round~$t$ and that within Round~$t$ failure of type~1 did
not occur. Then for all $i\in I_t$, $j\in [Kr]$ and any vertex $z\in V^i_j\cup X^i_j$,
$$d_{F'_i}(z) = (\alpha d_0 p(R_K,\vec{\beta}',j) \pm \delta^{1/2}) m'.$$ Moreover, let
$$S_j(i):=\left\{\{x,x'\} \in \binom{X^i_j}{2} : |N_{F'_i}(\{x,x'\})| \neq ((\alpha d_0 p(R_K,\vec{\beta}',j) )^2 \pm \delta^{1/2})m'\right\}.$$
Then $|S_j(i)|\leq \gamma m'^2$.
\end{claim}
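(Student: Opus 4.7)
The plan is to compare $F'_i$ with $F_i$, exploiting that we already know strong regularity for $F_i$ from (F5), and that only few edges are deleted when passing from $F_i$ to $F'_i$ thanks to Claim~\ref{Pt}(i). By \eqref{F' definition}, an edge $xv\in E(F_i[X^i_j,V^i_j])$ lies in $E(F'_i)$ unless there is some $y\in N^i_x$ with $\phi_i(y)v\in E(P)\setminus E(P^t)$. The key quantitative input is Claim~\ref{Pt}(i), which gives $d_{P-P^t}(u)\le \delta^{3/5}n$ for every $u\in V(P)$.

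For the degree bound, first consider $x\in X^i_j$. The number of $v\in V^i_j$ with $xv\in E(F_i)\setminus E(F'_i)$ is at most
$$\sum_{y\in N^i_x} d_{P-P^t}(\phi_i(y))\le K\Delta_R\cdot \delta^{3/5}n\le \delta^{1/2}m'/2,$$
since $|N^i_x|\le K\Delta_R$ by (F3) and $n\le Km'$. Now consider $v\in V^i_j$. If $xv\in E(F_i)\setminus E(F'_i)$ then there is $y\in N^i_x$ with $\phi_i(y)\in N_{P-P^t}(v)$; since $N^i$ is an $(H_i,R_K,\mathcal{X}^i)$-candidacy hypergraph, for any fixed $y$ there is at most one such $x\in X^i_j$ (namely the unique element of $N^i_y\cap X^i_j$ if it exists). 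Hence the number of such $x$ is at most $d_{P-P^t}(v)\le\delta^{3/5}n\le \delta^{1/2}m'/2$. Combining either estimate with (F5), which gives $d_{F_i}(z)=(\alpha d_0 p(R_K,\vec{\beta}',j)\pm f(\epsilon_t))m'$ and $f(\epsilon_t)\ll\delta$, yields the first statement of the claim.

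For the codegree bound, Proposition~\ref{regularity implies codegree} applied to the $(f(\epsilon_t),\alpha d_0 p(R_K,\vec{\beta}',j))$-super-regular bipartite graph $F_i[X^i_j,V^i_j]$ from (F5) gives a set $S^*\subseteq\binom{X^i_j}{2}$ of size at most $f(\epsilon_t)m'^2\le \gamma m'^2/2$ such that every pair $\{x,x'\}\notin S^*$ satisfies
$$|N_{F_i}(\{x,x'\})|=\bigl((\alpha d_0 p(R_K,\vec{\beta}',j))^2\pm 3f(\epsilon_t)\bigr)m'.$$
For such a pair, the argument above (applied to both $x$ and $x'$) shows that $|N_{F_i}(\{x,x'\})\setminus N_{F'_i}(\{x,x'\})|\le 2K\Delta_R\cdot \delta^{3/5}n\le \delta^{1/2}m'/2$, and together with $3f(\epsilon_t)\le \delta^{1/2}/2$ this gives
$$|N_{F'_i}(\{x,x'\})|=\bigl((\alpha d_0 p(R_K,\vec{\beta}',j))^2\pm \delta^{1/2}\bigr)m'.$$
Hence $S_j(i)\subseteq S^*$ and $|S_j(i)|\le \gamma m'^2$, as required.

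The routine part is checking the numerical inequalities relating $\epsilon_t,\delta,\gamma,K,\Delta_R$; the only conceptual step is the observation that the candidacy hypergraph property $|N^i_y\cap X^i_j|\le 1$ turns the $v$-degree estimate into a bound linear in $d_{P-P^t}(v)$ rather than $K\Delta_R d_{P-P^t}(v)$, but either estimate suffices with the margin available.
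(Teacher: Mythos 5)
Your proposal is correct and follows essentially the same route as the paper: pass from $F_i$ to $F'_i$ by bounding $\Delta(F_i-F'_i)$ via Claim~\ref{Pt}(i), then combine with (F5) for the degree bound and with Proposition~\ref{regularity implies codegree} for the codegree statement, exactly as in the paper's argument. Your slightly sharper estimate on the $V^i_j$-side (using $|N^i_y\cap X^i_j|\le 1$ instead of $|N^i_y|\le K\Delta_R$) is a valid but immaterial refinement, as you note.
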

\begin{proof}
By (\ref{F' definition}), for all $x\in X^i_j$ and $v\in V^{i}_j$, if $xv \in E(F_i - F'_i)$ then there exists $y\in N_x^{i}$ such that $\phi_i(y)v \in E(P - P^t)$. Since $d_{P-P^t}(\phi_i(y)) \leq \delta^{3/5} n$ by Claim~\ref{Pt}(i), \begin{align}\label{F' x}
d_{F_i - F'_i}(x) \leq |N^i_x| \delta^{3/5}n \stackrel{\text{(F3)}}{\leq} K\Delta_R \delta^{3/5} n.
\end{align}
Similarly, for all $x \in X^i_j$ and $v \in V^{i}_j$, if $xv \in E(F_i - F'_i)$ then 
\begin{align}\label{eq:xvunions}
v\in \bigcup_{y\in N_x^i}N_{P-P^t}(\phi_i(y))\Leftrightarrow v\in \bigcup_{y\colon x\in N_y^i}N_{P-P^t}(\phi_i(y)) \Leftrightarrow x \in \bigcup_{u \in N_{P-P^t}(v)} N^i_{\phi_i^{-1}(u)}.
\end{align} 
Since $d_{P-P^t}(v) \leq \delta^{3/5} n$, we have $d_{F_i - F'_i}(v) \leq K\Delta_R \delta^{3/5}n$. Together with (\ref{F' x}), this implies that
\begin{align}\label{eq:max deg F'F}
\Delta(F_i-F'_i)\leq K\Delta_R \delta^{3/5} n.
\end{align}
By \eqref{eq:epsilont}, $f(\epsilon_t) < \gamma < \delta^{1/2}/2$. Thus for each vertex $z\in V^i_j\cup X^i_j$, 
$$d_{F'_i}(z) = d_{F_i}(z) \pm K\Delta_R \delta^{3/5} n \stackrel{\text{(F5)}}{=} (\alpha d_0 p(R_K,\vec{\beta'},j) \pm f(\epsilon_t) )m' \pm K\Delta_R\delta^{3/5} n = (\alpha d_0 p(R_K,\vec{\beta'},j) \pm \delta^{1/2}) m'.$$
Let 
 \begin{align*}
S_{i,j}:=\left\{\{x,x'\} \in \binom{X^i_j}{2}: |N_{F_i}(\{x,x'\})| \neq ( (\alpha d_0p(R_K,\vec{\beta'},j))^2\pm 3 f(\epsilon_t))m'\right\}.
 \end{align*}
Then \eqref{eq:max deg F'F} implies that $S_j(i)\subseteq S_{i,j}$. Also (F5) together with Proposition \ref{regularity implies codegree} implies that $|S_{i,j}|\leq f(\epsilon_t)m'^2$. 
Thus  $|S_j(i)|\leq f(\epsilon_t) m'^2 \leq \gamma m'^2$.
\end{proof}

We will now show that each type of failure occurs with very small probability.  

\subsection{Failure of type 1}

\begin{claim} \label{type1}
Failure of type 1 occurs with probability at most $(1-c)^n$.
\end{claim}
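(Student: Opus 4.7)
The plan is to apply Lemma~\ref{modified blow-up} to each individual call of the Uniform embedding algorithm in Step~1 and then take a union bound over all $i\in\bigcup_{t\in[T]}I_t$. First, I would verify that, assuming no failure has occurred prior to the given call, the hypotheses of Lemma~\ref{modified blow-up} are satisfied by the parameters listed in the table in Step~1 of Round~$t$. The hierarchy
$$
1/n\ll 2c\ll \epsilon_t\ll 4k\Delta_R\gamma\ll \beta,\ d^t_{j,j'},\ \alpha d_0,\ 1/k,\ 1/(C+1),\ 1/\Delta_R
$$
follows from the choice of constants at the start of Section~\ref{sec:main}, the fact that $\epsilon_t$ is a bounded iteration of $q$ starting from $\epsilon^{1/3}$ so that $\epsilon_T\ll\gamma$ by~\eqref{eq:epsilont}, and from Claim~\ref{number of rounds}, which gives $d^t_{j,j'}\ge \alpha d/2$ for all $jj'\in E(R)$ and all $t\in[T]$.

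Next I would check the structural hypotheses. By inductive hypothesis (A1), $G^t$ is $(\epsilon_t,\vec{d}^t)$-super-regular with respect to $(R,\mathcal{V})$; the patching graph $P=P^1$ is $(\epsilon_1,\vec{\beta})$-super-regular with respect to $(R,\mathcal{V})$ by its construction in Round~$0$ via Lemma~\ref{preparing patching graph}, and hence also $(\epsilon_t,\vec{\beta})$-super-regular since $\epsilon_1\le\epsilon_t$. Condition~(S3) gives that $H_i$ is $(R,\vec{k}^i,C)$-near-equiregular with respect to $(R,\mathcal{X})$. Finally, by the construction of $A''_i$ via Lemma~\ref{lem: deletion of some edges still contain super-regular} before \eqref{eq: A'' def}, each $A''_i[X_j,V_j]$ is $(\epsilon_1,\alpha d_0)$-super-regular, and thus also $(\epsilon_t,\alpha d_0)$-super-regular. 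Hence all hypotheses of Lemma~\ref{modified blow-up} are met (with $2c$ in the role of $c$), and the Uniform embedding algorithm for a single $i\in I_t$ succeeds with probability at least $1-(1-2c)^n$.

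Taking a union bound over all at most $T\gamma n\le \eta^{-1}n$ indices $i\in\bigcup_{t\in[T]}I_t$, the total probability that some call of the Uniform embedding algorithm fails (equivalently, that failure of type~1 is ever declared) is at most
$$
\eta^{-1}n\cdot(1-2c)^n\le (1-c)^n,
$$
where we use that $c\ll\eta$ and $n$ is sufficiently large. No real obstacle arises: the only mild subtlety is that (A1), needed to verify the hypotheses, relies on there being no failure of type~3 in earlier rounds, but this is automatic since failure of type~1 in Round~$t$ is only declared when the algorithm has not already aborted for some other reason.
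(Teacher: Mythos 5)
Your proposal is correct and follows essentially the same route as the paper: verify the hypotheses of Lemma~\ref{modified blow-up} (using Claim~\ref{number of rounds}, \eqref{eq:epsilont}, the construction of $P$ in Round~0 and of $A''_i$, and the absence of earlier failures), conclude each call fails with probability at most $(1-2c)^n$, and take a union bound over the at most $\eta^{-1}n$ applications. The extra details you supply (the hierarchy check and the monotonicity of $\epsilon_t$) are consistent with, and slightly more explicit than, the paper's argument.
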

\begin{proof}
Suppose $t\in [T]$ and we are in Round~$t$. Then we can assume that there was no failure in a previous round. So by the definition
of failure of type 3 (if $t>1$) or as observed in Round~0 (if $t=1$), 
the graph $G^t$ is $(\epsilon_{t},\vec{d}^t)$-super-regular with respect to $(R,V_1,\dots,V_r)$. Moreover, as stated in Round~$0$, $P$ is $(\epsilon_1,\vec{\beta})$-super-regular with respect to $(R,V_1,\dots,V_r)$. Since by Claim \ref{number of rounds} and \eqref{eq:epsilont} we have $\epsilon_{t}\leq \epsilon_T \ll \gamma \ll \alpha d/2\leq d^t_{j,j'}$ for all $jj'\in E(R)$, Lemma \ref{modified blow-up} can be applied (with $2c$ playing the role of $c$), and for each $i\in I_t$ the Uniform embedding algorithm fails with probability at most $(1-2{c})^n$.

Since the number of times we apply the Uniform embedding algorithm over all rounds is at most $s \leq \eta^{-1} n $, the probability that failure of type 1 ever occurs is at most $\eta^{-1} n (1-2{c})^n \leq (1-{c})^n$. 
\end{proof}

\subsection{Failure of type 2}

We define the following event.
\begin{itemize}
\item[$(\text{QW}'_i)$] For all $j\in [r]$ and any sets $Q\in \mathcal{Q}_{j}$ and $W\in \mathcal{W}_{j}$, we have $|\phi'_i(Q) \cap W| = (1\pm \eta')\frac{|Q||W|}{n}$.
\end{itemize}

\begin{claim}\label{type2}
Failure of type 2 occurs with probability at most $(1-c)^n$. Moreover, $(\textup{QW}_i)$ implies $(\textup{QW}'_i)$.
\end{claim}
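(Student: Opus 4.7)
The plan is to establish the probabilistic bound by applying property~(B6) of Lemma~\ref{modified blow-up} to each relevant pair $(Q,W)$ and then taking a double union bound, while the implication $(\text{QW}_i)\Rightarrow(\text{QW}'_i)$ will follow by observing that $\phi'_i$ differs from $\phi_i$ only on the small set $Z_i$ fixed in Step~3.

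First I would fix $i\in I_t$. Recall from Step~1 of the main packing algorithm that $\phi_i$ is produced by an application of Lemma~\ref{modified blow-up} with $2c$ playing the role of $c$ and $\epsilon_t$ playing the role of $\epsilon$. Property~(B6) then guarantees that for any fixed $j\in[r]$, $Q\in\mathcal{Q}_j$ and $W\in\mathcal{W}_j$,
$$
\mathbb{P}\!\left[|\phi_i(Q)\cap W|=(1\pm f(\epsilon_t))\frac{|Q||W|}{n}\right]\geq 1-(1-4c)^n.
$$
The hypothesis $|Q|,|W|>f(\epsilon_t)n$ of~(B6) is verified by (S7) (which gives $|Q|,|W|\geq\eta' n$) together with $\epsilon_t\leq\epsilon_T\ll\gamma\ll\eta'$, cf.~\eqref{eq:epsilont}, which also yields $f(\epsilon_t)<\gamma$. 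Hence the probabilistic statement above implies the requirement of $(\text{QW}_i)$ on this particular triple $(j,Q,W)$.

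Next I would take a union bound over $j\in[r]$ and over the pairs $(Q,W)\in\mathcal{Q}_j\times\mathcal{W}_j$, using $|\mathcal{Q}_j||\mathcal{W}_j|\leq(1+c)^n$ from~(S7). This shows that $(\text{QW}_i)$ fails with probability at most $r(1+c)^n(1-4c)^n$; since $(1+c)(1-4c)=1-3c-4c^2<1-3c$, for $n$ large this is at most $(1-2c)^n$. A second union bound over $i\in[s]$, using $s\leq\eta^{-1}n$, bounds the probability of failure of type~2 by $\eta^{-1}n(1-2c)^n\leq(1-c)^n$, as required.

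For the second part of the claim, note that by Lemma~\ref{patching}(i), applied in each Step~4.$\ell$, we have $\phi'_i(x)=\phi_i(x)$ for every $x\notin Z_i$. Thus for any $j\in[r]$, $Q\in\mathcal{Q}_j$, $W\in\mathcal{W}_j$,
$$
\bigl||\phi'_i(Q)\cap W|-|\phi_i(Q)\cap W|\bigr|\leq|Q\cap Z_i|\leq|Z_i\cap X_j|\leq K\delta m'\leq 2\delta n,
$$
where the penultimate inequality uses that $\mathcal{X}^i$ refines $(X_1,\dots,X_r)$ into $Kr$ parts and that $|Z_i\cap X^i_{j'}|=\delta m'$ for every $j'\in[Kr]$ by the choice of $Y_i$ in Step~3. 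Assuming $(\text{QW}_i)$ we therefore get $|\phi'_i(Q)\cap W|=(1\pm\gamma)|Q||W|/n\pm 2\delta n$. Since $|Q||W|/n\geq\eta'^2 n$ by (S7) and the hierarchy permits $\gamma+2\delta/\eta'^2\leq\eta'$, this simplifies to $|\phi'_i(Q)\cap W|=(1\pm\eta')|Q||W|/n$, which is $(\text{QW}'_i)$. The only delicate point in this plan is bookkeeping: one must track the factor~$2$ in the parameter $2c$ passed to Lemma~\ref{modified blow-up} in Step~1 and verify that the hierarchy $\epsilon\ll\gamma\ll\delta\ll\eta'$ indeed accommodates both the $(1+c)^n\cdot(1-4c)^n$ union bound and the estimate $\gamma+2\delta/\eta'^2\leq\eta'$.
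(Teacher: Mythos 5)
Your proposal is correct and follows essentially the same route as the paper: apply (B6) of Lemma~\ref{modified blow-up} to each triple $(j,Q,W)$, take a union bound using the count $|\mathcal{Q}_j||\mathcal{W}_j|\leq(1+c)^n$ from (S7) and $s\leq\eta^{-1}n$, and deduce $(\text{QW}_i)\Rightarrow(\text{QW}'_i)$ from the fact that $\phi'_i$ agrees with $\phi_i$ outside $Z_i$, so that $|\phi'_i(Q)\cap W|$ differs from $|\phi_i(Q)\cap W|$ by $O(\delta n)$, which is absorbed since $|Q|,|W|\geq\eta' n$. The only (harmless) cosmetic differences are your slightly sharper constant $(1-4c)^n$ from passing $2c$ to the lemma and bounding the perturbation by $|Q\cap Z_i|\leq K\delta m'\leq 2\delta n$ rather than the paper's $\delta n$.
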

\begin{proof}
For given $Q\in \mathcal{Q}_j, W\in \mathcal{W}_j$, $i\in [s]$, let $\mathcal{E}_i(Q,W)$ be the event that $|\phi_i(Q)\cap  W| = (1\pm \gamma) \frac{|Q||W|}{n}$. Then we have
$$(\text{QW}_{i}) =\bigwedge_{j\in [r], Q\in \mathcal{Q}_j,W\in \mathcal{W}_j} \mathcal{E}_i(Q,W).$$

Note that (B6) of Lemma \ref{modified blow-up} together with the fact that $f(\epsilon_t) \ll \gamma$ imply that for given $i\in I_t$, $Q\in \mathcal{Q}_{j}, W\in \mathcal{W}_{j}$ we have
$$\mathbb{P}[ \mathcal{E}_i(Q,W)] \geq 1-(1-2c)^n.$$
A union bound and (S7) imply that
\begin{align}\label{QW11}
\mathbb{P}[(\text{QW}_i)]  \geq 1 - \sum_{j\in[r]} |\mathcal{Q}_j||\mathcal{W}_j|(1-2c)^n \geq 1- r (1+c)^n(1-2c)^n.
\end{align}
Since $1\leq i\leq s$, there are $s\leq \eta^{-1} n$ possible values for which the failure of type 2 can occur. Therefore, failure of type 2 ever occurs with probability at most $ rs(1+c)^n(1-2c)^n < (1-c)^n$.

Now we show the `moreover' part of Claim~\ref{type2}. Let us assume that $(\text{QW}_i)$ holds. 
Note that for $j\in [r]$, 
\begin{align}\label{delta different}
|\{ x\in X_j : \phi'_i(x) \neq \phi_i(x)\}| \leq |W_i|\stackrel{\eqref{eq:WiZidef}}{\leq} \delta n.
\end{align}
So for any sets $Q\in \mathcal{Q}_j, W\in \mathcal{W}_j$ we have
$$
|\phi'_i(Q)\cap W| \stackrel{\eqref{delta different}}{=} |\phi_i(Q)\cap W| \pm \delta n \stackrel{(\text{QW}_i)}{=} (1\pm \gamma )\frac{|Q||W|}{n} \pm \delta n 
=(1\pm \eta' )\frac{|Q||W|}{n}.
$$
We obtain the final identity since $|Q|,|W| \geq \eta' n$ and $\gamma\ll\delta \ll \eta'$. Thus $(\text{QW}_i)$ implies $(\text{QW}'_i)$.
\end{proof}

\subsection{Failure of type 3}

In this subsection we show that failure of type 3 occurs with small probability. We will achieve this by checking that the degrees and codegrees of $G^t$ decrease by the expected amount when constructing $G^{t+1}$. Then we invoke Theorem~\ref{codegree implies regularity}. 

\begin{claim} \label{succession}
Assume that we are in Step 1 of Round $t \in [T]$ and condition on there being no failure of type 1 in this step
(in particular,  $G^t$ is $(\epsilon_t,\vec{d}^t)$-super-regular with respect to $(R,V_1,\dots,V_r)$). 
Then, with probability at least $1-(1-{2c})^n$, $G^{t+1}$ is $(\epsilon_{t+1}, \vec{d}^{t+1})$-super-regular with respect to $(R,V_1,\dots,V_r)$. 
\end{claim}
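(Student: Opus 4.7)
We aim to verify, for each $jj' \in E(R)$, that $G^{t+1}[V_j, V_{j'}]$ is $(\epsilon_{t+1}, d^{t+1}_{j,j'})$-super-regular with probability $1 - \exp(-\Omega(n))$; a union bound over the $\leq \binom{r}{2}$ edges of $R$ then gives the claim. Abbreviate $d := d^t_{j,j'}$, $d' := d^{t+1}_{j,j'}$, $k^i := k^i_{j,j'}$. The strategy combines (i) a degree estimate $d_{G^{t+1}}(v, V_{j'}) = (d' \pm \epsilon_{t+1}/2)n$ for every $v \in V_j$ with (ii) a codegree estimate $|N_{G^{t+1}}(\{v_1, v_2\}) \cap V_{j'}| \leq ((d')^2 + \epsilon_{t+1}^7)n$ for all but at most $\epsilon_{t+1}^6|V_j|^2$ pairs. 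Together, (i) and (ii) feed into Theorem~\ref{codegree implies regularity} (applied with parameter $\epsilon_{t+1}^6$) to yield $(\epsilon_{t+1}, d')$-regularity, and (i) upgrades this to super-regularity. The key structural fact is that the embeddings $\phi_i$ for $i \in I_t$ are produced by \emph{independent} runs of the uniform embedding algorithm.

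Both expectations are computed by an iterative argument exploiting this independence. Fix any ordering $i_1, \dots, i_{\gamma n}$ of $I_t$ and set $G^{t,\ell} := G^t - \bigcup_{s\leq \ell}\phi_{i_s}(H_{i_s})$. For fixed $v \in V_j$, let $D_\ell := d_{G^{t,\ell}}(v, V_{j'})$. Conditional on $\phi_{i_1}, \dots, \phi_{i_\ell}$, the set $S_\ell := N_{G^{t,\ell}}(v) \cap V_{j'}$ is a deterministic subset of $N_{G^t}(v) \cap V_{j'}$ with $|S_\ell| = D_\ell \geq dn/2 \gg f(\epsilon_t)n$ (since $\gamma \ll d/k$, and at most $\gamma n(k+1)$ edges are ever deleted at $v$). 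Since $\phi_{i_{\ell+1}}$ is independent of $\phi_{i_1}, \dots, \phi_{i_\ell}$, property (B1) applies to it with $S = S_\ell$, giving $\mathbb{E}[D_{\ell+1} \mid \phi_{i_1}, \dots, \phi_{i_\ell}] = D_\ell(1 - (1\pm f(\epsilon_t))k^{i_{\ell+1}}/(dn))$. Iterating and using the definition \eqref{eq:mainalgparams} of $d'$ yields $\mathbb{E}[d_{G^{t+1}}(v, V_{j'})] = (1\pm \epsilon_{t+1}/8)d'n$. The codegree computation, which is the main obstacle, is analogous. Fix a pair $\{v_1, v_2\} \in \binom{V_j}{2}$ with $C_0 := |N_{G^t}(v_1, v_2) \cap V_{j'}| = (d^2 \pm 3\epsilon_t)n$ (which, by Proposition~\ref{regularity implies codegree}, holds for all but $\epsilon_t|V_j|^2$ pairs) and set $C_\ell := |N_{G^{t,\ell}}(v_1, v_2) \cap V_{j'}|$. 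Applying (B1) twice to $\phi_{i_{\ell+1}}$, once for $v_1$ and once for $v_2$, both with $S = T_\ell := N_{G^{t,\ell}}(v_1, v_2) \cap V_{j'}$, together with the overlap bound
\[
\mathbb{E}\bigl[|N_{\phi_{i_{\ell+1}}(H_{i_{\ell+1}})}(v_1) \cap N_{\phi_{i_{\ell+1}}(H_{i_{\ell+1}})}(v_2)|\bigr] \leq \Delta(H_{i_{\ell+1}}) \cdot \mathbb{P}[\phi_{i_{\ell+1}}^{-1}(v_1), \phi_{i_{\ell+1}}^{-1}(v_2) \text{ share a neighbour}] = O(1/\sqrt{n})
\]
from (B3.1), gives $\mathbb{E}[C_{\ell+1} \mid \phi_{i_1}, \dots, \phi_{i_\ell}] = C_\ell(1 - 2(1\pm f(\epsilon_t))k^{i_{\ell+1}}/(dn)) \pm O(1/\sqrt{n})$. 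Iterating and using the identity $1 - 2a = (1-a)^2(1 - O(a^2))$ to compare $\prod_i(1 - 2k^i/(dn))$ with $(d'/d)^2 = \prod_i (1 - k^i/(dn))^2$ (the cumulative $a^2$-discrepancy $\sum_i (k^i/(dn))^2$ is $O(\gamma/n)$, the $f(\epsilon_t)$-factor contributes a multiplicative $1 \pm O(f(\epsilon_t))$, and the $\pm O(1/\sqrt{n})$ terms accumulate only to $O(\gamma\sqrt{n}) = o(n)$) yields $\mathbb{E}[|N_{G^{t+1}}(v_1, v_2) \cap V_{j'}|] = (1 \pm \epsilon_{t+1}/8)(d')^2 n$.

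For concentration, expose $\phi_{i_1}, \dots, \phi_{i_{\gamma n}}$ one at a time: each $\phi_{i_s}$ alters $d_{G^{t+1}}(v, V_{j'})$ by at most $\Delta(H_{i_s}) \leq (k+1)\Delta_R$ and $|N_{G^{t+1}}(v_1, v_2) \cap V_{j'}|$ by at most $2(k+1)\Delta_R$. Azuma's inequality (Theorem~\ref{Azuma}) applied to the corresponding $\gamma n$-step exposure martingales gives, for each quantity, deviation of at least $\epsilon_{t+1}n/8$ from its expectation with probability at most $2\exp(-\Omega(\epsilon_{t+1}^2 n/\gamma)) \leq (1-3c)^n$. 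A union bound over the $\leq n$ choices of $v$ and $\leq n^2$ pairs $\{v_1, v_2\}$ implies that, with probability at least $1-(1-2c)^n$, the degree condition holds for all $v \in V_j$ and the codegree condition holds for all but at most $\epsilon_{t+1}^6 |V_j|^2$ pairs (including both non-typical pairs and concentration failures). Theorem~\ref{codegree implies regularity} with parameter $\epsilon_{t+1}^6$ then yields $(\epsilon_{t+1}, d')$-regularity of $G^{t+1}[V_j, V_{j'}]$, and the degree condition extends this to super-regularity. A final union bound over $jj' \in E(R)$ completes the proof.
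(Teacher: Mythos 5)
Your overall skeleton is the same as the paper's: estimate the expected degrees and codegrees of $G^{t+1}$ using the independence of the $\phi_i$ for $i\in I_t$, concentrate via an exposure (Azuma) martingale in the $\phi_i$, and conclude via Theorem~\ref{codegree implies regularity}. Where you genuinely differ is in how the expectation is computed. The paper works per vertex $u$: it writes the count as $\sum_u\prod_{i\in I_t}(1-B^i_u)$, invokes the per-vertex probability estimate (B5) (itself derived from (B1) and (B3.1)), and then spends most of the proof managing the exceptional sets where (B5) fails. You instead run a telescoping conditional-expectation argument: expose $\phi_{i_1},\dots,\phi_{i_\ell}$, note that the surviving (co)neighbourhood $S_\ell$ (resp.\ $T_\ell$) is measurable with respect to these and still large ($\gg f(\epsilon_t)n$, using $d^t_{j,j'}\geq \alpha d/2$ and $\gamma\ll \alpha,d,1/k$; you should also say this for $T_\ell$, not only for $S_\ell$), and apply (B1) to the independent $\phi_{i_{\ell+1}}$ with this random-but-fixed set playing the role of $S$, plus (B3.1) for the inclusion--exclusion overlap term in the codegree case. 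This is legitimate, and it buys you a cleaner computation: you never need per-vertex probabilities, so the exceptional-set bookkeeping ($A$, $B$, $D$, $D'$ in the paper, i.e.\ the content of (B5)) disappears; in effect you re-derive the aggregated consequence of (B5) inline.

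There is one concrete quantitative slip that, as written, breaks the final step. You carry expectation errors of size $\epsilon_{t+1}/8$ and take Azuma deviation $\epsilon_{t+1}n/8$, so what you actually prove is ``degrees $=(d'\pm\epsilon_{t+1}/4)n$'' and ``codegrees $\leq (d')^2n+\epsilon_{t+1}n/4$''. These are too weak both for the targets you announce at the start ($\epsilon_{t+1}^7$-level) and for Theorem~\ref{codegree implies regularity} applied with parameter $\epsilon_{t+1}^6$: the actual density $d_{\mathrm{act}}$ of $G^{t+1}[V_j,V_{j'}]$ is then only pinned down to within $\epsilon_{t+1}/4$ of $d'$, so a vertex of degree $(d'-\epsilon_{t+1}/4)n$ need not satisfy $d(x)>(d_{\mathrm{act}}-\epsilon_{t+1}^6)|V_{j'}|$, and the codegree bound need not beat $(d_{\mathrm{act}}+\epsilon_{t+1}^6)^2|V_{j'}|$; the DLR hypotheses are then not verified. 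The fix is routine and stays entirely within your argument: your iteration in fact yields relative errors of order $f(\epsilon_t)^{1/2}+3\epsilon_t$ (plus the $O(\gamma\sqrt n)$ and $O(\gamma/n)$ terms), which is far below $\epsilon_{t+1}^7$ since $f(\epsilon_t),\epsilon_t\ll q(\epsilon_t)^7=\epsilon_{t+1}^7$; and Azuma tolerates deviation $\epsilon_{t+1}^7 n$ at probability $2\exp(-\Omega(\epsilon_{t+1}^{14}n/\gamma))\leq(1-3c)^n$ just as well, since $\epsilon_{t+1}^{14}/\gamma$ is a constant with $c\ll\epsilon$. With those tolerances propagated (this is exactly the paper's choice of scale, $\epsilon_{t+1}^6$ relative error versus DLR parameter $\epsilon_{t+1}^6$), your proof goes through.
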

\begin{proof}
By \eqref{eq:epsrecursion}, this means that we need to show that with high probability $G^{t+1}[V_j,V_{j'}]$ is $(q(\epsilon_t), d^{t+1}_{j,j'})$-super-regular for all $jj' \in E(R)$.

Fix an outcome of the algorithm running for Rounds $1,\dots,t-1$. (For the proof of this claim, all probabilities and expectations are conditioned on this outcome.) Recall that we also condition on there being no failure of type 1 in Step 1.
Let $S$ be a set of $s'$ vertices in $V_{j'}$ with $s'\leq 2$, and let $N_j(S):= N_{G^t}(S)\cap V_j$. Also we assume that $S$ satisfies
\begin{align}\label{eq:NiS}
|N_j(S)|=((d^t_{j,j'})^{s'} \pm 3\epsilon_t)n.
\end{align}
We are applying the Uniform embedding algorithm to $G^t$ and $H_i$ for $i\in I_t$ to obtain embeddings $\phi_i$ for all $i\in I_t$.
Then $$N_{G^{t+1}}(S)\cap V_j = \{ u\in N_j(S) : v u \notin \phi_i(E(H_i)) \text{ for all } i \in I_t \text{ and all } v\in S \}.$$
For $i\in I_t\cup \{(t-1)\gamma n\}$, let $Q_i$ be the random variable defined by 
$$Q_i := \mathbb{E}\left[|N_{G^{t+1}}(S)\cap V_j| \mid \phi_{(t-1)\gamma n+1},\dots, \phi_{i}\right].$$
Note that this is an exposure martingale. Also, if we change one $\phi_{i'}$, then the value of 
$|N_{G^{t+1}}(S)\cap V_j|$ changes by at most $s'(k+1)$, thus 
\begin{align}\label{sk lipsitz}
|Q_{i+1} - Q_i| \leq s'(k+1).
\end{align} 
In other words, the martingale $Q_i$ is $s'(k+1)$-Lipschitz. Our aim now is to compute $\mathbb{E}[Q_{t\gamma n}]=\E[|N_{G^{t+1}}(S)\cap V_j|]$ and then to apply Azuma's inequality. For each $u\in N_j(S)$ and $i\in I_t$, let $B^i_u$ be the random variable such that 
$$
B^i_u:= \left\{\begin{array}{ll}
1 & \text{ if } v u \in \phi_i(E(H_i)) \text{ for some } v\in S, \\
0 & \text{ otherwise}. 
\end{array}\right.
$$
It is easy to see that $$Q_{t\gamma n} = |N_{G^{t+1}}(S)\cap V_j| = \sum_{u \in N_j(S)} \prod_{i\in I_t} (1-B^i_u).$$
Let $p_{i,u}:=\mathbb{P}[B^i_u=1] $. Then the fact that $\phi_i, \phi_{i'}$ are independent for $i\neq i'\in I_t$ implies that 
\begin{align}\label{eq:Eqgamman}
Q_{(t-1)\gamma n}=\E[Q_{t \gamma n}]=\sum_{u\in N_j(S)} \prod_{i\in I_t}(1-p_{i,u}).
\end{align}
We will show that $p_{i,u}$ is usually very close to $s'k^i_{j,j'}/(d^t_{j,j'}n)$. For this, let $A,B\subseteq N_j(S)\times I_t$ be defined as follows.
\begin{align}\label{AB def} 
A:= \left\{ (u,i): p_{i,u} > \frac{s'k^i_{j,j'}}{d^t_{j,j'} n}(1+ 2f(\epsilon_t))\right\} \text{ and }
B:= \left\{ (u,i): p_{i,u} < \frac{s'k^i_{j,j'}}{d^t_{j,j'} n}(1- 2f(\epsilon_t))\right\}.
\end{align}
Let $A_i^\diamond$ be the set of vertices $u\in N_j(S)$ such that $(u,i) \in A$, and let $B_i^\diamond$ be defined analogously. Let $A^*_u$ be the set of indices $i$ such that $(u,i) \in A$ and define $B^*_u$ analogously.
Then $|A_i^\diamond|,|B_i^\diamond| < 2f(\epsilon_t) n$\COMMENT{proof of (B5) shows $f(\epsilon_t)n$ but statement of (B5) is $|A_i^\diamond|+|B_i^\diamond|\leq 2f(\epsilon_t)n$} by (B5) of Lemma~\ref{modified blow-up}.
Therefore 
\begin{align}\label{A size}
|A| =\sum_{i\in I_t} |A_i^\diamond| < 2f(\epsilon_t)\gamma n^2 < f(\epsilon_t)n^2, 
\end{align}
and similarly 
\begin{align}\label{B size}
|B|< f(\epsilon_t)n^2.
\end{align}
Moreover, we have
\begin{align}\label{A ell sum}
\sum_{u\in A_i^\diamond} p_{i,u} \leq \frac{4 s' f(\epsilon_t)k^{i}_{j,j'}}{d^t_{j,j'}}.
\end{align} 
Indeed, if not, consider a set $A'\subseteq N_j(S)$ of size $2f(\epsilon_t) n$ with $A_i^\diamond\subseteq A'$ and obtain a contradiction via
\begin{align*} 
\frac{4 s' f(\epsilon_t)k^{i}_{j,j'}}{d^t_{j,j'}} &<\sum_{u \in A_i^\diamond} p_{i,u} \leq \sum_{u \in A'} p_{i,u} 
=\mathbb{E}\left[ |\{u \in A': vu\in \phi_i(E(H_i)) \text{ for some }v\in S\}|\right] \\
&\leq (1+ f(\epsilon_t))\frac{k^{i}_{j,j'}|A'|}{d^t_{j,j'}n}s' < \frac{3s'f(\epsilon_t)k^i_{j,j'}}{d^t_{j,j'}}.
\end{align*}
Note that the second line follows from (B1) of Lemma \ref{modified blow-up}.
We also obtain 
\begin{align}\label{B ell sum}
\sum_{u\in B_i^\diamond} p_{i,u} \leq \frac{s' k^i_{j,j'}}{d^t_{j,j'} n}|B_i^\diamond| \leq\frac{2 s'f(\epsilon_t)k^i_{j,j'}}{d^t_{j,j'} },
\end{align}
because $p_{i,u} < \frac{s' k^{i}_{j,j'}}{d^t_{j,j'} n}$ for $u\in B_i^\diamond$ and $|B_i^\diamond|<2f(\epsilon_t)n$. Thus 
\begin{align}\label{sum sum}
\begin{split}
&\sum_{u\in N_j(S)} \sum_{i \in A^*_u } p_{i,u}  = \sum_{i\in I_t} \sum_{u \in A_i^\diamond} p_{i,u}  \stackrel{\eqref{A ell sum}}{\leq} 4s' f(\epsilon_t)k^i_{j,j'}\gamma n/d^t_{j,j'}  < f(\epsilon_t) n, \\
& \sum_{u\in N_j(S)} \sum_{i \in B^*_u} p_{i,u}  = \sum_{i\in I_t} \sum_{u \in B_i^\diamond} p_{i,u} \stackrel{\eqref{B ell sum}}{\leq} 2s'f(\epsilon_t)k^i_{j,j'}\gamma n/d^t_{j,j'} < f(\epsilon_t) n.
\end{split}\end{align}
 
Let $D$ be the set of vertices $u\in N_j(S)$ satisfying at least one of $\sum_{i\in A^*_u} p_{i,u} > f(\epsilon_t)^{1/2}$ and $\sum_{i\in B^*_u} p_{i,u} > f(\epsilon_t)^{1/2}$. By (\ref{sum sum}), we have
\begin{align}\label{eq:Cineq}
|D| \leq 2f(\epsilon_t)^{1/2} n.
\end{align}
Then $u \in N_j(S)\setminus D$ implies that 
\begin{align}\label{Aw Bw sum}
\sum_{i \in A^*_u } p_{i,u} \leq f(\epsilon_t)^{1/2} \;\; \text{ and } \;\;
\sum_{i \in B^*_u } p_{i,u} \leq  f(\epsilon_t)^{1/2} .
\end{align}
Similarly, let $D'$ be the set of vertices $u\in N_j(S)$ such that $|A^*_u| > f(\epsilon_t)^{1/2} n$ or $|B^*_u| > f(\epsilon_t)^{1/2} n$. Then, by (\ref{A size}) and (\ref{B size}), 
\begin{align}\label{eq:Dineq}
|D'|\leq 2 f(\epsilon_t)^{1/2} n.
\end{align}
Now, consider any vertex $u\in N_j(S)\setminus (D \cup D')$. 
Since $u \notin D'$, 
\begin{align}\label{eq:A*uB*u}
|A^*_u\cup B^*_u| \leq 2f(\epsilon_t)^{1/2} n.
\end{align}
Moreover, for every $u\in N_j(S)\setminus(D\cup D')$, we have (note that $A^*_u,B^*_u$ are disjoint)
\begin{align}\label{main calculation} 
\begin{split}
\prod_{i\in I_t} (1- p_{i,u})
&=  \prod_{i \in I_t\setminus(A^*_u\cup B^*_u) } (1- p_{i,u})\prod_{i \in A^*_u} (1- p_{i,u})\prod_{i \in B^*_u\setminus A^*_u} (1- p_{i,u}) \\
& = \prod_{i\in I_t \setminus (A^*_u \cup B^*_u)}\left(1 - (1\pm 2f(\epsilon_t))\frac{s' k^{i}_{j,j'}}{d^t_{j,j'}n}\right)\prod_{i \in A^*_u} (1- p_{i,u})\prod_{i \in B^*_u} (1- p_{i,u}).
\end{split}
\end{align}
Estimating the above products by sums yields
\begin{align}\label{calculation 1} 
\begin{split}\prod_{i \in A^*_u} \left(1- p_{i,u}\right)\prod_{i \in B^*_u} (1- p_{i,u}) 
&= \left(1 \pm \sum_{i \in A^*_u} p_{i,u}\right)\left(1 \pm \sum_{i\in B^*_u} p_{i,u}\right)
\stackrel{(\ref{Aw Bw sum})}{=} (1\pm  f(\epsilon_t)^{1/2} )^2\\ &= 1\pm  f(\epsilon_t)^{1/3}.
\end{split} 
\end{align}
Recall that $d^t_{j,j'}\geq \alpha d/2$ by Claim~\ref{number of rounds}. Combining \eqref{main calculation} and \eqref{calculation 1}, we obtain
\begin{align*}
\prod_{i\in I_t} (1- p_{i,u})&=  (1\pm  f(\epsilon_t)^{1/3})\prod_{i\in I_t \setminus (A^*_u \cup B^*_u)}\left(1 - (1\pm 2f(\epsilon_t))\frac{s' k^{i}_{j,j'}}{d^t_{j,j'}n}\right) \\
&= (1\pm f(\epsilon_t)^{1/4}) \left(1 \pm 3f(\epsilon_t) \frac{2s'k}{\alpha dn}\right)^{\gamma n}
\left(\prod_{i\in A^*_u\cup B^*_u} \left(1-\frac{s'k^{i}_{j,j'}}{d^t_{j,j'}n}\right) \right)^{-1}
\prod_{i\in I_t}\left(1 - \frac{s' k^{i}_{j,j'}}{d^t_{j,j'}n}\right)\\
&=(1\pm f(\epsilon_t)^{1/4}) \left(1 \pm 3f(\epsilon_t) \frac{2s'k}{\alpha dn}\right)^{\gamma n}
\left(1\pm \frac{3s'k}{\alpha dn}\right)^{|A^*_u\cup B^*_u|}
\prod_{i\in I_t}\left(1 - \frac{s' k^{i}_{j,j'}}{d^t_{j,j'}n}\right) \\
&\stackrel{\eqref{eq:A*uB*u}}{=}(1\pm 2 f(\epsilon_t)^{1/4}) \left(\prod_{i\in I_t}\left(1 - \frac{ k^{i}_{j,j'}}{d^t_{j,j'}n}\right)\right)^{s'}.
\end{align*}\COMMENT{
In the 2nd and 3rd equality, we used $d^t_{j,j'} \geq \alpha d/2$ and
$ (1 \pm 3f(\epsilon_t) \frac{2s'k}{\alpha dn})^{\gamma n} = 1\pm 8 f(\epsilon_t) \gamma s'k/(\alpha d) = (1\pm f(\epsilon_t)^{1/2})$
and $(1\pm \frac{3s'k}{\alpha dn})^{-|A^*_u\cup B^*_u|} = (1\pm \frac{3s'k|A^*_u\cup B^*_u|}{\alpha dn}) = (1\pm f(\epsilon_t)^{1/3})$.
}
\COMMENT{ 
\begin{align*}
\prod_{i\in I_t}\left(1 - \frac{ s'k^{i}_{j,j'}}{d^t_{j,j'}n}\right) =
\prod_{i\in I_t} \left(1 \pm  \frac{2 s'^2(k^{i}_{j,j'} )^2}{(d^t_{j,j'}n)^2} \right)
\left(\prod_{i\in I_t}\left(1 - \frac{ k^{i}_{j,j'}}{d^t_{j,j'}n}\right)\right)^{s'} = \left(1 \pm  \sum_{i\in I_t} \frac{3 {s'}^2(k^{i}_{j,j'} )^2}{(d^t_{j,j'}n)^2 } \right) \left(\prod_{i\in I_t}\left(1 - \frac{ k^{i}_{j,j'}}{d^t_{j,j'}n}\right)\right)^{s'} \\
= (1\pm \epsilon_t)\left(\prod_{i\in I_t}\left(1 - \frac{ k^{i}_{j,j'}}{d^t_{j,j'}n}\right)\right)^{s'}
\end{align*}}
Taking the sets $D$ and $D'$ into account, we get 
\begin{eqnarray*}
\E[Q_{t\gamma n}]
&\stackrel{\eqref{eq:Eqgamman}}{=}& (|N_j(S)|-|D|-|D'|)(1\pm 2f(\epsilon_t)^{1/4})\left(\prod_{i\in I_t}\left(1 - \frac{ k^{i}_{j,j'}}{d^t_{j,j'}n}\right)\right)^{s'} \pm (|D|+|D'|)
\\&\stackrel{\eqref{eq:Cineq},\eqref{eq:Dineq}}{=}& (1\pm 2f(\epsilon_t)^{1/4})\left(\prod_{i\in I_t}\left(1 - \frac{ k^{i}_{j,j'}}{d^t_{j,j'}n}\right)\right)^{s'}|N_j(S)| \pm 10f(\epsilon_t)^{1/2} n\\
&\stackrel{\eqref{eq:NiS}}{=}& (1\pm f(\epsilon_t)^{1/5})\left(\prod_{i\in I_t}\left(1 - \frac{ k^{i}_{j,j'}}{d^t_{j,j'}n}\right)\right)^{s'}|N_j(S)|.
\end{eqnarray*}
So, applying Azuma's inequality (Theorem \ref{Azuma}) to $Q_i$ with $i\in I_t\cup \{(t-1)\gamma n\}$, which is $s'(k+1)$-Lipschitz by (\ref{sk lipsitz}), we get
$$\mathbb{P}\left[Q_{t\gamma n} \neq \left(1\pm 2f(\epsilon_t)^{1/5}\right)\left(\prod_{i\in I_t}\left(1 - \frac{ k^{i}_{j,j'}}{d^t_{j,j'}n}\right)\right)^{s'}|N_j(S)|\right] < (1-3{c})^n$$ since ${c} \ll \epsilon$.
This holds for all sets $S\subseteq V_{j'}$ of $s'\leq 2$ vertices with $|N_j(S)|= ((d^t_{j,j'})^{s'}\pm 3\epsilon_t) n$. 

Let $D_{j,j'}:= \{\{u,v\} \in \binom{V_{j'}}{2}: |N_{G^t}(\{u,v\})\cap V_j| = ((d^t_{j,j'})^2 \pm 3\epsilon_t)n\}$. Since $G^t[V_j,V_{j'}]$ is $(\epsilon_t,d^t_{j,j'})$-super-regular, Proposition~\ref{regularity implies codegree} implies that%
  \COMMENT{here and when applying Theorem~\ref{codegree implies regularity} below we are pretending that $|V_j|=|V_{j'}|=n$, but there is always room
in the calculations. So perhaps best to gloss over this...}
$$|D_{j,j'}| \geq \binom{n}{2} - \epsilon_t n^2 \geq \frac{1}{2}(1- 2\epsilon_t) n^2\stackrel{\eqref{eq:functions}}{\geq} \left(\frac{1}{2} - 5q(\epsilon_t)^6\right) n^2.$$
Then with probability at least $1 - r^2( n + |D_{j,j'}|)(1-3{c})^n \geq 1- (1-2{c})^n$, for all $j,j' \in [r]$ and any $S\in D_{j,j'}$ we have
\begin{eqnarray*}
|N_{G^{t+1}}(S)\cap V_j| &=& (1\pm 2f(\epsilon_t)^{1/5}) \left(\prod_{i\in I_t}\left(1 - \frac{ k^{i}_{j,j'}}{d^t_{j,j'}n}\right)\right)^2((d^t_{j,j'})^2 \pm 3\epsilon_t) n \\
&\stackrel{\eqref{eq:functions}}{=}& (1\pm q(\epsilon_t)^{6})\left(d^t_{j,j'}\prod_{i\in I_t}\left(1 - \frac{ k^{i}_{j,j'}}{d^t_{j,j'}n}\right)\right)^2 n\\
&\stackrel{\eqref{eq:mainalgparams}}{=}&
(1\pm \epsilon_{t+1}^6)(d^{t+1}_{j,j'})^2n, 
\end{eqnarray*}
and for any $v\in V_{j'}$
$$|N_{G^{t+1}}(v) \cap V_j| = (1\pm q(\epsilon_t)^6)\left(d^t_{j,j'} \prod_{i\in I_t}\left(1 - \frac{ k^{i}_{j,j'}}{d^t_{j,j'}n}\right)\right) n\stackrel{\eqref{eq:mainalgparams}}{=}(1\pm \epsilon_{t+1}^6)d^{t+1}_{j,j'} n.$$ 
This together with Theorem~\ref{codegree implies regularity} implies that
$G^{t+1}$ is $(\epsilon_{t+1}, \vec{d}^{t+1})$-super-regular 
with probability at least $1- (1-2{c})^n$.
\end{proof}

\begin{claim}\label{cl:mainalgfailure2}
Failure of type 3 occurs with probability at most $(1-{c})^n$.
\end{claim}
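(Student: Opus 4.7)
The plan is a straightforward union bound argument combining Claim~\ref{succession} over all rounds. Claim~\ref{succession} already provides the heart of the estimate: conditional on there being no failure of type~1 in Step~1 of Round~$t$ (which in particular guarantees that $G^t$ is $(\epsilon_t,\vec{d}^t)$-super-regular), the probability that $G^{t+1}$ fails to be $(\epsilon_{t+1},\vec{d}^{t+1})$-super-regular is at most $(1-2c)^n$. By definition, this is exactly the conditional probability of failure of type~3 occurring in Round~$t$.

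To bound the unconditional probability that failure of type~3 ever occurs, I would proceed as follows. For each $t\in[T]$, let $\mathcal{F}^3_t$ denote the event that failure of type~3 occurs in Round~$t$, and let $\mathcal{F}^1_{\le t}$ denote the event that failure of type~1 has occurred at some point during Rounds $1,\dots,t$. The event that failure of type~3 ever occurs is contained in
$$\bigcup_{t=1}^{T}\bigl(\mathcal{F}^3_t \cap \overline{\mathcal{F}^1_{\le t}}\bigr) \;\cup\; \mathcal{F}^1_{\le T}.$$
By Claim~\ref{succession}, $\Prob[\mathcal{F}^3_t \mid \overline{\mathcal{F}^1_{\le t}}] \le (1-2c)^n$ for each $t$, and by Claim~\ref{type1}, $\Prob[\mathcal{F}^1_{\le T}]\le (1-c)^n$. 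Taking a union bound over $t\in[T]$ and recalling from \eqref{eq:Tdk} that $T\le 1/(\gamma\eta)$ is a constant independent of $n$, we obtain
$$\Prob[\text{failure of type 3 ever occurs}] \le T\cdot(1-2c)^n + (1-c)^n \le \frac{1}{\gamma\eta}(1-2c)^n + (1-c)^n.$$
Since $1/n \ll c \ll \gamma,\eta$, this is at most $(1-c)^n$ for $n$ sufficiently large.

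There is essentially no obstacle here — the heavy lifting was done in Claim~\ref{succession}, and the remaining step is a routine union bound. The only point to double-check is that the conditioning in Claim~\ref{succession} (namely that no failure of type~1 occurs in the current step) is compatible with our nested conditioning on not having failed in earlier rounds; but since Claim~\ref{succession}'s statement fixes an arbitrary outcome of Rounds $1,\dots,t-1$ provided $G^t$ is $(\epsilon_t,\vec{d}^t)$-super-regular, this is automatic whenever no earlier failure has occurred.
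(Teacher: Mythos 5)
Your argument is the same as the paper's: a union bound over the $T$ rounds, with Claim~\ref{succession} supplying the bound $(1-2c)^n$ for each round conditional on no earlier failure. The only blemish is your final step: by including $\mathcal{F}^1_{\le T}$ in the covering union you end up claiming $T(1-2c)^n+(1-c)^n\le (1-c)^n$, which is false as stated (the left-hand side strictly exceeds $(1-c)^n$). The extra term is superfluous anyway: failure of type~3 in Round~$t$ can only occur if the algorithm has not already halted, so the event is contained in $\bigcup_{t}\bigl(\mathcal{F}^3_t\cap\{\text{no failure before this point}\}\bigr)$ alone, and each term there has probability at most $\Prob[\mathcal{F}^3_t\mid \text{no prior failure}]\le (1-2c)^n$ by Claim~\ref{succession}. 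Dropping the redundant term gives the paper's bound $T(1-2c)^n<(1-c)^n$, using $T\le 1/(\gamma\eta)$ and $1/n\ll c$.
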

\begin{proof}
By Claim \ref{succession}, for every $t\in [T]$, conditioned on no previous failure, $G^t$ is $(\epsilon_t,\vec{d^t})$-super-regular with probability at least $1- (1-2{c})^n$. 
Thus failure of type 3 ever occurs with probability at most $T(1-2{c})^n< (1-{c})^n$. 
\end{proof}

\subsection{Failure of type 4}

\begin{claim}\label{failure of type 4}
Failure of type $4$ occurs with probability at most $(1-{c})^n$. 
\end{claim}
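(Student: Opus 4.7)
The plan is to show each of the two subconditions (U1) and (U2) fails with probability at most $(1-c)^n/2$ per round, and then union-bound over the $T\leq \eta^{-1}n/\gamma$ rounds. The natural split is $NU_i = NU_i^* \cup NU_i^{**}$, where
$NU_i^* := \phi_i(X'(i)) \cup N_{\phi_i(H_i)}(\phi_i(X'(i)))$ collects the contributions coming from the $\Lambda$-related set $X'(i)=\bigcup_j X'_j(i)$, and
$NU_i^{**} := \phi_{i,2}(H_i, G, G_i'')$ with $G_i'' := \bigcup_{j\in I_t\setminus\{i\}}\phi_j(H_j)$ collects those coming from the conflict edges. Since every $\phi_j(H_j)$ has maximum degree at most $(k+1)\Delta_R$ and $|I_t|=\gamma n$, $G_i''$ has maximum degree at most $4k\Delta_R\gamma n$, which is precisely the value plugged in for ``$\gamma$'' when we applied Lemma~\ref{modified blow-up} in Step~1; hence (B3.2), (B4.1), (B4.2) apply with this parameter.

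For (U2), I would bound $|NU_i^*\cap V^i_j|$ deterministically. Each $V^i_j$ lies in some $V_{j'}$, so $NU_i^*\cap V^i_j \subseteq \phi_i(Y_{i,j'})$ with $Y_{i,j'} := X'_{j'}(i) \cup (N_{H_i}(X'(i))\cap X_{j'})$. By (S8) we have $|X'_{j'}(i)|\leq \epsilon n$, and $N_{H_i}(X'(i))\cap X_{j'}$ is a union over $j''\in N_R(j')$ of $H_i$-neighbourhoods in $X_{j'}$ of sets of size at most $\epsilon n$, giving $|Y_{i,j'}|\leq (1+(k+1)\Delta_R)\epsilon n$, which is well below $\gamma^{2/5}m'/2$ because $\epsilon \ll \gamma$. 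For $|NU_i^{**}\cap V^i_j|$, condition on every $\phi_{j'}$ with $j'\neq i$ (these are independent of $\phi_i$ by the algorithm) and apply (B4.2) of Lemma~\ref{modified blow-up} to $\phi_i$ with $G''=G_i''$; this gives $|NU_i^{**}\cap V^i_j|\leq (4k\Delta_R\gamma)^{3/5}n \leq \gamma^{2/5}m'/2$ for every $j\in[Kr]$ with probability $\geq 1-(1-4c)^n$. A union bound over $i\in I_t$ bounds the failure probability of (U2) by $\gamma n(1-4c)^n$.

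For (U1), I would fix $v\in V_j$ and split $|\{i\in I_t: v\in NU_i\}|$ into the $NU^*$ and $NU^{**}$ counts. For the $NU^*$ count, $v\in NU_i^*$ iff $\phi_i^{-1}(v)\in Y_{i,j}$, a set of size at most $\gamma^3 n$ by the calculation above; (B3.2) then gives $\Prob[v\in NU_i^*]\leq (4k\Delta_R\gamma)^2$. Since the $\phi_i$'s are chosen independently, the count is stochastically dominated by $\mathrm{Bin}(\gamma n,(4k\Delta_R\gamma)^2)$, so Chernoff (Lemma~\ref{Chernoff Bounds}) yields the count is $\leq \gamma^{4/3}n/2$ with probability $\geq 1-(1-5c)^n$. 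The delicate part is the $NU^{**}$ count: the indicators $Z_i:=\mathbf{1}[v\in NU_i^{**}]$ are not independent, because each $Z_i$ depends on $G_i''$, i.e.\ on all the other $\phi_j$'s. The key trick is that (B4.1) gives $\Prob[Z_i=1\mid (\phi_{j})_{j\neq i}]\leq (4k\Delta_R\gamma)^{1/2}$ uniformly in the conditioning, so by the tower rule $\Prob[Z_i=1\mid Z_1,\dots,Z_{i-1}]\leq (4k\Delta_R\gamma)^{1/2}$ as well; Proposition~\ref{generalised-chernoff} then stochastically dominates $\sum_i Z_i$ by $\mathrm{Bin}(\gamma n,(4k\Delta_R\gamma)^{1/2})$, and one more Chernoff application bounds the count by $\gamma^{4/3}n/2$ with probability $\geq 1-(1-5c)^n$. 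A union bound over $v\in V(G)$, combining both $NU^*$ and $NU^{**}$, then over rounds $t\in[T]$ and over (U1) vs (U2), gives overall failure probability $\leq(1-c)^n$.

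The main obstacle is precisely the $NU^{**}$ concentration step: the natural ``per-round'' dependencies among the $Z_i$'s (through the shared random graph $G_i''$) preclude a direct independence argument, and a naive martingale bound on $\sum_i Z_i$ has an unwieldy Lipschitz constant. The tower-rule observation above is what lets one recover an effective independence in the sense required by Proposition~\ref{generalised-chernoff}.
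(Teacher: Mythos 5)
Your treatment of (U2) and of the $\Lambda$-related part of (U1) matches the paper's proof, but there is a genuine gap in the concentration step for your $NU^{**}$-count in (U1). You set $Z_i:=\mathbf{1}[v\in(\phi_i)_2(H_i,G,G''_i)]$ with $G''_i=\bigcup_{j\in I_t\setminus\{i\}}\phi_j(H_j)$ and argue that, since $\Prob[Z_i=1\mid(\phi_j)_{j\neq i}]\leq(4k\Delta_R\gamma)^{1/2}$ by (B4.1), the tower rule gives $\Prob[Z_i=1\mid Z_1,\dots,Z_{i-1}]\leq(4k\Delta_R\gamma)^{1/2}$, so that Proposition~\ref{generalised-chernoff} applies. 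The tower rule would require $\sigma(Z_1,\dots,Z_{i-1})\subseteq\sigma((\phi_j)_{j\neq i})$, and this inclusion fails: each $Z_{i'}$ with $i'<i$ is computed from $G''_{i'}$, which contains $\phi_i(E(H_i))$, so conditioning on $Z_1,\dots,Z_{i-1}$ reveals information about $\phi_i$ itself. Moreover the correlation points the wrong way: learning $Z_{i'}=1$ may indicate that $\phi_i$ already places an edge near $v$ coinciding with $\phi_{i'}(H_{i'})$, which inflates the chance that $v\in(\phi_i)_2(H_i,G,G''_i)$. Hence the hypothesis of Proposition~\ref{generalised-chernoff} is not verified and the claimed domination by $\mathrm{Bin}(\gamma n,(4k\Delta_R\gamma)^{1/2})$ does not follow; this is exactly the pitfall the paper's proof is built to avoid.

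The repair is the paper's splitting by the direction of the conflict: for each $i$ set $G^{t,<}(i):=\bigcup_{i'\in I_t,\,i'<i}\phi_{i'}(E(H_{i'}))$ and $G^{t,>}(i):=\bigcup_{i'\in I_t,\,i'>i}\phi_{i'}(E(H_{i'}))$, and use two indicators $I_{v,i}$, $J_{v,i}$ for $v\in(\phi_i)_2(H_i,G^t,G^{t,<}(i))$ and $v\in(\phi_i)_2(H_i,G^t,G^{t,>}(i))$ respectively. Then $I_{v,(t-1)\gamma n+1},\dots,I_{v,i-1}$ depend only on $\phi_{(t-1)\gamma n+1},\dots,\phi_{i-1}$, which are independent of $\phi_i$, so (B4.1) legitimately yields $\Prob[I_{v,i}=1\mid I_{v,(t-1)\gamma n+1},\dots,I_{v,i-1}]\leq(4k\Delta_R\gamma)^{1/2}$ and Proposition~\ref{generalised-chernoff} applies; a symmetric argument with the reverse ordering of $I_t$ handles the $J_{v,i}$. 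Since $v\in NU_i$ forces $I_{v,i}=1$, $J_{v,i}=1$ or $L_{v,i}=1$ (your $NU^{*}$ indicator, which is independent across $i$ and handled by plain Chernoff), summing the three tail bounds recovers (U1); with this change the rest of your argument goes through as written.
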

\begin{proof}
Suppose we are in Round $t$, Step 2 and that $\phi_i$ for $i\in I_t$ are the embeddings we define in Round $t$, Step 1. We prove the following two subclaims. 

\vspace*{0.2cm}
\noindent {\bf Subclaim 1.} \textit{In Round $t$ (U1) fails  with probability at most } $3 r n (1-2{c})^n.$

To prove Subclaim 1, fix $v\in V(G)$, $i\in I_t$ and $j\in [r]$ such that $v\in V_j$. 
Let $G^{t,<}(i)$ be the spanning subgraph of $G^t$ with edge set  
$$E(G^{t,<}(i)) := \bigcup_{i'\in I_t, i'<i} \phi_{i'}(E(H_{i'})),$$ and, similarly, let $G^{t,>}(i)$ be the spanning subgraph of $G^t$ be defined by
$$E(G^{t,>}(i)) := \bigcup_{i'\in I_t, i'>i} \phi_{i'}(E(H_{i'})).$$
Let
$$
I_{v,i}:= \left\{\begin{array}{ll}
1 & \text{ if } v\in (\phi_{i})_2(H_i,G^t,G^{t,<}(i)),\\
0 & \text{ otherwise}, 
\end{array}\right.
$$
where $(\phi_{i})_2(H_i,G^t,G^{t,<}(i))$ is as defined at the beginning of Section \ref{sec:RR}.
Similarly, let
$$
J_{v,i}:= \left\{\begin{array}{ll}
1 & \text{ if } v\in (\phi_{i})_2(H_i,G^t,G^{t,>}(i)),\\
0 & \text{ otherwise}. 
\end{array}\right.
$$
Let%
\COMMENT{12/7: changed def of $L_{v,i}$, which leads to changes in (\ref{eq: size NHX'})}
$$L_{v,i}:=\left\{\begin{array}{ll}
1 & \text{ if } v\in \phi_{i}\left(X'_j(i)\cup \bigcup_{j'\in N_R(j)} (N_{H_i}(X'_{j'}(i))\cap V_j)\right),\\
0 & \text{ otherwise}. 
\end{array}\right.$$
Note that $L_{v,i}$ and $L_{v,i'}$ are independent for $i\neq i' \in I_t$.
Since $\Delta(H_{i'}) \leq (k+1)\Delta_R$ for all $i\in I_t$, we have
\begin{align}\label{eq:G<>}
\Delta(G^{t,<}(i)),\Delta(G^{t,>}(i)) \leq 2k\Delta_R\gamma n
\end{align}
and
\begin{align}\label{eq: size NHX'}
\left|X'_j(i)\cup \bigcup_{j'\in N_R(j)} (N_{H_i}(X'_{j'}(i))\cap V_j)\right| \leq
|X'_j(i)|+\sum_{j'\in N_R(j)} (k+1)|X'_{j'}(i)|\le \epsilon^{1/2} n.
\end{align}
\noindent
Recall that in Round $t$, Step 1, we apply Lemma \ref{modified blow-up} with $4k\Delta_R\gamma$ playing the role of $\gamma$ for each $i\in I_t$ independently to obtain embeddings $\phi_i$. Thus, by (B4.1) of Lemma \ref{modified blow-up}, for any $x_{(t-1)\gamma n+1}, \dots, x_{i-1} \in \{0,1\}$ we have 
$$\mathbb{P}[I_{v,i}=1 \mid I_{v,(t-1)\gamma n+1}=x_{(t-1)\gamma n+1}, \dots, I_{v,i-1}=x_{i-1} ] \leq (4k\Delta_R \gamma )^{1/2}\leq \gamma^{1/3}/4.$$\COMMENT{Jaehoon : If we merge $I$ and $J$ together to indicate $L_{v,i} = 1$ if $v\in (\phi_{i})_2(H_i,G^t,G^{t,\geq}(i)) \cup (\phi_{i})_2(H_i,G^t,G^{t,\leq}(i))$, then $L_{v,1}=x_1,\dots, L_{v,i-1}=x_{i-1}$ is conditioning on both the past and the future.(Since $\phi_{i},\phi_{i+1},\dots $ also affects the value of $L_{v,1},L_{v,2},\dots, L_{v,i-1}$. 
 Thus we can't directly say it's bounded by $(2kr\gamma)^{1/3}$ if we use $L$. That's why we divide $I$ and $J$ here.  }
Moreover, by (B3.2) of Lemma \ref{modified blow-up}, (\ref{eq: size NHX'}) and the fact that $\varepsilon\ll \gamma$ we have%
\COMMENT{12/7: had $(4k\Delta_R\gamma)^3 \leq \gamma^2$ before}
$$\mathbb{P}[L_{v,i} = 1 ] \leq (4k\Delta_R\gamma)^2 \leq \gamma.$$
Let $X, Y$ have binomial distribution with parameters $(\gamma n, \gamma^{1/3}/4)$ and $(\gamma n, \gamma)$, respectively, so $\E[X]=\gamma^{4/3}n/4$ and $\E[Y] = \gamma^2 n$.
Then, by Proposition~\ref{generalised-chernoff} and Lemma~\ref{Chernoff Bounds}, 
\begin{align}\label{II}
\mathbb{P}\left[ \sum_{i\in I_t} I_{v,i} \geq  \gamma^{4/3} n/3 \right] \leq \mathbb{P}\left[ X \geq  \frac{4}{3}\E[X] \right] \leq (1-2{c})^n.
\end{align}
A symmetric argument for $J_{v,i}$ (considering the reverse-ordering of $[\gamma n]$) yields 
\begin{align}\label{JJ}
\mathbb{P}\left[ \sum_{i\in I_t} J_{v,i} \geq \gamma^{4/3}n/3 \right] \leq \mathbb{P}\left[ X \geq  \frac{4}{3}\E[X] \right] \leq (1-2{c})^n.
\end{align}
Also, since $L_{v,i}$ and $L_{v,i'}$ are independent, by Lemma~\ref{Chernoff Bounds},%
\COMMENT{12/7: had ``by Proposition~\ref{generalised-chernoff} and Lemma~\ref{Chernoff Bounds}'' before}
\begin{align}\label{LL}
\mathbb{P}\left[ \sum_{i\in I_t} L_{v,i} \geq  \gamma^{4/3}n/3 \right] \leq \mathbb{P}\left[ Y \geq  2\E[Y] \right] \leq (1-2{c})^n.
\end{align}
Since $v\in NU_i$ implies that $I_{v,i}=1$ or $J_{v,i}=1$ or $L_{v,i}=1$, we obtain 
$$|\{i\in I_t :v\in NU_i\}| \leq  \sum_{i\in I_t} (I_{v,i}+ J_{v,i}+L_{v,i}).$$ Thus, by \eqref{II}, \eqref{JJ} and \eqref{LL}, for any given $v\in V(G)$,
\begin{align*}
\mathbb{P}\left[|\{i\in I_t: v\in NU_i\}|> \gamma^{4/3} n\right]\leq \mathbb{P}\left[ \sum_{i\in I_t} I_{v,i}+\sum_{i\in I_t}J_{v,i} + \sum_{i\in I_t} L_{v,i}\geq \gamma^{4/3} n\right] \leq 3(1-2{c})^n.
\end{align*}
Thus, with probability at least $1-3rn(1-2{c})^n$, (U1) holds in Round $t$ for all vertices. This completes the proof of Subclaim 1.
\vspace{0.2cm}

\noindent {\bf Subclaim 2.} \textit{ (U2) fails with probability at most $\gamma n(1-2c)^n$.}

To prove Subclaim 2, for $i\in I_t$ let $G^{t,<}(i)$ and $G^{t,>}(i)$ be the graphs defined in Subclaim 1. Let $\mathcal{E}_i$ be the event that 
$$|\{u \in V^i_j :  u\in (\phi_i)_2(H_i,G^t,G^{t,<}(i)\cup G^{t,>}(i)\}| \leq \gamma^{2/5}m'/2 \text{ for all } j\in[Kr].$$ 
Since $\Delta(G^{t,<}(i) \cup G^{t,>}(i)) \leq 4k\Delta_R \gamma n$ by \eqref{eq:G<>} and the random variable $\phi_i$ is independent from the random variable $G^{t,<}(i)\cup G^{t,>}(i)$, (B4.2) of Lemma \ref{modified blow-up} together with a union bound imply that 
$$\mathbb{P}\left[\bigwedge_{i\in I_t} \mathcal{E}_i\right] \geq 1-\gamma n(1-2c)^n.$$
(Here we use that $(4k\Delta_R\gamma )^{3/5}n \leq \gamma^{2/5} m'/2$.)
Let%
\COMMENT{12/7: new sentence}
$j'\in [r]$ be such that $V_j^i\subseteq V_{j'}$.
If $\mathcal{E}_i$ occurs for all $i\in I_t$, then%
\COMMENT{12/7: equation below changed}
$$|NU_i\cap V^i_j| \leq \frac{\gamma^{2/5} m'}{2} + \left|X'_{j'}(i)\cup \bigcup_{j''\in N_R(j')} (N_{H_i}(X'_{j''}(i))\cap V_{j'})\right| \stackrel{(\ref{eq: size NHX'})}{\leq} \frac{\gamma^{2/5}m'}{2} + \epsilon^{1/2} n \leq \gamma^{2/5}m'$$ holds for all $i\in I_t$ and $j\in [Kr]$. Thus (U2) fails with probability at most $\gamma n(1-2c)^n$.
 This completes the proof of Subclaim 2.
\vspace*{0.2cm}

To summarise, in Round $t$, (U1) fails with probability at most $3rn(1-2{c})^n$, and (U2) fails with probability at most $\gamma n(1-2c)^n$. Thus the probability that either one of them ever fails is at most $T(3rn (1-2{c})^n +  \gamma n(1-2c)^n) <(1-{c})^n$. 
\end{proof}

\subsection{Failure of type 5}

\begin{claim}
Failure of type 5 occurs with probability at most $(1-{c})^n$. 
\end{claim}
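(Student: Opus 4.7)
The plan is to fix the outcome of all events in Steps 1 and 2 of Round $t$ (assuming no prior failure) and analyse the failure of (W1)--(W3) conditional on this outcome. Then the only remaining randomness is the choice of the sets $Y_i$ (uniform over all subsets of $V(G)\setminus U_i$ subject to the size constraints $|(U_i\cup Y_i)\cap V^i_j|=\delta m'$ for every $j\in[Kr]$), and these are independent across $i\in I_t$. In particular, conditional on Step~2 not ending with failure, $|U_i \cap V^i_j| \leq |NU_i \cap V^i_j| \le \gamma^{2/5}m' \ll \delta m'$, so for each $v\in V^i_j\setminus U_i$ the probability that $v\in W_i$ is $(1\pm o(1))\delta$, hypergeometrically distributed.

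For (W2)(i), writing $|\{i\in I_t:v\in W_i\}|=|\{i\in I_t:v\in U_i\}|+|\{i\in I_t:v\in Y_i\}|$, the first summand is at most $\gamma^{4/3}n$ by (U1), and the second is a sum of $\gamma n$ independent Bernoulli variables of mean $\sim \delta$, hence concentrated around $\delta\gamma n$ by Lemma~\ref{Chernoff Bounds}. For (W2)(ii), note that the contribution at $v$ from those $i\in I_t$ with $v\in W_i$ is at most $\Delta(H_i)|\{i:v\in W_i\}|\le (k+1)\Delta_R\cdot 2\delta\gamma n$ by (W2)(i), and the remaining contribution is $\sum_{i\in I_t,v\notin W_i}|\{e\in E(H_i):v\in\phi_i(e),\phi_i(e)\cap W_i\ne\emptyset\}|$; the latter is a sum of independent Bernoulli-like quantities (one per $i$, depending only on $Y_i$), each of expectation $O(\delta)$, whose expectation is concentrated by a second application of Chernoff. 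A union bound over $v\in V(G)$ and $i\in I_t$ yields the desired failure probability $\le(1-2c)^n$.

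For (W1) and (W3), the argument is the same in spirit but requires translating concentration of degrees and codegrees into super-regularity. For (W1), fix $i\in I_t$. By Claim~\ref{Pt}(ii), $P^t$ is $(\delta^{1/4},\vec{\beta'})$-super-regular with respect to $(R_K,\mathcal{V}^i)$; by Claim~\ref{Pt}(iii) at most $\gamma m'^2$ pairs in each $V^i_j$ have irregular codegree into $V^i_{j'}$ in $P^t$. Since $|W_i\cap V^i_j|=\delta m'$ and $Y_i$ is nearly uniform, for every $v\in V^i_j$ the degree $|N_{P^t}(v)\cap V^i_{j'}\cap W_i|$ is hypergeometrically concentrated around $\beta'_{j,j'}\delta m'$ (applying Lemma~\ref{Chernoff Bounds}, and treating the fixed set $U_i$ as contributing only $o(\delta m')$); similarly, the fraction of pairs $\{v,v'\}\subseteq V^i_j\cap W_i$ with bad codegree in $W_i$ is $O(\gamma/\delta^2)$ in expectation, and concentrated by Chernoff. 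Plugging these into Theorem~\ref{codegree implies regularity} gives that each $P^t[W_i\cap V^i_j,W_i\cap V^i_{j'}]$ is $(\delta^{1/25},\beta'_{j,j'})$-regular, and the degree concentration already proved yields super-regularity. For (W3), we repeat the same argument but now using Claim~\ref{cl:F't} as input: the degrees in $F'_i$ are $(\alpha d_0 p(R_K,\vec{\beta'},j)\pm \delta^{1/2})m'$ from either side, and at most $\gamma m'^2$ pairs in $X^i_j$ have bad codegree in $F'_i$; Chernoff on the hypergeometric distribution of $|N_{F'_i}(x)\cap W_i\cap V^i_j|$ for $x\in Z_i\cap X^i_j$ (and symmetrically for vertices in $W_i\cap V^i_j$) plus Theorem~\ref{codegree implies regularity} give $(\delta^{1/20},\alpha d_0 p(R_K,\vec{\beta'},j))$-super-regularity.

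Finally, a union bound over $i\in I_t$, $j,j'\in[Kr]$, $v\in V(G)$, and over all $T\le 1/(\gamma\eta)$ rounds, together with $c\ll\delta\ll\gamma,\eta$, collects all the individual failure probabilities into a total of at most $(1-c)^n$. The main technical obstacle is (W1) and (W3): deducing super-regularity of a random induced subgraph from codegree statistics of the host graph requires care, because we must verify both vertex degree concentration (straightforward Chernoff) and the regularity condition (which needs enough codegree information to invoke Theorem~\ref{codegree implies regularity}); Claims~\ref{Pt}(iii) and~\ref{cl:F't} are designed precisely to supply this input, so the computations are routine once the dependence on the random choice of $Y_i$ is properly isolated.
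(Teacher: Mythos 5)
Your proposal is correct and is essentially the paper's own proof: conditional on Steps 1--2, the only remaining randomness is the independent (hypergeometric) choice of the sets $Y_i$, and the paper proves (W1) and (W3) exactly as you outline — concentration of degrees and codegrees of the $W_i$-induced subgraphs, with Claim~\ref{Pt}(iii) resp.\ Claim~\ref{cl:F't} supplying the codegree input to Theorem~\ref{codegree implies regularity} — followed by union bounds over $i$, $j,j'$, $v$ and the $T$ rounds. The only cosmetic difference is (W2)(ii): the paper bounds $\mathbb{E}\bigl[\sum_{i\in I_t}|\{e\in E(H_i):v\in\phi_i(e),\phi_i(e)\cap W_i\neq\emptyset\}|\bigr]\le\tfrac52 k\Delta_R\delta\gamma n$ directly and applies Azuma (Theorem~\ref{Azuma}) to the exposure martingale over the $Y_i$, whereas your split using the worst-case bound $2\delta\gamma n$ from (W2)(i) is slightly lossy in the constant (and your bounded, non-Bernoulli summands really call for an Azuma/Hoeffding-type bound rather than plain Chernoff); both issues disappear if you compute the expectation directly as the paper does.
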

\begin{proof}
Suppose that the algorithm has reached Round $t$, Step 3, 
and no previous failure has occurred. In particular,
(U1),(U2) of Step 2 hold.

First, we show that (W1) holds with high probability. Let $i\in I_t$ and $j\neq j' \in [Kr]$ with $jj'\in E(R_K)$ be fixed. Since by (U2)
we have 
\begin{align}\label{eq:UiNUi}
|U_i\cap V_j^i| \leq |NU_i \cap V^i_j| \leq \gamma^{2/5} m',
\end{align}
it follows that 
\begin{align}\label{eq:Yi}
|Y_i\cap V^i_j| = \delta m' - | U_i\cap V^i_j| \stackrel{\eqref{eq:UiNUi}}{\geq} \delta m' - \gamma^{2/5} m'.
\end{align}
Thus, \eqref{eq:UiNUi} and \eqref{eq:Yi} together imply that for any given vertex $v\in V^i_j\setminus U_i$ we have
\begin{align}\label{Yi prob}
\mathbb{P}[ v\in W_i ] = \Prob[v\in Y_i] = \frac{|Y_i\cap V^i_j|}{|V^i_j\setminus U_i|} = (1\pm \gamma^{1/3})\delta.
\end{align}
\noindent
Let
$$\overline{SP}^t_{j,j'}(i):=\left\{\{v,v'\} \in \binom{V_j^i}{2} : |N_{P^t}(\{v,v'\})\cap V_{j'}^i|=( \beta_{j,j'}'^2 \pm 3\delta^{3/5})m'\right\}.$$
Let $S\subseteq V_j^i$ be such that either $|S|=1$ or $S$ consists of a pair of vertices in $\overline{SP}^t_{j,j'}(i)$.
Then the definition of $\overline{SP}^t_{j,j'}(i)$ together with the fact that by Claim~\ref{Pt}(ii)
$P^t$ is $(\delta^{1/4},\vec{\beta'})$-super-regular with respect to $(R_K,\mathcal{V}^i)$ implies that
$|N_{P^t}(S)\cap V^i_{j'}| =  (\beta_{j,j'}'^{|S|} \pm 2\delta^{1/4}) m'$. 
It follows that 
$$|(N_{P^t}(S)\cap V^i_{j'})\setminus U_i | \stackrel{\eqref{eq:UiNUi}}{=} (\beta_{j,j'}'^{|S|} \pm 2\delta^{1/4}  \pm \gamma^{2/5}) m' .$$ 
Thus, 
\begin{align*} 
\mathbb{E}\left[\left| N_{P^t}(S)\cap V^i_{j'} \cap Y_i\right|\right]
&=  (1\pm \gamma^{1/3})\delta\left|(N_{P^t}(S)\cap V^i_{j'})\setminus U_i \right| \\
& =  (1\pm \gamma^{1/3})\delta(\beta_{j,j'}'^{|S|} \pm 2\delta^{1/4} \pm \gamma^{2/5})m'= 
\left(\beta_{j,j'}'^{|S|} \pm 3\delta^{1/4}\right)\delta m'.
\end{align*}
By the choice of $Y_i$, the above random variable is hypergeometrically distributed. Thus by Lemma~\ref{Chernoff Bounds},
\begin{eqnarray}\label{eq:W1lowprob}
\Prob\left[| N_{P^t}(S)\cap V^i_{j'} \cap W_i| = (\beta_{j,j'}'^{|S|} \pm 5\delta^{1/4}) \delta m'\right] &\stackrel{\eqref{eq:UiNUi}}{\geq}& \Prob\left[| N_{P^t}(S)\cap V^i_{j'} \cap Y_i| = \left(\beta_{j,j'}'^{|S|} \pm 4\delta^{1/4}\right) \delta m'\right]\nonumber \\
&\geq& 1-(1-2{c})^n.
\end{eqnarray}
Moreover, 
\begin{align}\label{eq:WiVij}
|W_i\cap V^i_j|=|W_i\cap V^i_{j'}|=\delta m',
\end{align}
and $$\left| \binom{W_i\cap V^i_j}{2} \cap \overline{SP}^t_{j,j'}(i)\right|\geq \binom{\delta m'}{2} - \gamma m'^2 \geq
(\frac{1}{2}-25\delta^{1/4})(\delta m')^2$$ by Claim~\ref{Pt}(iii). So Theorem~\ref{codegree implies regularity} together with~\eqref{eq:W1lowprob} 
and a union bound over all $S\in V^i_j\cup \overline{SP}^t_{j,j'}(i)$
implies that $P^t[W_i\cap V^i_j, W_i\cap V^i_{j'}]$ is $(\delta^{1/25},\beta'_{j,j})$-regular with probability at least $1-m'^2(1-2{c})^n$.

Recall that (W1) holds if and only if $P^t[W_i\cap V^i_j,W_i\cap V^i_{j'} ]$ is $(\delta^{1/25} , \beta'_{j,j'})$-super-regular
for all $i\in I_t$ and $jj'\in E(R_K)$. So a union bound over all $i,j,j'$ gives  
\begin{align}\label{eq:W1fails}
\Prob[\text{(W1) fails in Round } t] \leq \gamma n (Kr)^2m'^2 (1-2 {c})^n.
\end{align} 

Secondly, we show that (W2)(i) holds with high probability. Fix $v\in V(G)$. Note that~(\ref{Yi prob}) together with
Proposition~\ref{generalised-chernoff} and Lemma~\ref{Chernoff Bounds} implies that
with probability at least $1-(1-2c)^n$ we have $|\{i\in I_t:v\in Y_i\}|\le 3\delta|I_t|/2=3\delta\gamma n/2$.
Moreover, by (U1) we have
\begin{align}\label{eq:U1f}
| \{i\in I_t: v\in U_i\}|\le | \{i\in I_t: v\in NU_i\}| \leq \gamma^{4/3} n.
\end{align}
Since $| \{i\in I_t: v\in W_i\}|=| \{i\in I_t: v\in U_i\}|+| \{i\in I_t: v\in Y_i\}|$, together with a union bound over all $v \in V(G)$ this implies that
\begin{align}\label{eq:W2ifails}
\Prob[\text{(W2)(i) fails in Round } t] \leq rn(1-2 {c})^n.
\end{align}

Next we show that (W2)(ii) holds with high probability. Fix $v\in V(G)$ and
consider 
$$C(v):= \sum_{i\in I_t} |\{ e\in E(H_i): v\in \phi_i(e) , \phi_i(e)\cap W_i\neq \emptyset\}|.$$
\noindent
For all integers $ p \in I_t\cup \{(t-1)\gamma n\}$ define 
$$ X_p(v) := \mathbb{E}\left[ C(v) \mid Y_{(t-1)\gamma n+1}, \dots, Y_p \right],$$
and note that $X_p(v)$ is an exposure martingale.
Moreover, changing $Y_p$ changes $C(v)$ by at most $(k+1)\Delta_R$, so the martingale is $(k+1)\Delta_R$-Lipschitz.
Furthermore, we have
\begin{eqnarray*}  \mathbb{E}\left[C(v)\right] &\leq& \sum_{i\in I_t: v \notin NU_i} \left((k+1)\Delta_R \mathbb{P}\left[v \in W_i\right] +  \sum_{uv \in \phi_i(E(H_i))}\mathbb{P}\left[u \in W_i\right]\right) \\
&+& (k+1)\Delta_R| \{i\in I_t: v\in NU_i\}|\\
&\stackrel{(\ref{Yi prob}, \ref{eq:U1f})}{\leq}& \sum_{i\in I_t\colon v \notin NU_i} \left((k+1)\Delta_R (1+\gamma^{1/3})\delta+  \sum_{uv \in \phi_i(E(H_i))}(1+  \gamma^{1/3})\delta\right) + (k+1)\Delta_R\gamma^{4/3} n\\
&\leq& 2(k+1)\Delta_R(1+\gamma^{1/3})\delta\left(\gamma n - | \{i\in I_t: v\in NU_i\}| \right) + (k+1)\Delta_R\gamma^{4/3} n\\
&\leq& \frac{5}{2}k\Delta_R\delta\gamma n. 
\end{eqnarray*}
(To see that we can indeed apply (\ref{Yi prob}), note that whenever $v\notin NU_i$ and $uv\in \phi_i(E(H_i))$ we have $u\notin U_i$.)
So, by Azuma's inequality (Theorem~\ref{Azuma}), we obtain $\Prob[ C(v) \geq 3k\Delta_R\delta\gamma n] 
\leq (1-2{c})^n.$ 
Hence, a union bound over all $v\in V(G)$ gives
\begin{align}\label{eq:W2fails}
\Prob[\text{(W2)(ii) fails in Round } t]\leq rn(1-2{c})^n.
\end{align}

Finally, we show that (W3) holds with high probability. Recall that $F'_i$ was defined in~(\ref{F' definition}).
For any $i\in I_t$ and $j\in [Kr]$, let
$$\overline{S}_{j}(i):=\left\{\{x,x'\} \in \binom{X^i_j}{2}: \left|N_{F'_i}(\{x,x'\})\right| = ( (\alpha d_0 p(R_K,\vec{\beta}',j))^2 \pm \delta^{1/2})m' \right\}.$$ 
Then Claim~\ref{cl:F't} implies $|\overline{S}_j(i)| \geq \binom{|X^i_j|}{2} - \gamma m'^2$.

Let $S\subseteq X_j^i$ be such that either $|S|=1$ or $S$ consists of a pair of vertices in $\overline{S}_{j}(i)$. 
Then Claim~\ref{cl:F't} and the definition of $\overline{S}_j(i)$ imply that
\begin{align}\label{common nbrs s}
 |N_{F'_i}(S)| = ( (\alpha d_0 p(R_K,\vec{\beta}',j))^{|S|} \pm \delta^{1/2} )m'.
\end{align}
By the same argument as in the proof of (W1) we have 
\begin{eqnarray*} 
\mathbb{E}\left[\left|N_{F'_i}(S)\cap Y_i\right|\right] &\stackrel{\eqref{Yi prob}}{=}& (1\pm  \gamma^{1/3})\delta\left|N_{F'_i}(S)\setminus U_i\right|  \\
&\stackrel{(\ref{eq:UiNUi}, \ref{common nbrs s}) }{=}& (1\pm \gamma^{1/3}) \left((\alpha d_0 p(R_K,\vec{\beta}',j))^{|S|}\pm \delta^{1/2} \pm \gamma^{2/5}\right) \delta m'\\
&=& \left((\alpha d_0 p(R_K,\vec{\beta}',j))^{|S|} \pm 2\delta^{1/2} \right) \delta m'.
\end{eqnarray*}
As in the proof of (W1), $|N_{F'_i}(S)\cap Y_i|$ has the hypergeometric distribution, so
$$\mathbb{P}\left[ |N_{F'_i}(S)\cap Y_i| = ((\alpha d_0 p(R_K,\vec{\beta}',j))^{|S|}\pm 3\delta^{1/2}) \delta m'\right] \geq 1-(1-2{c})^n.$$ 
Together with \eqref{eq:UiNUi} this implies
\begin{align*}
\mathbb{P}\left[ |N_{F'_i}(S)\cap W_i| = ((\alpha d_0 p(R_K,\vec{\beta}',j))^{|S|}\pm 4\delta^{1/2}) \delta m'\right] \geq 1-(1-2{c})^n.
\end{align*}

\COMMENT{Here we use that $\phi_i^{-1}(Y_i)\cap X_j^i$ is a random subset of $X_j^i\setminus \phi_i^{-1}(U_i)$ since $\phi_i$ is fixed in Step 3.}Together with a similar argument for vertices $v \in V_j^i$ this shows that with probability at least $1- 2Kr (rn)^2(1-2{c})^n$ for all $j\in [Kr]$, $S\subseteq X_j^i$ with $|S|=1$ or $S \in \overline{S}_j(i)$ and all $v\in V_j^i$ we have
\begin{align}\label{eq:www}
&|N_{F'_i}(S)\cap W_i| = ((\alpha d_0 p(R_K,\vec{\beta}',j))^{|S|}\pm 4\delta^{1/2}) \delta m', \nonumber\\
&|N_{F'_i}(v)\cap Z_i| = (\alpha d_0 p(R_K,\vec{\beta}',j)\pm 4\delta^{1/2}) \delta m'.
\end{align}
Moreover, 
\begin{align*}
\left|\binom{Z_i\cap X^i_j}{2} \cap \overline{S}_j(i)\right|
 \stackrel{\eqref{eq:WiVij}}{\geq} \binom{\delta m'}{2} - \gamma m'^2 \geq \frac{1}{2}(1- \delta^{6/20}) (\delta m')^2.
\end{align*}
By Theorem~\ref{codegree implies regularity}, this together with \eqref{eq:www} implies that $F'_i[Z_i\cap X^i_j,W_i\cap V^i_j]$ is $(\delta^{1/20},\alpha d_0 p(R_K,\vec{\beta}',j))$-super-regular. Hence, 
\begin{align}\label{eq:W3fails}
\Prob[\text{(W3) fails in Round } t] \leq 2Kr(rn)^2(1-2{c})^n.
\end{align}

\noindent
By \eqref{eq:W1fails}, \eqref{eq:W2ifails}, \eqref{eq:W2fails} and \eqref{eq:W3fails}, failure of type 5 occurs in some Round $t$ with probability at most $$T\cdot \left((Kr)^2 \gamma n m'+rn + rn+ 2Kr(rn)^2\right)(1-2{c})^n \leq (1-{c})^n.$$ 
\end{proof}

\subsection{Failure of type 6}

The following claim shows that failure of type 6 in fact never occurs. 

\begin{claim}\label{type6}
Assume that we are in Round $t$, Step $4.\ell$, and no failure has occurred so far, in particular, (W1)--(W3) of Step 3 hold. Then failure of type 6 does not occur.
\end{claim}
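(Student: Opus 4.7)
The plan is to verify \emph{deterministically} that both conditions $(a')$ and $(b')$ of Step~$4.\ell$ are satisfied, under the inductive hypothesis that no failure has occurred so far; in particular (W1)--(W3) hold. The idea is that $P^t_i$ differs from $P^t$ only by the edges removed when defining $\phi'_j$ for $j\in I_t$ with $j<i$, and this difference causes only a negligible degree loss at every vertex, so that the already-established super-regularity of $P^t[W_i]$ (from (W1)) and of $F'_i[Z_i\cap X^i_j, W_i\cap V^i_j]$ (from (W3)) is preserved up to a small loss of parameters. The key technical tool will be Proposition~\ref{regularity after edge deletion}.

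The first step is to prove that
\[
d_{P^t-P^t_i}(v) \leq 5k\Delta_R\delta\gamma n \qquad\text{for every }v\in V(G).
\]
This is essentially the in-round analogue of \eqref{eq:ApplW2} in Claim~\ref{Pt}(i): any edge $uv\in \phi'_j(E(H_j))\cap E(P^t)$ for $j\in I_t$ with $j<i$ must satisfy $\phi_j(e)\cap W_j\neq\emptyset$, for otherwise $\phi'_j(e)=\phi_j(e)\in E(G^t)$, disjoint from $E(P^t)$. Summing over $j<i$ and using (W2)(ii), together with a $\Delta_R$-correction for those $j$ with $v\in W_j$, yields the bound.

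The second step is to deduce that $\Delta(F'_i-F^*_i) \leq 5kK\Delta_R^2\delta\gamma n$. Indeed, comparing the definitions~\eqref{F' definition} and~\eqref{eq:Ft*idef}, an edge $xv\in F'_i\setminus F^*_i$ requires $v\phi_i(y)\in E(P^t-P^t_i)$ for some $y\in N^i_x$; the bound at $x$ follows from $|N^i_x|\leq K\Delta_R$ (cf.\ (F3)) combined with the first step, and the bound at $v$ uses the symmetric reformulation~\eqref{eq:xvunions} together with the same bound on the number of pre-neighbours in the candidacy hypergraph.

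Finally, I would apply Proposition~\ref{regularity after edge deletion} twice, noting that each restricted vertex class has size $\delta m'$. For $(b')$: (W1) provides $(\delta^{1/25},\vec{\beta'})$-super-regularity of $P^t[W_i]$, and since $\gamma\ll\delta$ the degree loss $5k\Delta_R\delta\gamma n$ is well below $\delta^{1/25}\cdot \delta m'$, so the proposition yields $(3\delta^{1/50}/2,\vec{\beta'})$-super-regularity of $P^t_i[W_i]$, which is stronger than the required $(4\delta^{1/50},\vec{\beta'})$. For $(a')$: (W3) provides $(\delta^{1/20},\alpha d_0 p(R_K,\vec{\beta'},j))$-super-regularity of $F'_i[Z_i\cap X^i_j, W_i\cap V^i_j]$, and the same argument with $5kK\Delta_R^2\delta\gamma n\ll \delta^{1/20}\cdot \delta m'$ gives $(3\delta^{1/40}/2,\alpha d_0 p(R_K,\vec{\beta'},j))$-super-regularity of $F^*_i[Z_i\cap X^i_j, W_i\cap V^i_j]$, which is stronger than $(\delta^{1/50},\alpha d_0 p(R_K,\vec{\beta'},j))$ since $\delta$ is sufficiently small. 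Hence $(a')$ and $(b')$ both hold, and no failure of type~6 can occur. The main potential obstacle is tracking the chain of constants so that Proposition~\ref{regularity after edge deletion} stays within the prescribed thresholds; but the gap between $1/25$ and $1/50$ (resp.\ $1/20$ and $1/50$) in the super-regularity parameters of (W1) and (W3) is precisely what gives us the slack we need, so in practice this causes no difficulty.
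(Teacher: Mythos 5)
Your proposal is correct and follows essentially the same route as the paper: bound $\Delta(P^t-P^t_i)$ and hence $\Delta(F'_i-F^*_i)$, then apply Proposition~\ref{regularity after edge deletion} to the pairs from (W1) and (W3), using the slack between the exponents $1/25$, $1/20$ and $1/50$. The only difference is that your first step invokes (W2) as in \eqref{eq:ApplW2}, whereas the paper simply uses the trivial bound $d_{P^t-P^t_i}(v)\le\sum_{j\in I_t,\,j<i}\Delta(H_j)\le (k+1)\Delta_R\gamma n\le\delta^3 n$; both suffice, and your slightly different Lipschitz-type constants (e.g.\ $3\delta^{1/50}/2$ versus $3\delta^{1/50}$) are immaterial.
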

\begin{proof}

We need to check that $F^*_i[Z_i\cap X^i_j,W_i\cap V^i_j]$ and $P^t_i[W_i]$ satisfy ($a'$) and ($b'$). Note that for every vertex $v\in V(G)$ we have 
\begin{align}\label{eq:Pti degree}
 d_{P^t - P^t_i}(v) \leq \sum_{j\in I_t : j<i} \Delta(H_i)\le (k+1)\Delta_R \gamma n \leq \delta^3 n.
\end{align}

First, we prove ($a'$). If $\ell=1$, this is immediate from \eqref{eq:ptiftidef} and (W3). So assume that $\ell>1$. 
The definition of $F^*_i$ in \eqref{eq:Ft*idef} together with the definition of $F'_i$ in \eqref{F' definition} imply that whenever $xv \in E(F'_i)\setminus E(F^*_i)$ with $x\in X^i_{j}, v\in V^i_{j}$, we can find a vertex $y \in N^i_x$ such that $v \in N_{P^t-P^t_i}(\phi_i(y))$. Hence,
$$v \in \bigcup_{y\in N^i_x} N_{P^t- P^t_i}(\phi_i(y)).$$
By \eqref{eq:Pti degree} and the fact that $|N_x^i|\leq K\Delta_R$ (by (F3)) 
we have
$$\left|\bigcup_{y\in N^i_x} N_{P^t- P^t_i}(\phi_i(y))\right| \leq K\Delta_R \cdot \delta^3 n \leq \delta^2 m',$$ thus $x$ is incident to at most $\delta^2 m'$ edges in $E(F'_i)\setminus E(F^*_i)$.

Similarly, for any $xv \in E(F'_i)\setminus E(F^*_i)$ with $x\in X^i_j, v\in V^i_j$, an argument similar to that in \eqref{eq:xvunions} implies that 
 $$x\in \bigcup_{u \in N_{P^t- P^t_i}(v)} N^i_{\phi_i^{-1}(u)}.$$
As before, using \eqref{eq:Pti degree}, one can show that 
$v$ is incident to at most  $K\Delta_R \delta^3 n \leq \delta^2 m'$ edges in $E(F'_i) \setminus E(F^*_i)$. So 
$\Delta(F'_i - F^*_i)\leq \delta^2 m' \le 4\delta^{1/20}\cdot \delta m'$.

Moreover, by (W3), $F'_i[Z_i\cap X^i_j, W_i\cap V^i_j]$ is $(\delta^{1/20}, \alpha d_0p(R_K,\vec{\beta}',j))$-super-regular. 
Since $|Z_j\cap X_j^i|=|W_j\cap V_j^i|=\delta m'$, we can apply 
Proposition \ref{regularity after edge deletion} 
to deduce that $F^*_i[Z_i\cap X^i_j, W_i\cap V^i_j]$ is $(\delta^{1/50},\alpha d_0 p(R_K,\vec{\beta}',j))$-super-regular, and ($a'$) holds. 

Next, we prove ($b'$). By (W1), $P^t[W_i]$ is $(\delta^{1/25} , \vec{\beta'})$-super-regular with respect to $(R_K,V_1^i\cap W_i,\dots,V_{Kr}^i\cap W_i)$.
For $v \in V(P^{t}_i)$, \eqref{eq:Pti degree} implies that 
$$d_{P^t[W_i]}(v)- d_{P^t_i[W_i]}(v) \leq \delta^3 n \leq  4\delta^{1/25}\cdot \delta m'.$$
Thus Proposition \ref{regularity after edge deletion} implies that $P^t_i[W_i]$ is $(4\delta^{1/50} , \beta)$-super-regular, and ($b'$) holds.
\end{proof}

\begin{proof}[Proof of Lemma \ref{main lemma}]
Claims~\ref{type1}--\ref{type2} and \ref{cl:mainalgfailure2}--\ref{type6} imply that the main packing algorithm has failure probability of at most $5(1-{c})^n$. 
Thus, the main packing algorithm succeeds with high probability, and Claim \ref{disjoint} ensures that the embedded copies
$\phi'_1(H_1),\dots, \phi'_s(H_{s})$ of $H_1,\dots, H_{s}$ are pairwise edge-disjoint subgraphs of $G$ such that $\phi'_i(x)\in N_{A_i}(x)$ for all $x\in V(H_i)$. Thus (T1) and (T2) are satisfied for all $i\in [s]$. 
Claim~\ref{type2} also implies that, provided failure of type $2$ does not occur, $(\textup{QW}'_i)$ holds for all $i\in [s]$. This implies~(T3).
Finally, Claim~\ref{disjoint}(v) for all $i\in [s]$ implies (T4).
\end{proof}

The following lemma is a modification of Csaba's extension of the blow-up lemma~\cite{Csaba}, and it is easily implied from Theorem~\ref{main lemma} with the condition (S6) replaced with (S6$'$) and setting $s=1$.%
\COMMENT{Note Theorem~\ref{main lemma}only applies to near regular graphs. Can achieve this by using the maxflowmincut theorem for each bipartite subgraph between clusters to find a regular bipartite graph which contains it. Let $H'$ be the union of these bipartite graphs.}
It will be convenient (though not essential) to use it in Section~\ref{sec:stacking}.

\begin{lemma} \cite{Csaba} \label{thm:oldblowup}
Suppose $0< 1/n\ll \epsilon\ll 1/k, d, 1/\Delta_R, 1/C$ and that $R$ is a graph on $[r]$ with $\Delta(R)\leq\Delta_R$. Suppose $\vec{d}$ is a symmetric $r\times r$ matrix with entries in $[0,1]$ such that $\min_{ij\in E(R)}d_{i,j} \geq d$.  Suppose $H$ is an $r$-partite graph with partition $(R,X_1,\dots,X_r)$ and $G$ is an $r$-partite graph with partition $(R,V_1,\dots V_r)$ such that for all $i \in [r]$ we have $|X_i|=|V_i| = n \pm C$. Suppose that for all $i\in [r]$, we are given $W_i\subseteq X_i$ with $|W_i| \leq \epsilon |X_i|$ and $U_i \subseteq V_i$ with $|U_i| \leq \epsilon |V_i|$.
Suppose finally that $\Delta(H) \leq k$ and $G$ is $(\epsilon, \vec{d})$-super-regular with respect to $(R,V_1,\dots,V_r)$. Then $G$ contains a copy $\phi(H)$ of $H$, 
such that for each $i \in [r]$  we have $\phi(X_i)=V_i$ and $\phi(W_i) \cap U_i= \emptyset$.
\end{lemma}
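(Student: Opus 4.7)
The plan is to derive Lemma~\ref{thm:oldblowup} from Theorem~\ref{main lemma} applied with $s=1$, using the variant in which (S6) is replaced by (S6$'$) as noted in the remark following that theorem. The conclusion (T1) directly delivers the target-set requirement $\phi(W_i) \cap U_i = \emptyset$, while (T2)--(T4) are vacuous or irrelevant for a single graph. Two small technical preparations are needed: extending $H$ to a near-equiregular host graph $H'$ with $H \subseteq H'$, and encoding the exclusion sets $U_i$ via the bipartite graph $A_1$.

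First I will construct $H'$. For each edge $ij \in E(R)$, the bipartite graph $H[X_i, X_j]$ has maximum degree at most $k$, and I will extend it to a bipartite graph $H'[X_i, X_j]$ in which all vertices have degree $k$ except possibly at most $O(1)$ vertices of degree $k+1$ (absorbing the discrepancy $|X_i|-|X_j|$, which is bounded by $2C$). This is a standard exercise: decompose $H[X_i,X_j]$ into $k$ matchings via K\"onig's edge-colouring theorem and greedily complete each matching, or simply add edges vertex-by-vertex while respecting the degree bound. Setting $k^1_{j,j'} := k$ for $jj' \in E(R)$ and $k^1_{j,j'}:=0$ otherwise, the resulting graph $H'$ is $(R, \vec{k}^1, C')$-near-equiregular for some constant $C'$ depending only on $C$ and $k$, with $\Delta(H') \leq k\Delta_R$ and $H \subseteq H'$. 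Any embedding of $H'$ into $G$ restricts to an embedding of $H$ into $G$, so it suffices to embed $H'$.

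Second I will encode the forbidden sets using (S6$'$). For each $j\in[r]$ take $X^1_j := W_j$, so $|X^1_j|\leq \epsilon n$. Define $A_1$ by setting $N_{A_1}(x) := V_j \setminus U_j$ for each $x \in W_j$ and $N_{A_1}(x) := V_j$ for each $x \in X_j \setminus W_j$. Since $|U_j| \leq \epsilon|V_j|$, every vertex in $X^1_j$ has degree at least $(1-\epsilon)n \geq d_0 n$ in $A_1$, where $d_0 := 1/2$, so (S6$'$) holds. Take $\Lambda$ in (S8) to be the empty graph, which trivialises (S8) and (T4); (S7) and (T3) are not needed. The remaining hypotheses (S1), (S2), (S3), (S5) follow directly from the hypotheses of Lemma~\ref{thm:oldblowup} together with the construction of $H'$, and (S4) reduces to $k^1_{j,j'} \leq (1-\alpha) d_{j,j'} n$, which holds for any fixed $\alpha < 1$ because the left-hand side is a constant. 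Choose the constant hierarchy so that the $\epsilon$ of Lemma~\ref{thm:oldblowup} is small enough to play the role of $\epsilon$ in Theorem~\ref{main lemma}.

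Now I will invoke Theorem~\ref{main lemma} (with (S6$'$) in place of (S6)) to obtain an embedding $\phi'_1$ of $H'$ into $G$ satisfying (T1). Then $\phi := \phi'_1|_{V(H)}$ is an embedding of $H$ into $G$ with $\phi(X_i) = V_i$, and (T1) guarantees that $\phi(x) \in N_{A_1}(x) = V_i \setminus U_i$ for every $x \in W_i$, which is exactly $\phi(W_i) \cap U_i = \emptyset$. The only mildly non-routine step is the extension of $H$ to the near-equiregular graph $H'$; everything else is book-keeping to match the hypotheses of Theorem~\ref{main lemma}, and no genuine new difficulty arises.
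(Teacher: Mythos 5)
Your proposal follows exactly the route the paper intends: Lemma~\ref{thm:oldblowup} is stated without a written proof, with the remark that it follows from Theorem~\ref{main lemma} applied with $s=1$ and (S6) replaced by (S6$'$), after first extending $H$ to a near-equiregular graph $H'$ (the authors envisage doing this regularisation via a max-flow/min-cut argument as in Lemmas~\ref{adding} and~\ref{adding2}, while you suggest a K\"onig-type greedy completion -- either is routine). Your encoding of the sets $U_i$ through $A_1$ so that (T1) yields $\phi(W_i)\cap U_i=\emptyset$, with $\Lambda$ empty and $\phi(X_i)=V_i$ forced by $N_{A_1}(X_i)=V_i$ and $|X_i|=|V_i|$, is precisely the intended use of the target-set feature, so the proposal is correct and essentially the same as the paper's argument.
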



\section{Packing graphs into near-equiregular graphs}\label{sec:stacking}

Our aim is to pack graphs $H_1,\dots,H_\ell$ of small maximum degree into a graph $G$ which is
$(\epsilon,\vec{d})$-regular with respect to a partition $(R,V_1,\dots,V_r)$. However, one of the conditions of the blow-up lemma for approximate decompositions (Theorem~\ref{main lemma}) is that all the $H_s$ are `degree balanced' with respect to $R$ and the corresponding densities $d_{i,j}$.
If $R$ is complete and the $d_{i,j}$ are all equal, this (roughly speaking)
translates to the requirement that all the bipartite graphs $H[V_i,V_j]$ are
$k$-regular for some constant $k$.
More generally, the requirement is (again roughly speaking) that for each edge $ij$
of $R$, the graph 
$H[V_i,V_j]$ is $k_{i,j}$-regular. 

Suppose that we are given a family of graphs $\mathcal{H}=\{H_1,\dots,H_\ell\}$ of small maximum degree, which do not necessarily satisfy the degree balance condition. 
In this section we show that it is often possible to pack the graphs in $\mathcal{H}$
together in a suitable way to obtain a new family of graphs which do satisfy the
degree balance condition required in Theorem~\ref{main lemma}.

The main result (Lemma~\ref{lem:section3newlemma}) of Subsection~\ref{subsec:31} shows that this is possible if the graphs in
$\mathcal{H}$ are `edge-balanced'
i.e. $e(H[V_i,V_j])$ is proportional to $d_{i,j}$.
In Subsection~\ref{subsec:32} we will apply Lemma~\ref{lem:section3newlemma} in order to show that in the setting of Theorems~\ref{main 1b} and~\ref{main 2}, that is when $R$ is a complete graph, we can achieve a packing of the $H_s$ which satisfies the degree balance condition for any family $\mathcal{H}$ of bounded degree graphs. In this way we will be able to satisfy the conditions of Theorem~\ref{main lemma}.

\subsection{Packing graphs into almost regular graphs}\label{subsec:31}

The aim of this subsection is to prove the following lemma, which shows that a sufficiently large collection of `edge-balanced' graphs of bounded maximum degree can be packed into a suitable near-equiregular graph $H$. Note that property (ii) will not needed in this paper, but is intended for further applications elsewhere.

\begin{lemma}\label{lem:section3newlemma}
Suppose $0<1/n\ll \epsilon \ll 1/s \ll  1/\Delta,1/\Delta_R$ and $\epsilon \ll 1/k, 1/(C+1)$. Let $R$ be a graph on $[r]$ with $\Delta(R)\leq \Delta_R$ and let $n\leq n_1\leq \dots \leq n_r\leq n+C$. Let $\vec{M}$ and $\vec{k}$ be symmetric $r\times r$ matrices such that $k_{i,j}\in \mathbb{N}$ and $M_{i,j} + s^{2/3} +1 \leq k_{i,j} \leq k$ for all $ij\in E(R)$. \COMMENT{Note that this holds even if $M_{i,j} < s^{2/3}$.}

Suppose that $L_1,\dots,L_s$ are graphs such that each $L_\ell$ admits a vertex partition $(R,X_1^\ell,\dots,X_r^\ell)$ with $|X_i^\ell|=n_i$, $\Delta(L_\ell)\leq \Delta$ and 
\begin{align}\label{eq:edgebalance}
\sum_{\ell=1}^{s} e(L_\ell[X_i^\ell,X_j^\ell])=M_{i,j}n
\end{align}
for all $ij\in E(R)$. 
Suppose that for each $\ell\in [s]$ and $i\in [r]$, we have $W^{\ell}_{i} \subseteq X^{\ell}_i$ with $|W^{\ell}_{i}| \leq \epsilon n$.
Then there exists an $(R,\vec{k},C)$-near-equiregular graph $H$ with vertex partition $(R,V_1,\dots,V_r)$ such that $|V_i|=n_i$ for all $i\in [r]$ and such that $L_1,\dots,L_s$ pack into $H$. 

Moreover, writing $\phi(L_\ell)$ for the copy of $L_\ell$ in this packing of $L_1,\dots,L_s$ in $H$, and for each $ij\in E(R)$ writing $J_{i,j}:=H[V_i,V_j]-(\phi(L_1)\cup\dots\cup \phi(L_s))$, we have
\begin{itemize}
\item[(i)] $\phi(X^{\ell}_i)= V_{i}$, and $\Delta(J_{i,j})\leq k_{i,j}-M_{i,j}+2s^{2/3}$ and for each $i\in [r]$,  
\item[(ii)] for all $i\in [r]$ and all distinct $\ell , \ell' \in [s]$, we have $\phi(W^{\ell}_i) \cap \phi(W^{\ell'}_i) =\emptyset$.
 \end{itemize}  
\end{lemma}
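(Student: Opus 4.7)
The plan is a three-stage construction: randomly embed each $L_\ell$ with ``slot reservations'' that automatically enforce (ii), resolve the small number of resulting edge conflicts by local swaps, and finally pad the edge-disjoint union with additional edges to produce an $(R,\vec{k},C)$-near-equiregular host $H$. The key point is that $s$, $\Delta$, $\Delta_R$, $k$, $C$ are all bounded independently of $n$, so the expected number of conflicts is a constant, whereas the slack $k_{i,j}-M_{i,j}\ge s^{2/3}+1$ is large enough to absorb concentration errors in per-vertex degrees.

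For Stage~1, I would first choose, for each $i\in[r]$, pairwise disjoint sets $V_i^\ell\subseteq V_i$ with $|V_i^\ell|=|W_i^\ell|$; this is possible since $\sum_\ell |W_i^\ell|\le s\epsilon n\ll n$. Then, independently for each $\ell\in[s]$ and $i\in[r]$, I would pick a uniformly random bijection $\phi_\ell^i\colon X_i^\ell\to V_i$ subject to $\phi_\ell^i(W_i^\ell)=V_i^\ell$, and let $\phi_\ell$ be the combined map. This automatically gives (ii). Writing $F^\ast:=\sum_\ell \phi_\ell(L_\ell)$ for the multigraph union, the $V_j$-multidegree at $v\in V_i$ equals $\sum_\ell d_{L_\ell[X_i^\ell,X_j^\ell]}(\phi_\ell^{-1}(v))$, a sum of $s$ independent $[0,\Delta]$-valued variables with mean $M_{i,j}(1\pm o(1))$. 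By Hoeffding's inequality applied to each vertex, together with the fact that the expected number of coincidences $\phi_\ell(e)=\phi_{\ell'}(e')$ with $\ell\ne\ell'$ is $O(s^2\Delta^2)=O(1)$, a standard calculation (taking $n_0$ large enough relative to the bounded quantities $s,\Delta$) yields an outcome in which simultaneously (a) every multidegree is within $\tfrac{1}{2}s^{2/3}$ of $M_{i,j}$, and (b) the total number $c_0$ of multiedges in $F^\ast$ is at most some constant.

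For Stage~2, I process each of the $c_0$ conflicts $\phi_\ell(e)=\phi_{\ell'}(e')$ one by one: pick an endpoint $x$ of $e$, and swap $\phi_\ell(x)$ with $\phi_\ell(z)$ for some $z\in X_i^\ell\setminus W_i^\ell$ avoiding the set of ``bad'' candidates (those creating a new conflict or colliding with the $W$-reservation). Since this bad set has size $O(\Delta c_0)=O(1)$ and $|X_i^\ell\setminus W_i^\ell|\ge (1-\epsilon)n$, a safe $z$ always exists. Each swap alters at most $2\Delta$ images, so after all $c_0$ swaps the per-vertex degree in the multigraph has drifted by only $O(1)$ and the degree bound is still at most $M_{i,j}+s^{2/3}/2+O(1)<k_{i,j}-1$. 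Crucially, swaps are confined to $X_i^\ell\setminus W_i^\ell$, so (ii) is preserved. For Stage~3, set $F:=\bigcup_\ell \phi_\ell(E(L_\ell))$, which is now a simple graph with $M_{i,j}-O(s^{2/3})\le d_{F[V_i,V_j]}(v)\le M_{i,j}+O(s^{2/3})\le k_{i,j}-1$. I extend each $F[V_i,V_j]$ to a bipartite graph $H[V_i,V_j]$ in which every vertex has degree $k_{i,j}$ except for at most $Ck_{i,j}$ vertices of degree $k_{i,j}+1$; this is a bipartite degree-sequence realization problem, solvable by a Gale--Ryser augmentation, the compatibility between the two sides following from $\bigl||V_i|-|V_j|\bigr|\le C$ and the flexibility in choosing which vertices are raised. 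Conclusion~(i) is then automatic: $\Delta(J_{i,j})\le (k_{i,j}+1)-(M_{i,j}-s^{2/3}-O(1))\le k_{i,j}-M_{i,j}+2s^{2/3}$. The main obstacle is Stage~2, controlling the cascade of swaps; this is handled by the constant bound on $c_0$ and choosing swap targets from the large pool $X_i^\ell\setminus W_i^\ell$ to avoid the small ``bad'' set.
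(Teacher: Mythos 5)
There is a genuine gap at the heart of Stage~1, and it is exactly the difficulty the lemma is designed to overcome. In the hierarchy $1/n\ll\epsilon\ll 1/s$, the number $s$ of graphs is a \emph{constant} (large compared with $\Delta,\Delta_R$, but independent of $n$). For a fixed vertex $v\in V_i$, the multidegree $\sum_{\ell=1}^{s} d_{L_\ell[X_i^\ell,X_j^\ell]}(\phi_\ell^{-1}(v))$ is a sum of only $s$ independent $[0,\Delta]$-valued terms, so Hoeffding gives a deviation-$\tfrac12 s^{2/3}$ failure probability of order $e^{-\Theta(s^{1/3})}$ --- a fixed positive constant that does \emph{not} improve as $n\to\infty$. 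Taking $n_0$ large therefore does not help: the expected number of vertices whose multidegree deviates by more than $\tfrac12 s^{2/3}$ is $\Theta(n)\,e^{-\Theta(s^{1/3})}$, i.e.\ linear in $n$, and a.a.s.\ your property (a) fails at linearly many vertices. Consequently Stage~2, which is calibrated to repair only a bounded number $c_0$ of local conflicts by $O(1)$ swaps, cannot absorb the damage: you would have to correct $\Theta(n)$ degree defects, which is a degree-balancing problem of the same difficulty as the lemma itself, not a finite list of collisions. (A secondary point: when $r$ grows with $n$, even the count of edge collisions is only $O(1)$ \emph{per pair} $ij\in E(R)$ in expectation, and getting it simultaneously for all pairs needs more care; but this is minor next to the concentration failure.)

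This is precisely why the paper does not use independent uniform bijections. Instead, for each class $X_i^\ell$ it first extracts a small exceptional set and then \emph{deterministically sorts} the remaining vertices by their degrees towards each neighbouring class, cutting them into $s^{\Delta_R}$ blocks so that, for all but $O(1)$ values of $\ell$ (respectively, all but $s^{3/5}$ after a random cyclic shift), every vertex of a given block has exactly the same degree towards $X_{j}^\ell$. Randomness enters only through the bounded number of block-level choices (the shifts $j'_{i,\ell}$ and the exceptional sets), where Azuma over the $s$ graphs suffices and only $O(s^{\Delta_R}\Delta_R)$ events per class $i$ need a union bound --- none over the $n$ vertices. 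Per-vertex degree control is then automatic from within-block regularity, and the block-respecting embedding is realized by the blow-up lemma (which also delivers your condition (ii) by forbidding the previously used image sets, the role your slot reservation plays). Your Stage~3 (padding to exact near-equiregularity via a flow/Gale--Ryser argument) matches the paper's Lemmas on adding edges and is fine, but the construction feeding it must control every vertex degree up to $\pm s^{2/3}$, and independent random bijections simply do not achieve that in this constant-$s$ regime.
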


In order to prove Lemma~\ref{lem:section3newlemma}, we first show that the graphs $L_1\dots,L_s$ pack `perfectly' into a graph $H$ for which $H[V_i,V_j]$ is close to regular whenever $ij\in E(R)$ and $H[V_i,V_j]$ is empty if $ij\notin E(R)$. Note that $\Delta(L_\ell)\leq \Delta$ implies that 
\begin{align}\label{s1}
M_{i,j} \leq 2\Delta s.
\end{align} 

\begin{lemma}\label{packing to almost regular graph}
Assume the setup of Lemma~\ref{lem:section3newlemma}. Then there exists an $r$-partite graph $H$ admitting a vertex partition $(R,V_1,\dots, V_r)$ with $|V_i|=n_i$ such that $H$ has a decomposition into $L_1,\dots,L_s$ and $d_{H[V_i,V_j]}(v)= M_{i,j} \pm s^{2/3}$ for all $v\in V_i\cup V_j$ and
$ij \in E(R)$. 
Moreover, writing $\phi(L_\ell)$ for the copy of $L_\ell$ in this decomposition of $H$, for each $i\in [r]$ we have
\begin{itemize}
\item[(i)] $\phi(X^\ell_i)=V_i$,
\item[(ii)] for all $i\in [r]$ and all distinct $\ell , \ell' \in [s]$, we have $\phi(W^{\ell}_i) \cap \phi(W^{\ell'}_i) =\emptyset$.
 \end{itemize}  
\end{lemma}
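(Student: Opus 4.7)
The plan is to construct $H$ as the union of random bijective images of $L_1,\dots,L_s$, then use an alteration step to resolve any edge collisions. First I would handle the auxiliary condition (ii) by pre-allocating, for each $i\in[r]$, pairwise disjoint ``target sets'' $\tilde W_i^1,\dots,\tilde W_i^s\subseteq V_i$ with $|\tilde W_i^\ell|=|W_i^\ell|$. This is feasible because $\sum_\ell |W_i^\ell|\leq s\epsilon n\ll n$. Then for each $\ell\in[s]$ independently I would choose a bijection $\phi_\ell\colon V(L_\ell)\to V(H)$ uniformly at random subject to $\phi_\ell(X_i^\ell)=V_i$ and $\phi_\ell(W_i^\ell)=\tilde W_i^\ell$ for every $i$.

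With the $\phi_\ell$ chosen, consider the multi-graph $H^{\mathrm{m}}:=\bigcup_\ell \phi_\ell(L_\ell)$. For each $ij\in E(R)$ and $v\in V_i$, linearity gives
\[
\E\bigl[d_{\phi_\ell(L_\ell)[V_i,V_j]}(v)\bigr] = \frac{e(L_\ell[X_i^\ell,X_j^\ell])}{n_i} \pm O(\epsilon\Delta),
\]
so summing over $\ell$ and using \eqref{eq:edgebalance} yields $\E[d_{H^{\mathrm{m}}[V_i,V_j]}(v)] = M_{i,j}\pm O(1)$. Since $d_{H^{\mathrm{m}}[V_i,V_j]}(v)=\sum_\ell d_{L_\ell[X_i^\ell,X_j^\ell]}(\phi_\ell^{-1}(v))$ is a sum of $s$ independent random variables each bounded by $\Delta$, a Chernoff--Hoeffding bound (Lemma~\ref{Chernoff Bounds}) gives that the deviation is at most $s^{2/3}/2$ with probability at least $1-2\exp(-c s^{1/3}/\Delta^2)$ for some absolute constant $c>0$. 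A union bound over the $O(r^2 n)$ vertex--edge pairs shows that, with positive probability (provided $s$ is sufficiently large in terms of $\Delta,\Delta_R$), every $v$ satisfies $d_{H^{\mathrm{m}}[V_i,V_j]}(v)=M_{i,j}\pm s^{2/3}/2$.

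The remaining issue is that $H^{\mathrm{m}}$ may carry multi-edges, since the $\phi_\ell(L_\ell)$ need not be pairwise edge-disjoint. Here the expected number of ordered pairs $(\ell,\ell')$ and $(u,v)$ with $uv\in \phi_\ell(E(L_\ell))\cap \phi_{\ell'}(E(L_{\ell'}))$ is bounded by
\[
\sum_{\ell\neq\ell'}\sum_{uv\in V_i\times V_j}\frac{e(L_\ell[X_i^\ell,X_j^\ell])\,e(L_{\ell'}[X_i^{\ell'},X_j^{\ell'}])}{n_i^2n_j^2}\cdot n_in_j\;\le\; M_{i,j}^2=O(\Delta^2 s^2),
\]
so by Markov's inequality the total number of collisions is bounded by a constant (depending only on $\Delta,s$) with positive probability. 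For each collision edge I would perform a local swap: re-route one of the two coincident embeddings by swapping the image of an endpoint with the image of a well-chosen vertex outside the $W$-targets, which requires only a constant number of modifications per collision and changes each vertex degree by $O(1)$. Preserving the $W$-target partition during swaps is straightforward since each $\tilde W_i^\ell$ has linear size.

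The main obstacle is obtaining both the concentration bound uniformly over all $v$ and maintaining it through the alteration step while keeping the degrees within the tighter window $\pm s^{2/3}$. To make the simple Chernoff union bound succeed one needs $s$ chosen sufficiently large depending on $\Delta$, $\Delta_R$ (which is consistent with the hierarchy $1/s\ll 1/\Delta,1/\Delta_R$), and the alteration step needs to be carried out so that only $O(1)$ vertices per bipartite pair have their degree perturbed, keeping the final deviation within $s^{2/3}$ rather than $s^{2/3}/2$. Properties (i) and (ii) are built into the construction and survive the alterations as long as swaps are done outside the $W$-target layer.
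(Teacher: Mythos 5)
There is a genuine gap, and it is fatal to the core of your argument: the per-vertex degree concentration step. In the hierarchy of Lemma~\ref{lem:section3newlemma} we have $1/n\ll\epsilon\ll 1/s$, so $s$ is bounded in terms of the other constants while $n$ may be arbitrarily large compared to any function of $s$. For a fixed vertex $v\in V_i$, the quantity $d_{H^{\mathrm m}[V_i,V_j]}(v)=\sum_{\ell}d_{L_\ell[X_i^\ell,X_j^\ell]}(\phi_\ell^{-1}(v))$ is a sum of only $s$ independent terms bounded by $\Delta$, so the failure probability of the event $\{$deviation $\le s^{2/3}/2\}$ is only $e^{-\Theta(s^{1/3}/\Delta^2)}$, a \emph{constant} independent of $n$. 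A union bound over the $\Theta(rn\Delta_R)$ vertex--pair events therefore does not close: taking ``$s$ sufficiently large in terms of $\Delta,\Delta_R$'' cannot help, because $n$ is chosen after (and much larger than) $s$. Worse, the conclusion itself is false for random independent bijections: if, say, half the vertices of each $X_i^\ell$ have degree $0$ into $X_j^\ell$ and half have degree $\approx 2M_{i,j}/s$, then since $n\gg 2^s$ there will typically exist a vertex all of whose $s$ preimages are high-degree, giving degree $\approx 2M_{i,j}$, far outside $M_{i,j}\pm s^{2/3}$. So the maximum deviation over $n$ vertices is of order $s$, not $s^{2/3}$, and no alteration step repairing $O(1)$ collision edges can fix a linear number of badly off vertices.

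The paper avoids exactly this by not leaving the matching of degrees to chance at the vertex level. For each $\ell$ it sets aside a small exceptional class and then sorts the remaining vertices of each $X_i^\ell$ by their degrees towards the neighbouring classes, cutting them into $s^{\Delta_R}$ blocks that are degree-homogeneous for all but $O(1)$ of the indices $\ell$; the only randomness is a single cyclic shift $j'_{i,\ell}$ of the block labels per $(i,\ell)$. Then for each fixed block index the relevant quantity is a sum over $\ell\in[s]$ of block-average degrees, concentrated by Azuma's inequality (Theorem~\ref{Azuma}), and the union bound runs over only $O(\Delta_R s^{\Delta_R})$ events per $i$ (independent across $i$), never over $n$ vertices. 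Finally, instead of allowing edge collisions and repairing them, the paper embeds $L_1,\dots,L_s$ one by one into the complete $r$-partite graph minus the previously used edges via the blow-up lemma (Lemma~\ref{thm:oldblowup}), which also handles your requirement (ii) through its image-avoidance feature; edge-disjointness is thus automatic, which is needed since the statement asks for a genuine decomposition. If you want to salvage your scheme, you would have to build in some such coordination of degrees across the $\ell$'s; independent uniform bijections plus Chernoff cannot do it.
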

\begin{proof}
For each $\ell \in [s]$, we will now refine the partition $X^\ell_1,\dots,X_r^\ell$ of $L_\ell$. The aim is that, firstly, within each refined partition class the vertex degrees are similar with respect to the other vertex classes (see Claim~\ref{B'qij}). 
Secondly, an analogue of the condition \eqref{eq:edgebalance} is still preserved (with appropriate scaling and relabelling) by the subpartitions (see Claim~\ref{Bqij}).
To achieve this, we first define a suitable `exceptional' class $X^{\ell,0}_i\subseteq X_i^\ell$ (for each $i\in [r]$) whose removal results in partition classes whose size is a large integer multiple.

For each $i\in [r]$ let $r_i(j)$ denote the $j$-th smallest number in $N_R(i)$. Let $r_i(j):= 0$ if $j> |N_R(i)|$ and let $M_{i,0}:=0$. Since $\Delta(R) \leq \Delta_R$, we have $r_i(\Delta_R+1)=0$ for each $i\in [r]$.
For every $x\in X^\ell_i$ and $j\leq |N_R(i)|$ we define 
$$d^\ell_j(x):= |N_{L_\ell}(x)\cap X_{r_i(j)}^\ell|, \text{ which will be called the \emph{$(j,\ell)$-degree} of } x.$$ We let $d^\ell_j(x):=0$ if $j> |N_R(i)|$. Note that for every $x\in X_i^\ell$ and every $j\in [\Delta_R]$ we have
\begin{align}\label{eq:jelldegrees}
0\leq d_j^\ell(x)\leq \Delta.
\end{align}
For a set $A\subseteq X^\ell_i$ we say that $A$ is \emph{$(j,\ell)$-regular} if every vertex $v \in A$
has the same $(j,\ell)$-degree, otherwise we say that $A$ is \emph{$(j,\ell)$-irregular}. 
We let $n'$ be the integer satisfying $n = (s^{\Delta_R}+1) n' + m'$ with $0\leq m' \leq s^{\Delta_R}$.
For each $i \in [r]$ define $m'_i$ to satisfy $n_i = (s^{\Delta_R}+1) n' + m_i'$. So $0 \leq m_i' \leq s^{\Delta_R} +C$. 
Let 
\begin{align}\label{eq:defhatn}
\hat{n}_{i,j}:=\begin{cases}
n_i  &\text{ if } j=0,\\
(s^{\Delta_R-j}+1)n'+m_i' &\text{ if } j\in[\Delta_R-1],\\
n'+m'_i &\text{ if } j=\Delta_R.
\end{cases}
\end{align}

\begin{claim}\label{X0 choice}
For all $\ell\in [s]$ and $i\in [r]$, there is a set $X_{i}^{\ell,0} \subseteq X^\ell_i$ with $|X_{i}^{\ell,0}|=\hat{n}_{i,\Delta_R}$ such that
for each $j\in [|N_R(i)|]$ we have 
\begin{align}\label{average X0} 
\sum_{\ell=1}^{s} D^{\ell,j}_i =M_{i,r_i(j)}\pm s^{3/5},
\end{align}
where $D^{\ell,j}_i$ is the average $(j,\ell)$-degree of the vertices in $X^{\ell,0}_i$, and
\begin{align}\label{irregular X0}
|\{\ell\in [s] : X^{\ell,0}_i \text{ is }(j,\ell)\text{-irregular}\}| \leq 3\Delta_R \Delta .\end{align}
\end{claim}
\begin{proof}
Let $\ell\in [s]$ and $i\in[r]$ be arbitrary but fixed. 
For each $0\leq j \leq \Delta_R$ we define a vertex set $Y_i^{\ell,j}\subseteq X_i^\ell$ inductively as follows. Let $Y_i^{\ell,0}:=X^\ell_i$.
Assume that for some $0\leq j < \Delta_R$ we have defined  $Y_i^{\ell,j}$ of size $\hat{n}_{i,j}$, and order the vertices in $Y_i^{\ell,j}$ as $y^{\ell,j}_1,\dots, y^{\ell,j}_{\hat{n}_{i,j}}$ so that their $(j+1,\ell)$-degrees are non-decreasing. Now we pick a number $a$ in $[\hat{n}_{i,j}]$ uniformly at random and let 
$$Y_i^{\ell,j+1}:= \{ y^{\ell,j}_q : a\leq q< a+ \hat{n}_{i,j+1} \}, \text{ where the index } q \text{ is considered modulo }\hat{n}_{i,j}. $$
We repeat this process until we obtain $Y_i^{\ell,\Delta_R}$ with $|Y^{\ell,\Delta_R}_i|=\hat{n}_{i,\Delta_R} = n'+m'_i$. 
Let $X^{\ell,0}_i := Y^{\ell,\Delta_R}_i$.
Consider any vertex $x \in X^\ell_i$. We have
$$\Prob[x\in X^{\ell,0}_i]=\prod_{j=1}^{\Delta_R} \mathbb{P}[x\in Y^{\ell,j}_i\mid x\in Y^{\ell,j-1}_i] = \prod_{j=1}^{\Delta_R} \frac{|Y^{\ell,j}_i|}{|Y^{\ell,j-1}_i|} = \frac{\hat{n}_{i,\Delta_R}}{n_i}.$$ 
Thus 
\begin{align*}
\mathbb{E}\left[ \sum_{\ell=1}^{s} D^{\ell,j}_i \right] &=\sum_{\ell=1}^{s} \sum_{x\in X^\ell_i}\mathbb{P}[x\in X^{\ell,0}_i] \frac{d^\ell_j(x)}{\hat{n}_{i,\Delta_R}} = \frac{1}{n_i} \sum_{\ell=1}^{s}\sum_{x\in X^\ell_i} d^\ell_j(x)
\stackrel{\eqref{eq:edgebalance}}{=} \frac{M_{i,r_i(j)}n}{n_i}  \\ & \stackrel{\eqref{s1}}{=} M_{i,r_i(j)}\pm \frac{3C\Delta s}{n}.
\end{align*}
Consider the exposure martingale $Z_i^\ell: = \mathbb{E}[\sum_{\ell'=1}^{s} D^{\ell',j}_i \mid X^{1,0}_i, \dots, X^{\ell,0}_i]$ for $0\leq \ell \leq s$.
Note that $D^{\ell,j}_i\leq \Delta$ and $D^{\ell,j}_i$ is determined by $X^{\ell,0}_i$. Thus $Z^\ell_i$ is $\Delta$-Lipschitz. So by Azuma's inequality (Theorem~\ref{Azuma}) we know that 
\begin{equation}\label{eq:azum}
\Prob\left[\sum_{\ell=1}^{s} D^{\ell,j}_i \neq  M_{i,r_i(j)}\pm s^{3/5}\right]\leq 2e^{-\frac{s^{6/5}}{8\Delta^2 s}}
< e^{-s^{1/6}}.
\end{equation} 
Let $Z_{i,j}^\ell$ be the indicator variable defined by 
  \begin{displaymath}
Z_{i,j}^\ell:= \left\{ \begin{array}{ll}
1 & \text{ if }X_i^{\ell,0} \text{ is }(j,\ell)\text{-irregular},\\
0 & \text{ if }X_i^{\ell,0} \text{ is }(j,\ell)\text{-regular}.\\
\end{array} \right.
\end{displaymath}
Also we let $Z_{i,j} := \sum_{\ell=1}^{s} Z_{i,j}^\ell$.
Note that since $X^{\ell,0}_i = Y^{\ell,\Delta_R}_i \subseteq Y^{\ell,j}_i$ for all $j \in [\Delta_R-1]$ we get that if $Y^{\ell,j}_i$ is $(j,\ell)$-regular, then $X^{\ell,0}_i$ is also $(j,\ell)$-regular. \COMMENT{(Moreover, if $j> |N_R(i)|$, then $X_i^{\ell,0}$ is always $(j,\ell)$-regular as the $(j,\ell)$-degree of any vertex in $X_i^{\ell,0}$ is $0$.)}

Note that there are at most $\Delta+1$ indices $q$ such that $y^{\ell,j-1}_{q}$ and $y^{\ell,j-1}_{q+1}$ have distinct $(j,\ell)$-degrees
(by~\eqref{eq:jelldegrees} and since the ordering $y^{\ell,j-1}_{1},\dots, y^{\ell,j-1}_{\hat{n}_{j,i}}$ of $Y_i^{\ell,j-1}$ is such that the $(j,\ell)$-degrees are non-decreasing). So for a given $j\in [\Delta_R]$, 
$$\Prob[ Z_{i,j}^\ell = 1 ] \leq \Prob[Y^{\ell,j}_i \text{ is } (j,\ell)\text{-irregular}]\leq
\frac{(\Delta+1) \hat{n}_{i,j}}{\hat{n}_{i,j-1}} \stackrel{\eqref{eq:defhatn}}{\leq} \frac{3 \Delta}{2s}.
$$
So $\mathbb{E}[Z_{i,j}] \leq \frac{3\Delta s}{2s} =\frac{3\Delta}{2}$, and hence by Markov's inequality, 
$$\mathbb{P}\left[ Z_{i,j} \geq 3\Delta_R \Delta \right] \leq \frac{1}{2\Delta_R}.$$
Together with~\eqref{eq:azum} and a union bound over all choices of $j\in [\Delta_R]$ this implies that
with probability at least $1- \Delta_R(e^{-s^{1/6}}+1/(2\Delta_R))>0$ both \eqref{average X0} and \eqref{irregular X0} hold for fixed $i\in [r]$, meaning that we can find sets $X_{i}^{\ell,0}$ as required for fixed $i$. Thus we may take such a choice for each $i\in [r]$, which completes the proof of the claim.
\end{proof}

We now proceed in a similar way for the remaining vertices. For each $\ell \in [s]$ and $i\in [r]$ let $X^{\ell,0}_i$ be as guaranteed by Claim \ref{X0 choice} and let
$$X^{\ell,*}_i := X^\ell_i\setminus X^{\ell,0}_i.$$ Then $|X^{\ell,*}_i| = s^{\Delta_R} n'$. We now order the vertices of $X^{\ell,*}_i$ in the order of non-decreasing $(1,\ell)$-degree and partition them into $s$ blocks, each of size $s^{\Delta_R -1}n'$. For each $p_1\in [s]$ we let  $A^\ell_i(p_1)$ denote the $p_1$-th block. We next order the vertices of each $A^\ell_i(p_1)$ in the order of  non-decreasing $(2,\ell)$-degree and partition them into $s$ blocks, each of size $s^{\Delta_R-2}n'$. For all $p_1,p_2\in [s]$ we let $A^\ell_i(p_1,p_2)$ denote the $p_2$-th block of $A^\ell_i(p_1)$. Continuing in this way, we obtain $s^{\Delta_R}$ blocks $A^\ell_i(p_1,p_2,\dots, p_{\Delta_R})$ of size $n'$ each.

We now show that for fixed $\ell\in [s]$, $i\in[r]$ and $j\in [\Delta_R]$ we have 
\begin{align}\label{A block irregular}
|\{(p_1,\dots, p_{\Delta_R} ) : A^\ell_i(p_1,\dots, p_{\Delta_R})\text{ is }(j,\ell)\text{-irregular} \}| \leq (\Delta+1) s^{\Delta_R-1}.
\end{align}
Note that, for each $j\in [\Delta_R]$, if $A^\ell_i(p_1, \dots, p_{j})$ is $(j,\ell)$-regular, then  $A^\ell_i(p_1,\dots, p_{\Delta_R})$ is also $(j,\ell)$-regular. But by~\eqref{eq:jelldegrees}, for any fixed $(p_1,\dots, p_{j-1})$ all but at most $\Delta+1$ of the blocks $A^\ell_i(p_1, \dots, p_{j})$ are $(j,\ell)$-regular.
This implies~(\ref{A block irregular}).

For each $\ell\in [s]$ and $i\in [r]$ we now relabel the $s^{\Delta_R}$ blocks $A^\ell_i(p_1,\dots, p_{\Delta_R})$ by $X^{\ell,j'}_i$ with $j' \in [s^{\Delta_R}]$.
Thus $X_i^{\ell,1},X_i^{\ell,2},\dots, X^{\ell,s^{\Delta_R}}_i$ forms a partition of $X_i^{\ell,*}$ into blocks of size $n'$.
Let $d^{\ell}_j(i,j')$ be the average $(j,\ell)$-degree of the vertices in $X^{\ell,j'}_i$ where the index $j'$ is considered modulo $s^{\Delta_R}$. Now we choose a random integer $ j'_{i,\ell} \in [s^{\Delta_R}]$ for each $\ell \in [s]$ and $i\in [r]$.

\begin{claim}\label{Bqij}
For any fixed $i\in [r],j\in [\Delta_R],q\in [s^{\Delta_R}]$, let $B_q(i,j)$ be the event that 
\begin{align}\label{ref Bqij}
\sum_{\ell=1 }^{s} d^\ell_j(i,j'_{i,\ell}+q) \neq  M_{i,r_i(j)}\pm 2s^{3/5}.
\end{align}
Then $\Prob[B_q(i,j)] \leq e^{-s^{1/6}}$.
\end{claim}
\begin{proof}
For each vertex $x \in X^{\ell,*}_i$, we have $\mathbb{P}[x \in X^{\ell,j'_{i,\ell}+q}_i]=1/{s^{\Delta_R}}$ since there exists one index $j'$ such that $x \in X^{\ell,j'}_i$, and the probability that $j'_{i,\ell}+q = j'$ is exactly ${1}/{s^{\Delta_R}}$.
So, 
\begin{align}\label{Ediljq}
\mathbb{E}\left[\sum_{\ell=1 }^{s} d^\ell_j(i,j'_{i,\ell}+q) \right]= \sum_{\ell=1}^{s} \sum_{ x\in X^{\ell,*}_i} \frac{d^\ell_j(x)}{n'} \cdot \frac{1}{s^{\Delta_R} }.
\end{align}
By the assumptions of Lemma~\ref{packing to almost regular graph} and~\eqref{average X0}, $$\sum_{\ell=1}^{s} \sum_{ x\in X^\ell_i} d^\ell_j(x) = M_{i,r_i(j)} n  \;\;\; \text{ and } \;\;\; \sum_{\ell=1}^{s} \sum_{x\in X^{\ell,0}_i} d^\ell_j(x) = (M_{i,r_i(j)} \pm s^{3/5})\hat{n}_{i,\Delta_R}.$$
Thus 
\begin{eqnarray*}\sum_{\ell=1}^{s} \sum_{ x\in X^{\ell,*}_i} d^\ell_j(x) &=& \sum_{\ell=1}^{s} \sum_{ x\in X^\ell_i \setminus X^{\ell,0}_i} d^\ell_j(x) = M_{i,r_i(j)} (n_i\pm C)-(M_{i,r_i(j)}\pm s^{3/5})\hat{n}_{i,\Delta_R}\\ 
&\stackrel{\eqref{eq:defhatn}}{=}& (M_{i,r_i(j)}\pm s^{3/5}) s^{\Delta_R}n'.
\end{eqnarray*}
Together with (\ref{Ediljq}) this implies that $$\mathbb{E}\left[\sum_{\ell=1 }^{s} d^\ell_j(i,j'_{i,\ell}+q) \right] = M_{i,r_i(j)}\pm s^{3/5}.$$
For fixed $i,j$ and all $0\leq \ell \leq s$, let $$Z^{\ell}_{i,j} := \mathbb{E}\left[\sum_{\ell'=1}^{s} d^{\ell'}_j(i,j'_{i,\ell'}+q) \mid j'_{i,1},\dots, j'_{i,\ell}\right].$$
Then $Z^{\ell}_{i,j}$ is an exposure martingale which is $\Delta$-Lipschitz by~\eqref{eq:jelldegrees}.
Therefore, by Azuma's inequality (Theorem \ref{Azuma}), 
$$\Prob[B_q(i,j)]\leq 2e^{-\frac{s^{6/5}}{2\Delta^2 s}} < e^{-s^{1/6}}.$$
\end{proof}

\begin{claim}\label{B'qij}
For any fixed $i\in [r],j\in [\Delta_R],q\in [s^{\Delta_R}]$, 
let $B'_q(i,j)$ be the event that 
$$|\{\ell\in [s]: X^{\ell,j'_{i,\ell}+q}_i \text{ is }(j,\ell)\text{-irregular}\}| \geq s^{3/5}.$$
Then $\Prob[B'_q(i,j)]\leq e^{-s^{1/6}}.$
\end{claim}
\begin{proof}
Let 
\begin{displaymath}
Z(i,j,q,\ell):= \left\{ \begin{array}{ll}
1 & \text{ if }X^{\ell, j'_{i,\ell}+q}_i \text{ is }(j,\ell)\text{-irregular},\\
0 & \text{ otherwise. }\\
\end{array} \right.
\end{displaymath}
\noindent
Let $Z(i,j,q) := \sum_{\ell=1}^{s} Z(i,j,q,\ell)$. By~\eqref{A block irregular}, 
$$\Prob\left[X^{\ell,j'_{i,\ell}+q}_i \text{ is } (j,\ell) \text{-irregular}\right] \leq \frac{(\Delta+1) s^{\Delta_R -1}}{s^{\Delta_R}} \le  \frac{2\Delta}{s}.$$
This implies that for all $\ell\in[s]$ and all $z_1,\dots,z_{\ell-1}\in\{0,1\}$, 
$$\Prob\left[Z(i,j,q,\ell)= 1 \mid Z(i,j,q,1)=z_1, \dots, Z(i,j,q,\ell-1)=z_{\ell-1} \right] \leq  \frac{2\Delta}{s}.
$$
Thus, by Lemma~\ref{Chernoff Bounds} and Proposition~\ref{generalised-chernoff}\COMMENT{Note that $\mathbb{E}[ Bin(s,2\Delta/s) ] = 2\Delta \leq s^{3/5}/2$.},
$\Prob[Z(i,j,q) \geq s^{3/5}] \leq 2e^{-s^{6/5}/2s} \leq e^{-s^{1/6}}$. 
\end{proof}

Claims \ref{Bqij} and \ref{B'qij} imply that for fixed $i\in [r]$, with probability at least $1 - \Delta_R s^{\Delta_R}(e^{-s^{1/6}} +e^{-s^{1/6}}) >1/2$ for all $j \in [\Delta_R] $ and $q\in [s^{\Delta_R}]$ neither $B_q(i,j)$ nor $B'_q(i,j)$ occurs.
Thus for fixed $i\in [r]$, there exists a choice of $j'_{i,\ell}$ for each $\ell\in [s]$ such that this is indeed the case. Hence we can choose such integers $j'_{i,\ell}$ for all $i\in [r]$.\COMMENT{Note that choice for $i\in [r]$ does not affect the choice for another index $i'\in [r]$.} We now re-label, letting $X^\ell_{i,q}:= X^{\ell,j'_{i,\ell}+q}_i$ for all $q\in [s^{\Delta_R}]$, and $X^{\ell}_{i,0}: = X^{\ell,0}_i$.

In order to construct the graph $H$ required in Lemma~\ref{packing to almost regular graph}, we now apply the blow-up lemma (Lemma~\ref{thm:oldblowup}). For this, we divide each $V_i$ into $V_{i,0}$ of size $n'+m'_i$, and $V_{i,q}$ of size $n'$ for all $q\in [s^{\Delta_R}]$. Let $G_1$ be the complete $r$-partite graph with vertex partition $V_1,\dots, V_r$, and consider the refined vertex partition $$\mathcal{P}:=(V_{1,0},\dots, V_{1,s^{\Delta_R}},V_{2,0},\dots, V_{2,s^{\Delta_R}},\dots,V_{r,s^{\Delta_R}})$$
of $G_1$ and the $(s^{\Delta_R}+1)$-fold blow-up $R_{s^{\Delta_R}+1}$ of $R$. Note that $G_1$ admits vertex partition $(R_{s^{\Delta_r}+1}, \mathcal{P})$.

For each $\ell\in [s]$ in turn, we will choose an embedding $\phi_\ell$ of $H_\ell$ into some subgraph $G_\ell$ of $G_1$. Suppose that for some $\ell\in [s]$
we have already defined $G_\ell$ and $\phi_{\ell'}$ for all $\ell'<\ell$.
For each $(i,q) \in [r] \times \{0, 1, \dots,  s^{\Delta_R} \}$, we let $U^{\ell}_{i,q} := V_{i,q}\cap \bigcup_{\ell' < \ell} \phi_{\ell'}(W^{\ell'}_i)$ and 
we choose an embedding $\phi_\ell: L_\ell \rightarrow G_\ell$ of $L_\ell$ into $G_\ell$ in such a way that for all $(i,q) \in [r] \times \{0,1,\dots, s^{\Delta_R}\}$ we have 
$\phi_{\ell}(X^{\ell}_{i,q})=V_{i,q}$  and
\begin{align}\label{eq: W U disjoint}
\phi_{\ell}(W^{\ell}_i \cap X^{\ell}_{i,q} ) \cap U^{\ell}_{i,q} =\emptyset.
\end{align}
 We then let $G_{\ell+1}:= G_\ell - \phi_\ell(E(L_\ell))$. We repeat this process until we obtain $G_{s+1}$. 

Note that this process succeeds because for all $x \in V(G_\ell)\setminus V_i,$ and all $q\in\{0,1,\dots,s^{\Delta_R}\}$ we have $|N_{G_\ell}(x)\cap V_{i,q}| \geq |V_{i,q}| - \Delta s \geq (1- \frac{\Delta s}{n'})|V_{i,q}|.$ So $G_\ell$ is $((\frac{\Delta s}{n'})^{1/2},1)$-super-regular with respect to $(R_{s^{\Delta_r}+1},\mathcal{P})$ for all $\ell\in [s]$. 
Also, for each $(i,q) \in [r] \times \{0,1,\dots, s^{\Delta_R}\}$ we have $|W^{\ell}_i|, |U^{\ell}_{i,q}| \leq s \epsilon  n \leq \epsilon^{1/2} n'$.
Thus we can keep applying the Blow-up lemma (Lemma~\ref{thm:oldblowup}) until we obtain $G_{s+1}$.  Let $H:= \bigcup_{\ell=1}^{s} \phi_\ell(L_\ell).$

We claim that $H$ is as desired. 
Given a vertex $x\in V_{i,q}$, let 
$d_{H,j}(x):= |N_H(x)\cap V_{r_i(j)}|.$ Note that $d^\ell_j(\phi^{-1}_\ell(x)) =d^{\ell}_j(i,j'_{i,\ell}+q)$ unless $X^\ell_{i,q}$ is $(j,\ell)$-irregular. But $X^\ell_{i,q}$ is $(j,\ell)$-irregular for at most 
$s^{3/5}$ indices $\ell$ because $B'_q(i,j)$ does not occur. Also $|d^{\ell}_j(\phi^{-1}_\ell(x))- d^{\ell}_j(i,j'_{i,\ell}+q)| \leq \Delta$ even when $X^\ell_{i,q}$ is $(j,\ell)$-irregular.
Thus 
\begin{eqnarray} 
d_{H,j}(x) &=& \sum_{\ell=1}^{s} d^\ell_j(\phi^{-1}_\ell(x)) =\sum_{\ell=1}^{s} d^{\ell}_{j}(i,j'_{i,\ell}+q) \pm \Delta s^{3/5}
\stackrel{(\ref{ref Bqij})}{=} M_{i,r_i(j)}\pm 2s^{3/5} 
\pm \Delta s^{3/5} 
\nonumber \\ &=& 
M_{i,r_i(j)}\pm s^{2/3}.\nonumber
\end{eqnarray}
Lemma~\ref{packing to almost regular graph}(i) follows from the observation that for all $\ell\in [s]$ and $i\in [r]$, we have $\phi(X_i^{\ell}) = \phi_{\ell}(\bigcup_{q=0}^{s^{\Delta_R}}X^{\ell}_{i,q}) = \bigcup_{q=0}^{s^{\Delta_R}}V_{i,q} = V_i$.
Lemma~\ref{packing to almost regular graph}(ii) follows from \eqref{eq: W U disjoint}. This completes the proof of Lemma~\ref{packing to almost regular graph}. 
\end{proof}

Now we show that we can transform an almost regular graph $H$ (as in the conclusion of Lemma~\ref{packing to almost regular graph}) into an $(R,\vec{k},C)$-near-equiregular graph by adding a small number of edges. First we show that we can achieve this if the vertex classes have equal size. We write $\overline{H}$ for the complement of $H$. 

\begin{lemma} \label{adding}
Suppose $r\in \mathbb{N}$ and $0<1/n\ll 1/s \ll 1/\Delta$. Let $R$ be a graph on $[r]$. 
Let $\vec{M}$ and $\vec{k}$ be symmetric $r\times r$ matrices such that $k_{i,j}\in \mathbb{N}$, $M_{i,j} \leq 2\Delta s$ and $ M_{i,j}+ s^{2/3}+1\leq k_{i,j} < n/2$ for all $ij\in E(R)$. Suppose $H$ is an $r$-partite graph with vertex partition $(R,V_1,\dots, V_r)$ such that $|V_i|=n$ for all $i\in [r]$ and that for all $ij\in E(R)$ and every vertex $x\in V_i$ we have $M_{i,j}- s^{2/3}-1\leq d_{H[V_i,V_j]}(x)\leq M_{i,j} + s^{2/3}$.\COMMENT{need to allow $M_{i,j}- s^{2/3}-1$ instead of $M_{i,j}- s^{2/3}$ to be able to apply this in the proof of Lemma \ref{adding2}} Then there exists a graph $H'$ with vertex partition $(R,V_1,\dots,V_r)$ such that $H\subseteq H'$ and $H'[V_i,V_j]$ is $k_{i,j}$-regular for all $ij \in E(R)$.
\end{lemma}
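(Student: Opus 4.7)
The plan is to handle each edge $ij\in E(R)$ independently: for each such $ij$ I will construct a bipartite graph $F_{i,j}$ on $(V_i,V_j)$, edge-disjoint from $H[V_i,V_j]$, such that $(H\cup F_{i,j})[V_i,V_j]$ is $k_{i,j}$-regular. Taking $H':=H\cup\bigcup_{ij\in E(R)}F_{i,j}$ gives a graph with vertex partition $(R,V_1,\dots,V_r)$ in which each $H'[V_i,V_j]$ is $k_{i,j}$-regular, which is exactly what is required. Because distinct edges of $R$ correspond to disjoint bipartite "slots" in the partition, these constructions do not interfere.

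The construction of $F_{i,j}$ proceeds in two phases. In \emph{Phase 1} I eliminate the small degree irregularity: set $K:=M_{i,j}+s^{2/3}+1$ and the \emph{phase-1 deficiency} $\delta^1_v:=K-d_{H[V_i,V_j]}(v)$ for every $v\in V_i\cup V_j$. By hypothesis $\delta^1_v\in[1,2s^{2/3}+2]$, and $\sum_{v\in V_i}\delta^1_v=Kn-e(H[V_i,V_j])=\sum_{v\in V_j}\delta^1_v$ since $|V_i|=|V_j|=n$. I claim there exists a bipartite graph $F^1\subseteq \overline{H}[V_i,V_j]$ with $d_{F^1}(v)=\delta^1_v$ for every $v$; this is a standard application of the Gale--Ryser type realisability condition for degree sequences within a host bipartite graph. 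Given the huge slack --- each vertex has at least $n-M_{i,j}-s^{2/3}>n/2$ non-neighbours in $\overline{H}[V_i,V_j]$ while each prescribed degree is at most $2s^{2/3}+2\ll n$ --- the cut conditions are easily verified. (An alternative constructive route is to repeatedly apply König's theorem to find a matching which decreases the maximum deficiency by one, terminating after $O(s^{2/3})$ iterations.)

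In \emph{Phase 2}, write $H^1:=H\cup F^1$; then $H^1[V_i,V_j]$ is $K$-regular, so its bipartite complement in $V_i\times V_j$ is $(n-K)$-regular. By König's edge-colouring theorem this complement decomposes into $n-K$ pairwise edge-disjoint perfect matchings. Since $k_{i,j}<n/2$ and $K\leq k_{i,j}<n/2$ we have $k_{i,j}-K\leq n-K$, so taking the union of $k_{i,j}-K$ of these matchings produces a $(k_{i,j}-K)$-regular bipartite graph $F^2\subseteq\overline{H^1}[V_i,V_j]$. Setting $F_{i,j}:=F^1\cup F^2$ yields a graph on $(V_i,V_j)$, edge-disjoint from $H[V_i,V_j]$, such that every vertex has degree $(K-d_{H[V_i,V_j]}(v))+(k_{i,j}-K)=k_{i,j}-d_{H[V_i,V_j]}(v)$ in $F_{i,j}$, as required.

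The main obstacle is the existence claim in Phase 1. Phase 2 is essentially automatic from König once Phase 1 is done, and the reduction across edges of $R$ is trivial; but for Phase 1 one must verify the Gale--Ryser conditions (or alternatively make the swap/matching construction rigorous and check that it always terminates with the exact prescribed degrees rather than overshooting). Thanks to the gap between the tiny prescribed degrees $O(s^{2/3})$ and the large minimum complement degree $\geq n/2$, both routes go through without difficulty, but writing the verification cleanly takes a little care with the subset sums involved.
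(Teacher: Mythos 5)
Your outline is correct and is essentially the paper's argument: the paper also treats each pair $(V_i,V_j)$ separately, realises the missing ``deficiency'' degrees by an edge set of $\overline{H}[V_i,V_j]$ via a max-flow min-cut argument (your Gale--Ryser/cut-condition step is the same tool), and then tops up to $k_{i,j}$ by adding perfect matchings of the complement. Your Phase 2 -- decomposing the $(n-K)$-regular bipartite complement into perfect matchings by K\"onig and adding $k_{i,j}-K$ of them -- differs only cosmetically from the paper, which adds perfect matchings one at a time using that the complement of the current graph has minimum degree above $n/2$.

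The one substantive caveat concerns the step you yourself identify as the main obstacle. The reason you give for the cut conditions -- more than $n/2$ non-neighbours per vertex versus prescribed degrees $O(s^{2/3})$ -- is not, by itself, a valid reason: a bipartite host with minimum degree $n/2+1$ can fail to contain a subgraph with prescribed degrees as small as $3$. For instance, take the complete bipartite graph on $(A,B)$ minus all edges between sets $A_0\subseteq A$, $B_0\subseteq B$ of size $n/2-1$, and prescribe degree $3$ on $A_0\cup B_0$ and degree $1$ elsewhere; the side sums agree, but all edges at $A_0$ must land in $B\setminus B_0$, whose total prescribed degree is only about $n/2$, far below $3|A_0|$. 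What actually makes the verification go through here is that $\overline{H}[V_i,V_j]$ is the complement of a graph of maximum degree at most $M_{i,j}+s^{2/3}=O(\Delta s)$, together with the lower bound $\delta^1_v\ge 1$ (this is where the ``$+1$'' in $k_{i,j}\ge M_{i,j}+s^{2/3}+1$ is used) and $1/n\ll 1/s$. Concretely, the condition to check is that for all $S\subseteq V_i$, $T\subseteq V_j$ one has $\sum_{v\in S}\delta^1_v\le e_{\overline{H}}(S,T)+\sum_{u\in V_j\setminus T}\delta^1_u$, and the case analysis is: if $|T|\ge M_{i,j}+3s^{2/3}+3$, every $v\in S$ has at least $\delta^1_v$ complement-neighbours in $T$; if $|S|\ge M_{i,j}+3s^{2/3}+3$, every $u\in T$ has at least $\delta^1_u$ complement-neighbours in $S$ and one uses that the two side sums of deficiencies are equal; and if both $|S|$ and $|T|$ are smaller than this, then $\sum_{u\in V_j\setminus T}\delta^1_u\ge n-O(\Delta s)$ dominates $\sum_{v\in S}\delta^1_v=O(\Delta s\cdot s^{2/3})$. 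This is exactly the min-cut case analysis the paper carries out, so your conclusion stands, but the ``slack'' you quote is not the operative one, and the sparsity of $H$ (not just the $n/2$ bound) must enter the verification.
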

\begin{proof}
It suffices to prove the lemma for the case when $r=2$ and $R=K_2$, and $k_{1,2} \leq M_{1,2}+ 2 s^{2/3}$. (Indeed, if $r\geq 3$, we proceed in the same way as when $r=2$ for each pair $V_i,V_j$ with $ij \in E(R)$. Also if $M_{1,2} + 2s^{2/3} < k_{1,2} < n/2$, then we can set $k'_{1,2} := M_{1,2} + 2s^{2/3}$ and find $H''$ with $H\subseteq H''$ such that $H''[V_1,V_2]$ is $k'_{1,2}$-regular. Since $k'_{1,2}<n/2$, we have $\delta(\overline{H''}[V_1,V_2]) \geq n/2$, and thus $\overline{H''}[V_1,V_2]$ has a perfect matching which we can add to $H''$. We repeat this until we obtain a $k_{1,2}$-regular graph $H'[V_1,V_2]$. This is possible since $k_{1,2} < n/2$.) 

So suppose that $r=2$ and $R=K_2$ and $k_{1,2} \leq M_{1,2}+ 2 s^{2/3}$. We will apply the Max-flow min-cut theorem. For this sake let us define a digraph $G'$ and an edge capacity function $c$ as follows:
\begin{itemize}
\item
$V(G') := \{s,t\} \cup V_1 \cup V_2$,
\item
$E(G') := \{\overrightarrow{vv'} : vv' \in E(\overline{H}[V_1,V_2]), v\in V_1, v'\in V_2\}
 \cup \{ \overrightarrow{sv} : v\in V_1\} \cup \{ \overrightarrow{v't}: v'\in V_2\}$,
\item
$c(\overrightarrow{vv'}) := 1, c(\overrightarrow{sv}) := k_{1,2}-d_{H[V_1,V_2]}(v),\text{ and }  c(\overrightarrow{v't}) := k_{1,2}-d_{H[V_1,V_2]}(v') \text{ for all } v \in V_1, v'\in V_2$.
\end{itemize}
Let $$C^*:=\sum_{v\in V_1}c(\overrightarrow{sv})=\sum_{v\in V_1} (k_{1,2}-d_{H[V_1,V_2]}(v))=\sum_{v'\in V_2}c(\overrightarrow{v't})$$
be the capacity of the cut $\{\{s\}, V_1 \cup V_2  \cup \{t\}\}$. If the above cut 
is a minimum $(s,t)$-cut, then the Max-flow min-cut theorem ensures that there is an integer flow $f$ from $s$ to $t$ with value $C^*$. Let $E'$ be the set of all edges from $V_1$ to $V_2$ with value $1$ in $f$. Then $E'\subseteq E(\overline{H})$, so letting $E(H'[V_1,V_2])):= E(H[V_1,V_2])\cup E'$ will ensure that $H'[V_1,V_2]$ is $k_{1,2}$-regular. Thus it suffices to prove that any $(s,t)$-cut in $G'$ has capacity at least $C^*$.

Note that for any $v\in V_1, v'\in V_2$, 
\begin{align} \label{flow bounds}
1\leq c(\overrightarrow{sv}), c(\overrightarrow{v't}) \leq k_{1,2} - (M_{1,2} - s^{2/3}-1) \leq 4 s^{2/3}.\end{align}
Assume we have an $(s,t)$-cut $\{U,W\}$ with $s\in U, t\in W$ which has capacity $C'$. Let $U_1 := U\cap V_1, U_2:= U\cap V_2, W_1:= W\cap V_1, W_2 := W\cap V_2$. Then
$$C'=\sum_{v\in W_1} c(\overrightarrow{sv}) + e_{\overline{H}}(U_1,W_2) + \sum_{v'\in U_2} c(\overrightarrow{v't}).$$

First, assume that $|W_2| \geq M_{1,2}+5s^{2/3}$. Then for each $v\in U_1$, $e_{\overline{H}}(\{v\},W_2) \geq |W_2| - d_{H[V_1,V_2]}(v) \geq M_{1,2}+5s^{2/3} -(M_{1,2}+ s^{2/3}) \geq c(\overrightarrow{sv})$ by \eqref{flow bounds}, so we get
\begin{align*}
C' \geq \sum_{v\in W_1} c(\overrightarrow{sv}) + e_{\overline{H}}(U_1,W_2)  
\geq  \sum_{v\in W_1} c(\overrightarrow{sv})+  \sum_{v\in U_1} c(\overrightarrow{sv}) = C^*.
\end{align*}
So we may assume that $|W_2| < M_{1,2}+5s^{2/3}$. A similar argument shows that we may assume $|U_1| < M_{1,2}+5s^{2/3}$. \COMMENT{Assume, for a contradiction, that $|U_1| \geq M_{1,2}+5s^{2/3}$. Then for $v'\in W_2$, $e_{\overline{H}}(U_1,\{v'\}) \geq |U_1| - d_{H[V_1,V_2]}(v') \geq M_{1,2}+5s^{2/3}-(M_{1,2}+ s^{2/3}) > c(\overrightarrow{v't})$, so we get

\begin{align*}
C' \geq e(U_1,W_2) + \sum_{v'\in U_2} c(\overrightarrow{v't})  &= \sum_{v'\in U_2} c(\overrightarrow{v't}) + \sum_{v'\in W_2} e(U_1,\{v'\}) \\
& \geq \sum_{v'\in W_2} c(\overrightarrow{v't})+  \sum_{v'\in U_2} c(\overrightarrow{v't}) = C^*,
\end{align*}
which, again, contradicts the assumption that $C'<C^*$.} Hence $|U_2|,|W_1| \geq n- M_{1,2}-5s^{2/3}\geq n/2$ since $1/n \ll 1/(2\Delta s) \leq 1/M_{1,2}$. This implies that
\begin{align*}
C'&\geq \sum_{v\in W_1} c(\overrightarrow{sv}) + \sum_{v'\in U_2} c(\overrightarrow{v't})  \stackrel{(\ref{flow bounds})}{\geq} \sum_{v\in W_1} c(\overrightarrow{sv}) +  \sum_{v'\in U_2} 1  \geq   \sum_{v\in W_1} c(\overrightarrow{sv}) + n/2 \\
& \geq   \sum_{v\in W_1} c(\overrightarrow{sv}) +  |U_1|\cdot 4  s^{2/3} \stackrel{(\ref{flow bounds})}{\geq}  \sum_{v\in W_1} c(\overrightarrow{sv}) +  \sum_{v\in U_1} c(\overrightarrow{sv}) = C^*,
\end{align*}
thus every $(s,t)$-cut has capacity at least $C^*$, and we are done.
\end{proof}

We now extend Lemma \ref{adding} to the setting where the vertex class sizes might not be exactly equal.


\begin{lemma} \label{adding2}
Suppose $0<1/n\ll 1/s \ll  1/\Delta$ and $1/n \ll 1/k, 1/(C+1)$. Let $R$ be a graph on $[r]$ and let $n\leq n_1\leq \dots \leq n_r\leq n+C$. 
Let $\vec{M}$ and $\vec{k}$ be symmetric $r\times r$ matrices such that $M_{i,j}\leq 2\Delta s$, $k_{i,j}\in \mathbb{N}$, $M_{i,j}+s^{2/3}+1\leq k_{i,j} \leq k$ for all $ij\in E(R)$. Suppose $H$ is an $r$-partite graph with vertex partition $(R,V_1,\dots, V_r)$ such that $|V_i|=n_i$ for all $i\in [r]$ and that for all $ij\in E(R)$ and every vertex $x\in V_i$ we have 
$d_{H[V_i,V_j]}(x)= M_{i,j} \pm s^{2/3}$. Then there exists an $(R,\vec{k},C)$-near-equiregular graph $H''$ with vertex partition $(R,V_1,\dots,V_r)$ such that $H\subseteq H''$.
\end{lemma}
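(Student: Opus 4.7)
The plan is to follow the proof of Lemma~\ref{adding} essentially verbatim, adapting the Max-Flow Min-Cut argument so that it produces the near-equiregular structure directly. First I would reduce Lemma~\ref{adding2} to the bipartite case $r=2$, $R=K_2$: the near-equiregularity condition constrains $H''[V_i,V_j]$ for each $ij\in E(R)$ independently, and edges are added only within these pairs, so each pair can be handled separately. As in Lemma~\ref{adding}, I would further assume $k_{1,2}\leq M_{1,2}+2s^{2/3}$; larger values can then be reached by iteratively adding a perfect matching in the bipartite complement $\overline{H''_0}[V_1,V_2]$ restricted to the right vertex subsets (which exist by Hall's theorem, since at each stage the complement has minimum degree at least $n_1/2$).

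In the bipartite case, suppose without loss of generality $n_1\leq n_2$, set $a:=n_2-n_1\leq C$, and designate an arbitrary subset $X\subseteq V_1$ of size $b:=k_{1,2}a$. Note that $b\leq kC<n_1$ for $n$ large, and $b\leq Ck_{1,2}$, matching the tolerance in the near-equiregular definition. The target is to construct $H''[V_1,V_2]\supseteq H[V_1,V_2]$ in which every $v\in X$ has degree $k_{1,2}+1$ while every other vertex of $V_1\cup V_2$ has degree $k_{1,2}$; the total degree check $k_{1,2}(n_1-b)+(k_{1,2}+1)b=k_{1,2}n_2$ confirms this is consistent.

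To produce such $H''$, I would set up a flow network exactly as in the proof of Lemma~\ref{adding}, but with the source capacities modified to $c(\overrightarrow{sv}):=k_{1,2}+\mathbbm{1}[v\in X]-d_{H[V_1,V_2]}(v)$ for $v\in V_1$, keeping $c(\overrightarrow{v't}):=k_{1,2}-d_{H[V_1,V_2]}(v')$ for $v'\in V_2$ and $c(\overrightarrow{vv'}):=1$ for $vv'\in E(\overline{H}[V_1,V_2])$. All these capacities still lie in $[1,4s^{2/3}]$, and both total source and total sink capacity equal $C^*:=k_{1,2}n_2-e(H[V_1,V_2])$. By the Max-Flow Min-Cut theorem it suffices to show that every $(s,t)$-cut has capacity at least $C^*$; an integer-valued flow of this value identifies the set of $\overline{H}$-edges to add to $H$ to obtain $H''$.

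The case analysis for the minimum cut is essentially identical to that in Lemma~\ref{adding}. Given a cut $\{U,W\}$ with $U_i:=U\cap V_i$ and $W_i:=W\cap V_i$, if $|W_2|\geq M_{1,2}+5s^{2/3}$ then for each $v\in U_1$ we have $e_{\overline{H}}(\{v\},W_2)\geq |W_2|-d_H(v)\geq 4s^{2/3}\geq c(\overrightarrow{sv})$, so the $\overline{H}$-edges from $U_1$ to $W_2$ absorb the missing source capacities; the symmetric case $|U_1|\geq M_{1,2}+5s^{2/3}$ is handled via the sink capacities. Otherwise $|U_1|,|W_2|$ are both small, forcing $|U_2|,|W_1|\geq n_1/2$, in which case the sink capacities from $U_2$ contribute at least $|U_2|\geq n_1/2$, which dominates $\sum_{v\in U_1}c(\overrightarrow{sv})\leq 4s^{2/3}|U_1|=O(s^{5/3})\ll n$, since $1/n\ll 1/s$. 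The main obstacle is simply verifying that the extra $+1$ in the source capacities does not disrupt the case analysis; it does not, because the bound $c(\overrightarrow{sv})\leq 4s^{2/3}$ still holds (using $k_{1,2}\leq M_{1,2}+2s^{2/3}$) and the thresholds used in Lemma~\ref{adding} remain valid.
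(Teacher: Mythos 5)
Your main construction is correct, and it takes a genuinely different route from the paper. The paper handles each pair $ij\in E(R)$ with $|V_i|\ge |V_j|$ by first deleting $a:=|V_i|-|V_j|$ vertices of $V_i$ whose $H$-neighbourhoods are disjoint, applying Lemma~\ref{adding} as a black box to the now-balanced pair to get a $k_{i,j}$-regular graph, and then re-attaching the deleted vertices with disjoint $k_{i,j}$-sets of new neighbours (which creates the at most $Ck_{i,j}$ vertices of degree $k_{i,j}+1$). You instead reprove a one-sided variant of Lemma~\ref{adding} directly: prescribe in advance a set $X$ of $k_{1,2}a$ vertices of the smaller class that will have degree $k_{1,2}+1$, and run the same Max-Flow Min-Cut argument with source capacities shifted by the indicator of $X$. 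Your degree/capacity bookkeeping is right (total source and sink capacities both equal $k_{1,2}n_2-e(H[V_1,V_2])$, all capacities lie in $[1,4s^{2/3}]$ under the reduction $k_{1,2}\le M_{1,2}+2s^{2/3}$), and the three-case cut analysis goes through unchanged, so an integral maximum flow yields exactly the near-equiregular completion with $|X|=k_{1,2}a\le Ck_{1,2}$ exceptional vertices. This buys a self-contained argument that avoids the greedy deletion/re-attachment step, at the cost of redoing the flow analysis rather than quoting Lemma~\ref{adding}.

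The one step that does not work as literally stated is your reduction to $k_{1,2}\le M_{1,2}+2s^{2/3}$ by ``iteratively adding a perfect matching in the bipartite complement restricted to the right vertex subsets.'' In the unbalanced setting the edge count forces the exceptional set to have size exactly $k'a$ at parameter $k'$, so each increment from $k'$ to $k'+1$ must raise \emph{every} $V_2$-degree by one, raise most $V_1$-degrees by one, and raise $a$ newly exceptional $V_1$-vertices by two; no matching (even between equal-sized subsets) has this degree sequence when $|V_2|>|V_1|$. The fix is routine: give each of the $a$ new exceptional vertices two fresh complement-neighbours greedily, then find a perfect matching in the complement between the remaining $n_1-a$ vertices of $V_1$ and the remaining $n_1-a$ vertices of $V_2$ (Hall applies since the complement restricted to these sets has minimum degree well above half the part size, as $k\ll n$). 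Alternatively, you can drop the reduction altogether: since $k_{1,2}\le k$ and $k,\Delta s\ll n$, the same cut analysis works with capacities bounded by $k+1$ instead of $4s^{2/3}$, using the threshold $M_{1,2}+s^{2/3}+k+1$ in the first two cases and $(k+1)|U_1|\ll n$ in the third.
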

\begin{proof}
Consider $ij\in E(R)$ with $|V_i|\geq |V_j|$ and let $a:=|V_i|-|V_j|$. Greedily choose a set $A_{ij}$ of $a$ vertices in $V_i$ such that their neighbourhoods in $H[V_i,V_j]$ are disjoint (note that we can do this as $n \geq Ck^2$)\COMMENT{We take one vertex from $V_i$ and delete all vertices whose distance from it is two. (at most $k(k-1)$ vertices ) Now we again choose a vertex from the remaining of $V_i$ and delete all vertices within distance two from it. We can keep do this $C$ times as long as $Ck^2 \leq n$.}. 
Note that $H_{ij}:=H[V_i,V_j]-A_{ij}$ satisfies that for all $x\in V_i\cup V_j$, $M_{i,j} - s^{2/3} -1 \leq d_{H_{ij}}(x) \leq M_{i,j}+s^{2/3}$. Now apply Lemma~\ref{adding} to $H_{ij}$ to obtain a bipartite graph $H'_{ij}$ which is $k_{i,j}$-regular and is obtained from $H_{ij}$ by adding edges. Obtain $H''_{ij}$ from $H'_{ij}$ by adding $A_{ij}$ and connecting each $v\in A_{ij}$ to $k_{i,j}$ vertices in $V_j$ such that $N_{H[V_i,V_j]}(v)\subseteq N_{H''_{ij}}(v)$ and such that the $N_{H''_{ij}}(v)$ are disjoint for different $v\in A_{ij}$. Repeat this for all $ij\in E(R)$ and let $H''$ be the union of the $H''_{ij}$. From the construction of $H''$ it is immediate that $H''$ is $(R,\vec{k},C)$-near-equiregular.
\end{proof}

We can now combine the above results to obtain Lemma~\ref{lem:section3newlemma}. 

\begin{proof}[Proof of Lemma~\ref{lem:section3newlemma}.]
First we apply Lemma~\ref{packing to almost regular graph} to obtain $H$ as described there. So in particular, $H$ can be decomposed into copies $\phi(L_1),\dots \phi(L_s)$ of $L_1,\dots,L_s$ such that (i) and (ii) of Lemma~\ref{packing to almost regular graph} hold. Then we apply Lemma~\ref{adding2} to obtain $H'$ which satisfies the requirements of Lemma~\ref{lem:section3newlemma}. 

To check Lemma~\ref{lem:section3newlemma}(i), for each $ij\in E(R)$ recall $J_{i,j}=H'[V_i,V_j]-(\phi(L_1)\cup\dots\cup\phi(L_s))=(H-H')[V_i,V_j]$. Lemmas~\ref{packing to almost regular graph} and~\ref{adding2} together imply that each $x\in V_i$ satisfies $d_{H[V_i,V_j]}(x) = M_{i,j}\pm s^{2/3}$, and $d_{H'[V_i,V_j]}(x) \in   \{ k_{i,j}, k_{i,j}+1\}$.
Thus $\Delta(J_{i,j})\leq k_{i,j}- M_{i,j} + 2s^{2/3}$. 
Lemma~\ref{lem:section3newlemma}(ii) follows directly from Lemma~\ref{packing to almost regular graph}(ii).
\end{proof}


\subsection{Obtaining $(r,k,2)$-near-equiregular graphs}\label{subsec:32}
In this subsection we deal with the case when the `reduced graph' $R$ of the partitions satisfies $R=K_r$. As described at the beginning of Section~\ref{sec:stacking}, we will show that we can pack any suitable family $\mathcal{L}$ of bounded degree graphs into
`edge-balanced' graphs $H$ (see Lemma~\ref{packing to regular graph}). We then can apply Lemma~\ref{lem:section3newlemma} to pack these graphs $H$ together to form degree balanced graphs. We begin with a simple arithmetic observation.

\begin{prop} \label{arithm}
Suppose $r, \Delta, n\in \mathbb{N}$ with $r\geq 3\Delta+2$\COMMENT{Jaehoon: I think it is possible to replace it to $2\Delta+1$,
but it requires some tedious argument so I am not sure if it's worth doing.}, and let $\overline{n}:=rn+c$ for some integer $c$ with $0\leq c\leq r-1$. Then there are non-negative integers $a_i$ and $n_i$ for all $i\in [3]$ such that 
\begin{itemize}
\item[(i)] $a_2,a_3 \in \{0\}\cup\{\Delta+1,\Delta+2,\dots \}$, 
\item[(ii)] $a_1+a_2+a_3=r$,
\item[(iii)] $n_1\in\{n-1,n\}$ and $n_3=n_2+1=n_1+2$, 
\item[(iv)] $a_1n_1 + a_2n_2 + a_3n_3 = \overline{n}$.
\end{itemize}
\end{prop}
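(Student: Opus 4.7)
The plan is to reduce the problem to a simple case analysis on $c$. First, I would observe that condition (iii) forces $(n_1,n_2,n_3)$ to be one of the two triples $(n,n+1,n+2)$ or $(n-1,n,n+1)$. Substituting each into (iv) and using (ii) to eliminate $a_1$, the equation simplifies to
\[
a_2 + 2a_3 = c \qquad \text{(Profile A: } n_1=n\text{)}
\]
or
\[
a_3 - a_1 = c \qquad \text{(Profile B: } n_1=n-1\text{)}.
\]
So I just have to realise $c$ in one of these two forms while respecting (i), i.e.\ having $a_2,a_3\in\{0\}\cup[\Delta+1,\infty)$.

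Next, I would split according to the size of $c$ relative to $\Delta+1$. If $c=0$, take $a_1=r$, $a_2=a_3=0$ in Profile A. If $c\ge \Delta+1$, Profile A works trivially by setting $a_3=0$, $a_2=c$, $a_1=r-c$: the sole non-trivial check is $a_1\ge 0$, which follows from $c\le r-1$, and $a_2=c\ge \Delta+1$ is immediate.

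The remaining range $1\le c\le \Delta$ is where Profile A is forced to fail (since $a_3$ would have to be $0$, but then $a_2=c\le\Delta$ is forbidden). Here I would use Profile B and make the concrete choice
\[
a_3:=\Delta+1,\qquad a_1:=\Delta+1-c,\qquad a_2:=r-2\Delta-2+c.
\]
Then $a_3-a_1=c$ gives (iv); conditions (ii) and $a_1\ge 0$ are automatic; and the key point is that the hypothesis $r\ge 3\Delta+2$ together with $c\ge 1$ yields $a_2\ge r-2\Delta-1\ge \Delta+1$, so (i) is also satisfied.

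The main obstacle is really just noticing that one needs both profiles: Profile A alone cannot handle $1\le c\le \Delta$, while Profile B needs exactly the assumption $r\ge 3\Delta+2$ to guarantee $a_2\ge\Delta+1$ in that small-$c$ range. Once these two profiles and the three sub-cases of $c$ are set up, the verification is direct substitution.
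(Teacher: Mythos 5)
Your proposal is correct and is essentially the paper's own proof: the same case split on $c$ ($c=0$, $1\le c\le\Delta$, $c\ge\Delta+1$) with the identical explicit choices $a_1=\Delta+1-c$, $a_3=\Delta+1$, $n_1=n-1$ in the middle range and $a_1=r-c$, $a_3=0$, $n_1=n$ otherwise. Your reformulation via the two equations $a_2+2a_3=c$ and $a_3-a_1=c$ is just a cleaner way of phrasing the same direct verification.
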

\begin{proof}
If $c=0$, then we let $n_1:=n$, $a_1 := r$, $a_2:=a_3:=0$.
If $0< c\leq \Delta$, then we let $n_1:=n-1$, $a_1:=\Delta+1-c$ and $a_3:=\Delta+1$. If $\Delta+1 \leq c \leq r-1$, then we let $n_1:=n, a_1:= r-c$ and $a_3:=0$. In all cases we let $a_2:=r-a_1-a_3$, $n_2:=n_1+1$ and $n_3:=n_1+2$. It is easy to check that conditions (i)--(iv) are satisfied.
\end{proof}

Now we can state our main goal in this subsection. 

\begin{lemma}\label{packing to regular graph}
Suppose $0<1/n \ll 1/M \ll \xi \ll 1/\Delta,1/r$ and that $\overline{n}=rn+c$ for some integer $c$ with $0\leq c \leq r-1$. Suppose $s,k\in \mathbb{N}$ and that  $(1+\xi)M \leq k \leq (1+2\xi)M$.
Suppose $L_1,\dots, L_{s}$ are graphs of order $\overline{n}$ with $\Delta(L_i)\leq \Delta$ and $e(L_i)\geq {n}/4$ for all $i\in [s]$, and such that $\sum_{i=1}^{s} e(L_i) =  M\binom{r}{2} n$. 
\begin{itemize}
\item[(i)] 
Suppose further that $c=0$ and each $L_i$ has an equitable $r$-colouring. Then there exists a graph $H$ with vertex classes $V_1,\dots, V_r$ such that $|V_i|=n$ for all $i\in [r]$, $H[V_i,V_j]$ is $k$-regular for all $1\leq i<j\leq r$, and such that $L_1,\dots , L_s$ pack into $H$.
\item[(ii)] 
Suppose alternatively that $r\geq 3\Delta+2$ and $a_j,n_j$ are as given by Proposition \ref{arithm} (for all $j\in [3]$). Then there exists
an $(r,k,2)$-near-equiregular graph $H$ with vertex classes $V_1,\dots, V_r$ and a partition of $[r]$ into $I_1,I_2,I_3$ where $|I_j|=a_j$ and $|V_i|=n_j$ for all $i\in I_j$, and such that $L_1,\dots , L_s$ pack into $H$.
\end{itemize}
Moreover, writing $\phi(L_\ell)$ for the copy of $L_\ell$ in this packing of $L_1,\dots,L_s$ in $H$, and for each $ij\in E(R)$ writing $J_{i,j}:=H[V_i,V_j]-(\phi(L_1)\cup\dots\cup \phi(L_s))$, we have $\Delta(J_{i,j})\leq 3\xi k$. 
\end{lemma}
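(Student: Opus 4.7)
The plan is to reduce to Lemma~\ref{lem:section3newlemma}. For each $L_\ell$ we must produce a partition $(X_1^\ell,\dots,X_r^\ell)$ of $V(L_\ell)$ into independent sets whose sizes match $|V_i|$. In case (i) the assumed equitable $r$-colouring of $L_\ell$ does this directly, with all parts of size $n$. In case (ii), $\Delta(L_\ell)\le\Delta<r$ permits Hajnal--Szemer\'edi (Theorem~\ref{thm:HS}), giving an equitable $r$-colouring with classes of sizes $n$ or $n+1$ (counts $r-c$ and $c$); when $0<c\le\Delta$ we then perform $a_1=\Delta+1-c$ vertex moves between $n$-size classes to reach the prescribed profile $(n_1,n_2,n_3)$ with counts $(a_1,a_2,a_3)$ of Proposition~\ref{arithm}. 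To arrange these moves, introduce the digraph $D$ on the collection $\mathcal{P}$ of $n$-size classes with an arc $A\to B$ whenever some vertex of $A$ has no neighbour in $B$ (so that such a vertex may be moved from $A$ to $B$ preserving independence). A double-counting argument (each class $B$ ``covers'' at most $\Delta$ other classes) yields min in- and out-degree of $D$ at least $|\mathcal{P}|-1-\Delta\ge\Delta+1$, so the underlying undirected graph on $|\mathcal{P}|\ge 2\Delta+2$ vertices has min degree $\ge|\mathcal{P}|/2$; Dirac's theorem then supplies a near-perfect matching of size $\ge\Delta+1\ge a_1$, any $a_1$ edges of which (oriented via the corresponding arcs of $D$) deliver the required moves.

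Next, for each $\ell$ independently I pick a uniformly random bijection $\sigma_\ell$ from the colour classes of $L_\ell$ to $[r]$ respecting size constraints, and set $X_i^\ell:=\sigma_\ell^{-1}(i)$ and $M_{i,j}:=\tfrac{1}{n}\sum_\ell e(L_\ell[X_i^\ell,X_j^\ell])$. By symmetric averaging over $\sigma_\ell$ (combined with a random choice of matching in the Dirac step in case (ii), which symmetrises the adjustments), $\mathbb{E}[M_{i,j}]$ is equal to $M$ in case (i) and differs from $M$ by at most $O(1/n)\cdot M$ in case (ii) since all class sizes differ from $n$ by at most $2$. Because the random choices across $\ell$ are independent, each $e(L_\ell[X_i^\ell,X_j^\ell])$ lies in $[0,\Delta\overline{n}/2]$, and $s\le 4M\binom{r}{2}$ (from $e(L_\ell)\ge n/4$), Hoeffding's inequality (Lemma~\ref{Chernoff Bounds}) with a union bound over the $\binom{r}{2}$ pairs gives $|M_{i,j}-M|\le\xi M/10$ with positive probability; the hierarchy $1/M\ll\xi\ll 1/\Delta,1/r$ ensures the Hoeffding exponent $\Theta(\xi^2 M/(\Delta r^2))$ diverges. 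Fix such an outcome.

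Finally, I apply Lemma~\ref{lem:section3newlemma} with $R=K_r$, all $k_{i,j}=k$, the matrix $\vec M$ just defined, $W_i^\ell=\emptyset$, and $C=0$ in case (i) or $C=2$ in case (ii). The hypothesis $M_{i,j}+s^{2/3}+1\le k$ holds because $k-M_{i,j}\ge 0.9\xi M$ while $s^{2/3}$ is a constant depending only on $M,r,\Delta$ and $\xi M\to\infty$. The lemma produces an $(R,\vec k,C)$-near-equiregular graph $H$ with vertex class sizes as required, along with an edge-disjoint packing of $L_1,\dots,L_s$ into $H$; by definition $H[V_i,V_j]$ is exactly $k$-regular for all $ij\in E(R)$ when $C=0$ (case (i)), and $H$ is $(r,k,2)$-near-equiregular when $C=2$ (case (ii)). For the moreover conclusion, Lemma~\ref{lem:section3newlemma}(i) gives $\Delta(J_{i,j})\le k-M_{i,j}+2s^{2/3}\le 2.1\xi M+0.9\xi M=3\xi M\le 3\xi k$, using $k\le(1+2\xi)M$, $M_{i,j}\ge M-\xi M/10$, $M\le k$, and absorbing $2s^{2/3}\le 0.9\xi M$ via the hierarchy. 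The main obstacle is the case (ii) colouring adjustment handled above, where the hypothesis $r\ge 3\Delta+2$ is used precisely to guarantee $|\mathcal{P}|\ge 2\Delta+2$ so that Dirac's theorem applies.
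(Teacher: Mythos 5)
Your case (i) is essentially the paper's argument (a uniformly random relabelling of the colour classes plus a concentration bound, then Lemma~\ref{lem:section3newlemma} with $C=0$), and your Dirac-type vertex-moving trick for adjusting the class sizes in case (ii) is a legitimate alternative to the paper's route (the paper instead trims every class to size $n_1$, and later re-applies Hajnal--Szemer\'edi inside the groups $I_2,I_3$). The final application of Lemma~\ref{lem:section3newlemma} and the ``moreover'' computation are also fine. The problem is the balancing step in case (ii).

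There you only randomise \emph{within} the groups $I_1,I_2,I_3$ (plus a ``random matching'' in the Dirac step), but the assignment of a colour class to a group is forced by its size, and the sizes are essentially determined by the Hajnal--Szemer\'edi colouring: e.g.\ when $\Delta+1\le c\le r-1$ no moves are made at all, and the $c$ classes of size $n+1$ are pinned to $I_2$; when $0<c\le\Delta$ the original size-$(n+1)$ classes are pinned to $I_3$ and the promotion/demotion choices are constrained by the auxiliary digraph, not uniform. Consequently the cross-group edge totals $\sum_\ell e(L_\ell[Y^\ell_j,Y^\ell_{j'}])$ need not be proportional to $a_ja_{j'}$, and your claim $\mathbb{E}[M_{i,i'}]=M\pm O(M/n)$ is false in general: the within-group permutation can only spread edge mass over pairs with the same group pattern, never across patterns. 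Concretely, take $\Delta=1$, $c=2$, and let each $L_\ell$ be a perfect matching on a fixed set of $2(n+1)$ vertices together with isolated vertices; an equitable $r$-colouring may place this set exactly into the two size-$(n+1)$ classes, which are then pinned to $I_2$, so after your randomisation $M_{i,i'}=0$ for every pair meeting $I_1$, far from $M$. This is precisely the issue the paper's Claim~\ref{permute2} is designed to handle: all classes are first trimmed to the common size $n_1$, a \emph{global} random permutation (Claim~\ref{permute}) then guarantees the group-level balance \eqref{eq:W}, and only afterwards are the excess vertices redistributed, the groups re-coloured via Theorem~\ref{thm:HS}, and the within-group permutation applied. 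Your proof is missing an argument of this kind, so the hypothesis $\vec M$ of Lemma~\ref{lem:section3newlemma} is not verified in case (ii). (A minor point: for the concentration step you should invoke Azuma's inequality, Theorem~\ref{Azuma}, as the paper does, since Lemma~\ref{Chernoff Bounds} is stated only for binomial and hypergeometric variables.)
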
 

The proof of Lemma \ref{packing to regular graph} will be conducted via two claims. First we show how in case (i) of Lemma \ref{packing to regular graph} the partition classes of the $L_i$ can be individually re-arranged so that the union of the $L_i$ is edge-balanced, i.e. the total numbers of edges in the bipartite graphs induced by each pair of partition classes is almost equal for different pairs. 


\begin{claim} \label{permute}
Suppose $0<1/n \ll 1/M \ll 1/\Delta,1/r$. Suppose $L_1,\dots, L_s$ are graphs of order $rn$ such that each $L_\ell$ satisfies $\Delta(L_\ell)\leq \Delta$ and has an equitable $r$-colouring with colour classes $X^\ell_1, \dots, X^\ell_r$. If $e(L_\ell)\geq n/5$ for each $\ell\in [s]$ and $\sum_{\ell=1}^{s} e(L_\ell) = M\binom{r}{2} n\pm M^{3/5}n/2$, then for each $\ell\in [s]$ one can permute the subscript indices
of the colour classes $X^\ell_1,\dots,X^\ell_r$ to achieve that
\begin{align}\label{eq:permute}
\sum_{\ell=1}^{s} \frac{e(L_\ell[X^\ell_{j},X^\ell_{j'}])}{n} = M\pm M^{3/5}
\end{align}
 for all $j,j'\in [r]$ with $j\neq j'$.%
    \COMMENT{Claim is now slightly stronger than before (due to the new first part in the proof of Claim~\ref{permute2}). More precisely,
have $e(L_\ell)\geq n/5$ instead of $e(L_\ell)\geq n/4$, $\sum_{\ell=1}^{s} e(L_\ell) = M\binom{r}{2} n\pm M^{3/5}n/2$ instead of
$\sum_{\ell=1}^{s} e(L_\ell) = M\binom{r}{2} n$ and an error of $M^{3/5}$ instead of $M^{2/3}$.}
\end{claim}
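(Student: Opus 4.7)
\medskip

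\noindent\textbf{Proof proposal.}
The plan is to choose the permutations of the colour class indices uniformly at random and independently across $\ell$, and then show via a standard Chernoff/Hoeffding-type argument combined with a union bound that a choice of permutations satisfying \eqref{eq:permute} simultaneously for all pairs exists.

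More precisely, for each $\ell\in[s]$ independently, let $\pi_\ell$ be a permutation of $[r]$ chosen uniformly at random, and re-label the colour classes so that $X_j^\ell$ is replaced by $X_{\pi_\ell(j)}^\ell$. Fix an unordered pair $\{j,j'\}\subseteq[r]$ and, for each $\ell$, set
$$
Y_\ell^{j,j'}:=e(L_\ell[X_{\pi_\ell(j)}^\ell,X_{\pi_\ell(j')}^\ell]).
$$
Since $\{\pi_\ell(j),\pi_\ell(j')\}$ is uniformly distributed over $\binom{[r]}{2}$, and the classes $X_1^\ell,\dots,X_r^\ell$ partition the edges of $L_\ell$ into $\binom{r}{2}$ bipartite subgraphs with edge counts summing to $e(L_\ell)$, we get $\mathbb{E}[Y_\ell^{j,j'}]=e(L_\ell)/\binom{r}{2}$. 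Hence
$$
\mathbb{E}\left[\sum_{\ell=1}^s Y_\ell^{j,j'}\right]=\frac{1}{\binom{r}{2}}\sum_{\ell=1}^s e(L_\ell)= Mn\pm \frac{M^{3/5}n}{2\binom{r}{2}}.
$$

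Next I would apply Hoeffding's inequality. The variables $Y_\ell^{j,j'}$ are independent in $\ell$ (for fixed $\{j,j'\}$), and since $\Delta(L_\ell)\leq\Delta$ and $|X_{\pi_\ell(j)}^\ell|=n$, each satisfies $0\leq Y_\ell^{j,j'}\leq \Delta n$. Moreover, the hypothesis $e(L_\ell)\geq n/5$ together with $\sum_\ell e(L_\ell)\leq M\binom{r}{2}n+M^{3/5}n/2$ yields $s\leq 6M r^2$. Thus $\sum_\ell(\Delta n)^2\leq 6Mr^2\Delta^2n^2$, and Hoeffding gives
$$
\Prob\left[\left|\sum_{\ell=1}^s Y_\ell^{j,j'}-\mathbb{E}\left[\sum_{\ell=1}^s Y_\ell^{j,j'}\right]\right|\geq \frac{M^{3/5}n}{4}\right]\leq 2\exp\left(-\frac{M^{1/5}}{200\,r^2\Delta^2}\right).
$$
Since $1/M\ll 1/\Delta,1/r$, the right-hand side is $o(1/r^2)$, so a union bound over the $\binom{r}{2}$ pairs $\{j,j'\}$ shows that with positive probability every such sum is $Mn\pm M^{3/5}n$. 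Any realisation of the $\pi_\ell$ achieving this event yields the desired re-labelling.

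The only mild subtlety is the choice of the Hoeffding range $[0,\Delta n]$: using the trivial bound $Y_\ell^{j,j'}\leq e(L_\ell)$ would give too weak a concentration, so it is important to observe that $Y_\ell^{j,j'}$ counts edges incident to a single colour class of size $n$ and bounded maximum degree $\Delta$. With this observation, together with the bound $s=O(Mr^2)$ derived from the assumption $e(L_\ell)\geq n/5$, the probabilistic argument goes through essentially mechanically.
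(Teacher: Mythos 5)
Your proposal is correct and follows essentially the same route as the paper: uniformly random, independent relabelling permutations, the same expectation computation, a Chernoff-type concentration bound using $s=O(Mr^2)$ and the per-graph range $\Delta n$, and a union bound over the $\binom{r}{2}$ pairs. The only difference is cosmetic: you invoke Hoeffding's inequality for the independent summands, whereas the paper applies Azuma's inequality to the $\Delta$-Lipschitz exposure martingale, which amounts to the same estimate.
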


\noindent
Note that the above constraints of $e(L_i)\geq n/5$ and $\Delta(L_i)\leq \Delta$ for all $i\in[s]$ imply 

$$\frac{sn}{5}\leq \sum_{i=1}^{s} e(L_i)\leq \frac{s\Delta \overline{n}}{2}. $$

\noindent
This, combined with $\sum_{i=1}^{s} e(L_i) =  M\binom{r}{2} n\pm M^{3/5}n/2$, yields

\begin{align} \label{s}
\frac{(r-1)M}{4\Delta} \leq s \leq 10M\binom{r}{2}.
\end{align}

\noindent
In other words, $s$ is of the same order of magnitude as $M$ in our hierarchy.

\begin{proof}[Proof of Claim~\ref{permute}]

For each $\ell\in [s]$ we choose a permutation $\sigma_\ell$ on $[r]$ uniformly at random. For all $j,j'\in [r]$ with $j\neq j'$ let $D_{j,j'}$ be the random variable defined by $$D_{j,j'} := \sum_{\ell=1}^{s} \frac{e(L_\ell[X^\ell_{\sigma_\ell(j)},X^\ell_{\sigma_\ell(j')}])}{n}.$$ 
Note that 
\begin{align*}
\E[e(L_\ell[X^\ell_{\sigma_\ell(j)},X^\ell_{\sigma_\ell(j')}])] &=
\sum_{i\neq i'} e(L_\ell[X^\ell_{i},X^\ell_{i'}]) \cdot \mathbb{P}[\sigma_\ell(j)=i, \sigma_\ell(j')=i'] = \frac{e(L_\ell)}{\binom{r}{2}}.
\end{align*}

\noindent
Thus $$\E[D_{j,j'}] = \frac{\sum_{\ell=1}^{s} e(L_\ell)}{\binom{r}{2}n}=M\pm M^{3/5}/2.$$

Consider the exposure martingale  
$J_{j,j'}^\ell:=\E[D_{j,j'} \mid \sigma_1,\dots,\sigma_\ell]$ for $0\leq\ell\leq s$. 
Note that this martingale is $\Delta$-Lipschitz since changing only one permutation $\sigma_i$ changes the value of $D_{j,j'}$ by at most $\Delta$ (as $0\leq e(L_\ell[X^\ell_{\sigma_\ell(j)},X^\ell_{\sigma_\ell(j')}])\leq \Delta n$ for any fixed $\ell,j,j'$). Thus, by Azuma's inequality (Theorem \ref{Azuma}), by (\ref{s}), and by the fact that $1/M\ll 1/\Delta,1/r$, we have
$$\mathbb{P}[ D_{j,j'} \neq M\pm M^{3/5} ] \leq 2 e^{\frac{-M^{6/5}}{8\Delta^2 s}} \leq e^{-M^{1/6}}.$$
Hence with probability at least $1- \binom{r}{2}  e^{-M^{1/6}} \geq 1/2$ we can permute the subscript indices so that~\eqref{eq:permute} holds for all $1\leq j\neq j'\leq r$.
\end{proof}

Next we prove a similar statement for case (ii) of Lemma \ref{packing to regular graph}.

\begin{claim} \label{permute2}
Suppose $0<1/n \ll 1/M\ll 1/\Delta,1/r$ and that $r\geq 3\Delta +2$. Let $c$ be an integer such that $0\leq c\leq r-1$.
Suppose that $L_1,\dots, L_s$ are graphs with $\Delta(L_\ell)\leq \Delta$, $|L_\ell|=\overline{n}=rn+c$ and
$e(L_\ell)\geq n/4$ for all $\ell \in [s]$. Suppose that $\sum_{\ell=1}^{s} e(L_\ell) = M\binom{r}{2} n$.
For all $j\in [3]$ let $a_j, n_j$ be as given by Proposition \ref{arithm}. 

Then there is a partition $I_1,I_2,I_3$ of $[r]$ with $|I_j|=a_j$ and, for each $\ell\in [s]$, there is
an $r$-colouring of $L_\ell$ with colour classes $X^\ell_1,\dots,X_r^\ell$ such that $|X^\ell_i|=n_j$ for all $i\in I_j$
and
$$\sum_{\ell=1}^{s} \frac{e(L_\ell[X^\ell_{i},X^\ell_{i'}])}{n} = M\pm M^{2/3}$$
for all $1\leq i\neq i'\leq r$.
\end{claim}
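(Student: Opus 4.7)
The plan is to mirror the argument of Claim~\ref{permute}, adapted to the fact that here the parts may have up to three distinct sizes $n_1<n_2<n_3$ differing by at most $2$. The argument has three stages: first, for each $L_\ell$, construct an independent-set partition whose size multiset is exactly $\{n_1^{(a_1)}, n_2^{(a_2)}, n_3^{(a_3)}\}$; second, randomly permute the parts within size blocks; third, apply Azuma to obtain concentration, having first computed the expectations.

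For Stage~1, fix $\ell$ and apply Hajnal-Szemer\'edi (Theorem~\ref{thm:HS}) to $L_\ell$ to obtain an equitable $r$-colouring with class sizes in $\{n,n+1\}$. When $c=0$ or $c\geq\Delta+1$ (the first and third sub-cases of Proposition~\ref{arithm}) this already has the correct size multiset. When $0<c\leq\Delta$, I perform $\Delta+1-c$ additional local modifications: at each step, pick two distinct size-$n$ classes $A,B$ and a vertex $v\in A$ with $N_{L_\ell}(v)\cap B=\emptyset$, and move $v$ from $A$ to $B$, replacing the two sizes $(n,n)$ by $(n-1,n+1)$. A counting argument using $r\geq 3\Delta+2$ shows that at every step at least $\Delta+3$ size-$n$ classes remain available, so after excluding $A$ itself and the at most $\Delta$ size-$n$ classes containing a neighbour of $v$, a valid target $B$ always exists. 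Label the resulting partition $Y_1^\ell,\dots, Y_r^\ell$ so that $|Y_i^\ell|=n_j$ for $i\in I_j$, where $(I_1,I_2,I_3)$ is any fixed partition of $[r]$ with $|I_j|=a_j$.

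For Stage~2, for each $\ell\in[s]$ and each $j\in[3]$ independently choose a uniformly random permutation $\pi_j^\ell$ of $I_j$, and set $X_i^\ell:=Y_{\pi_j^\ell(i)}^\ell$ for $i\in I_j$. This preserves independence and all size constraints. For Stage~3, write $D_{i,i'}:=\sum_\ell e(L_\ell[X_i^\ell, X_{i'}^\ell])/n$; the exposure martingale on the collection $(\pi_j^\ell)_{\ell,j}$ is $\Delta$-Lipschitz (changing a single $\pi_j^\ell$ alters $D_{i,i'}$ by at most $\Delta$), so Azuma's inequality combined with the bound $s\leq 10M\binom{r}{2}$ from~(\ref{s}) gives $\Prob[|D_{i,i'}-\E[D_{i,i'}]|\geq M^{2/3}/2]\leq \exp(-M^{1/6})$, and a union bound over the $\binom{r}{2}$ pairs completes the concentration step.

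The hard part will be showing $\E[D_{i,i'}]\approx M$ uniformly over all pairs $i\neq i'$. Because the Stage~2 permutations mix only classes of equal size, the expectation splits according to the size pattern of $(i,i')$, and there is no automatic reason for the sums $\sum_\ell e(L_\ell[Y_p^\ell,Y_q^\ell])$ over class-pairs of a given size pattern to match their ``fair shares'' $Mn\binom{a_j}{2}$ and $Mn a_j a_{j'}$. To handle this I would inject extra randomness in Stage~1: apply a uniformly random permutation of $[r]$ to the Hajnal-Szemer\'edi classes before the local moves, and make each of the $\Delta+1-c$ local moves (the choice of $A$, $v$ and $B$) uniformly random. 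A direct calculation should then show that any edge $uv\in E(L_\ell)$ lands in a given pair $(X_i^\ell, X_{i'}^\ell)$ with probability depending only on the block sizes, so that $\E[D_{i,i'}]=M\pm O(M^{3/5})$, which together with Azuma's error gives the claimed $M\pm M^{2/3}$ bound after the final union bound.
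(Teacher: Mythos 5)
Your outer scaffolding (Hajnal--Szemer\'edi, within-block random permutations, a $\Delta$-Lipschitz exposure martingale and Azuma, union bound over $\binom{r}{2}$ pairs) matches the paper, and your Stage~1 size-adjustment via local moves is fine. But the step you yourself flag as "the hard part" is a genuine gap, and the extra randomness you propose does not close it. The problem is that none of your random operations ever mixes classes of different sizes: relabelling the classes does nothing (the sizes are attached to the vertex sets, not the labels), the local moves only shuffle which size-$n$ classes become donors/receivers, and the Stage~2 permutations act within size blocks only. Consequently any class that Hajnal--Szemer\'edi outputs with size $n+1$ is deterministically frozen into the block of largest classes. If, say, for every $\ell$ almost all edges of $L_\ell$ happen to lie between two size-$(n+1)$ classes of the chosen equitable colouring (e.g.\ $L_\ell$ is a $\Delta$-regular bipartite graph on two classes of size $n+1$ plus isolated vertices, a situation you cannot rule out since you have no control over which equitable colouring Theorem~\ref{thm:HS} returns), then all these edges are forced into pairs $(i,i')$ inside that block, and for such pairs $\E[D_{i,i'}]$ is of order $M\binom{r}{2}\big/\binom{\Delta+1}{2}\gg M$, while pairs meeting the other blocks get expectation close to $0$. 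So "each edge lands in a given pair with probability depending only on the block sizes" is false, and no direct calculation will rescue it.

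The paper avoids this by inserting a global balancing step before any block structure is created: from each class of the equitable colouring it removes none, one or two vertices (the excess set $P^\ell$, of size $a_2+2a_3\le 2r$, costing only $O(r\Delta)$ edges per graph) so that all $r$ remaining classes have the common size $n_1$, and then applies Claim~\ref{permute} to these trimmed graphs, i.e.\ a uniformly random permutation of \emph{all} $r$ equal-size classes, which balances every one of the $\binom{r}{2}$ pairs up to $M^{3/5}$. Only then are the classes grouped into $Y^\ell_1,Y^\ell_2,Y^\ell_3$ (with $P^\ell$ redistributed into the second and third groups), the groups $Y^\ell_2,Y^\ell_3$ are re-equipartitioned by Theorem~\ref{thm:HS} (possible since $a_j\ge\Delta+1$ or $a_j=0$, by Proposition~\ref{arithm}), and the within-block random permutations are applied. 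Because the block-to-block edge totals are already within $2M^{3/5}$ of their fair shares $M\binom{a_j}{2}n$ and $Ma_ja_{j'}n$, the expectation of each $D_{i,i'}$ is automatically $M\pm 2M^{3/5}$ for \emph{every} size pattern, and Azuma then gives $M\pm M^{2/3}$. To repair your argument you would need to add exactly this kind of cross-size mixing (trim to a common size and invoke Claim~\ref{permute} globally); randomising the local moves alone cannot do it.
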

\begin{proof}

Recall from Proposition \ref{arithm} that $n_1\in \{n-1,n\}$. For each $L_\ell$ we apply the Hajnal-Szemer\'{e}di theorem (Theorem \ref{thm:HS}) to obtain an equitable $r$-colouring of $L_\ell$. Take this $r$-colouring, and take out none, one or two vertices from each colour class to ensure that the resulting vertex classes have size $n_1$. Let $P_\ell$ be the set of those `excess' vertices and let $Q^\ell_1, \dots, Q^\ell_r$ be the remaining colour classes. We have thus partitioned $V(L_\ell)$ into sets $Q^\ell_1, \dots, Q^\ell_r, P^\ell$ such that each $L_\ell[Q^\ell_i]$ is empty, $|Q^\ell_i|=n_1$ and $|P^\ell|= a_2+2a_3$.

Let $n':=rn_1$. For each $\ell\in [s]$ let $L'_\ell:=L_\ell\setminus P^\ell$. So $n'=|L'_\ell|$.
Since $|P^\ell|\le 2r$ by Proposition~\ref{arithm}(ii), at most $2r\Delta$ edges of $L_\ell$ are incident to~$P^\ell$. Thus $e(L'_\ell)\ge n'/5$
and $\sum_{\ell=1}^s e(L_\ell) = M\binom{r}{2} n'\pm n'$. Hence by Claim~\ref{permute} for each $\ell\in [s]$ we can
permute the subscript indices of the partition classes $Q^\ell_1,\dots,Q^\ell_r$ of $L'_\ell$ to achieve that for all $i,i'\in [r]$ with $i\neq i'$,
\begin{align}\label{eq:sum}
\sum_{\ell=1}^{s} \frac{e(L'_\ell[Q^\ell_{i},Q^\ell_{i'}])}{n'} = M\pm M^{3/5}.
\end{align}
Let $I_1:=[a_1]$, $I_2:=\{a_1+1,\dots,a_1+a_2\}$ and $I_3:=\{a_1+a_2+1,\dots,r\}$. Partition each $P^\ell$ into two sets
$P^\ell_2, P^\ell_3$ of size $a_2$ and $2a_3$ respectively.
Now define 
$$Y^\ell_1 := \bigcup_{i\in I_1} Q^\ell_i, \ \ \  Y^\ell_2 := \bigcup_{i\in I_2} Q^\ell_i \cup P^\ell_2, \ \ \  Y^\ell_3 := \bigcup_{i\in I_3} Q^\ell_i  \cup P^\ell_3.$$
Then $|Y^\ell_j|= a_jn_j$ for all $j\in [3]$ by Proposition \ref{arithm}(iii).
Moreover, by~\eqref{eq:sum} for all $j,j'\in [3]$ with $j\neq j'$ we have
\begin{align}\label{eq:W}
\sum_{\ell=1}^{s} e(L_\ell[Y_j^\ell]) & =(M\pm M^{3/5})\binom{a_j}{2}n'\pm \begin{cases}
0 &\text{ if } j=1,\\
2sa_j\Delta &\text{ if } j\in \{2,3\} 
\end{cases}
\\&=(M\pm 2M^{3/5}){\binom{a_j}{2}}n,\nonumber\\
\sum_{\ell=1}^{s} e(L_\ell[Y^\ell_j,Y^\ell_{j'}]) & = (M\pm  2M^{3/5})a_ja_{j'}n.
\end{align}

Recall that $|Y_j^\ell|=a_jn_j$, and that for each $j\in\{2,3\}$, either $a_j=0$ or $a_j\ge \Delta+1$.
Thus we can apply the Hajnal-Szemer\'{e}di theorem (Theorem \ref{thm:HS}) to obtain an equitable $a_j$-colouring of $L_\ell[Y_j^\ell]$
for each $j\in\{2,3\}$.%
   \COMMENT{Instead of choosing a new partition via the Hajnal-Szemer\'{e}di theorem, we could also use Hall's theorem to show that we
can extend the $Q^\ell_2$ and $Q^\ell_3$ into the required new partition by adding $1$ or $2$ vertices from $P^\ell_2$, $P^\ell_3$ to
each $Q^\ell_2$, $Q^\ell_3$ respectively. This would work if can ensure that $a_2, a_3\ge 2\Delta$ (or $0$), which we can probably achieve
if we make $r$ even larger compared to $\Delta$ than now. But the current way is not so complicated either, so probably best to leave things as they are.} 
Let $X^\ell_{a_1+1},\dots, X^{\ell}_{a_1+a_2}$ be the corresponding equipartition of $Y_2^\ell$ into sets of size $n_2$ and
let $X^{\ell}_{a_1+a_2+1},\dots ,X^{\ell}_r$ be the corresponding equipartition of $Y_3^\ell$ into sets of size $n_3$.
Let $\{X^\ell_1, \dots, X^\ell_{a_1}\}:= \{Q_i^\ell : i \in I_1\}$.

For each $\ell\in [s]$ we now choose a permutation $\sigma_{\ell}$ on $[r]$ such that $\sigma_{\ell}$ is a random permutation when restricted
to each of $I_1$, $I_2$ and $I_3$ (choose three independent permutations
on $I_1$, $I_2$ and $I_3$ uniformly at random, and combine them).
For all $1\leq i \neq i'\leq r$ define $$ D^{\ell}_{i,i'}:= e(L_\ell[X^\ell_{\sigma_{\ell}(i)}, X^{\ell}_{\sigma_\ell(i')}])
\ \ \ \text{ and } \ \ \ D_{i,i'} := \frac{1}{n}\sum_{\ell=1}^{s} D^{\ell}_{i,i'}.$$ 
Given $1\leq i\neq i'\leq r$ and $\ell \in [s]$, let $j,j' \in [3]$ be so that $i\in I_{j}, i'\in I_{j'}$. Then
\begin{align*}
\mathbb{E}[D^{\ell}_{i,i'}] & = \sum_{b\neq b', b\in I_j, b'\in I_{j'}} e(L_\ell[X^\ell_{b},X^\ell_{b'}]) \mathbb{P}\left[\sigma_\ell(i)=b, \sigma_\ell(i')=b'\right]\\
&= \begin{cases}
\sum_{b\neq b' \in I_j} \frac{e(L_\ell[X^\ell_{b},X^\ell_{b'}])}{a_j(a_j-1)}=\frac{2e(L_\ell[Y_j^\ell])}{a_j(a_j-1)} &\mbox{if } j=j' \text{ and } a_j\geq 2,\\
\sum_{b \in I_j, b'\in I_{j'}} \frac{e(L_\ell[X^\ell_{b},X^\ell_{b'}])}{a_j a_{j'}}=\frac{e(L_\ell[Y_j^\ell,Y_{j'}^\ell])}{a_j a_{j'}} & \mbox{if } j\neq j' \text{ and } a_j,a_{j'}\geq 1. \end{cases}
\end{align*}
Together with \eqref{eq:W} this implies that for every $1\le i\neq i'\le r$ 
$$\mathbb{E}[D_{i,i'}] = M\pm  2M^{3/5}.$$
Consider the exposure martingale $J_{i,i'}^\ell:= \mathbb{E}[D_{i,i'}\mid \sigma_1,\dots, \sigma_\ell]$ with $0\leq \ell \leq s$. Note that $J_{i,i'}^\ell$ is $\Delta$-Lipschitz. Thus by Azuma's inequality (Theorem~\ref{Azuma}) and the fact that $1/M\ll 1/\Delta,1/r$ we obtain
$$\mathbb{P}[ D_{i,i'} \neq M\pm M^{2/3} ] \leq 2 e^{\frac{-M^{4/3}/4}{2\Delta^2 s}} \stackrel{\eqref{s}}{\leq} e^{-M^{1/4}}.$$
Therefore with probability at least $1- {\binom{r}{2}}e^{-M^{1/4}} \geq 1/2$ we have
\begin{align}
\sum_{\ell=1}^{s} \frac{e(L_\ell[X^\ell_{\sigma_\ell(i)},X^\ell_{\sigma_\ell(i')}])}{n} = M\pm M^{2/3}
\end{align} 
for all $1\leq i\neq i'\leq r$. The claim now follows by relabelling the subscript indices of the $X_i^\ell$ according to $\sigma_\ell$.
\end{proof}

We now conclude the proof of Lemma~\ref{packing to regular graph}.

\begin{proof}[Proof of Lemma~\ref{packing to regular graph}]
To prove Lemma~\ref{packing to regular graph}(i), we first apply Claim~\ref{permute}. We then apply Lemma~\ref{lem:section3newlemma} with $C=0$, with $K_r$
playing the role of $R$ and with $\vec{k}$, $\vec{M}$ being the matrices with entries $k_{i,j}:=k$ and $M_{i,j}:=\sum_{\ell=1}^{s}\frac{e(L_\ell[X_i^\ell,X_j^\ell])}{n}= M\pm M^{3/5}$ for all $i\neq j$ (note that $s^{2/3}+ M^{3/5}+1 \leq \xi M$ by~\eqref{s}). To prove Lemma~\ref{packing to regular graph}(ii),\COMMENT{This time we have $M_{i,j}= M\pm M^{2/3}$, but this is still ok since $s^{2/3}+ M^{2/3} \leq \xi M$.} we proceed similarly, except that we first apply Claim~\ref{permute2}, and that we apply Lemma~\ref{lem:section3newlemma} with $C=2$ instead of $C=0$.

In either case, Lemma~\ref{lem:section3newlemma}(i) implies that $\Delta(J_{i,j})\leq k - M_{i,j} + 2s^{2/3} \leq k - (M- M^{3/5})+ 2s^{2/3} \leq 3\xi M \le 3\xi k$.
\end{proof}


\section{Combining the packing results}\label{sec:concl}

Combining Theorem~\ref{main lemma} and Lemma~\ref{lem:section3newlemma}, we obtain the following result. It gives an approximate decomposition of $G$ into $L_1,\dots, L_s$ if the $L_i$ have a common vertex partition so that the densities of the resulting bipartite pairs reflect the densities of the corresponding pairs in $G$. It immediately implies Theorem~\ref{thm:main3} and generalises it to the setting where the pairs of $G$ have different densities. Properties (T1), (T3) and (T4) of Theorem~\ref{main lemma} will not be used in this section, so as remarked after Theorem~\ref{main lemma}, we can ignore the conditions in 
(S6)--(S8) whenever we apply Theorem~\ref{main lemma} in this section.

\begin{thm}\label{thm:main0}
Suppose $0<1/n\ll \epsilon \ll  \alpha, \eta, d, 1/(C+1), 1/\Delta, 1/\Delta_R$ and $1/n\ll 1/r$. Let $s\in \mathbb{N}$ be such that $s \leq \eta^{-1} n$. Suppose that $R$ is a graph on $[r]$ with $\Delta(R)\leq \Delta_R$.
Let $\vec{d}$ be a symmetric $r\times r$ matrix such that $d_{j,j'}\geq d$ for all $jj'\in E(R)$ and $d_{j,j'}=0$ if $jj'\notin E(R)$. 
Suppose that $L_1,\dots, L_s$ are graphs admitting a common vertex partition $(R,X_1,\dots, X_r)$ such that 
$\max_{j \in [r]} |X_j|=n$ and $n-C \leq |X_j|\leq n$ for all $j\in [r]$, $\Delta(L_\ell)\leq \Delta$ for all $\ell\in[s]$ and $\sum_{\ell=1}^s e(L_\ell[X_j,X_{j'}]) \leq (1-\alpha)d_{j,j'}n^2$ for all $jj'\in E(R)$. 
Suppose finally that $G$ is an $(\epsilon,\vec{d})$-super-regular graph with partition $(R,V_1,\dots, V_r)$ such that $|X_i|=|V_i|$ for all $i\in [r]$. Then $L_1,\dots, L_{s}$ pack into $G$. 
\end{thm}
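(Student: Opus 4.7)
The plan is to reduce Theorem~\ref{thm:main0} to Theorem~\ref{main lemma} by first ``stacking'' the $L_\ell$'s, in groups of bounded constant size, into $(R,\vec{k}^p,C)$-near-equiregular graphs via Lemma~\ref{lem:section3newlemma}, and then packing these stacks into $G$ via Theorem~\ref{main lemma}.

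Concretely, choose a constant $S$ large compared to $\alpha, \eta, d, \Delta, \Delta_R, C$ but not depending on $n$ (in particular, $S \geq (16/(\alpha\eta d))^3$ and $1/S \ll 1/\Delta, 1/\Delta_R, 1/(C+1)$). If necessary, pad the family with empty graphs so that $S$ divides $s$, and arbitrarily partition $[s]$ into $T := s/S$ groups $\mathcal{G}_1,\dots,\mathcal{G}_T$ of size $S$. For each $p\in [T]$ and $jj'\in E(R)$ set
$$M^p_{j,j'} := \frac{1}{n} \sum_{\ell\in\mathcal{G}_p} e(L_\ell[X_j,X_{j'}]) \le \Delta S, \qquad k^p_{j,j'} := \lceil M^p_{j,j'}\rceil + \lceil S^{2/3}\rceil + 1,$$
and $k^p_{j,j'}:=0$ for $jj'\notin E(R)$. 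Then $k^p_{j,j'}\le 2\Delta S =: k$ for $S$ large, and the bound $M^p_{j,j'}+S^{2/3}+1\le k^p_{j,j'}$ required by Lemma~\ref{lem:section3newlemma} holds. Apply Lemma~\ref{lem:section3newlemma} (with trivial $W^\ell_i=\emptyset$, so conclusion (ii) is vacuous) to each $\mathcal{G}_p$ with matrices $\vec{M}^p, \vec{k}^p$; this produces an $(R,\vec{k}^p,C)$-near-equiregular graph $H_p$ on vertex classes that we identify with $V_1,\dots,V_r$, into which $\{L_\ell:\ell\in\mathcal{G}_p\}$ packs edge-disjointly.

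Now apply Theorem~\ref{main lemma} to pack $H_1,\dots,H_T$ into $G$, with $\alpha/2$, $\eta S$, $d_0=1$, $k=2\Delta S$ and $\Delta':=\Delta_R k$ playing the roles of $\alpha$, $\eta$, $d_0$, $k$ and $\Delta$, respectively (take each $A_i$ to be complete bipartite, and note that conditions (S6)--(S8) and conclusions (T1), (T3), (T4) are not needed). Conditions (S1), (S2), (S3), (S5) are immediate; the only nontrivial one is (S4), which holds because
$$\sum_{p=1}^{T} k^p_{j,j'} \;\le\; \sum_{p=1}^{T}(M^p_{j,j'}+S^{2/3}+2) \;\le\; (1-\alpha)d_{j,j'}n + \frac{s}{S}(S^{2/3}+2) \;\le\; (1-\alpha/2)d_{j,j'}n,$$
where the final inequality uses $s\le n/\eta$ together with $S^{1/3}\ge 8/(\alpha\eta d)$. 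Composing the embeddings $L_\ell\hookrightarrow H_p\hookrightarrow G$ furnished by Lemma~\ref{lem:section3newlemma} and Theorem~\ref{main lemma} yields the required edge-disjoint packing of $L_1,\dots,L_s$ in~$G$.

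The essential (and rather mild) obstacle is the verification of~(S4): we pay a slack of $S^{2/3}+O(1)$ per group from the constraint $k^p_{j,j'}\ge M^p_{j,j'}+S^{2/3}+1$ in Lemma~\ref{lem:section3newlemma}, and the cumulative slack $T\cdot S^{2/3} = O(n/S^{1/3})$ must fit inside the margin $\alpha d n$ guaranteed by the hypothesis. This forces $S$ to be a constant depending on $\alpha, \eta, d$, but crucially no balancing between groups and no concentration inequality are needed --- any equipartition of $[s]$ into groups of size $S$ suffices, because each group separately gets its own matrix $\vec{k}^p$ which need only dominate $\vec{M}^p$.
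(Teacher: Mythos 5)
Your proposal is correct and follows essentially the same route as the paper's own proof: the paper likewise splits the family into groups of constant size $M$ (your $S$), applies Lemma~\ref{lem:section3newlemma} to each group with $k^i_{j,j'}$ taken to be roughly $M^i_{j,j'}+2M^{2/3}$, verifies (S4) by exactly the slack computation you give, and finishes by applying Theorem~\ref{main lemma} with $\alpha/2$ in place of $\alpha$ and with the $A_i$ complete bipartite. The only deviations are cosmetic, e.g.\ your explicit choice of the constant $S$ versus a hierarchy-chosen $M$, a harmless $+3$ rather than $+2$ coming from the two ceilings in your bound on $k^p_{j,j'}$, and the superfluous mention of a parameter $\Delta$ for Theorem~\ref{main lemma}, none of which affects the argument.
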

\begin{proof}
Choose an integer $M$ such that $\epsilon \ll 1/M \ll \alpha,\eta,d,1/\Delta,1/\Delta_R$.  
Without loss of generality we may assume that $s> M^2$ and that $s$ is a multiple of $M$
(if not, add up to $M^2$ empty graphs $L_\ell$). 
Let $t:=s/M$.
Then we may take an arbitrary partition of $\{ L_1,\dots, L_{s}\}$ into $\mathcal{L}_1,\dots \mathcal{L}_{t}$ 
such that $|\mathcal{L}_i| =M$ for all $i \in [t]$.

Now for all $i\in [t]$ and $jj'\in E(R)$, let $M^i_{j,j'} := \sum_{L \in \mathcal{L}_i}\frac{e(L[X_j,X_{j'}])}{n}$. Then $M^i_{j,j'}\le \Delta M$ and
$\sum_{i=1}^{t} M^i_{j,j'} \leq (1-\alpha) d_{j,j'} n$.
If $jj'\in E(R)$, let $k^{i}_{j,j'}$ be the smallest integer satisfying 
$k^{i}_{j,j'} \geq M^{i}_{j,j'} + 2M^{2/3}$, and if $jj'\notin E(R)$ let $k^{i}_{j,j'}:=0$.
Apply Lemma~\ref{lem:section3newlemma} with $M$ playing the role of $s$ for all $i\in [t]$ to find an $(R,\vec{k}^i,C)$-near-equiregular graph $H_i$ with
partition $(R,V_1,\dots,V_r)$
such that the graphs in $\mathcal{L}_i$ pack into $H_i$.
Then for all $jj'\in E(R)$, 
\begin{align*}
\sum_{i=1}^{t} k^{i}_{j,j'} &\leq \sum_{i=1}^{t}( M^{i}_{j,j'} + 2M^{2/3}+1)  \leq (1-\alpha)d_{j,j'}n + t (2M^{2/3}+1) \\
&\leq (1-\alpha)d_{j,j'}n + \frac{3s}{M^{1/3}} \leq 
(1-\alpha) d_{j,j'}n + \alpha d n/2
\leq (1-\alpha/2) d_{j,j'} n.
\end{align*}
Here, we obtain the penultimate inequality since $1/M \ll \eta, \alpha, d$ and $s\leq \eta^{-1}n$. 
It is easy to see that $H_1,\dots, H_t$, $G$, $R$ satisfy the assumptions in Theorem~\ref{main lemma}, 
where $\alpha/2$ plays the role of $\alpha$. Thus $H_1,\dots, H_t$ pack into $G$, and so $L_1,\dots, L_s$ also pack into $G$. 
\end{proof}

In the next result, the reduced graph $R$ of the partition of $G$ is complete. In this case, we can drop the requirement that the $L_\ell$ and $G$ have a common partition.

\begin{thm}\label{main 1}
Suppose $0<{1}/{n} \ll \epsilon \ll \lambda, d, {1}/{\Delta}, {1}/{r}$ and that $r\geq 3\Delta+2$.  Let $c,\overline{n}$ be integers
such that $0\leq c\leq r-1$ and $\overline{n}=nr+c$. Let $a_1,a_2,a_3,n_1,n_2,n_3$ be non-negative integers such that
\begin{itemize}
\item[(i)] $a_2,a_3 \in \{0\}\cup\{\Delta+1,\Delta+2,\dots \}$,
\item[(ii)] $a_1+a_2+a_3=r$,
\item[(iii)] $n_1\in\{n-1,n\}$ and $n_3=n_2+1=n_1+2$, 
\item[(iv)] $a_1n_1 + a_2n_2 + a_3n_3 = \overline{n}$.
\end{itemize}
Let $I_1,I_2,I_3$ be a partition of $[r]$ with $|I_j|=a_j$.
Suppose that $L_1,\dots, L_s$ are graphs with $|L_\ell|=\overline{n}$ and $\Delta(L_\ell)\leq\Delta$ for all $\ell\in[s]$.
Suppose further that $G$ is an $\overline{n}$-vertex $(\epsilon,d)$-super-regular graph with respect to a partition $(V_1,\dots, V_r)$, where 
$|V_i|=n_j$ for all $i\in I_j$. Suppose finally that $(1-9\lambda)e(G)\leq \sum_{\ell=1}^{s} e(L_\ell) \leq (1-\lambda)e(G)$. Then $L_1,\dots, L_s$ pack into $G$.

Moreover, writing $\phi(L_\ell)$ for the copy of $L_\ell$ in this packing of $L_1,\dots,L_s$ in $H$, and writing $J':=G-(\phi(L_1)\cup\dots\cup \phi(L_s))$, we have $\Delta(J')\leq 10\lambda d \overline{n}$. 
\end{thm}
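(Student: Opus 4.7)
My plan is to reduce Theorem~\ref{main 1} to Theorem~\ref{main lemma} via Lemma~\ref{packing to regular graph}(ii), closely following the template used in the proof of Theorem~\ref{thm:main0}. The key difference is that here the $L_\ell$ come without any prescribed vertex partition, so Lemma~\ref{packing to regular graph}(ii) — rather than Lemma~\ref{lem:section3newlemma} — is the appropriate intermediate tool: it packs any family of bounded-degree graphs on $\overline{n}$ vertices into a single $(r,k,2)$-near-equiregular graph whose vertex classes have precisely the sizes $n_1,n_2,n_3$ prescribed by Proposition~\ref{arithm} (which agree with the class sizes of $G$).

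I would introduce auxiliary constants $1/n \ll 1/M \ll \xi \ll \lambda, d, 1/\Delta, 1/r$, and greedily partition $\{L_1,\dots,L_s\}$ into batches $\mathcal{L}_1,\dots,\mathcal{L}_t$ whose total edge counts are each close to $M\binom{r}{2}n$. After padding each batch with a small number of auxiliary graphs on $\overline{n}$ vertices of bounded degree, each padded batch $\mathcal{L}_i^*$ has total edge count exactly $M_i\binom{r}{2}n$ for an integer $M_i$ of order $M$. I then apply Lemma~\ref{packing to regular graph}(ii) to each batch with $k_i := \lceil (1+\xi) M_i\rceil$, obtaining an $(r,k_i,2)$-near-equiregular graph $H_i$ on vertex classes $(V_1,\dots,V_r)$ matching the prescribed sizes, into which the graphs of $\mathcal{L}_i^*$ pack edge-disjointly.

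Since $\sum_\ell e(L_\ell)\leq (1-\lambda)e(G)$ and the padding contributes at most $O(tn)=o(\lambda dn^2)$ extra edges, one has $\sum_i k_i \leq (1+2\xi)(1-\lambda/2)dn \leq (1-\lambda/4)dn$. Thus the hypotheses of Theorem~\ref{main lemma} are satisfied with $R=K_r$, $d_{j,j'}=d$ for $j\ne j'$, $\vec{k}^i$ having all off-diagonal entries $k_i$, $C=2$, and $\alpha=\lambda/4$. The resulting edge-disjoint packing of $H_1,\dots,H_t$ into $G$ restricts to the desired edge-disjoint packing of $L_1,\dots,L_s$ into $G$ (the dummy graphs being discarded).

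For the ``moreover'' statement, the moreover clause of Lemma~\ref{packing to regular graph}(ii) ensures that the portion of each $H_i[V_j,V_{j'}]$ not covered by copies of graphs from $\mathcal{L}_i^*$ has maximum degree at most $3\xi k_i$. Combined with the lower bound $\sum_\ell e(L_\ell)\geq (1-9\lambda)e(G)$ (which forces $\sum_i k_i\geq (1-9\lambda-o(1))dn$), a short degree count yields $\Delta(J')\leq (9\lambda+3\xi+o(1))(r-1)dn$; since $\overline{n}\geq (r-1)n$ and $\xi\ll\lambda$, this is at most $10\lambda d\overline{n}$. The main technical obstacle is the batching-and-padding step, in particular accommodating those $L_\ell$ with $e(L_\ell)<n/4$ — which cannot be used directly as input to Lemma~\ref{packing to regular graph}(ii). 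This can be handled by absorbing such sparse $L_\ell$ into other graphs within a batch (at the cost of a constant-factor increase in $\Delta$, which is harmless since Theorem~\ref{main lemma} and Lemma~\ref{packing to regular graph}(ii) tolerate any bounded maximum degree) or by including a controlled number of dummy graphs, since the edge budget has enough slack between $(1-9\lambda)e(G)$ and $e(G)$ to absorb this without affecting the overall argument.
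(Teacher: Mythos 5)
Your proposal follows essentially the same route as the paper: batch the $L_\ell$ so that each batch has total edge count close to a fixed multiple of $\binom{r}{2}n$, apply Lemma~\ref{packing to regular graph}(ii) to each batch to obtain $(r,k,2)$-near-equiregular graphs $H_i$ on vertex classes of the prescribed sizes $n_1,n_2,n_3$, and then pack the $H_i$ into $G$ via Theorem~\ref{main lemma} with $R=K_r$ and $\alpha$ a fraction of $\lambda$; the degree count for the ``moreover'' part is also the paper's. Two of your deviations are harmless: the paper uses a single value $k$ for all batches rather than per-batch $k_i$, and your padding of each batch to make $M_i$ an integer is unnecessary, since in Lemma~\ref{packing to regular graph} the quantity $M$ is just defined by $\sum e(L_i)=M\binom{r}{2}n$ and need not be integral.

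The one point where your sketch is not quite right is the treatment of the $L_\ell$ with fewer than $n/4$ edges. Note that $s$ can be as large as $\Theta(n^2)$ (e.g.\ if many $L_\ell$ consist of a single edge), so ``absorbing such sparse $L_\ell$ into other graphs within a batch at the cost of a constant-factor increase in $\Delta$'' fails: the number of sparse graphs can vastly exceed the number of dense hosts, so overlaying them would blow up the maximum degree, and adding dummy graphs does not help because the sparse $L_\ell$ themselves are still inadmissible inputs for Lemma~\ref{packing to regular graph}(ii). The paper's fix is simpler and costs nothing in $\Delta$: since any graph with fewer than $n/4$ edges touches at most $n/2$ vertices and $\overline{n}\geq (3\Delta+2)n$, one can repeatedly replace two sparse graphs by their vertex-disjoint union (packed into a single $\overline{n}$-vertex graph with unchanged maximum degree) until at most one sparse graph remains, and then add at most $n/4$ edges to that last graph, an error which is absorbed by the $+\Delta$ term (and the slack in $\lambda$) in the final degree count. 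With that replacement your argument matches the paper's proof.
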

\begin{proof}
First, if there are graphs $L_{j_1}, \dots, L_{j_q}$, each having fewer than $n/4$ edges, then we can pack two of them into a single graph without increasing the maximum degree. We keep doing this until we obtain at most one graph having fewer than $n/4$ edges. Once we have at most one such graph, we add at most $n/4$ suitable new edges to assume that all graphs $L_\ell$ have at least
$n/4$ edges and still satisfy $\Delta(L_\ell)\le \Delta$. 
Choose $k\in \mathbb{N}$ and $\xi$ such that 
$\epsilon \ll {1}/{k}\ll \xi \ll \lambda, 1/r, 1/\Delta$.
Note that we may assume that $\sum_{\ell=1}^{s} e(L_\ell) \geq k^2 n$. (Indeed, since $\sum_{\ell=1}^{s} e(L_\ell)\ge (1-9\lambda)e(G)$
this is clearly true if $\lambda \le 1/10$. If $\lambda > 1/10$ we simply add additional graphs $L_\ell$ if necessary.)
Let \begin{align}\label{m' def}
m:=\frac{\sum_{\ell=1}^{s} e(L_\ell)}{k\binom{r}{2}n(1-3\xi/2)}.
\end{align}
Partition $\{L_1,\dots, L_s\}$ into $\mathcal{L}_1,\dots, \mathcal{L}_{m}$ such that for all $i\in[m]$ we have 
\begin{align}\label{kkkkk}
(1-7\xi/4)k\binom{r}{2}n\leq \sum_{L\in \mathcal{L}_i} e(L) \leq (1-\xi)k\binom{r}{2}n.
\end{align} 
Such a partition exists since $\sum_{\ell=1}^{s} e(L_\ell) \geq k^2 n$ and $1/k \ll \xi, 1/\Delta, 1/r$. 

Now for each $i\in [m]$ use Lemma~\ref{packing to regular graph}(ii) with $M_i:=\sum_{L\in \mathcal{L}_i} e(L)/(\binom{r}{2}n)$ playing
the role of $M$ to obtain an $(r,k,2)$-near-equiregular graph $H_i$ with partition $X_1,\dots, X_r$, where $|V_j|=|X_j|$ for all $j\in [r]$, and so
that the graphs in $\mathcal{L}_i$ can be packed into $H_i$. 
Moreover, writing $\tau(L)$ for the copy of $L \in \mathcal{L}_i$ in this packing of the graphs in $\mathcal{L}_i$ into $H_i$, and for each $i'\neq j' \in [r]$ writing $J^i_{i',j'}:=H_i[V_{i'},V_{j'}]-\bigcup_{L\in \mathcal{L}_i} \tau(L)$, we have $\Delta(J^i_{i',j'})\leq 3 \xi k$. 

Note that %
\COMMENT{recall we added up to $n/4$ artificial edges}
\begin{eqnarray*}
k m &\stackrel{\eqref{m' def}}{=}& \frac{\sum_{\ell=1}^{s} e(L_\ell)}{(1-3\xi/2)\binom{r}{2}n} \leq \frac{(1-\lambda)e(G)+n/4}{(1-3\xi/2)\binom{r}{2} n} \leq  (1-\lambda/2)(d+\epsilon) n
\leq (1-\lambda/3) dn.
\end{eqnarray*}
and
\begin{eqnarray}\label{eq: km}
k m &\stackrel{\eqref{m' def}}{=}& \frac{\sum_{\ell=1}^{s} e(L_\ell)}{(1-3\xi/2)\binom{r}{2}n} \geq \frac{(1-9\lambda)e(G)}{(1-3\xi/2)\binom{r}{2} n}
\geq (1-9\lambda) dn.
\end{eqnarray}
In particular, the conditions of Theorem \ref{main lemma} are satisfied with $K_r$ playing the role of $R$ and $\lambda /3$ playing the role of $\alpha$.
So we can apply Theorem~\ref{main lemma} to $G$ and $H_1,\dots, H_{m}$, to find a packing of $H_1,\dots, H_{m}$ into $G$.
This yields a packing of $L_1,\dots,L_s$ into $G$.

To prove the moreover part of Theorem~\ref{main 1}, note that since $\Delta(G)\le d\overline{n}$,%
\COMMENT{since $\Delta(G)\le (d+\epsilon)(r-1)n\le d\overline{n}$}
we may assume that
$\lambda \le 1/10$ (and thus we did not add any additional graphs $L_\ell$ at the beginning of the proof). For each $i',j'\in [r]$%
  \COMMENT{The extra $\Delta$ below comes from the additional $n/4$ edges that we (might have) added}
\begin{eqnarray*}
\Delta(J'[V_{i'},V_{j'}]) &\leq& \Delta(G[V_{i'},V_{j'}]) - km + \sum_{i=1}^{m}\Delta(J^i_{i',j'}) +\Delta \leq (d+\epsilon)n - km + 3\xi k m +\Delta\\
&\stackrel{\eqref{eq: km}}{\leq}& (d+\epsilon)n - (1-4\xi)(1-9\lambda)dn \leq 10\lambda dn.
\end{eqnarray*}
Thus, 
$\Delta(J') \leq 10\lambda d (r-1)n \leq 10\lambda d\overline{n}.$
\end{proof}

Now we can deduce Theorem~\ref{main 1b} from Theorem~\ref{main 1} (note that the conditions (1) and (2) needed 
in the proof are actually weaker than the quasi-randomness assumption of Theorem~\ref{main 1b}).

\begin{proof}[Proof of Theorem \ref{main 1b}.]
Choose $r,n_0\in\mathbb{N}$ and $\varepsilon>0$ such that $1/n_0 \ll \epsilon \ll 1/r \ll \alpha,p_0,1/\Delta$.
Let $n\ge n_0$, $p\ge p_0$ and assume that $G$ is an $n$-vertex graph satisfying the following two assumptions.
\begin{itemize}
\item[(1)] Each vertex $v$ in $G$ satisfies $d_G(v)= (1\pm \epsilon) p n$.
\item[(2)] There exists $D\subseteq \binom{V(G)}{2}$ such that $|D| \geq \binom{n}{2} - \epsilon n^2$ and all pairs $\{u,v\}\in D$ satisfy $|N_{G}(u,v)| = (1\pm \epsilon )p^2 n$. 
\end{itemize}
Note that an $(\epsilon,p)$-quasi-random graph satisfies both (1) and (2). By adding additional graphs~$L_\ell$ if necessary,
we may assume that $\sum_{\ell=1}^{s} e(L_\ell) \geq (1-2\alpha)\binom{n}{2}p$. Let $n':=\lfloor n/r\rfloor$ and let $c:=n-rn'$.
Choose $a_1,a_2,a_3,n_1,n_2,n_3$ satisfying (i)--(iv) of Theorem~\ref{main 1} (with $n$, $n'$ playing the roles of $\overline{n}$, $n$ in Theorem~\ref{main 1}).
Such a choice is possible by Proposition~\ref{arithm}. 
Let $I_1, I_2, I_3$ be a partition of $[r]$ with $|I_i| = a_i$.

Now choose a partition $\mathcal{V}=(V_1,\dots,V_r)$ of $V(G)$ such that $|V_i|=n_i$ for all $i\in I_j$ with $j\in [3]$ and
such that $G[V_i,V_{i'}]$ is $(\epsilon^{1/7},p)$-super-regular for all $i\neq i'\in [r]$.
(To see that such a partition exists, consider a partition $\mathcal{V}=(V_1,\dots, V_r)$ of $V(G)$ chosen uniformly at random among all the
partitions satisfying $|V_i|= n_i$ for all $i\in I_j$ and apply Lemma~\ref{Chernoff Bounds} as well as Theorem~\ref{codegree implies regularity}.)%
\COMMENT{
For $S\in D\cup V(G)$ and $i\in [r]$, $|N_{G}(S)\cap V_i|$ has hypergeometric distribution satisfying $\mathbb{E}[|N_{G}(S)\cap V_i|] = \frac{n_1\pm 2}{n}|N_{G}(S)| =
(1\pm 2\epsilon)p^{|S|}\frac{n}{r}.$ 
Thus by Lemma~\ref{Chernoff Bounds},
$$\mathbb{P}[ |N_{G}(S)\cap V_i| = (p^{|S|}\pm 3\epsilon)|V_i|] \geq 1 -(1-c)^n.$$

Thus with probability at least $1-r(\binom{n}{2}+n)(1-c)^n > 1/2$, we get a partition which satisfies $|N_{G}(S)\cap V_i| = (p^{|S|}\pm 3\epsilon)|V_i|$ for all $S\in D\cup V(G)$ and $i\in [r]$. We take a partition satisfying this. Then Theorem~\ref{codegree implies regularity} implies that $G$ is $(\epsilon^{1/7},p)$-super-regular with respect to the chosen partition $\mathcal{V}$.}%
\COMMENT{For $j$, $D\cap \binom{V_j}{2}$ has size at least $\binom{V_j}{2} - \epsilon n^2 \geq \binom{V_j}{2} - \epsilon r^2|V_j|^2$. And $r^2\epsilon < \epsilon^{6/7}$.}

Now we let $G'$ be the spanning subgraph of $G$ such that $G'[V_i]=\emptyset$ and $G'[V_i,V_j]=G[V_i,V_j]$ for $i\neq j \in [r]$. Then $G'$ is $(\epsilon^{1/7},p)$-super-regular with respect to $\mathcal{V}$. Moreover, since $1/r \ll \alpha, p_0$ we have
$e(G') \geq (1-\alpha/2) e(G)=(1-\alpha/2)(1\pm 2\epsilon)\binom{n}{2}p$. Thus
$$\left(1-3\alpha \right) e(G') \leq \left(1-2\alpha \right)\binom{n}{2}p \leq \sum_{\ell=1}^{s} e(H_\ell) \leq \left(1-\alpha\right)\binom{n}{2}p \leq \left(1-\alpha/3 \right) e(G').$$ 
Now we apply Theorem \ref{main 1} with $\epsilon^{1/7}$, $\alpha/3$, $p$, $n$, $n'$ playing the roles of $\epsilon,\lambda$, $d$, $\overline{n}$, $n$ to find the desired packing
of $H_1,\dots,H_s$ in $G'\subseteq G$. Moreover, writing $\phi(H_\ell)$ for the copy of $H_\ell$ in this packing of $H_1,\dots,H_s$ in $G$, and writing $J':=G'-(\phi(H_1)\cup\dots\cup \phi(H_s))$, we have $\Delta(J')\leq 10\alpha p n/3$.
Since $J= (G-G')\cup J'$, we conclude $\Delta(J)\leq \Delta(G-G')+ \Delta(J') \leq n/r + 10\alpha p n/3 \leq 4\alpha p n.$
\end{proof}

Similarly, we can now deduce Theorem~\ref{main 2} from Theorem~\ref{main lemma}.

\begin{proof}[Proof of Theorem \ref{main 2}.]
This follows from the same argument as in the proof of Theorem~\ref{main 1} with $\alpha$ playing the role of $\lambda$. Here the assumption $r\geq 3\Delta +2$ is not required since a common partition of the graphs $H_\ell$ and $G$ is given. In particular, this time we apply Lemma~\ref{packing to regular graph}(i) instead of Lemma~\ref{packing to regular graph}(ii).
\end{proof}

Next, we prove Corollary~\ref{Hamilton-Waterloo for many cycles}. 
For this, we use the following theorem by K\"uhn and Osthus.
It a special case of Theorem 1.2~in~\cite{KellyII}, which is stated for the more general class of robustly expanding graphs
(in~\cite{KellyII}, this result on Hamilton decompositions of robustly expanding graphs is in turn derived from a digraph version, which is the main result of~\cite{Kelly}).
 \begin{thm} \label{quasicor}\cite{Kelly, KellyII}
 For every $p_0>0$ there exist $\epsilon>0$ and $n_0\in\mathbb{N}$ such that the following holds for all $p\ge p_0$ and $n\ge n_0$ for which $pn$ is even.
 Let $G$ be any $pn$-regular $(\epsilon,p)$-quasi-random graph on $n$ vertices.
 Then $G$ has a Hamilton decomposition.
 \end{thm}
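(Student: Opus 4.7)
The plan is to prove this via the ``approximate decomposition plus absorption'' framework developed in \cite{Kelly,KellyII}. First I would check that $(\epsilon,p)$-quasi-randomness is strong enough to imply robust expansion in the Kühn--Osthus sense (this is standard for $\epsilon \ll p_0$), so that the problem reduces to showing that every sufficiently large $pn$-regular robust expander admits a Hamilton decomposition.

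The decomposition proceeds in three stages. The first stage is to reserve a spanning \emph{absorbing} subgraph $A \subseteq G$ by independently keeping each edge with small probability $\beta/p$, where $\beta \ll \alpha \ll p_0$. With high probability $A$ is $(\epsilon', \beta)$-quasi-random (and hence a robust expander in its own right), while $G - A$ is $(\epsilon', p-\beta)$-quasi-random and almost $(p-\beta)n$-regular. The critical property one needs to build into $A$ is a form of \emph{universal absorbing}: for every sufficiently sparse graph $L$ on $V(G)$ with a prescribed degree sequence, the union $A \cup L$ admits a Hamilton decomposition. Arranging this property typically requires further conditioning on the random choice of $A$, together with the insertion of specific gadgets (e.g.\ ``chord absorbers'' allowing arbitrary chord edges to be swapped into Hamilton cycles), and this construction is delicate.

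The second stage is to apply Theorem \ref{main 1b} to $G - A$ with $H_1 = \dots = H_s = C_n$, where $s = (p-\beta)n/2 - \alpha n$. This produces $s$ edge-disjoint Hamilton cycles in $G - A$, and the ``moreover'' clause of Theorem \ref{main 1b} guarantees that the uncovered leftover $J := (G-A) - (\phi(H_1) \cup \dots \cup \phi(H_s))$ satisfies $\Delta(J) \le 4\alpha(p-\beta)n$, which can be made as small as we wish by taking $\alpha$ small relative to $\beta$. In particular, we may arrange $\Delta(J) \ll \beta n$ so that $J$ is genuinely sparse compared with $A$.

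The third stage combines $A$ with $J$ and decomposes $A \cup J$ into exactly $pn/2 - s$ further Hamilton cycles, completing the Hamilton decomposition of $G$. \textbf{This is the main obstacle}: we must show that an arbitrary low-degree ``error graph'' $J$ can be absorbed into Hamilton cycles drawn from $A \cup J$ in an edge-disjoint fashion. This is precisely the hard technical content of \cite{Kelly,KellyII}, and is where the universal absorbing property of $A$ designed in stage one is consumed. The argument there proceeds by peeling off Hamilton cycles one at a time: at each step one uses rotation-extension arguments and Pósa-type rotations inside the robust expander $A$ to produce a Hamilton cycle that contains a carefully chosen subset of the remaining edges of $J$, while maintaining sufficient regularity and expansion in the residual graph to iterate. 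Balancing the degree sequence of the leftover throughout the iteration (so that the final residue is actually decomposable, rather than merely ``almost'' so) is the subtlest point, and is handled by a bookkeeping of ``excess'' and ``deficit'' at each vertex via the absorber gadgets reserved in $A$.
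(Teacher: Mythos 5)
The paper does not prove this statement: it is stated and used as a black box, cited explicitly as ``a special case of Theorem 1.2 in [KellyII]'', with the remark that [KellyII] derives it from a digraph version that is the main result of [Kelly]. So there is no in-paper proof for your outline to be compared against; you are attempting to reconstruct the argument of the cited papers.

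Your high-level three-stage structure -- set aside a sparse random absorbing subgraph $A$, approximately decompose $G-A$ into Hamilton cycles with a bounded-degree residual $J$, then decompose the regular graph $A\cup J$ -- does correctly capture the macroscopic architecture of [Kelly, KellyII]. Using Theorem~\ref{main 1b} of this paper for the approximate step is a sensible substitution (the cited papers obviously could not use it, since it postdates them; they have their own, weaker, approximate decomposition tools built from the regularity lemma and robust expansion), and the parity bookkeeping works: $A\cup J = G - \bigcup_\ell\phi(H_\ell)$ is $(pn-2s)$-regular, which is even since $pn$ is even. Be aware, though, that this makes your outline logically independent of [Kelly, KellyII]'s \emph{approximate} step but not of their absorption machinery, which is where essentially all of the work is.

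The genuine gap is in your description of stage three, and it is not merely that you defer to the literature. You characterise the absorption as proceeding ``via rotation-extension arguments and P\'osa-type rotations inside the robust expander $A$'', with the degree bookkeeping handled by gadgets that trade excess against deficit. That is not how [Kelly, KellyII] work. P\'osa rotations do not appear in their absorption step at all; that technique belongs to a different line of Hamiltonicity results (sparse random graphs, resilience). What [Kelly] actually prove is the ``robust decomposition lemma'': one fixes a regularity partition of $G$, builds a highly structured absorber (a ``bi-setup'' together with ``chord absorbers'' and an ``exceptional-vertex cover'') inside a small slice of $G$, and shows that any sparse leftover which meets this absorber in a prescribed way can be combined with it to give an exact decomposition into Hamilton cycles, ultimately by reducing to decompositions of blow-ups of cycles. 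The degree balancing is not done by a P\'osa-style excess/deficit argument at all but is fully pre-empted by constraints the robust decomposition lemma imposes on $J$. If you are going to gesture at the absorber machinery rather than reprove it, the outline should at least name it correctly and not conflate it with rotation-extension; as written, the stage-three sketch both misattributes the technique and leaves the universal-absorbing property of $A$ (which you correctly flag as what stage one must deliver) entirely unconstructed.
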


\begin{proof}[Proof of Corollary~\ref{Hamilton-Waterloo for many cycles}.]
Note that without loss of generality we may assume that $\beta\le p_0/3$.
Choose $\varepsilon,\alpha>0$ and $n_0\in\mathbb{N}$ such that $1/n_0\ll\epsilon \ll \alpha \ll p_0,\beta,1/\Delta$.
Let $n\ge n_0$, $p\ge p_0$, and let $G$ and $H_1,\dots,H_s$ be as given in Corollary~\ref{Hamilton-Waterloo for many cycles}.
Choose a subgraph $J$ of $G$ such that $G':=G-J$ is $(2\epsilon,p')$-quasi-random where $p'=p-2\beta+\alpha$
and $J$ is $(2\epsilon,2\beta-\alpha)$-quasi-random.
(Indeed, for each edge $e$ of $G$, we include $e$ in $E(J)$ with probability $(2\beta -\alpha)/ p$. Then it is straightforward to check that $J$ is $(2\epsilon,2\beta-\alpha)$-quasi-random and $G'=G-J$ is also $(2\epsilon,p-2\beta+\alpha)$-quasi-random with high probability.)

Since $\epsilon \ll \alpha, p',1/\Delta$, Theorem~\ref{main 1b} implies that $H_{\beta n+1},\dots, H_{s}$ pack into $G'$.
In other words, for each $\beta n+1\le \ell\le s$ there exists a copy $\phi_\ell(H_\ell)$ of $H_\ell$ in $G$ such that the $\phi_\ell(H_\ell)$ are pairwise edge-disjoint. Let $$G^*:= G - \bigcup_{\ell=\beta n+1}^{s} \phi_\ell(H_\ell).$$ Then $G^*$ is $2\beta n$-regular because $G$ is $pn$-regular and each $H_\ell$ is $r_\ell$-regular with $\sum_{\ell=\beta n+1}^{s} r_\ell = (p-2\beta)n$.
Let $G'':=G'-\bigcup_{\ell=\beta n+1}^{s} \phi_\ell(H_\ell)$. Then any vertex $v\in V(G)$ satisfies
\begin{equation} \label{hello}
d_{G''}(v) = (1\pm 2\epsilon) (p-2\beta+\alpha)n - \sum_{\ell=\beta n+1}^{s} r_\ell = (\alpha \pm 2\epsilon)n.
\end{equation}
Also recall that $J$ is $(2\epsilon,2\beta -\alpha)$-quasi-random. Thus for any $S\in \binom{V(G)}{2}$, we have
$$
N_{G^*}(S) = N_{J}(S) \pm 2\Delta(G'') \stackrel{(\ref{hello})}{=} 
(1\pm 2\epsilon) (2\beta -\alpha)^{2} n \pm 2(\alpha \pm 2\epsilon) n = (1\pm \alpha^{1/2})(2\beta)^2 n.
$$ 
Thus $G^*$ is a $(\alpha^{1/2},2\beta)$-quasi-random $2\beta n$-regular graph.  This means that we can apply Theorem~\ref{quasicor} to $G^*$
to obtain a decomposition of $G^*$ into $H_1,\dots, H_{\beta n}$. Together with the $\phi_\ell(H_\ell)$ this gives the required decomposition.
\end{proof}

The following bipartite version of the tree packing conjecture was formulated by Hobbs, Bourgeois and Kasiraj \cite{HB} in 1987. 
\begin{conj}\label{bipartite tree packing conjecture}
Let $T_1,\dots, T_n$ be trees such that, for each $i\in[n]$, $T_i$ has $i$ vertices. Let $G$ be the complete bipartite graph $K_{n-1,n/2}$ if $n$ is even, and $K_{n,(n-1)/2}$
if $n$ is odd. Then $G$ has a decomposition into copies of $T_1,\dots,T_n$.
\end{conj}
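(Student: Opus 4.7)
The plan is to attempt the exact decomposition via the \emph{absorption paradigm}, combining the approximate bipartite tree packing (Corollary~\ref{bipartite version}) with a carefully preconstructed completion structure. First I would partition the tree family $\{T_1,\ldots,T_n\}$ into three groups: (a) a small reserve $T_{i_1},\ldots,T_{i_K}$ of ``absorber'' trees chosen from among the largest ones; (b) the trees of unusually large maximum degree; and (c) all remaining trees, which have bounded maximum degree. The trees in group (b) can be embedded greedily at the start: since $\sum_i e(T_i)=\binom{n}{2}$, only $O(n/\Delta_0)$ trees have maximum degree exceeding a large constant $\Delta_0$, so their total edge count is $O(n^2/\Delta_0)$, negligible compared to $e(G)$, and their removal barely perturbs the super-regularity of the host.

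Next I would reserve a random sparse absorbing subgraph $A\subseteq G$ of edge density $\alpha=\alpha(K)>0$, set aside so that $G-A$ remains $(\epsilon,1-\alpha)$-super-regular across the two vertex classes of $G$ and so that $A$ is ``flexible enough'' to complete any admissible sparse leftover. I would then apply Corollary~\ref{bipartite version} to pack all trees in group (c) into $G-A$ edge-disjointly, invoking the moreover clause (analogous to that of Theorem~\ref{main 1b}) to guarantee that the uncovered subgraph $J$ of $G-A$ has maximum degree $o(n)$. The remaining graph $J\cup A$ is then fed into the completion step, which must decompose it into copies of the reserved trees $T_{i_1},\ldots,T_{i_K}$ exactly.

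The hard part will be constructing the bipartite absorber and proving that the completion step always succeeds. Unlike $K_n$, the host $G=K_{n-1,n/2}$ (for even $n$) is highly asymmetric: one class has $n-1$ vertices of degree $n/2$, the other has $n/2$ vertices of degree $n-1$, and any exact decomposition must match these two distinct degree classes \emph{precisely} by the embedded trees. Consequently the absorber trees in (a) must be chosen with a variety of bipartition sizes and leaf structures, so that the residual degree imbalance on \emph{both} sides of $G$ can be corrected simultaneously. In the unipartite exact tree packing setting resolved in \cite{JKKO}, the absorber exploits the full vertex-transitivity of $K_n$; adapting this to the asymmetric bipartite host --- and showing that the resulting absorber can accommodate every admissible leftover $J$ --- is the principal new difficulty and the reason Conjecture~\ref{bipartite tree packing conjecture} is not settled by the blow-up lemma for approximate decompositions alone. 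A promising route is to engineer the reserved family $T_{i_1},\ldots,T_{i_K}$ via pairs of trees that admit many alternating ``swaps'' between edges of $J$ and edges of $A$ on each side of $G$ separately, so that the completion reduces to a bipartite degree-preserving rewiring problem that can be solved by a bipartite analogue of the switching arguments developed for the unipartite case.
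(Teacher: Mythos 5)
The statement you were asked about is a \emph{conjecture} in the paper, attributed to Hobbs, Bourgeois and Kasiraj; the paper does not prove it. What the paper establishes is only an \emph{approximate} version for bounded degree trees, namely Corollary~\ref{bipartite version}, derived from Theorem~\ref{bipartite thm} (which in turn follows from Theorem~\ref{thm:main0} via a three-part refinement of the bipartition). So there is no ``paper's own proof'' to compare against, and your proposal is attempting something strictly beyond what the paper claims.

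As for your sketch itself: it is not a proof but a plan, and you correctly identify that the bipartite absorber/completion is the crux and remains unresolved, so on its own terms the argument is incomplete. More importantly, there is a concrete gap in the step you do treat as routine. You argue that trees of maximum degree exceeding a large constant $\Delta_0$ have total edge count $O(n^2/\Delta_0)$ and can therefore be embedded greedily up front. This is false: the number of edges of $T_i$ is $i-1$ regardless of its maximum degree, and nothing in the hypothesis of Conjecture~\ref{bipartite tree packing conjecture} bounds the degrees. Taking $T_i \cong K_{1,i-1}$ for every $i$, \emph{all} $n$ trees have $\Delta(T_i)=i-1$ and their total edge count is the full $\binom{n}{2}$, so group (b) in your decomposition is everything and the approximate packing machinery (which, like Corollary~\ref{bipartite version}, only applies to bounded degree graphs) never applies. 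Any approach to the exact conjecture that leans on the paper's approximate results must either restrict to bounded degree trees (yielding at best an exact bounded-degree variant, not the full conjecture) or supply a genuinely new mechanism for trees of linear maximum degree; your sketch does neither.
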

The following result implies an approximate version of this conjecture for bounded degree trees (see Corollary~\ref{bipartite version}). We derive Theorem~\ref{bipartite thm} from Theorem~\ref{thm:main0}.

\begin{thm}\label{bipartite thm}
Suppose $0<1/n \ll \epsilon \ll \alpha, 1/\Delta$.
Let $H_1,\dots, H_{s}$ be $n$-vertex bipartite graphs with $\Delta(H_i)\leq \Delta$ for all $i\in [s]$.
Let $G$ be an $(\epsilon,d)$-super-regular bipartite graph with bipartition $(A,B)$ such that $|A|=\lfloor n/2\rfloor$ and $|B|=n-1$.
If $\sum_{i=1}^{s} e(H_i) \leq (1-\alpha) e(G)$, then $H_1,\dots, H_s$ pack into $G$.
\end{thm}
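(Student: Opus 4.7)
The plan is to deduce Theorem~\ref{bipartite thm} from Theorem~\ref{thm:main0} applied with $r = 3$. Since $|A| = \lfloor n/2 \rfloor$ and $|B| = n-1$ differ by $\approx n/2$, whereas Theorem~\ref{thm:main0} requires all vertex classes to be of size within a constant of each other, we split $B$ into two halves to obtain three vertex classes of size $\approx n/2$, and correspondingly split each $H_i$. At the outset, we pre-merge bipartite graphs with fewer than $n/8$ edges by packing their non-isolated vertices onto disjoint subsets of the two bipartite sides (which preserves both the bipartite structure and the bound $\Delta(\cdot)\le \Delta$, analogously to the merging step in the proof of Theorem~\ref{main 1}), so that we may assume $s \le \eta^{-1} n$ for some $\eta \ll \alpha, d, 1/\Delta$.

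Partition $B$ uniformly at random into $B_1, B_2$ of sizes $\lceil (n-1)/2 \rceil$ and $\lfloor (n-1)/2 \rfloor$. Lemma~\ref{Chernoff Bounds} combined with Proposition~\ref{regularity implies codegree} and Theorem~\ref{codegree implies regularity} shows that, with positive probability, both $G[A, B_1]$ and $G[A, B_2]$ are $(2\epsilon, d)$-super-regular. Fix such a partition, set $V_1 := A$, $V_2 := B_1$, $V_3 := B_2$, $n' := |V_1|$, and let $R$ be the graph on $[3]$ with edge set $\{12, 13\}$. Then $G$ admits the vertex partition $(R, V_1, V_2, V_3)$ and is $(2\epsilon, \vec{d})$-super-regular with respect to it, with $d_{1,2} = d_{1,3} = d$ and $|V_j| \in [n'-2, n']$ for each $j$. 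For each $i \in [s]$, fix any bipartition $(Y_i^1, Y_i^2)$ of $H_i$ with $|Y_i^1| \le |Y_i^2| \le n-1$; necessarily $|Y_i^1| \le \lfloor n/2 \rfloor = |V_1|$. Independently assign each vertex of $Y_i^2$ to $Y_i^{2a}$ with probability $p := |V_2|/(|V_2|+|V_3|)$, and set $Y_i^{2b} := Y_i^2 \setminus Y_i^{2a}$. A standard Chernoff plus variance argument yields, with positive probability and simultaneously for all $i \in [s]$, that $|Y_i^{2a}| \le |V_2|$, $|Y_i^{2b}| \le |V_3|$, and
\[
\sum_i e(H_i[Y_i^1, Y_i^{2a}]) = p \sum_i e(H_i) \pm n^{3/2},
\]
with the analogous bound for $Y_i^{2b}$. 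Padding $(Y_i^1, Y_i^{2a}, Y_i^{2b})$ with isolated vertices, we obtain a bipartite graph $L_i \supseteq H_i$ with $\Delta(L_i) \le \Delta$ and a vertex partition $(R, X_1, X_2, X_3)$ of sizes $(|V_1|, |V_2|, |V_3|)$.

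It is now routine to verify the hypotheses of Theorem~\ref{thm:main0}. Since $e(G) = (d \pm \epsilon)|V_1|(|V_2|+|V_3|)$, we have $p \, e(G) = (d \pm \epsilon) |V_1||V_2|$, so
\[
\sum_i e(L_i[X_1, X_2]) \le p(1-\alpha) e(G) + n^{3/2} \le (1 - \alpha/2) d_{1,2} (n')^2,
\]
and analogously for $X_1, X_3$, while there are no edges between $X_2$ and $X_3$. Applying Theorem~\ref{thm:main0} with $r = 3$, $C = 2$, $\Delta_R = 2$, $2\epsilon$ in place of $\epsilon$, $\alpha/2$ in place of $\alpha$, and $n'$ in place of $n$ yields an edge-disjoint packing of $L_1, \ldots, L_s$ into $G$, whose restriction to the non-padding vertices is the required packing of $H_1, \ldots, H_s$. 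Conceptually the only subtle point is the choice of reduced graph $R$; the remaining technical obstacles---preserving super-regularity under the random partition of $B$, balancing edges under the random splits of the $Y_i^2$, and organizing the pre-merging---are all handled by standard Chernoff-style concentration.
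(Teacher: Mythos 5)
Your proposal is correct and follows essentially the same route as the paper: merge the sparse $H_i$, pass to a three-class partition whose reduced graph is a path centred at $A$, split $B$ (and the larger side of each $H_i$) randomly so that super-regularity is preserved and the edges are balanced between the two halves, and then invoke Theorem~\ref{thm:main0}. The only deviations (an independent Bernoulli split of each $Y_i^2$ instead of an exact random equipartition, and a global rather than per-graph concentration bound) are harmless, since after the pre-merging step every $H_i$ has linearly many vertices on its smaller side, so the required size and balance conditions hold with high probability.
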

\begin{proof}
If $H_i$ and $H_j$ both have at most $n/4$ edges, then we can pack them disjointly
into an $n$-vertex bipartite graph without increasing the maximum degree. 
So we may assume that $e(H_i)\geq n/4$ and $\Delta(H_i)\leq 2\Delta$ for all $i\in [s]$.\COMMENT{via packing,
we can make all graphs having at least $n/4$ edges except one. If there's one graph $H_i$ with less edges, then we take another graph $H_j$ and pack $H_i$ with $H_j$ into a bipartite graph with maximum degree at most $2\Delta$.}
Note that this implies that $s\leq 2n$. For each $H_i$, we take a bipartition $(A_i,B_i)$ such that $|A_i|\leq |B_i|$. Note that $|A_i|\leq \lfloor n/2 \rfloor, |B_i|\leq n-1$, so we may assume $|A_i|= \lfloor n/2 \rfloor, |B_i| =n-1$ by adding isolated vertices if needed. Let $X^i_1:= A_i$. 
Take an equipartition $(X^i_2,X^i_3)$ of $B_i$ with $|X^i_2|= \lceil (n-1)/2\rceil, |X^i_3| = \lfloor (n-1)/2 \rfloor$ such that $|e(H_i[X^i_1,X^i_2])-e(H_i[X^i_1,X^i_3])|\leq \epsilon n.$ 
(To see that such an equipartition exists, consider a random equipartition and apply  Theorem~\ref{Azuma}.)%
\COMMENT{Let $v_1,v_2,\dots,v_{n-1}$ be the vertices in $B_i$. Let $X_i$ be the random variable such that $X_i := d_{H_i}(v_i)$ if $v_i \in X^i_2$ and $X_i:= - d_{H_i}(v_i)$ otherwise. We also let $Y_i:= \mathbb{E}[\sum_{j=1}^{n-1} X_j : X_1,\dots, X_i ].$ Then $Y_i$ is an exposure $2\Delta$-Lipschitz martingale. Thus Theorem~\ref{Azuma} implies that 
$\mathbb{P}[Y_{n-1} = \pm \epsilon n ] \geq 1 - c^n.$ 
Also, $\sum_{j=1}^{n-1} X_j = e(H_i)[X^i_1,X^i_2] - e(H_i)[X^i_1,X^i_3]$.}

Let $X_1:=A$. We also choose a partition $(X_2,X_3)$ of $B$ with $|X_2|= \lceil (n-1)/2 \rceil, |X_3| = \lfloor (n-1)/2 \rfloor$ such that both $G[X_1,X_2]$ and $G[X_1,X_3]$ are $(4\epsilon,d)$-super-regular. 
(Again, consider a random equipartition for this.)%
\COMMENT{
Such equipartition exists because if we take a random equipartition, then Lemma~\ref{Chernoff Bounds} shows that $$\mathbb{P}[|N_G(v)\cap X_2|= \frac{1}{2}d_{G}(v) \pm \epsilon n = (d\pm 4\epsilon)|X_2| ] \geq 1-2e^{\frac{2\epsilon^2n^2}{2dn}} \geq 1- c^n.$$ 
Note that $d_{G}(v) = (d\pm \epsilon) n$ since $G$ is $(\epsilon,d)$-super-regular. 
Thus there exists a partition $B = X_2 \cup X_3$ such that $d_{G[X_1,X_2]}(v) = dn/2 \pm 4 \epsilon n$ and 
$d_{G[X_1,X_3]}(v) = dn/2 \pm 4 \epsilon n$ as $1 - nc^n > 0$. For this partition, both $G[X_1,X_2]$ and $G[X_1,X_3]$ are $(2\epsilon,d)$-regular by Proposition~\ref{restriction}. Thus the desired equipartition exists.}
Then it is easy to see that 
\COMMENT{$$\leq \sum_{i=1}^{s} (\frac{1}{2}|E(H_i)|+\epsilon n)  \leq \frac{1}{2}(1-\alpha)e(G) + \epsilon s n\\
 \leq \frac{1}{2}(1-\alpha/2) e(G) \leq (1-\alpha/2)(d+\epsilon)\frac{n^2}{4} \\
\leq (1-\alpha/3)(d-5\epsilon)\frac{n^2}{4}$$}
$$
\sum_{i=1}^{s} e(H_i[X^i_1,X^i_2]) \leq \sum_{i=1}^{s} (e(H_i)/2+\epsilon n) 
 \leq (1-\alpha/3)d|X_1||X_2|.
$$
Similarly $\sum_{i=1}^{s} e(H_i[X^i_1,X^i_3]) \leq (1-\alpha/3)d|X_1||X_3|$.
Let $R$ be a path on three vertices such that $V(R)=[3]$ and $d_R(1)=2$.
Then each $H_i$ admits vertex partition $(R,X^i_1, X^i_2, X^i_3)$. So we can now apply Theorem~\ref{thm:main0}
to obtain the desired packing.\end{proof}

The following corollary is immediate from Theorem~\ref{bipartite thm}. Note that we consider $K_{n-1,\lfloor n/2 \rfloor}$ here, which also implies the result for $K_{n,\lfloor n/2 \rfloor}$.

\begin{cor}\label{bipartite version}
Suppose $0<1/n\ll \alpha, 1/\Delta$. Let $T_{\alpha n},\dots, T_n$ be trees such that, for each $\alpha n\le i \le n$, $T_i$ has $i$ vertices and $\Delta(T_i)\leq \Delta$. Then $T_{\alpha n},\dots,T_n$ pack into $K_{n-1,\lfloor n/2\rfloor}$.
\end{cor}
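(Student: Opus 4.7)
The plan is to derive this as a direct application of Theorem~\ref{bipartite thm} to the complete bipartite graph $G = K_{n-1,\lfloor n/2 \rfloor}$. The only nontrivial point is to verify the edge-count hypothesis $\sum_i e(H_i) \leq (1-\alpha') e(G)$, so I will introduce a suitable constant $\alpha'$ and check the arithmetic.

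First, I would observe that for any $\epsilon > 0$, the complete bipartite graph $K_{n-1,\lfloor n/2\rfloor}$ is trivially $(\epsilon,1)$-super-regular (all degrees are exactly $n-1$ or $\lfloor n/2\rfloor$ and every bipartite subgraph has density $1$). Then, to match the format required by Theorem~\ref{bipartite thm}, I would pad each tree $T_i$ (which has $i$ vertices) with $n-i$ isolated vertices to form an $n$-vertex bipartite graph $H_i$ with $e(H_i) = i-1$ and $\Delta(H_i) \leq \Delta$. This rewriting preserves the problem: a packing of $H_{\alpha n}, \dots, H_n$ into $G$ yields a packing of $T_{\alpha n}, \dots, T_n$ into $G$.

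Next I would compute the total number of edges to be packed:
\begin{equation*}
\sum_{i=\lceil \alpha n\rceil}^{n} e(H_i) = \sum_{i=\lceil \alpha n\rceil}^{n} (i-1) = \frac{(1-\alpha^2)}{2}n^2 + O(n),
\end{equation*}
while $e(G) = (n-1)\lfloor n/2\rfloor = \frac{n^2}{2} + O(n)$. Thus for $n$ large enough (which we have, since $1/n \ll \alpha$), the ratio $\sum_i e(H_i)/e(G)$ is at most $1 - \alpha^2/2$, i.e.~setting $\alpha' := \alpha^2/2$ gives the required slack.

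Finally I would apply Theorem~\ref{bipartite thm} with $\epsilon$ chosen to satisfy $1/n \ll \epsilon \ll \alpha', 1/\Delta$ (which is possible by the hypothesis $1/n \ll \alpha, 1/\Delta$), with $d=1$, $\alpha'$ in place of $\alpha$, and the bipartite graphs $H_{\alpha n},\dots,H_n$ in the role of $H_1,\dots,H_s$. The conclusion is a packing of $H_{\alpha n},\dots,H_n$ into $K_{n-1,\lfloor n/2\rfloor}$, which is exactly the desired tree packing. There is no genuine obstacle here --- the only thing to handle carefully is the floor/ceiling bookkeeping in the edge count, which only affects lower-order terms and is absorbed by the slack $\alpha^2/2$.
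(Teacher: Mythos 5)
Your proposal is correct and is essentially the paper's argument: the corollary is stated there as an immediate consequence of Theorem~\ref{bipartite thm}, obtained exactly as you do by viewing $K_{n-1,\lfloor n/2\rfloor}$ as an $(\epsilon,1)$-super-regular bipartite graph, padding each tree to an $n$-vertex graph, and noting that $\sum_i e(T_i)\approx\tfrac{(1-\alpha^2)}{2}n^2$ leaves the required constant slack against $e(G)\approx n^2/2$. The bookkeeping you flag (floors/ceilings, choice of $\alpha'=\alpha^2/2$ and of $\epsilon$ in the hierarchy) is handled correctly.
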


\section*{Acknowledgements}
We are grateful to Felix Joos for suggesting Theorem~\ref{MM} and its proof which replaces a more complicated approach towards (M1)--(M3) based on a random greedy matching algorithm and an associated martingale analysis.

\end{document}